\theoremstyle{plain}
\newtheorem{theorem}{Theorem}[section]
\newtheorem{proposition}[theorem]{Proposition}
\newtheorem{lemma}[theorem]{Lemma}
\newtheorem{corollary}[theorem]{Corollary}
\theoremstyle{definition}
\theoremstyle{remark}
\newtheorem{remark}[theorem]{Remark}
\newtheorem{example}[theorem]{Example}
\title{The discrete wave equation with applications to scattering theory and quantum chaos}
\author{Carsten Peterson}
\begin{document}
    \begin{abstract}
        With a view towards studying the multitemporal wave equation on affine buildings recently introduced by Anker--Rémy--Trojan \cite{anker_remy_trojan}, we systematically develop the basic properties of the discrete wave equation on $\mathbb{Z}$ and use this to explain existing results about the wave equation on regular graphs. Furthermore, we explicitly compute the incoming and outgoing translation representations and the scattering operator, in the sense of Lax--Phillips, for regular and biregular trees. Finally, we use the wave equation on biregular graphs to extend a result of Brooks--Lindenstrauss about delocalization of eigenfunctions on regular graphs to the setting of biregular graphs.
    \end{abstract}

    \maketitle

        \tableofcontents

\section{Introduction}

The wave equation (say, on $\mathbb{R}^n$) defined by $\frac{\partial^2}{\partial t^2} u(x, t) = \Delta_x u(x, t)$, is one of the most classical and well-studied partial differential equations in mathematics and physics. It affords many remarkable properties such as existence and uniqueness of solutions given Cauchy data, finite speed of propagation/Huygens' principle, d'Alembert's solution (and more generally solutions expressible as superpositions of plane waves), and conservation of energy. From an algebraic perspective, many of these geometric and analytic properties may be viewed as a consequence of the fact that the wave equation involves applying ``the same'' (at least when $n = 1$) differential operator in space and in time, and that this operator is the simplest non-trivial operator invariant under $x \mapsto -x$. This perspective shall play a distinguished role in the sequel.

In this paper we begin by studying the \textit{discrete wave equation} on $\mathbb{Z}$, i.e. functions $u: \mathbb{Z} \times \mathbb{Z} \to \mathbb{C}$ satisfying
\begin{gather}
    u(n, t+1) + u(n, t-1) = u(n+1, t) + u(n-1, t).  \label{eqn_discrete_wave_intro}
\end{gather}
If one were to subtract the term $2 u(n, t)$ from both sides of \eqref{eqn_discrete_wave_intro}, one would immediately recognize that \eqref{eqn_discrete_wave_intro} simply involves setting the discrete second derivative in the second (i.e. ``time'') variable equal to the discrete second derivative in the first (i.e. ``space'') variable. We shall at times refer to this as the \textit{flat wave equation} to contrast it with the wave equation on trees and graphs to be considered later on. 

While \eqref{eqn_discrete_wave_intro} is certainly not new, we have been unable to find a systematic treatment in the literature. Note that, if we were to convert a function on $\mathbb{Z} \times \mathbb{Z}$ into a formal bi-infinite Laurent series in $x$ and $y$, then the LHS of \eqref{eqn_discrete_wave_intro} amounts to multiplication by $y + y^{-1}$, and the RHS amounts to multiplication by $x + x^{-1}$. As we shall show, most of the exceptional properties of \eqref{eqn_discrete_wave_intro} are a result of algebraic properties of the Laurent polynomial ring $\mathbb{C}[x, x^{-1}]$ (in fact, many of the remarkable properties of the wave equation on $\mathbb{R}^n$ are a result of algebraic properties of $\mathbb{C}[x]$; see Section \ref{sec_wave_eqn_R}). Let $W = S_2$ be the symmetric group on two elements whose non-trivial element $w_0$ acts on $\mathbb{C}[x, x^{-1}]$ via $x \mapsto x^{-1}$. Then $\mathbb{C}[x, x^{-1}]$ is a free module of rank 2 over $\mathbb{C}[x, x^{-1}]^W = \mathbb{C}[x + x^{-1}]$ (Proposition \ref{prop_alg_properties}). Choosing appropriate initial conditions to guarantee existence and uniqueness of solutions ultimately amounts to choosing a basis for this module. A very natural, and in many ways best possible, basis is given by $h_1 = 1, h_2 = \frac{x - x^{-1}}{2}$. Furthermore, $\mathbb{C}[x, x^{-1}]$ carries a natural non-degenerate $\mathbb{C}[x + x^{-1}]$-bilinear pairing $(\cdot, \cdot)$ taking values in $\mathbb{C}[x + x^{-1}]$ (Proposition \ref{prop_pairing}). This allows us to find a dual basis for any basis; the dual basis to $h_1, h_2$ is $m_1 = \frac{x - x^{-1}}{2}, m_2 = 1$. The relationship between initial conditions and the corresponding fundamental solutions is essentially the relationship between a basis and the corresponding dual basis (Theorem \ref{thm_soln}). The d'Alembert presentation of solutions to the wave equation is essentially a consequence of the definition of $(\cdot, \cdot)$ (Corollary \ref{cor_d'alembert}), finite speed of propagation is immediate from the explicit expression of the solutions (Corollary \ref{cor_finite_speed}), and conservation of energy amounts to diagonalizing the matrix $[(m_j h_j, x)]$ (Proposition \ref{prop_energy_flat}). Furthermore, as we discuss in Section \ref{sec_chebyshev}, studying ``fundamental solutions'' to \eqref{eqn_discrete_wave_intro} essentially amounts to studying Chebyshev polynomials, or more generally Bernstein-Szegö polynomials. Orthogonality properties of such polynomials can also be understood using this framework. See Theorem \ref{thm_orthogonality}.

When considering the wave equation on the hyperbolic plane $\mathbb{H}$ (or on hyperbolic surfaces), often one studies the \textit{shifted wave equation} (also referred to as the \textit{automorphic wave equation} in the context of hyperbolic surfaces) defined as solutions $u: \mathbb{H} \times \mathbb{R} \to \mathbb{C}$ to the equation
\begin{gather}
    \frac{\partial^2}{\partial t^2} u(x, t) = \Big(\Delta_x - \frac{1}{4}\Big) u(x, t). \label{shifted_wave_eqn}
\end{gather} 
See, for instance, \cite{pavlov_feddeev, lax_phillips_automorphic, lax_phillips_lattice_points, anker_pierfelice_vallarino, borthwick}. The appearance of $\Delta-\frac{1}{4}$ on the RHS may be explained as follows. There is a natural algebra isomorphism $\Gamma$ (a special case of the Harish-Chandra isomorphism) from $\textnormal{SL}(2, \mathbb{R})$-invariant differential operators on $\mathbb{H}$ to polynomials on $\mathbb{C}$ invariant under $x \mapsto -x$; the latter is simply $\mathbb{C}[x^2]$. We have that $\Gamma(\Delta - \frac{1}{4}) = x^2$, which is the symbol of the operator $\frac{\partial^2}{\partial x^2}$ which appears on the RHS of the wave equation on $\mathbb{R}$. The wave equation \eqref{shifted_wave_eqn} has nicer properties than the usual wave equation on $\mathbb{H}$, partially because it interfaces much better with the representation theory of $\textnormal{SL}(2, \mathbb{R})$. One may ultimately study \eqref{shifted_wave_eqn} by a two-step process: first by studying the wave equation on $\mathbb{R}$, and then studying the nature of the map $\Gamma$ (the latter of which can be studied via spherical harmonic analysis on $\mathbb{H}$).

Let $\mathcal{T}$ be the infinite $(q+1)$-regular tree. The discrete analogue of \eqref{shifted_wave_eqn} is
\begin{gather}
    u(v, t+1) + u(v, t-1) = \Big(\frac{A}{\sqrt{q}} \Big) u(v, t) \label{tree_wave_intro}
\end{gather}
where $u: \mathcal{T} \times \mathbb{Z} \to \mathbb{C}$, and $A$ is the adjacency operator on $\mathcal{T}$ (we shall only consider functions on the vertices of $\mathcal{T}$). Note that \eqref{tree_wave_intro} was independently and roughly simultaneously introduced by \cite{brooks_lindenstrauss} as the \textit{$p$-adic wave equation} (see also \cite{brooks_lindenstrauss_graphs, brooks_le_masson_lindenstrauss}), and by \cite{anker_martinot_pedon_setti} as the \textit{shifted wave equation}. In fact, we also found the same equation considered in the much earlier work \cite{romanov_rudin_95}. There is a natural map $\textnormal{Sat}$ (a special case of the Satake isomorphism) from $\textnormal{Aut}(\mathcal{T})$-invariant operators on $\mathcal{T}$ (also known as the spherical Hecke algebra) to $\mathbb{C}[x, x^{-1}]^W$. We have that $\textnormal{Sat}(\frac{A}{\sqrt{q}}) = x + x^{-1}$, which is essentially the ``symbol'' of the operator appearing on the RHS of \eqref{tree_wave_intro}. After discussing the relevant tools from spherical harmonic analysis on $\mathcal{T}$ in Section \ref{sec_harmonic_analysis_trees}, we quickly and systematically recover in Section \ref{sec_wave_eqn_tree} the main results of \cite{brooks_lindenstrauss} and \cite{anker_martinot_pedon_setti} concerning \eqref{tree_wave_intro}. We remark that many ideas closely related to the wave equation on the regular tree have implicitly appeared for a long time in analytic number theory in terms of Chebyshev polynomials of the normalized Hecke operator; see for instance \cite{serre_hecke, conrey_duke_farmer, kaplan_petrow, sarnak_zubrilina}. In case $q$ is a power of a prime, we may view $\mathcal{T}$ as the Bruhat--Tits tree associated to $\textnormal{SL}(2, F)$, with $F$ a non-archimedean local field with residue field of order $q$, and we may view the adjacency operator $A$ as the Hecke operator, which plays an important role in $p$-adic representation theory and the theory of automorphic forms.

Lax--Phillips established a general framework for studying wave-like equations, often referred to as the \textit{Lax--Phillips scattering theory}. Already in the book of Lax--Phillips \cite{lax_phillips_book}, the basics of the discrete analogue of this theory are established. The existence of a conserved energy for the wave equation \eqref{tree_wave_intro} (Proposition \ref{prop_tree_energy}), implies that the wave propagator acts unitarily on the Hilbert space of finite energy initial conditions. Lax--Phillips define the abstract notion of an \textit{outgoing subspace} $D_+$ in the context of a unitary operator $\mathcal{V}$ acting on a Hilbert space $\mathcal{H}$; intuitively this corresponds to initial data for which the corresponding solution to the wave equation is ``outgoing'', i.e. at time $t \geq 0$ it is supported outside a ball of radius $t$. Under such a set-up, Lax--Phillips prove the abstract existence of a so-called \textit{outgoing translation representation} $T_+: \mathcal{H} \to \ell^2(\mathbb{Z}; N)$, where $N$ is some auxiliary Hilbert space, such that $D_+$ maps bijectively to $\ell^2(\mathbb{Z}_{\geq 0}; N)$ and $\mathcal{V}$ turns into the right shift operator. The map $T_+$ is unique up to an isomorphism of $N$. There is an analogous notion of \textit{incoming translation representation} $T_-$ for \textit{incoming subspaces}. The relationship between these two maps is represented by the \textit{scattering operator}. Physically, waves ``incoming from infinity'' evolve over time in a potentially complicated way, but eventually ``scatter'' into a superposition of waves ``outgoing to infinity.'' The scattering operator encodes this information. In \cite{lax_phillips_h3}, Lax--Phillips describe very explicitly the nature of the translation representations and scattering operator for the shifted wave equation on hyperbolic 3-space. In Section \ref{sec_scattering_theory}, we obtain analogous results to many of the main results of \cite{lax_phillips_h3} but in the context of the wave equation on the regular tree. Along the way, we show how to express every solution to the wave equation as a superposition of horocyclic plane waves, and prove to what extent such a representation is unique.

The original applications of Brooks--Lindenstrauss' introduction of \eqref{tree_wave_intro} were in the field of \textit{quantum chaos}. In particular in \cite{brooks_lindenstrauss_graphs}, they show that any eigenfunction of the adjacency operator on a finite regular graph cannot concentrate its mass on too small of a set; the precise statement involves constants depending on certain geometric features of the underlying graph (such as the number of short cycles). Because the underlying ``classical dynamics'' on the graph (e.g. the random walk) is chaotic in the sense that it rapidly equidistributes, we heuristically expect that stationary quantum particles on the graph, represented by eigenfunctions of the adjacency operator, should also be fairly equidistributed, i.e. not localized to small sets. In Section \ref{sec_wave_eqn_biregular}, we introduce and study the wave equation on the biregular tree. Its definition is less obvious than in the case of the regular tree, but from a structural standpoint it is clear that it is the ``right'' equation to study. Using properties of this wave equation, we are able to adapt the technique of Brooks--Lindenstrauss to prove delocalization results for eigenfunctions on biregular graphs (Theorem \ref{thm_delocalization}). We also remark that in \cite{brooks_lindenstrauss}, Brooks--Lindenstrauss use \eqref{tree_wave_intro} to ultimately show quantum unique ergodicity for joint eigenfunctions of the Laplacian and one single Hecke operator on arithmetic hyperbolic surfaces. Here the interpretation of the $(p+1)$-regular tree as the Bruhat--Tits tree of $\textnormal{SL}(2, \mathbb{Q}_p)$ plays a crucial role. Seeing as the $(r, r^3)$-biregular tree is the Bruhat--Tits tree of $\textnormal{SU}(3, E/F)$ of an unramified quandratic extension $E$ of $F$, with $F$ having residue field of order $r$, we hope that ultimately the wave equation on biregular tree may have applications to arithmetic quantum unique ergodicity on locally symmetric spaces associated to the real Lie group $\textnormal{SU}(2, 1)$.

While many parts of this paper are expository in the sense that they discuss existing results, we believe that our approach to the discrete wave equation clarifies and simplifies many of these previous results. Furthermore, it opens the door to a similar analysis in higher rank settings. The wave equation on regular trees has been recently generalized to arbitrary regular affine buildings (of reduced type) in Anker--Rémy--Trojan \cite{anker_remy_trojan} (note that biregular trees are example of regular affine buildings of non-reduced type). In joint work in preparation with these authors \cite{multitemporal_buildings}, we plan to generalize essentially all of the results of this paper to the higher rank setting. This amounts in some sense to a two-step process. Let $\mathcal{B}$ be an affine building. Let $P$ denote the coweight lattice of the underlying root system, and let $W$ denote the Weyl group. We first study a ``flat'' multitemporal wave equation on $P \times P$:
\begin{gather}
    u(\lambda, \mu) *_{\mu} g(\mu) = u(\lambda, \mu) *_{\lambda} g^\vee(\lambda), \label{intro_higher_rank_wave}
\end{gather}
for every $g(\nu) \in \mathbb{C}[P]^W$, and where $g^\vee(\nu) := g(-\nu)$. Properties of this wave equation reduce to algebraic properties of the algebra $\mathbb{C}[P]$ (when the root system is of type $A_1$, i.e. associated to $\textnormal{SL}(2)$, then \eqref{intro_higher_rank_wave} is simply \eqref{eqn_discrete_wave_intro} and $\mathbb{C}[P] = \mathbb{C}[x, x^{-1}]$). The Satake isomorphism is an algebra isomorphism between the algebra of spherical averaging operators on special vertices (often referred to as the spherical Hecke algebra in the Bruhat--Tits setting), and $\mathbb{C}[P]^W$. Let $\mathcal{B}_s$ denote the set of special vertices of $\mathcal{B}$. The solutions to the multitemporal wave equation on $\mathcal{B}$ corresponds to functions $u: \mathcal{B}_s \times P \to \mathbb{C}$ satisfying
\begin{gather}
    u(v, \mu) *_{\mu} g(\mu) = \textnormal{Sat}^{-1}(g^\vee)_v u(v, \mu) \label{intro_higher_rank_building}
\end{gather}
for all $g \in \mathbb{C}[P]^W$. Note that \eqref{intro_higher_rank_building} reduces to \eqref{tree_wave_intro} in case the root system is of type $A_1$. We also remark that \eqref{intro_higher_rank_building} is modelled off of the multitemporal wave equation on symmetric spaces \cite{semenov, shahshahani_83, shahshahani_89, phillips_shahshahani, helgason_multitemporal} which, in the case of $\mathbb{H}$, reduces to \eqref{shifted_wave_eqn}. In \cite{multitemporal_buildings}, we shall show that properties of \eqref{intro_higher_rank_building} can be deduced via properties of \eqref{intro_higher_rank_wave} together with tools of spherical harmonic analysis on buildings.

\subsection*{Acknowledgements}
I would like to thank Jean-Philippe Anker, Bertrand Rémy, and Bartosz Trojan for innumerable helpful conversations related to this project. The project has received funding from
the European Union’s Horizon 2020 research and
innovation programme under the Marie Skłodowska-Curie
grant agreement No 101034255. This work was partially supported by U.S. National Science Foundation Grant DMS-2503324.

\section{The discrete wave equation on $\mathbb{Z}$}
We summarize the contents of this section. We remark that this section is mostly expository in the sense that most of the results of this section were already known, though we have not been able to find an analogous treatment to ours in the literature. We reiterate that part of the goal of this section (and this paper as a whole) is to set up a framework for studying the wave equation which readily generalizes to the higher rank setting.

In Section \ref{sec_notation}, we establish some basic notation that will be used throughout. In Section \ref{sec_algebra}, we discuss some algebraic properties of the ring $\mathbb{C}[x, x^{-1}]$. In Section \ref{sec_flat_soln}, we use these properties to prove existence and uniqueness of solutions of the flat wave equation. In Section \ref{sec_d'alembert}, we deduce from this result finite speed of propagation and the d'Alembert solution to the flat wave equation. In Section \ref{sec_chebyshev} we discuss the connection of the flat wave equation with the theory of Chebyshev polynomials, and more generally with Bernstein-Szegö polynomials. In Section \ref{sec_energy}, we derive the invariant energy form for the discrete wave equation. Finally, in Section \ref{sec_wave_eqn_R}, we revisit the wave equation on $\mathbb{R}$ and derive its basic properties in an analogous way to how we derive them in the discrete case.

\subsection{Notation} \label{sec_notation}
Suppose $h: \mathbb{Z} \to \mathbb{C}$. Instead of viewing $h$ as a function on $\mathbb{Z}$, we may instead represent it as a bi-infinite Laurent series in $x^{\pm 1}$. We denote this by $h^{[x]}$:
\begin{gather*}
    h^{[x]} := \sum_{n \in \mathbb{Z}} h(n) x^n.
\end{gather*}
We may clearly multiply $h^{[x]}$ by any Laurent polynomial. In the reverse direction, if $j(x)$ is a bi-infinite Laurent series in $x^{\pm 1}$, we use the notation $[x^n]j(x)$ to denote the coefficient of $x^n$ in $j(x)$.

Note that $h^{[x]}$ is in some sense the formal discrete Fourier-Laplace transform of $h(n)$, and $[x^n]j(x)$ reads off the $n$th Fourier coefficient of $j(x)$, assuming we can analytically make sense of $j(x)$ as a function on $S^1$. Very often though we shall treat these as algebraic operations. When we instead think of the underlying objects analytically, we shall use the variable $\xi$ in place of $x$ to emphasize the analytic, rather than algebraic, nature of the function in a specific context.

We shall extend the above notation to functions $u: \mathbb{Z} \times \mathbb{Z} \to \mathbb{C}$, i.e. we define $u^{[x, y]}$ as the formal bi-infinite Laurent series in $x^{\pm 1}, y^{\pm 1}$:
\begin{gather*}
    u^{[x, y]} := \sum_{(n, t) \in \mathbb{Z}^2} u(n, t) x^n y^t.
\end{gather*}
Again, we may multiply $u^{[x, y]}$ by any Laurent polynomial. With respect to this notation, we may rewrite \eqref{eqn_discrete_wave_intro} as
\begin{gather}
    u^{[x, y]} \cdot (x + x^{-1} - y - y^{-1}) = 0. \label{eqn_wave_laurent}
\end{gather}
We also make use of the notation
\begin{gather*}
    u^{[y]}(n, \cdot) := \sum_{t \in \mathbb{Z}} u(n, t) y^t,
\end{gather*}
which we may think of as a $\mathbb{Z}$-parametrized family of bi-infinite Laurent series. We may similarly define $u^{[x]}(\cdot, t)$. Notice that, if $u$ solves the flat wave equation, then
\begin{gather*}
    u^{[y]}(\cdot, t+1) + u^{[y]}(\cdot, t-1) = (y + y^{-1}) u^{[y]}(\cdot, t).
\end{gather*}
Any function $w: \Omega \times \mathbb{Z} \to \mathbb{C}$, where $\Omega \subseteq \mathbb{C}^\times$, which satisfies
\begin{gather}
    w(\xi, t+1) + w(\xi, t-1) = (\xi + \xi^{-1}) w (\xi, t) \label{spectral_wave_eqn}
\end{gather}
is said to satisfy the \textit{spectral wave equation}.

If $g(x, y)$ is a formal bi-infinite Laurent series, we define $[x^n y^t]g(x, y)$ as the coefficient of $x^n y^t$ in $g(x, y)$. We shall also use the notation
\begin{gather*}
    [y^t]g(x, y) := \sum_{n \in \mathbb{Z}} [x^n y^t]g(x, y) \cdot x^n.
\end{gather*}

\subsection{Algebraic properties of $\mathbb{C}[x, x^{-1}]$ and $\mathbb{C}[x, x^{-1}]^W$} \label{sec_algebra}
Let $\mathbb{C}[\mathbb{Z}]$ denote the group algebra of $\mathbb{Z}$. This is clearly isomorphic to $\mathbb{C}[x, x^{-1}]$, the ring of Laurent polynomials. Let $W = S_2 = \{1, w_0\}$ denote the symmetric group on 2 elements. The ring $\mathbb{C}[x, x^{-1}]$ carries a $W$-action by having the non-trivial element $w_0$ act by $x \mapsto x^{-1}$. From a more representation theoretic perspective, we should think of $\mathbb{Z}$ as the coweight lattice of $\mathfrak{sl}(2)$, $W$ as the Weyl group, and the above action as being induced from the natural action of $W$ on the coweight lattice. 

It is a basic fact that $\mathbb{C}[x, x^{-1}]^W = \mathbb{C}[x + x^{-1}]$, i.e. every $W$-invariant Laurent polynomial is a polynomial in $x + x^{-1}$; we shall call such polynomials \textit{symmetric}. We say that an element in $\mathbb{C}[x, x^{-1}]$ is \textit{antisymmetric} if it gets multiplied by $-1$ upon applying $w_0$. Another basic fact is that every antisymmetric polynomial is a product of $x - x^{-1}$ with a symmetric polynomial. Another way of expressing this is that for any $f \in \mathbb{C}[x, x^{-1}]$, we have $\frac{f - w_0.f}{x - x^{-1}} \in \mathbb{C}[x, x^{-1}]^W$. The following additional properties are well-known, but we prove them here for completeness.

\begin{proposition} \label{prop_alg_properties}
    We have the following basic properties of $\mathbb{C}[x, x^{-1}]$.
    \begin{enumerate}
        \item $\mathbb{C}[x, x^{-1}]$ is a rank 2 free module over $\mathbb{C}[x, x^{-1}]^W$.
        \item We can take $1, \frac{x - x^{-1}}{2}$ as a free basis.
        \item As a $W$-module, the $\mathbb{C}$-vector space $\mathbb{C}[x, x^{-1}]/(x + x^{-1})$ is isomorphic to the regular representation.
    \end{enumerate}
\end{proposition}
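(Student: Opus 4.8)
The plan is to establish (1) and (2) simultaneously by verifying directly that $h_1 = 1$ and $h_2 = \frac{x-x^{-1}}{2}$ form a free basis of $\mathbb{C}[x,x^{-1}]$ over $R^W := \mathbb{C}[x,x^{-1}]^W = \mathbb{C}[x+x^{-1}]$, and then to obtain (3) by reducing this decomposition modulo the ideal $(x+x^{-1})$. For the spanning statement, I would take an arbitrary $f \in \mathbb{C}[x,x^{-1}]$ and split it into its symmetric and antisymmetric parts, $f = \frac{f + w_0.f}{2} + \frac{f - w_0.f}{2}$. By the basic facts recalled just before the statement, the first summand lies in $R^W$, and $g := \frac{f - w_0.f}{x - x^{-1}}$ lies in $R^W$ as well; hence $f = \frac{f+w_0.f}{2}\cdot h_1 + (2g)\cdot h_2$, which exhibits $f$ as an $R^W$-linear combination of $h_1, h_2$.

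For linear independence, suppose $a\, h_1 + b\, h_2 = 0$ with $a, b \in R^W$. Applying $w_0$ and using that $a, b$ are $W$-fixed while $h_2$ is antisymmetric gives $a\, h_1 - b\, h_2 = 0$; adding and subtracting these two identities yields $2a = 0$ and $b(x-x^{-1}) = 0$. Since $\mathbb{C}[x,x^{-1}]$ is an integral domain and $x - x^{-1} \neq 0$, both $a$ and $b$ vanish. This proves (1) and (2).

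For (3), write $c = x+x^{-1}$. From (1)--(2) we have $\mathbb{C}[x,x^{-1}] = \mathbb{C}[c]\,h_1 \oplus \mathbb{C}[c]\,h_2$ as $\mathbb{C}[c]$-modules, so the ideal $(c)$ equals $c\mathbb{C}[c]\,h_1 \oplus c\mathbb{C}[c]\,h_2$, and therefore $\mathbb{C}[x,x^{-1}]/(c)$ is a two-dimensional $\mathbb{C}$-vector space with basis $\overline{h_1}, \overline{h_2}$. The residual $W$-action is diagonal in this basis: since $w_0$ fixes $h_1$, sends $h_2$ to $-h_2$, and preserves the ideal $(c)$, it fixes $\overline{h_1}$ and negates $\overline{h_2}$. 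Hence $\mathbb{C}[x,x^{-1}]/(x+x^{-1})$ is isomorphic as a $W$-module to the direct sum of the trivial and the sign representations of $W = S_2$, which is exactly its regular representation. The argument is entirely elementary; the only points demanding a little care are the cancellation of the factor $x-x^{-1}$ in the independence step, which relies on $\mathbb{C}[x,x^{-1}]$ being a domain, and the identification of the ideal $(x+x^{-1})$ with $c\mathbb{C}[c]\,h_1 \oplus c\mathbb{C}[c]\,h_2$ under the free-module splitting, after which (3) drops out immediately.
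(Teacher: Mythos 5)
Your proposal is correct in substance and arrives at the same decomposition as the paper, but the mechanics differ in a couple of places, and there is one small coefficient slip to repair. For existence, the paper likewise splits $f$ into its symmetric and antisymmetric parts and writes $f = \tfrac{1}{2}(f + w_0.f)\cdot 1 + \tfrac{f - w_0.f}{x - x^{-1}}\cdot \tfrac{x-x^{-1}}{2}$; in your formula the coefficient of $h_2$ should be $g = \tfrac{f - w_0.f}{x - x^{-1}}$ rather than $2g$, since $g\,h_2 = \tfrac{f - w_0.f}{2}$ is already the antisymmetric part — a trivial fix. For uniqueness, the paper follows Steinberg: it encodes the problem as the $2\times 2$ system $A[a_1, a_{w_0}]^T = [f, w_0.f]^T$ over $\mathbb{C}(x)$, inverts $A$ explicitly, and deduces uniqueness of the coefficients from uniqueness of the solution of that system; you instead apply $w_0$ to the relation $a\,h_1 + b\,h_2 = 0$, add and subtract, and cancel $x - x^{-1}$ using that $\mathbb{C}[x,x^{-1}]$ is a domain. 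These are the same computation in different clothing; yours is more elementary and self-contained, while the paper's matrix formulation is the one that generalizes directly (this is Steinberg's argument for general Weyl groups), which is relevant to the higher-rank program announced in the introduction. For (3), both arguments reduce the free-module decomposition modulo $(x + x^{-1})$; your identification of the ideal with $c\,\mathbb{C}[c]\,h_1 \oplus c\,\mathbb{C}[c]\,h_2$ makes the two-dimensionality of the quotient, and hence the nonvanishing of the classes of $h_1$ and $h_2$, completely explicit, which is slightly more careful than the paper's appeal to the fact that the two basis elements transform under distinct characters of $W$.
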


\begin{proof}
    Claims (1) and (2) above say that every element $f \in \mathbb{C}[x, x^{-1}]$ can be written uniquely as $f = g_1 + g_2 \frac{(x - x^{-1})}{2}$ with $g_1, g_2 \in \mathbb{C}[x + x^{-1}]$. Granting this for now, if we write $g_i = a_i + b_i$ with $a_i \in \mathbb{C}$ and $b_i \in (x + x^{-1})$ (the ideal in $\mathbb{C}[x + x^{-1}]$ generated by $x + x^{-1}$), we get that $f \equiv a_1 + a_2 \frac{(x - x^{-1})}{2}$ mod $(x + x^{-1})$. This in particular implies that $1, \frac{x - x^{-1}}{2}$ span $\mathbb{C}[x, x^{-1}]/(x + x^{-1})$ as a vector space. Furthermore, they must clearly be linearly independent since for example $1$ gets fixed by the $W$-action, and $\frac{x - x^{-1}}{2}$ transforms according to the sign character under the $W$-action. This shows that $\mathbb{C}[x, x^{-1}]/(x + x^{-1})$ is indeed isomorphic to the regular representation for $W$.

    We are thus reduced to showing Claims (1) and (2). We adapt an argument of Steinberg \cite{steinberg} who applied it in much greater generality. Consider the following matrix and its inverse
    \begin{gather*}
        A = \begin{bmatrix}
            1 & \frac{x - x^{-1}}{2} \\
            1 & - \frac{x - x^{-1}}{2}
        \end{bmatrix}, \hspace{5mm} A^{-1} = \begin{bmatrix}
            \frac{1}{2} & \frac{1}{2} \\
            \frac{1}{x - x^{-1}} & - \frac{1}{x - x^{-1}}
        \end{bmatrix}.
    \end{gather*}
    Suppose $f \in \mathbb{C}[x, x^{-1}]$. Inside of $\mathbb{C}(x)$, the equation
    \begin{gather}\label{eqn_permute_soln}
        A \begin{bmatrix}
            a_1 \\
            a_{w_0}
        \end{bmatrix} = \begin{bmatrix}
            f \\
            w_0.f
        \end{bmatrix}
    \end{gather}
    has the unique solution
    \begin{gather*} 
    \begin{bmatrix} 
        a_1 \\
        a_{w_0}
    \end{bmatrix} = A^{-1} \begin{bmatrix}
        f \\
        w_0.f
    \end{bmatrix} = \begin{bmatrix}
        \frac{1}{2} (f + w_0.f) \\
        \frac{f - w_0.f}{x - x^{-1}}
    \end{bmatrix}.
    \end{gather*}
    Note that we actually have $\frac{f - w_0.f}{x - x^{-1}} \in \mathbb{C}[x, x^{-1}]^W$.

    We can write 
    \begin{gather*}
        f = \frac{1}{2}(f + w_0.f) \cdot 1 + \frac{f - w_0.f}{x - x^{-1}} \cdot \frac{x - x^{-1}}{2}.
    \end{gather*}
    Because the second row of $A$ is simply the element $w_0$ applied to each entry in the first row, we get that any time $a_1, a_{w_0}$ are such that $a_1 + a_{w_0} \frac{x - x^{-1}}{2} = f$, we must necessarily have that they solve \eqref{eqn_permute_soln}. By the uniqueness of the solution to \eqref{eqn_permute_soln} we conclude uniqueness of $a_1, a_{w_0}$. 
\end{proof}

Consider the following symmetric pairing on $\mathbb{C}[x, x^{-1}]$ taking values in $\mathbb{C}[x, x^{-1}]^W$:
\begin{gather} \label{eqn_pairing}
    (f, g) := \frac{fg - w_0 (f g)}{x - x^{-1}}.
\end{gather}
The following is essentially due to Kazhdan-Lusztig \cite{kazhdan_lusztig_87}. For completeness, we include the argument here.

\begin{proposition} \label{prop_pairing}
    Elements $g_1, g_2 \in \mathbb{C}[x, x^{-1}]$ form a free basis for $\mathbb{C}[x, x^{-1}]$ as a module over $\mathbb{C}[x, x^{-1}]^W$ if and only if the Gram matrix with respect to the pairing in \eqref{eqn_pairing} has determinant which is a non-zero constant.
\end{proposition}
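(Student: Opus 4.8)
The plan is to push everything through the reference basis $h_1 = 1$, $h_2 = \frac{x-x^{-1}}{2}$ of Proposition \ref{prop_alg_properties} and reduce the claim to the fact that the units of $\mathbb{C}[x,x^{-1}]^W = \mathbb{C}[x+x^{-1}]$ are exactly $\mathbb{C}^\times$. First I would record the formal properties of the pairing \eqref{eqn_pairing} that make this work: it is symmetric because $\mathbb{C}[x,x^{-1}]$ is commutative; it really lands in $\mathbb{C}[x,x^{-1}]^W$, since applying the operator $f \mapsto \frac{f - w_0.f}{x-x^{-1}}$ (which, as noted before the proposition, maps $\mathbb{C}[x,x^{-1}]$ into $\mathbb{C}[x,x^{-1}]^W$) to the product $fg$ yields $(f,g)$; and it is $\mathbb{C}[x,x^{-1}]^W$-bilinear, because $w_0$ fixes $\mathbb{C}[x,x^{-1}]^W$ and so any such scalar pulls through the operator $\frac{\mathrm{id} - w_0}{x-x^{-1}}$.

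Next I would compute the Gram matrix of $h_1, h_2$. One has $(1,1)=0$; $(1, \tfrac{x-x^{-1}}{2}) = \frac{x-x^{-1}}{x-x^{-1}} = 1$; and $(\tfrac{x-x^{-1}}{2}, \tfrac{x-x^{-1}}{2}) = 0$, the last because $(\tfrac{x-x^{-1}}{2})^2$, being the square of an antisymmetric element, is symmetric and hence killed by $\mathrm{id} - w_0$. Thus the Gram matrix of $h_1,h_2$ is $\begin{pmatrix} 0 & 1 \\ 1 & 0\end{pmatrix}$, with determinant $-1$. Now let $g_1, g_2 \in \mathbb{C}[x,x^{-1}]$ be arbitrary and, using Proposition \ref{prop_alg_properties}, write $g_i = \sum_j C_{ij} h_j$ for a unique $C \in M_2(\mathbb{C}[x,x^{-1}]^W)$. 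By bilinearity the Gram matrix of $g_1, g_2$ is $C \begin{pmatrix} 0 & 1 \\ 1 & 0\end{pmatrix} C^{\mathsf T}$, so $\det[(g_i,g_j)] = -(\det C)^2$. On the other hand $g_1, g_2$ is a free basis of $\mathbb{C}[x,x^{-1}]$ over $\mathbb{C}[x,x^{-1}]^W$ precisely when $C$ is invertible over $\mathbb{C}[x,x^{-1}]^W$: if $g_1,g_2$ is a basis then writing each $h_i$ back in terms of the $g_j$ and invoking the uniqueness in Proposition \ref{prop_alg_properties} produces a two-sided inverse of $C$; conversely $C^{-1}$ expresses the $h_i$ as $\mathbb{C}[x,x^{-1}]^W$-combinations of the $g_j$, giving a spanning set, and any relation $\sum_i a_i g_i = 0$ with $a_i \in \mathbb{C}[x,x^{-1}]^W$ becomes $(a^{\mathsf T} C) h = 0$, forcing $a^{\mathsf T} C = 0$ and hence $a = 0$. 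And $C \in GL_2(\mathbb{C}[x,x^{-1}]^W)$ iff $\det C \in (\mathbb{C}[x,x^{-1}]^W)^\times = \mathbb{C}^\times$.

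Putting the two computations together: $g_1, g_2$ is a free basis $\iff \det C \in \mathbb{C}^\times \iff (\det C)^2 \in \mathbb{C}^\times \iff \det[(g_i,g_j)] = -(\det C)^2 \in \mathbb{C}^\times$, where for the middle equivalence the direction $(\det C)^2 \in \mathbb{C}^\times \Rightarrow \det C \in \mathbb{C}^\times$ uses only that a factor of a unit in the integral domain $\mathbb{C}[x+x^{-1}]$ is again a unit. The argument is mostly bookkeeping; the one point that deserves care is the equivalence ``$g_1,g_2$ a free basis $\iff C \in GL_2(\mathbb{C}[x,x^{-1}]^W)$'', and the only genuinely arithmetic input is the identification of the units of the one-variable polynomial ring $\mathbb{C}[x+x^{-1}]$ with the nonzero scalars — which is exactly what turns invertibility of the Gram matrix into the statement that its determinant is a nonzero constant.
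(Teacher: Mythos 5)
Your proof is correct and follows essentially the same route as the paper: express $g_1,g_2$ in terms of the reference basis $1,\frac{x-x^{-1}}{2}$ via a change-of-basis matrix over $\mathbb{C}[x+x^{-1}]$, use that the Gram matrix transforms by $C(\cdot)C^{\mathsf T}$ so its determinant is $-(\det C)^2$, and invoke that the units of $\mathbb{C}[x+x^{-1}]$ are the nonzero constants. The only cosmetic difference is that you compute the reference Gram matrix $\begin{pmatrix}0&1\\1&0\end{pmatrix}$ entrywise, whereas the paper obtains its determinant from the factorization $(x-x^{-1})[(f_i,f_j)]=A^{\mathsf T}\,\mathrm{diag}(1,-1)\,A$.
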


\begin{proof}
    Suppose $g_1, g_2 \in \mathbb{C}[x, x^{-1}]$. By Proposition \ref{prop_alg_properties} we know that there exists a matrix $B$ with entries in $\mathbb{C}[x + x^{-1}]$ such that 
    \begin{gather*}
        B \begin{bmatrix}
            1 \\
            \frac{x - x^{-1}}{2}
        \end{bmatrix} = \begin{bmatrix}
            g_1 \\
            g_2
        \end{bmatrix}.
    \end{gather*}
    Thus $g_1, g_2$ is a free basis if and only if $B^{-1}$ also has entries in $\mathbb{C}[x + x^{-1}]$. This occurs if and only if $\det(B)$ is a non-zero constant as these are the only units in $\mathbb{C}[x + x^{-1}]$.

    Let $f_1 = 1$, $f_2 = \frac{x - x^{-1}}{2}$. We have
    \begin{gather} \label{eqn_expand_matrix}
        (x - x^{-1}) \begin{bmatrix}
            (f_1, f_1) & (f_1, f_2) \\
            (f_2, f_1) & (f_2, f_2)
        \end{bmatrix} = \begin{bmatrix}
            f_1 & w_0.f_1 \\
            f_2 & w_0.f_2
        \end{bmatrix} \begin{bmatrix}
            1 & \\
             & -1
        \end{bmatrix} \begin{bmatrix}
            f_1 & f_2 \\
            w_0.f_1 & w_0.f_2
        \end{bmatrix}.
    \end{gather}
    The third matrix on the RHS is the matrix $A$ from the proof of Proposition \ref{prop_alg_properties}, and the first matrix on the RHS is $A^T$. They both have determinant $-(x - x^{-1})$. Therefore, the RHS has determinant $-(x - x^{-1})^2$. On the other hand the LHS has determinant $(x - x^{-1})^2$ times the determinant of the Gram matrix for $f_1, f_2$; we conclude that the determinant of this Gram matrix is therefore $-1$.

    Likewise, the matrix whose entries are $(x - x^{-1}) (g_i, g_j)$ is obtained by multiplying the RHS of \eqref{eqn_expand_matrix} on the right by $B^T$ and on the left by $B$. Thus the Gram matrix for the basis $g_1, g_2$ has determinant $-\det(B)^2$ which is a non-zero constant if and only if $\det(B)$ is as well.
\end{proof}

Given a free basis $h_1, h_2$ of $\mathbb{C}[x, x^{-1}]$ we get a unique dual basis $m_1, m_2$ with respect to the pairing in \eqref{eqn_pairing}. Concretely, the dual basis is found by inverting the Gram matrix and applying it to $[h_1, h_2]^T$. The dual basis of $1, \frac{x - x^{-1}}{2}$ is $\frac{x - x^{-1}}{2}, 1$. On the other hand the dual basis of $1, x$ is $-x^{-1}, 1$. 

We have the following useful relationship.

\begin{proposition} \label{prop_basis_dual}
    Suppose $c \in \mathbb{C}[x, x^{-1}]$. Let $h_1, h_2 \in \mathbb{C}[x, x^{-1}]$ be a free basis of $\mathbb{C}[x, x^{-1}]$ as a $\mathbb{C}[x, x^{-1}]^W$-module, and let $m_1, m_2$ be the dual basis. Then
    \begin{gather*}
        c = (m_1, c) h_1 + (m_2, c) h_2.
    \end{gather*}
\end{proposition}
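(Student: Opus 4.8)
The plan is to prove this by expanding $c$ in the basis $h_1,h_2$ and then extracting the coordinates using the pairing, exploiting that $(\cdot,\cdot)$ is bilinear over the base ring $R := \mathbb{C}[x,x^{-1}]^W$, not merely over $\mathbb{C}$. This is the standard ``reconstruction from a dual basis'' argument, adapted to the module setting.

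First I would invoke Proposition \ref{prop_alg_properties}: since $h_1,h_2$ is a free basis of $\mathbb{C}[x,x^{-1}]$ over $R$, there exist unique $a_1,a_2\in R$ with $c=a_1h_1+a_2h_2$. The proposition is then equivalent to the identity $a_j=(m_j,c)$ for $j=1,2$, so it suffices to compute the pairings $(m_j,c)$. Next I would record the $R$-bilinearity of the pairing \eqref{eqn_pairing}: for $a\in R$ we have $w_0.a=a$, hence
\begin{gather*}
    (af,g)=\frac{afg-w_0(afg)}{x-x^{-1}}=a\cdot\frac{fg-w_0(fg)}{x-x^{-1}}=a(f,g),
\end{gather*}
and symmetry of $(\cdot,\cdot)$ handles the other slot; this is the same observation underlying the proof of Proposition \ref{prop_pairing}. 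Combining these two facts gives
\begin{gather*}
    (m_j,c)=(m_j,a_1h_1+a_2h_2)=a_1(m_j,h_1)+a_2(m_j,h_2),
\end{gather*}
and by the defining property of the dual basis $(m_j,h_i)=\delta_{ij}$ (equivalently, the Gram matrix of $m_1,m_2$ against $h_1,h_2$ is the identity, since $[m_1,m_2]^T$ is the inverse Gram matrix applied to $[h_1,h_2]^T$), we conclude $(m_j,c)=a_j$. Substituting back into $c=a_1h_1+a_2h_2$ yields $c=(m_1,c)h_1+(m_2,c)h_2$.

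I do not expect a serious obstacle here: the content is essentially formal. The only points requiring care are (i) making sure the pairing is pulled out as an $R$-scalar, which is exactly why $R$-bilinearity rather than $\mathbb{C}$-bilinearity is needed, and (ii) matching the normalization convention for the dual basis, where the symmetry of $(\cdot,\cdot)$ makes it irrelevant whether one defines it by $(m_i,h_j)=\delta_{ij}$ or $(h_i,m_j)=\delta_{ij}$. Everything else follows from Proposition \ref{prop_alg_properties} (existence and uniqueness of the expansion) applied verbatim.
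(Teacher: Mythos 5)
Your proposal is correct and is exactly the paper's argument spelled out in full: the paper's one-line proof (``apply the pairing with $m_1$ and $m_2$ to both sides'') is precisely your expansion $c=a_1h_1+a_2h_2$ followed by extraction of the coefficients via $\mathbb{C}[x,x^{-1}]^W$-bilinearity and $(m_i,h_j)=\delta_{ij}$. No gaps; your extra care about the dual-basis normalization and symmetry of the pairing is fine but not needed beyond what you wrote.
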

\begin{proof}
    This can be seen immediately by applying the pairing with $m_1$ and $m_2$ to both sides.
\end{proof}
As we shall see in the next section, the relationship between a basis and its dual basis is essentially the same thing as the relationship between initial conditions and fundamental solutions to the wave equation.

\subsection{Initial conditions and fundamental solutions} \label{sec_flat_soln}
Let $h \in \mathbb{C}[x, x^{-1}]$ with $h = \sum_i c_i x^i$ (with $c_i = 0$ for all but finitely many $i$). We may think of $h$ as defining an ``initial condition'' for functions $u : \mathbb{Z} \times \mathbb{Z} \to \mathbb{C}$. More precisely, we say that $u(n, t)$ has $h$-initial condition equal to $g: \mathbb{Z} \to \mathbb{C}$ if $\sum_i c_i u(n, i) = g(n)$ for all $n$. In terms of the notation previously introduced, we may re-express this as:
\begin{gather*}
    [y^0](u^{[y]}(n, \cdot) \cdot h(y^{-1}) ) = g(n).
\end{gather*}

\begin{theorem} \label{thm_soln}
    Suppose $h_1, h_2$ is a free basis for $\mathbb{C}[x, x^{-1}]$ as a module over $\mathbb{C}[x, x^{-1}]^W$. Let $m_1, m_2$ be the dual basis. Then given any two functions $g_1, g_2 : \mathbb{Z} \to \mathbb{C}$, the unique solution $u : \mathbb{Z}^2 \to \mathbb{C}$ satisfying the wave equation with initial conditions
    \begin{align}
        [y^0] \Big( u^{[y]}(n, \cdot) \cdot h_1(y^{-1}) \Big) &= g_1(n) \label{eqn_flat_initial} \\
        [y^0] \Big( u^{[y]}(n, \cdot) \cdot h_2(y^{-1}) \Big) &= g_2(n) \notag
    \end{align}
    is given by:
    \begin{gather} \label{eqn_soln_initial}
        u(n, t) = [x^n] \Big( g_1^{[x]} \cdot (m_1, x^t) + g_2^{[x]} \cdot (m_2, x^t) \Big).
    \end{gather}
\end{theorem}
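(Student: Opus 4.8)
The plan is to verify that the formula \eqref{eqn_soln_initial} does two things: (a) it produces a function $u$ solving the wave equation, and (b) it satisfies the prescribed initial conditions; then separately to establish uniqueness. For existence, I would first rewrite \eqref{eqn_soln_initial} in terms of Laurent series: fixing $t$, the function $n \mapsto u(n,t)$ corresponds to the Laurent series $u^{[x]}(\cdot, t) = g_1^{[x]} \cdot (m_1, x^t) + g_2^{[x]} \cdot (m_2, x^t)$, where $(m_i, x^t) \in \mathbb{C}[x,x^{-1}]^W = \mathbb{C}[x+x^{-1}]$ is a symmetric Laurent polynomial in $x$ that we multiply the series $g_i^{[x]}$ by. To check the wave equation in the form \eqref{spectral_wave_eqn}, i.e. $u^{[x]}(\cdot, t+1) + u^{[x]}(\cdot, t-1) = (x + x^{-1}) u^{[x]}(\cdot, t)$, it suffices by bilinearity to check that for each $i$ the symmetric polynomials $p_i(t) := (m_i, x^t)$ satisfy the same three-term recurrence: $(m_i, x^{t+1}) + (m_i, x^{t-1}) = (x+x^{-1})(m_i, x^t)$. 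But this is immediate from $\mathbb{C}[x+x^{-1}]$-bilinearity of the pairing \eqref{eqn_pairing} in its second argument, since $x^{t+1} + x^{t-1} = (x + x^{-1}) x^t$ and $x + x^{-1}$ is symmetric, so it pulls out of the pairing.

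For the initial conditions, I would compute, using the $\mathbb{C}[x,x^{-1}]^W$-bilinearity of the pairing and Proposition \ref{prop_basis_dual}, the quantity $\sum_t u(n,t) h_j$-coefficients. Concretely, the $h_j$-initial condition of $u$ is read off from $\sum_t [x^n](u^{[x]}(\cdot,t)) \cdot (\text{coefficient of } y^t \text{ in } h_j(y))$; unwinding \eqref{eqn_soln_initial} this becomes $[x^n]\big(g_1^{[x]} \cdot \sum_t (m_1, x^t)(\text{coeff})_t + g_2^{[x]} \cdot \sum_t (m_2, x^t)(\text{coeff})_t\big)$. The inner sums are exactly $(m_i, h_j)$ by linearity of the pairing (writing $h_j = \sum_t (\text{coeff})_t x^t$), and $(m_i, h_j) = \delta_{ij}$ by definition of the dual basis. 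Hence the $h_j$-initial condition of $u$ is $[x^n] g_j^{[x]} = g_j(n)$, as desired. The one point requiring a little care here is matching the $h(y^{-1})$ versus $h(y)$ convention in the definition of ``$h$-initial condition'' with the symmetry of the $m_i$; since the relevant $h_j$ appears paired against the symmetric polynomials $(m_i, \cdot)$, and symmetric polynomials are invariant under $y \mapsto y^{-1}$, this substitution is harmless, but I would spell it out.

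The main obstacle I anticipate is uniqueness, because one must argue that the $h_1, h_2$-initial data genuinely pin down a solution, and a priori the space of solutions to \eqref{eqn_wave_laurent} consists of arbitrary bi-infinite Laurent series annihilated by $x + x^{-1} - y - y^{-1}$, which is infinite-dimensional per spatial slice. The cleanest route is: given a solution $u$ with both $g_1 = g_2 = 0$, show $u \equiv 0$. Fix $n$ and consider $u^{[y]}(n,\cdot)$; the wave equation says multiplication by $x + x^{-1} - y - y^{-1}$ kills $u^{[x,y]}$, so for each fixed $y$ in a suitable sense $u^{[x]}(\cdot, t)$ is an eigen-family. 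More algebraically, I would argue that the solution $u^{[x]}(\cdot, t)$, viewed as a $\mathbb{C}[x,x^{-1}]^W$-linear combination, is determined by its two ``components'' in the free basis $h_1, h_2$ of $\mathbb{C}[x,x^{-1}]$ over $\mathbb{C}[x,x^{-1}]^W$: the wave recurrence forces $u^{[x]}(\cdot, t) = \phi_1(x+x^{-1}) \cdot a_1(x) + \phi_2(x+x^{-1}) \cdot a_2(x)$ where $a_i$ are fixed (the images of $h_i$ under duality, essentially $(m_i, x^t)$ up to the initial data) and $\phi_i$ are determined by the slices $t = $ whatever is needed to recover $g_i$ — and vanishing of $g_1, g_2$ forces $\phi_i = 0$. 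I expect the slickest phrasing is to observe that Proposition \ref{prop_alg_properties} gives a splitting, reduce to the case where the initial data lives in one graded piece, and then the three-term recurrence \eqref{spectral_wave_eqn} propagates the data uniquely in $t$ from two consecutive (or suitably chosen) time slices — the role of $h_1, h_2$ being precisely to specify enough ``consecutive'' data via the pairing. Verifying that the two $h_j$-conditions are equivalent to specifying $u$ on two time-slices (after the change of basis encoded by the Gram matrix of $h_1, h_2$, which is invertible over $\mathbb{C}[x,x^{-1}]^W$ up to the constant determinant by Proposition \ref{prop_pairing}) is the technical heart, and I would organize the uniqueness proof around that equivalence.
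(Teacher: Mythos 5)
Your existence half is fine and is essentially the paper's own computation: the formula \eqref{eqn_soln_initial} solves the wave equation because $(m_i, x^{t+1}) + (m_i, x^{t-1}) = (x+x^{-1})(m_i, x^t)$ by $\mathbb{C}[x+x^{-1}]$-bilinearity of the pairing, and the initial conditions come out as $g_j$ because $\sum_i c_i (m_k, x^i) = (m_k, h_j) = \delta_{kj}$ when $h_j = \sum_i c_i x^i$.

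The gap is in uniqueness, which is the substantive half of the theorem and which you leave as a plan rather than an argument. The ingredient your sketch never states, and which any version of this argument needs, is: if $u$ solves the wave equation, then $u^{[x,y]} \cdot a(y) = u^{[x,y]} \cdot a(x)$ for every symmetric Laurent polynomial $a$ (this is the content of \eqref{eqn_wave_laurent} applied repeatedly). The paper's uniqueness proof uses it directly: write $y^{-t} = a_1^{(t)}(y) h_1(y^{-1}) + a_2^{(t)}(y) h_2(y^{-1})$ with $a_i^{(t)} = (m_i, x^t)$ symmetric (Proposition \ref{prop_basis_dual}, then apply $w_0$), use the identity to replace $a_i^{(t)}(y)$ by $a_i^{(t)}(x)$, and read off $u(n,t) = [x^n]\big(g_1^{[x]} a_1^{(t)} + g_2^{[x]} a_2^{(t)}\big)$ for an arbitrary solution with the given data, so uniqueness and the formula come in one stroke. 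Your alternative route (show the $h_1, h_2$-data of a wave solution determine the slices $u(\cdot,0), u(\cdot,1)$, then propagate in $t$) can be made rigorous, but the step you defer as ``the technical heart'' is precisely where this identity enters: writing $h_j = b_{j1}\cdot 1 + b_{j2}\cdot x$ with $b_{jk} \in \mathbb{C}[x+x^{-1}]$, the identity converts the $h_j$-condition on a solution into the difference operators $b_{j1}, b_{j2}$ acting in $n$ on $u(\cdot,0)$ and $u(\cdot,1)$, and invertibility of the matrix $(b_{jk})$ over $\mathbb{C}[x+x^{-1}]$ (its determinant is a nonzero constant, cf. Propositions \ref{prop_alg_properties} and \ref{prop_pairing}) then recovers the two slices. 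As written, phrases like ``reduce to the case where the initial data lives in one graded piece'' and the unexplained decomposition $u^{[x]}(\cdot,t) = \phi_1 a_1 + \phi_2 a_2$ do not supply this mechanism, so uniqueness is not yet proved; also, the relevant invertible matrix is the change of basis from $\{h_1,h_2\}$ to $\{1,x\}$ rather than the Gram matrix, though invertibility of either is equivalent.
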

\begin{proof}
    First we show uniqueness. A function $u : \mathbb{Z}^2 \to \mathbb{C}$ is determined by knowing $[x^n y^0](u^{[x, y]} \cdot y^{-t})$ (i.e. $u(n, t)$) for all $n, t \in \mathbb{Z}$. We can write $y^t = a_1^{(t)} h_1(y) + a_2^{(t)} h_2(y)$ for appropriate $a_i^{(t)} \in \mathbb{C}[y + y^{-1}]$; in fact we know by Proposition \ref{prop_basis_dual} that $a_i^{(t)} = (m_i(y), y^t)$. Applying $w_0$, we get $y^{-t} = a_1^{(t)} h_1(y^{-1}) + a_2^{(t)} h_2(y^{-1})$. 

    Suppose $u(n, t)$ solves the wave equation. Suppose we know that $[x^n y^0](u^{[x, y]} \cdot h_i(y^{-1})) = g_i(n)$. Then
    \begin{align*}
        u(n, t) &= [x^n y^0] (u^{[x, y]} \cdot y^{-t}) \\
        &= [x^n y^0] \Big(u^{[x, y]} \cdot \big( a_1^{(t)}(y) \cdot h_1(y^{-1}) + a_2^{(t)}(y) \cdot h_2(y^{-1}) \big) \Big) \\
        &= [x^n y^0] \Big(u^{[x, y]} \cdot \big( a_1^{(t)}(x) \cdot h_1(y^{-1}) + a_2^{(t)}(x) \cdot h_2(y^{-1}) \big) \Big) \\
        &= [x^n] \big(g_1^{[x]} \cdot a_1^{(t)} (x) + g_2^{[x]} \cdot a_2^{(t)}(x) \big). 
    \end{align*}
    To go from the second line to the third, we have used the fact that if $u(n, t)$ solves the wave equation, then for every $a \in \mathbb{C}[x + x^{-1}]$, we have that $u^{[x, y]} \cdot a(y) = u^{[x, y]} \cdot a(x)$. To go from the third line to the fourth, we use the initial conditions, together with the following simple observation: suppose $b(x, y)$ is a formal Laurent series, and let $c(x)$ be the Laurent series in $x$ such that $[x^n] c(x) = [x^n y^0] b(x, y)$. Then for any Laurent polynomial $d(x)$ we have $[x^n y^0]\big (b(x, y) \cdot d(x) \big) = [x^n] \big(c(x) \cdot d(x) \big)$. We apply this observation to $b(x, y) = u^{[x, y]} \cdot h_i(y^{-1})$, in which case $c(x) = g_i^{[x]}$. We thus see that knowing $[x^n y^0]\big (u^{[x, y]} \cdot h_i(y^{-1}) \big)$ is enough to know the value of $u(n, t)$ for all $n, t$ assuming $u$ solves the wave equation. We thus get uniqueness of solution.

    We now show existence. The previous paragraph shows that if a solution to the wave equation with the prescribed initial conditions exists, it must be of the form in \eqref{eqn_soln_initial}. We are thus just tasked with showing that \eqref{eqn_soln_initial} has the desired properties.

    First we show that the expression in \eqref{eqn_soln_initial} solves the wave equation. Let $u_i(n, t) = [x^n] \big(g_i^{[x]} \cdot (m_i, x^t) \big)$. Then 
    \begin{align*}
        u_i(n+1, t) + u_i(n-1, t) &= [x^n] \big( g_i^{[x]} \cdot (m_i, x^t) \cdot (x + x^{-1}) \big) \\
        u_i(n, t+1) + u_i(n, t-1) &= [x^n] \Big( g_i^{[x]} \cdot \big( (m_i, x^{t+1}) + (m_i, x^{t-1}) \big) \Big).
    \end{align*}
    These are clearly equal, so $u(n, t)$ solves the wave equation.

    Lastly, we check the initial conditions. Suppose, for example $h_1 = \sum_i c_i x^i$. Then
    \begin{align*}
        [y^0] (u_1^{[y]}(n, \cdot) \cdot h_1(y^{-1})) &= \sum_{i} c_i \cdot u_1 (n, i) = \sum_{i} c_i \cdot [x^n]\big(g_1^{[x]} \cdot (m_1, x^i) \big) \\
        &= [x^n] \big(g_1^{[x]} \cdot (m_1, h_1) \big) = g_1(n).
    \end{align*}
    Similarly we get $[y^0](u_1^{[y]}(n, \cdot) \cdot h_2(y^{-1})) = 0$. Arguing similarly for $u_2$, we thus immediately see that \eqref{eqn_soln_initial} satisfies the initial conditions.
\end{proof}

Suppose $m \in \mathbb{C}[x, x^{-1}]$. Define
\begin{gather*}
    \mathcal{F}_m(n, t) := [x^n]\Big((m, x^t) \Big). 
\end{gather*}
Then $\mathcal{F}_m(n, t)$ is a solution to the wave equation; we call it the \textit{fundamental solution} associated to $m$. Note that if $h_1, h_2$ is a basis of $\mathbb{C}[x, x^{-1}]$ as a module over $\mathbb{C}[x + x^{-1}]$ with dual basis $m_1, m_2$, then $\mathcal{F}_{m_1}(n, t)$ is the solution to the wave equation with $h_1$-initial condition $g_1 = \delta_0(n)$, and $h_2$-initial condition $g_2 = 0$. 

Suppose we take $h_1 = 1, h_2 = \frac{x - x^{-1}}{2}$. Then $m_1 = \frac{x - x^{-1}}{2}, m_2 = 1$. We refer to these as a \textit{standard initial conditions}. Then
\begin{align}
    (m_1, x^t) &= \frac{x^t + x^{-t}}{2} \notag \\
    (m_2, x^t) &= \frac{x^{t} -x^{-t}}{x - x^{-1}} = \begin{cases}
        x^{t-1} + x^{t-3} + \dots + x^{-(t-3)} + x^{-(t-1)} & t \geq 1 \\
        0 & t = 0 \\
        - (x^{-t-1} + x^{-t-3} + \dots + x^{-(-t-3)} + x^{-(-t-1)}) & t \leq -1. 
    \end{cases} \label{eqn_fundamental_soln}
\end{align}
Notice that for this choice of $h_1, h_2$ we have $\mathcal{F}_{m_1}(n, -t) = \mathcal{F}_{m_1}(n, t)$ and $\mathcal{F}_{m_2}(n, -t) = -\mathcal{F}_{m_2}(n, t)$. Very explicitly, specializing Theorem \ref{thm_soln} to this choice of $h_1, h_2$, we find that the unique function $u(n, t)$ such that $u(n, 0) = f(n)$ and $\frac{u(n, 1) - u(n, -1)}{2} = g(n)$ is given by:
\begin{align}
    u(n, t) = &\frac{1}{2}(f(n-t) + f(n+t)) \notag \\
    & + \textnormal{sgn}(t) \big(g(n+|t|-1) + g(n+|t|-3) + \dots + g(n-(|t|-3)) + g (n-(|t|-1)) \big). \label{eqn_soln_std}
\end{align}

\subsection{Finite speed of propagation and d'Alembert solution} \label{sec_d'alembert}
The explicit form of the solution given in \eqref{eqn_soln_initial} has several immediate consequences. The following tells us that waves propagate at ``speed one.''

\begin{corollary}[Finite speed of propagation] \label{cor_finite_speed}
    Suppose $u(n, t)$ is a solution to the wave equation with standard initial conditions $f, g$. Suppose $f$ is supported on $|n| \leq r$ and $g$ is supported on $|n| \leq r+1$. Then $u(n, t)$ is supported on $|n| \leq r + |t|$ for all $t$. 
\end{corollary}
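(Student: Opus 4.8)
The plan is to read the support of $u$ off the explicit solution formula \eqref{eqn_soln_std}, which writes $u(n,t)$ as a sum of values of $f$ and $g$ at finitely many lattice points whose distance from $n$ is controlled by $|t|$. Fix $t$ and assume $|n| > r + |t|$; I will show $u(n,t) = 0$. The case $t = 0$ is immediate from \eqref{eqn_soln_std} since then $u(n,0) = f(n)$, which is supported on $|n| \le r = r + |0|$, so I may assume $t \neq 0$.

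The two $f$-terms in \eqref{eqn_soln_std} have arguments $n - t$ and $n + t$, and by the reverse triangle inequality each satisfies $|n \pm t| \ge |n| - |t| > r$; since $f$ is supported on $|m| \le r$, both of these terms vanish. The $g$-terms have arguments forming the arithmetic progression $n + |t| - 1,\, n + |t| - 3,\, \dots,\, n - (|t| - 1)$, so every such argument $m$ lies in the interval from $n - (|t|-1)$ to $n + (|t|-1)$ and hence satisfies $|m| \ge |n| - (|t| - 1) > (r + |t|) - (|t| - 1) = r + 1$. Since $g$ is supported on $|m| \le r + 1$, all the $g$-terms vanish too. Hence $u(n,t) = 0$, which is the claim.

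An equivalent route, more in keeping with the rest of the section, is to use \eqref{eqn_soln_initial} together with the explicit fundamental solutions in \eqref{eqn_fundamental_soln}: the solution with standard initial conditions $f, g$ is the spatial convolution of $f$ with $\mathcal{F}_{m_1}(\cdot, t)$ plus the spatial convolution of $g$ with $\mathcal{F}_{m_2}(\cdot, t)$. From \eqref{eqn_fundamental_soln} one reads that $\mathcal{F}_{m_1}(\cdot, t)$ is supported on $|n| \le |t|$ and $\mathcal{F}_{m_2}(\cdot, t)$ on $|n| \le |t| - 1$, and since the support of a convolution is contained in the (Minkowski) sum of the supports, the two contributions are supported on $|n| \le r + |t|$ and $|n| \le (r+1) + (|t|-1) = r + |t|$ respectively.

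There is essentially no real obstacle here beyond bookkeeping. The only points requiring a little care are the parity and the endpoints of the progression of $g$-arguments, together with the degenerate cases $t = 0$ and $|t| = 1$; but the bound $|m| \ge |n| - (|t|-1)$ on those arguments holds uniformly across the progression, so the argument goes through cleanly in all cases.
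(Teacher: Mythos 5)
Your proof is correct and essentially the same as the paper's: the paper bounds the degree of $f^{[x]}\cdot(m_1,x^t)+g^{[x]}\cdot(m_2,x^t)$ by $r+|t|$, which is exactly your second route (support of a convolution lies in the Minkowski sum of supports), and your first route is just the same count made pointwise via \eqref{eqn_soln_std}. No gaps.
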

\begin{proof}
    The hypotheses tell us that $f^{[x]}$ is degree at most $r$, and $g^{[x]}$ is degree at most $r+1$. On the other hand, $(m_1, x^t)$ is degree $|t|$, and $(m_2, x^t)$ is degree $|t|-1$. Therefore, $f^{[x]} \cdot (m_1, x^t) + g^{[x]} \cdot (m_2, x^t)$ is degree at most $r + |t|$.  
\end{proof}

We also have the following analogue of d'Alembert's solution to the one-dimensional wave equation.

\begin{corollary}[d'Alembert solution] \label{cor_d'alembert}
    The function $u(n, t)$ is a solution to the wave equation if and only if it can be written as
    \begin{gather*}
        u(n, t) = w_{\infty}(n-t) + w_{-\infty}(n+t)
    \end{gather*}
    for some $w_\infty, w_{-\infty}: \mathbb{Z} \to \mathbb{C}$. If we have $w_{\infty}(n-t) + w_{-\infty}(n+t) = \tilde{w}_\infty(n-t) + \tilde{w}_{-\infty}(n+t)$, then $w_{\infty}(n-t) - \tilde{w}_\infty(n-t) = w_{-\infty}(n+t) - \tilde{w}_{-\infty}(n+t)$ is a function which is constant on $x + t \equiv 0 \mod 2$ and on $x + t \equiv 1 \mod 2$.
\end{corollary}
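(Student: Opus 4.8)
The ``if'' direction is a one-line check: substituting $u(n,t) = w_\infty(n-t) + w_{-\infty}(n+t)$ into \eqref{eqn_discrete_wave_intro}, the four summands on each side match up, since $w_\infty(n-t\pm 1) = w_\infty((n\pm 1)-t)$ and $w_{-\infty}(n+t\pm 1) = w_{-\infty}((n\pm 1)+t)$. So the work is in the ``only if'' direction, and my plan is to read it off from the explicit solution already at our disposal. Given a solution $u$, set $f(n) := u(n,0)$ and $g(n) := \tfrac12\big(u(n,1) - u(n,-1)\big)$; by the uniqueness part of Theorem \ref{thm_soln} (applied to standard initial conditions), $u$ is given by \eqref{eqn_soln_std}. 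The term $\tfrac12(f(n-t) + f(n+t))$ is already of d'Alembert shape, so everything comes down to rewriting the second term of \eqref{eqn_soln_std} as a difference $G(n+t) - G(n-t)$.

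To that end I would introduce a discrete antiderivative of $g$: choose any $G : \mathbb{Z} \to \mathbb{C}$ with $G(m) - G(m-2) = g(m-1)$ for all $m$. Such a $G$ exists because we may prescribe $G$ freely on $\{0,1\}$ and then propagate by the two-step recurrence in both directions along each residue class modulo $2$; in particular no summability of $g$ is used, which matters since $u$ (hence $f$ and $g$) is an arbitrary, possibly unbounded, function on $\mathbb{Z}^2$. With $G$ fixed, the arithmetic progression appearing in \eqref{eqn_soln_std} telescopes: checking the cases $t>0$, $t=0$, $t<0$ in turn, each collapses to exactly $G(n+t) - G(n-t)$ (the factor $\textnormal{sgn}(t)$ is precisely what makes the two endpoints of the telescoping sum line up for negative $t$). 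Putting $w_\infty(m) := \tfrac12 f(m) - G(m)$ and $w_{-\infty}(m) := \tfrac12 f(m) + G(m)$ then yields $u(n,t) = w_\infty(n-t) + w_{-\infty}(n+t)$. As an aside, one can bypass Theorem \ref{thm_soln} altogether: the function $p(n,t) := u(n,t) - u(n-1,t-1)$ satisfies $p(n,t+1) = p(n+1,t)$ by the wave equation, hence depends only on $n+t$; summing the relation $u(n,t) - u(n-1,t-1) = p(n+t)$ along a characteristic line $n-t = \textnormal{const}$ and telescoping against the same $G$ reaches the conclusion directly.

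For the non-uniqueness clause, assume $w_\infty(n-t) + w_{-\infty}(n+t) = \tilde w_\infty(n-t) + \tilde w_{-\infty}(n+t)$ for all $n,t$, and put $\delta(m) := w_\infty(m) - \tilde w_\infty(m)$. The hypothesis reads $\delta(n-t) = \tilde w_{-\infty}(n+t) - w_{-\infty}(n+t)$: the left-hand side depends only on $n-t$, the right only on $n+t$. Fixing a value $s$ of $n-t$ and letting $(n,t) = (t+s,t)$ vary over $t\in\mathbb{Z}$, the quantity $n+t = 2t+s$ ranges over the entire residue class of $s$ modulo $2$; hence $\delta(s)$ equals $\tilde w_{-\infty}(n+t) - w_{-\infty}(n+t)$ for every $n+t$ in that class, so this difference is constant on each of $n+t\equiv 0$ and $n+t\equiv 1 \pmod 2$, and $\delta$ takes those same two values. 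This is the asserted statement (up to the sign $w_{-\infty}(n+t) - \tilde w_{-\infty}(n+t) = -\delta(n-t)$ forced by the hypothesis).

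There is no genuine obstacle here; the substance is entirely organizational. The one spot to get right is the construction of $G$ as an honest function rather than an infinite sum — so that the argument is valid for arbitrary unbounded $u$ — together with the case-check that makes the telescoping produce $G(n+t) - G(n-t)$ uniformly in the sign of $t$.
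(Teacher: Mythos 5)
Your proof is correct, and the overall skeleton matches the paper's: the ``if'' direction and the non-uniqueness clause are argued the same way (the paper phrases the latter as constancy on cosets of the index-two sublattice generated by $(1,1)$ and $(1,-1)$, which is exactly your residue-class argument; you also correctly flag the sign slip in the statement, since the hypothesis forces $w_\infty - \tilde w_\infty = \tilde w_{-\infty} - w_{-\infty}$). Where you genuinely diverge is in producing the d'Alembert form of the $g$-part of \eqref{eqn_soln_std}. The paper splits $g = g_+ + g_-$, uses the formal identities $\frac{1}{x - x^{-1}} = -M_+^{[x]} = M_-^{[x]}$ as bookkeeping, and writes down explicit piecewise partial-sum formulas for $w^g_{\pm\infty}$ which it then verifies. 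You instead take any discrete antiderivative $G$ with $G(m) - G(m-2) = g(m-1)$, built by a two-step recurrence from arbitrary values on $\{0,1\}$, and telescope the arithmetic-progression sum in \eqref{eqn_soln_std} to $G(n+t) - G(n-t)$; your case-check over $t>0$, $t=0$, $t<0$ is right, and the recurrence construction correctly sidesteps any summability issue for unbounded $g$ (the paper's explicit formulas are one particular choice of your $G$, anchored so that the $g(0)$ contribution is split evenly, the discrepancy being exactly the per-parity constant permitted by the uniqueness clause). Your approach buys a shorter, more transparent verification at the cost of not exhibiting closed-form $w^g_{\pm\infty}$, which the paper's computation does and which is in keeping with its formal-Laurent-series theme. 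Your aside via $p(n,t) := u(n,t) - u(n-1,t-1)$, which depends only on $n+t$ and can be integrated against the same kind of antiderivative, is also valid and is a genuinely independent route that bypasses Theorem \ref{thm_soln} entirely; the paper does not take it.
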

\begin{proof}
    First off, it is immediate that any function of the form $w_\infty(n-t) + w_{-\infty}(n+t)$ solves \eqref{eqn_discrete_wave_intro}. It is also clear that any function on $\mathbb{Z}^2$ which can be expressed as both a function of $n+t$ and $n-t$ must be constant on any coset of the sublattice generated by $(1, 1)$ and $(1, -1)$. This sublattice is simply the locus $x + t \equiv 0 \mod 2$, and it is index two inside of $\mathbb{Z}^2$.

    Now suppose $u(n, t)$ is the solution to the wave equation with $u(n, 0) = f(n)$ and $\frac{u(n, 1) - u(n, -1)}{2} = g(n)$. First we define
    \begin{gather*}
        w^f_\infty(k) := \frac{f(k)}{2} =: w^f_{-\infty}(k).
    \end{gather*}
    If $g(n) \equiv 0$, then by \eqref{eqn_soln_std}, we have that $u(n, t) = w^f_\infty(n-t) + w^f_{-\infty}(n+t)$.
    
    We now wish to also derive the d'Alembert solution in case $f(n) \equiv 0$. In that case
    \begin{gather*}
        u(n, t) = [x^n]\Big( g^{[x]} \cdot (1, x^t) \Big)
    \end{gather*}
    We define $g_+, g_-: \mathbb{Z} \to \mathbb{C}$ via
    \begin{align*}
        g_+^{[x]} &:= \frac{g(0)}{2} x^0 + \sum_{\ell \geq 1} g(\ell) x^\ell \\
        g_-^{[x]} &:= \frac{g(0)}{2} x^0 + \sum_{\ell \leq -1} g(\ell) x^\ell.
    \end{align*}
    Then $g = g_+ + g_-$. We shall also make use of the identity
    \begin{align*}
        \frac{1}{x - x^{-1}} &= -(x + x^3 + x^5 + \dots) = -M_+^{[x]} \\
        &= x^{-1} + x^{-3} + x^{-5} + \dots = M_-^{[x]},
    \end{align*}
    where $M_+(n) = 1$ if $n \geq 1$ and odd, and zero otherwise, and $M_-(n) = 1$ if $n \leq -1$ and odd, and zero otherwise. We shall essentially use these formal identities as a bookkeeping tool to arrive at a formula which may ultimately be easily verified. 
    
    We therefore have that
    \begin{align*}
        [x^n]\Big( g^{[x]} \cdot (1, x^t) \Big) &= [x^0] \Big( x^{-n} (g_+^{[x]} + g_-^{[x]}) \frac{x^t - x^{-t}}{x - x^{-1}} \Big) \\
        &= [x^0] \Big( x^{-n+t} \frac{g_+^{[x]} + g_-^{[x]}}{x - x^{-1}} - x^{-n-t} \frac{g_+^{[x]} + g_-^{[x]}}{x - x^{-1}} \Big).
    \end{align*}
    We can compute for example that
    \begin{align*}
        [x^0] \Big(x^{-k} \frac{g_+^{[x]}}{x - x^{-1}} \Big) &= [x^0] \Big( x^{-k} g_+^{[x]} \cdot (-M_+^{[x]}) \Big) \\
        &= \Big(g_+ * (-M_+) \Big) (k) \\
        &= \begin{cases}
            -\frac{g(0)}{2} - \sum_{\ell = 2, \ \ell \textnormal{ even}}^{k-1} g(\ell) & k \geq 0, \textnormal{ odd} \\
            -\sum_{\ell = 1, \ \ell \textnormal{ odd}}^{k-1} g(\ell) & k \geq 0, \textnormal{ even} \\
            0 & \textnormal{otherwise}.
        \end{cases}
    \end{align*}
    Similar computations reveal that if we let
    \begin{align*}
        w^g_{\infty}(k) &:= \begin{cases}
            -\textnormal{sgn}(k) \Big( \frac{g(0)}{2} + \sum_{\ell = 2, \ \ell \textnormal{ even}}^{|k|-1} g\big(\textnormal{sgn}(k) \cdot \ell \big) \Big) & k \textnormal{ odd} \\
            -\textnormal{sgn}(k) \sum_{\ell = 1, \ \ell \textnormal{ odd}}^{|k|-1} g(\textnormal{sgn}(k) \cdot \ell) & k \textnormal{ even}
        \end{cases} \\
        w_{-\infty}^g(k) &:=  \begin{cases}
            \textnormal{sgn}(k) \Big( \frac{g(0)}{2} + \sum_{\ell = 2, \ \ell \textnormal{ even}}^{|k|-1} g\big(\textnormal{sgn}(k) \cdot \ell \big) \Big) & k \textnormal{ odd} \\
            \textnormal{sgn}(k) \sum_{\ell = 1, \ \ell \textnormal{ odd}}^{|k|-1} g(\textnormal{sgn}(k) \cdot \ell) & k \textnormal{ even}
        \end{cases},
    \end{align*}
    then 
    \begin{align*}
        w_\infty^g(n-t) + w_{-\infty}^g(n+t) &= \textnormal{sgn}(t) \big(g(n-|t|+1) + g(n-|t|+3) + \dots + g(n+|t|-1) \big) \\
        &= [x^n]\Big( g^{[x]} \cdot (1, x^t) \Big).
    \end{align*}
    If we therefore let $w_\infty = w^f_\infty + w^g_\infty$ and $w_{-\infty} = w^f_{-\infty} + w^g_{-\infty}$, then we have that $u(n, t) = w_\infty(n-t) + w_{-\infty}(n+t)$. 
\end{proof}

Though we did not use it in the proof, we briefly remark the d'Alembert-like identity
\begin{gather*}
    (x - y^{-1})(x - y) = x(x + x^{-1} - y - y^{-1}).
\end{gather*}
Because $x$ is a unit in the group algebra, we have that \eqref{eqn_discrete_wave_intro} is in turn equivalent to $f^{[x, y]}$ being annihilated by multiplication with $(x - y^{-1})(x -y)$. 

We also have a ``spectral'' version of the d'Alembert solution.

\begin{proposition} \label{prop_spectral}
    A function $g(\xi, t)$ satisfies \eqref{spectral_wave_eqn} if and only if it can be written as
    \begin{gather} \label{spectral_soln}
        g(\xi, t) = g_\infty(\xi) \xi^{t} + g_{-\infty}(\xi) \xi^{-t} + \delta_{1}(\xi) [a_1 t + b_1] + (-1)^t \delta_{-1}(\xi) [a_{-1} t + b_{-1}].
    \end{gather}
    If $g_{\infty}(\xi) \xi^{t} + g_{-\infty}(\xi) \xi^{-t} = \tilde{g}_{\infty}(\xi) \xi^{t} + \tilde{g}_{-\infty}(\xi) \xi^{-t}$, then $[g_\infty(\xi) - \tilde{g}_\infty(\xi)] \xi^t = [\tilde{g}_{-\infty}(\xi) - g_{-\infty}(\xi)] \xi^{-t} = c_1 \delta_{1}(\xi) + c_{-1} \delta_{-1}(\xi)$. 
\end{proposition}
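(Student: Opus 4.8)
The plan is to read the spectral wave equation \eqref{spectral_wave_eqn}, for each fixed $\xi \in \Omega$, as a second-order linear recurrence in $t$ with coefficients constant in $t$: $g(\xi,t+1) = (\xi+\xi^{-1})\,g(\xi,t) - g(\xi,t-1)$. Since the coefficient of $g(\xi,t-1)$ is $-1\neq 0$, the recurrence runs in both directions, so for each $\xi$ the space of solutions $(g(\xi,t))_{t\in\mathbb{Z}}$ is exactly two-dimensional over $\mathbb{C}$, pinned down by the pair $(g(\xi,0),g(\xi,1))$. The characteristic polynomial is $z^2 - (\xi+\xi^{-1})z + 1 = (z-\xi)(z-\xi^{-1})$, so its roots are $\xi^{\pm 1}$. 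For the ``if'' direction I would simply verify that each summand of \eqref{spectral_soln} solves \eqref{spectral_wave_eqn}: $t\mapsto \xi^t$ and $t\mapsto \xi^{-t}$ do so for every $\xi$ by the factorization just noted; $t\mapsto \delta_1(\xi)(a_1 t+b_1)$ does so because it is supported at $\xi=1$, where \eqref{spectral_wave_eqn} becomes $g(t+1)-2g(t)+g(t-1)=0$ and affine functions of $t$ lie in the kernel; and $t\mapsto (-1)^t\delta_{-1}(\xi)(a_{-1}t+b_{-1})$ does so because it is supported at $\xi=-1$, where the equation reads $g(t+1)+2g(t)+g(t-1)=0$, whose kernel is spanned by $(-1)^t$ and $t(-1)^t$. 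Summing, every function of the form \eqref{spectral_soln} is a solution.

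For the ``only if'' direction I would argue pointwise in $\xi$. If $\xi\neq\pm1$ the roots $\xi,\xi^{-1}$ are distinct, so the sequences $(\xi^t)_t$ and $(\xi^{-t})_t$ are linearly independent (their values at $t=0,1$ have determinant $\xi^{-1}-\xi\neq0$) and hence span the two-dimensional solution space; thus $g(\xi,t)=g_\infty(\xi)\xi^t+g_{-\infty}(\xi)\xi^{-t}$ for suitable scalars, and we put $\delta_{\pm1}(\xi)=0$ there. If $\xi=1$ the characteristic polynomial has the double root $1$, so the solution space is spanned by $(1)_t$ and $(t)_t$, giving $g(1,t)=a_1t+b_1$; likewise if $\xi=-1$ the double root $-1$ yields $g(-1,t)=(-1)^t(a_{-1}t+b_{-1})$. (If $\pm1\notin\Omega$ the corresponding case is vacuous and $\delta_{\pm1}$ is identically zero on $\Omega$.) Assigning $g_\infty,g_{-\infty}$ off $\{\pm1\}$ by the first case, the four scalars $a_{\pm1},b_{\pm1}$ by the latter two, and $g_\infty,g_{-\infty}$ any values (say $0$) at $\pm1$, assembles the global form \eqref{spectral_soln}.

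For the uniqueness clause, fix $\xi$ and suppose $g_\infty(\xi)\xi^t+g_{-\infty}(\xi)\xi^{-t}=\tilde g_\infty(\xi)\xi^t+\tilde g_{-\infty}(\xi)\xi^{-t}$ for all $t\in\mathbb{Z}$. Writing $\alpha=g_\infty(\xi)-\tilde g_\infty(\xi)$ and $\beta=\tilde g_{-\infty}(\xi)-g_{-\infty}(\xi)$, we get $\alpha\xi^t=\beta\xi^{-t}$ for all $t$; taking $t=0$ gives $\alpha=\beta$, and then $t=1$ gives $\alpha(\xi-\xi^{-1})=0$, so $\alpha=0$ unless $\xi\in\{1,-1\}$. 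Hence, as a function of $\xi$, $g_\infty-\tilde g_\infty$ vanishes off $\{1,-1\}$, i.e.\ equals $c_1\delta_1+c_{-1}\delta_{-1}$ with $c_{\pm1}=(g_\infty-\tilde g_\infty)(\pm1)$, and $\tilde g_{-\infty}-g_{-\infty}$ equals the same function; since $\xi^t=\xi^{-t}$ equals $1$ at $\xi=1$ and $(-1)^t$ at $\xi=-1$, this is exactly the asserted identity. The only point that deserves care — and it is the real content of the statement rather than an obstacle — is the degeneration at $\xi=\pm1$: there the two plane waves $\xi^{\pm t}$ coincide, so they cease to span the solution space and one must append the secular terms $\delta_1(\xi)t$ and $(-1)^t\delta_{-1}(\xi)t$; this same collapse is precisely why the representation fails to be unique exactly at $\xi=\pm1$.
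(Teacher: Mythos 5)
Your proposal is correct and follows essentially the same route as the paper: for each fixed $\xi$ the equation is a second-order recurrence whose solution space is pinned down by the values at $t=0,1$, with the $2\times 2$ system (determinant $\xi^{-1}-\xi$) handling $\xi\neq\pm1$ and the degenerate recurrence at $\xi=\pm1$ producing the secular terms, and the uniqueness clause reduced to $\alpha\xi^{t}=\beta\xi^{-t}$ forcing $\alpha=\beta=0$ off $\xi=\pm1$. Your explicit use of the characteristic polynomial $(z-\xi)(z-\xi^{-1})$ is only a cosmetic repackaging of the paper's argument, so no further comparison is needed.
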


\begin{proof}
    It is immediate that any function of the form in \eqref{spectral_soln} solves \eqref{spectral_wave_eqn}. We should remark that functions of the form $A(t) = a_1 t + b_1$ are the functions satisfying $A(t-1) + A(t+1) = 2 A(t)$, and functions of the form $B(t) = (-1)^t (a_{-1} t + b_{-1})$ are the functions satisfying $A(t-1) + A(t+1) = -2 A(t)$.
    
    We also note that if $a(\xi) \xi^t = b(\xi) \xi^{-t}$ for all $\xi, t$, then $a(\xi) \xi^{2 t} = b(\xi)$. Unless $\xi^{2 t} = 1$ for all $t \in \mathbb{Z}$, we must have that $a(\xi) = b(\xi) = 1$. On the other hand $\xi^{2 t} = 1$ for all $t$ is equivalent to $\xi = \pm 1$. 

    Now let's prove that if $g$ solves \eqref{spectral_wave_eqn}, then it can be expressed in the claimed form. First we remark that if $g$ solves the spectral wave equation, then it is uniquely determined by its value at $t = 0$ and $t = 1$. This follows inductively by \eqref{spectral_wave_eqn}. On the other hand, we claim that given any function $h: \mathbb{C}^\times \times \mathbb{Z} \to \mathbb{C}$ defined only at $t = 0$ and $t = 1$ extends to a function of the form in \eqref{spectral_soln}. Since functions of this form solve the wave equation, we get that solutions to the wave equation are all of this form.

    Suppose we are given two functions $a(\xi)$ and $b(\xi)$. For now suppose $\xi \neq \pm 1$. We claim that there exists a unique function $g(\xi, t) = g_\infty(\xi) \xi^{t} + g_{-\infty} (\xi) \xi^{-t}$ outside of $\xi = \pm 1$ such that $g(\xi, 0) = a(\xi)$ and $g(\xi, 1) = b(\xi)$. We must have $a(\xi) = g_\infty(\xi) + g_{-\infty}(\xi)$ and $b(\xi) = g_\infty(\xi) \xi + g_{-\infty}(\xi) \xi^{-1}$. In matrix form
    \begin{gather*}
        \begin{bmatrix}
            1 & 1 \\
            \xi & \xi^{-1}
        \end{bmatrix} \begin{bmatrix}
            g_\infty(\xi) \\
            g_{-\infty}(\xi)
        \end{bmatrix} = \begin{bmatrix}
            a(\xi) \\
            b(\xi)
        \end{bmatrix}
    \end{gather*}
    Thus, so long as $\xi - \xi^{-1} \neq 0$, we get that $g_\infty(\xi)$ and $g_{-\infty}(\xi)$ are uniquely determined by $a(\xi)$ and $b(\xi)$. Note that $\xi - \xi^{-1} = 0$ if and only if $\xi = \pm 1$. 
    
    For $\xi = \pm 1$, we must solve a corresponding linear recurrence relation and, as discussed in the first paragraph of the proof, the corresponding general solutions are of stated form.
\end{proof}

\begin{remark} 
It may seem that there is a discrepancy between Corollary \ref{cor_d'alembert} and Proposition \ref{prop_spectral}. This may be explained as follows. Any time we have a solution to the wave equation \eqref{eqn_discrete_wave_intro} for which we may reasonably take a discrete Fourier-Laplace transform in an analytic sense, we necessarily get a solution to \eqref{spectral_wave_eqn} of the form $\hat{w}_\infty(\xi) \xi^{t} + \hat{w}_{-\infty}(\xi) \xi^{-t}$. On the other hand, given an integrable function of the form in \eqref{spectral_soln} for which we may take Fourier series (by restricting to $S^1$), we may freely change the value of the function at $\pm 1$ without changing the Fourier series, so these ``extra terms'' are somewhat irrelevant. If we instead consider $\delta_1(\xi)$ as a distribution, then its Fourier series corresponds to the constant function on $\mathbb{Z}$. Taking the Fourier series of $\delta_1(\xi)[a_1 t + b_1]$ gives the function $h: \mathbb{Z} \times \mathbb{Z} \to \mathbb{C}$ given by $h(n, t) = a_1 t + b_1$. This function can indeed be written as a sum of a left and right moving wave: $h(n, t) = \frac{a_1}{2} ((t + n) + (t - n)) + b_1$. Let $h_1(n, t) = t+n$ and $h_2(n, t) = t-n$. Restricting to $\xi \in S^1$, we get that $\hat{h}_1(\xi, t) = \sum_{n = -\infty}^{\infty} (t+n) \xi^n = \xi^{-t} \sum_{n = -\infty}^{\infty} (t + n) \xi^{t + n}$ which we can think of as the distribution $i \xi^{-t} \delta'_{1}(\xi)$. Similarly we can think of $\hat{h}_2(\xi, t)$ as $-i \xi^{t} \delta'_{-1}(\xi)$. As a distribution we get that $i \frac{a_1}{2}(\xi^{-t} - \xi^{t}) \delta'_{1}(\xi) + b_1 \delta_{1}(\xi) = \delta_1(\xi) [a_1 t + b_1]$. Something similar happens for $\delta_{-1}$. 
\end{remark}

\subsection{Chebyshev polynomials and orthogonality} \label{sec_chebyshev}

We know that every symmetric polynomial in $\mathbb{C}[x, x^{-1}]$ can in turn be expressed as a polynomial in $x + x^{-1}$ or equivalently as a polynomial in $\frac{x + x^{-1}}{2}$. On the other hand, each fundamental solution is a sequence of symmetric polynomials of growing degree. The Chebyshev polynomials of the first and second kind $T_t$ and $U_t$ are exactly the polynomials expressing the fundamental solutions in \eqref{eqn_fundamental_soln} as polynomials in $\frac{x + x^{-1}}{2}$, i.e.
\begin{align*}
    T_t( \frac{x + x^{-1}}{2}) &= \frac{ x^t + x^{-t}}{2} \\
    U_{t-1} (\frac{ x + x^{-1}}{2}) &= \frac{x^{t} - x^{-t}}{x - x^{-1}}.
\end{align*}
The Chebyshev polynomials satisfy the recurrence relation $A_{t+1}(z) + A_{t-1}(z) = 2 z A_t(z)$, which, by setting $z = \frac{\xi + \xi^{-1}}{2}$, is simply a reformulation of \eqref{spectral_wave_eqn} for solutions $g(\xi, t)$ such that $g(\cdot, t)$ is always symmetric.

More generally, given $h(x) \in \mathbb{C}[x, x^{-1}]$ for each $t \in \mathbb{Z}$, we may define a corresponding Chebyshev polynomial $\textnormal{Ch}_t^h(z)$
\begin{gather}
    \textnormal{Ch}_t^h (\frac{x + x^{-1}}{2}) = \frac{h(x) x^t - h(x^{-1}) x^{-t}}{x - x^{-1}}. \label{eqn_chebyshev}
\end{gather}
Thus $T_t$ corresponds to $h = \frac{x - x^{-1}}{2}$, $U_{t-1}$ corresponds to $h = 1$. Other well-studied examples are the Chebyshev polynomials of the third and fourth kind, often denoted $V_t$ and $W_t$, which correspond to $h = x - 1$ and $h = x + 1$, respectively.

Using the following theorem, we shall see that the orthogonality of the Chebyshev polynomials (and related polynomials) follows directly from the fact that they are expressible in a form as in \eqref{eqn_chebyshev}. After discovering the below result, we realized that it is essentially a reformulation of the theory of Bernstein-Szegö polynomials. See \cite{szego, li_sole}. We include the proof as it illustrates several ideas which will be useful in later sections.

\begin{theorem} \label{thm_orthogonality}
    Let $h(\xi) = \sum_{i = n}^m a_i \xi^i \in \mathbb{R}[\xi, \xi^{-1}]$. Suppose $a_n, a_m \neq 0$. Define
    \begin{gather*}
        f_k(\xi) = \frac{h(\xi) \xi^{k} - h(\xi^{-1}) \xi^{-k}}{\xi - \xi^{-1}}.
    \end{gather*}
    Let $d \theta$ denote the Lebesgue measure on $S^1 \subset \mathbb{C}^\times$ corresponding to arclength (i.e. total mass $2 \pi$). Let $d \mu$ be the measure on $\mathbb{C}^\times$ defined by
    \begin{gather*}
        d\mu = \frac{(\xi - \xi^{-1})(\xi^{-1} - \xi)}{h(\xi) h(\xi^{-1})} d \theta.
    \end{gather*}
    \begin{enumerate}
        \item Suppose $\frac{h(\xi)}{h(\xi^{-1})}$ has no poles on the closed unit disk $\overline{\mathbb{D}}$. Then $f_k(\xi)$ with $k \geq 0$ are orthogonal on $\mathbb{C}^\times$ with respect to $d \mu$. If $\frac{h(\xi)}{h(\xi^{-1})}$ has no poles on $\hat{\mathbb{C}} \setminus \mathbb{D}$, then $f_k(\xi)$ with $k \leq 0$ are orthogonal with respect to $d \mu$.
        \item Suppose
        \begin{gather*}
            h(\xi) = (1 - a^{-1} \xi^{-1}) (1 + b^{-1} \xi^{-1}) \xi
        \end{gather*}
        with $1 < a < \infty$ and $0 < b < 1$. Then $f_k(\xi)$ with $k \geq 0$ are orthogonal on $\mathbb{C}^\times$ with respect to the measure
        \begin{gather*}
            d \mu + \frac{4 \pi (1-b^2)}{(1 + a^{-1} b^{-1})(1 + a^{-1} b)} \delta_{-b}.
        \end{gather*}
    \end{enumerate}
\end{theorem}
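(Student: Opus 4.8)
The plan is to reduce both orthogonality relations to the elementary identity $\int_{S^1}\xi^n\,d\theta = 2\pi\,\delta_{n,0}$ together with Cauchy's theorem and the residue calculus. The first step is to expand the integrand: using $(\xi-\xi^{-1})(\xi^{-1}-\xi) = -(\xi-\xi^{-1})^2$, a direct multiplication gives, for all $j$ and $k$,
\begin{gather*}
    f_j(\xi)\,f_k(\xi)\,d\mu = -\Big[\tfrac{h(\xi)}{h(\xi^{-1})}\,\xi^{j+k} + \tfrac{h(\xi^{-1})}{h(\xi)}\,\xi^{-(j+k)} - \xi^{j-k} - \xi^{k-j}\Big]\,d\theta .
\end{gather*}
When $j\neq k$ the terms $\xi^{j-k}$ and $\xi^{k-j}$ integrate to $0$ over $S^1$, and since $h$ has real coefficients the first two terms are complex conjugates of each other on $S^1$ (there $h(\xi^{-1})=\overline{h(\xi)}$); hence, writing $I_\ell := \int_{S^1}\tfrac{h(\xi)}{h(\xi^{-1})}\,\xi^{\ell}\,d\theta$, we get $\int_{S^1} f_j\,f_k\,d\mu = -\big(I_{j+k} + \overline{I_{j+k}}\big)$. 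So for part (1) it suffices to prove $I_{j+k}=0$, and for part (2) it suffices to compute $I_{j+k}$ and compare with the proposed point mass. (Throughout I take for granted that $h$ has no zeros on $S^1$, which is implicit in the statement: on $S^1$ one has $(\xi-\xi^{-1})(\xi^{-1}-\xi)=|\xi-\xi^{-1}|^2\ge 0$ and $h(\xi)h(\xi^{-1})=|h(\xi)|^2>0$, so $d\mu$ is a finite positive measure and the integrals converge.)

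For part (1), parametrize $S^1$ by $\xi=e^{i\theta}$, so $d\theta = d\xi/(i\xi)$ and $I_\ell = \tfrac{1}{i}\oint_{|\xi|=1}\tfrac{h(\xi)}{h(\xi^{-1})}\,\xi^{\ell-1}\,d\xi$. If $j,k\ge 0$ and $j\neq k$, then $\ell = j+k\ge 1$, so $\xi^{\ell-1}$ is an ordinary monomial with nonnegative exponent; combined with the hypothesis that $\tfrac{h(\xi)}{h(\xi^{-1})}$ is holomorphic on $\overline{\mathbb D}$, the whole integrand is holomorphic on $\overline{\mathbb D}$, and Cauchy's theorem gives $I_\ell = 0$. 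This proves the first assertion. For the second assertion I would apply the first with $h$ replaced by $\tilde h(\xi) := h(\xi^{-1})$: one checks that $\tilde h \in \mathbb R[\xi,\xi^{-1}]$ still has nonvanishing extreme coefficients, that $d\tilde\mu = d\mu$ and $\tilde f_k = -f_{-k}$, and that ``$\tfrac{h(\xi)}{h(\xi^{-1})}$ has no poles on $\hat{\mathbb C}\setminus\mathbb D$'' is equivalent to ``$\tfrac{\tilde h(\xi)}{\tilde h(\xi^{-1})} = \tfrac{h(\xi^{-1})}{h(\xi)}$ has no poles on $\overline{\mathbb D}$'', since the inversion $\xi\mapsto\xi^{-1}$ carries $\hat{\mathbb C}\setminus\mathbb D$ onto $\overline{\mathbb D}$. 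Hence the $\tilde f_k$ with $k\ge 0$ are $d\mu$-orthogonal, i.e.\ the $f_k$ with $k\le 0$ are $d\mu$-orthogonal.

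For part (2), I would first rewrite $h(\xi) = \xi^{-1}(\xi-a^{-1})(\xi+b^{-1})$, so that
\begin{gather*}
    \frac{h(\xi)}{h(\xi^{-1})} = \frac{(\xi-a^{-1})(\xi+b^{-1})}{(1-a^{-1}\xi)(1+b^{-1}\xi)},
\end{gather*}
which is holomorphic on $\hat{\mathbb C}$ except for simple poles at $\xi = a$ (outside $\overline{\mathbb D}$, since $a>1$) and at $\xi = -b$ (inside $\mathbb D$, since $0<b<1$). Hence, for $\ell = j+k\ge 1$, the only pole of $\xi^{\ell-1}\tfrac{h(\xi)}{h(\xi^{-1})}$ inside $\mathbb D$ is the simple pole at $-b$, and the residue theorem gives $I_\ell = 2\pi\,\textnormal{Res}_{\xi=-b}\big(\xi^{\ell-1}\tfrac{h(\xi)}{h(\xi^{-1})}\big)$, a quantity proportional to $(-b)^{\ell-1}$, which I would compute explicitly. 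Separately I would evaluate $f_j(-b)f_k(-b)$: because $-b^{-1}$ is a zero of $h$, one has $h(-b^{-1}) = 0$, which collapses the definition of $f_k$ to the clean formula $f_k(-b) = b^{-1}(a^{-1}+b)(-b)^{k}$, so $f_j(-b)f_k(-b) = b^{-2}(a^{-1}+b)^2(-b)^{j+k}$. Plugging both computations into $\int_{S^1} f_j\,f_k\,d\mu = -\big(I_{j+k}+\overline{I_{j+k}}\big)$, a short algebraic simplification should yield $\int_{S^1} f_j\,f_k\,d\mu = -\,c\,f_j(-b)f_k(-b)$ with $c = \tfrac{4\pi(1-b^2)}{(1+a^{-1}b^{-1})(1+a^{-1}b)}$; equivalently $\int f_j\,f_k\,(d\mu + c\,\delta_{-b}) = 0$ for $j\neq k$, $j,k\ge 0$, which is the claim. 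One also checks $c>0$, so $d\mu + c\,\delta_{-b}$ is a positive measure.

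The setup and part (1) are essentially routine; the one point worth flagging there is that the inequality $j+k\ge 1$ — which uses $j\neq k$ — is exactly what guarantees that $\xi^{j+k-1}$, hence the contour integrand, is holomorphic on $\overline{\mathbb D}$. I expect the real work to be the bookkeeping in part (2): computing $\textnormal{Res}_{\xi=-b}\big(\xi^{\ell-1}\tfrac{h(\xi)}{h(\xi^{-1})}\big)$ and checking that it is matched exactly by the weight $c$ of the point mass at $-b$. That step is purely computational, but it is where an arithmetic error could enter, so I would carry it out carefully (and sanity-check it on the small cases $(j,k) = (0,1)$ and $(1,2)$).
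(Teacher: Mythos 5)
Your proposal is correct and follows essentially the same route as the paper: expand $f_jf_k\,d\mu$, kill the $\xi^{\pm(j-k)}$ terms, convert the remaining term to a contour integral of $\tfrac{h(\xi)}{h(\xi^{-1})}\xi^{j+k-1}$, and in case (2) pick up the simple pole at $\xi=-b$ and match it against the point mass (your reduction of the second half of (1) via $\tilde h(\xi)=h(\xi^{-1})$ is just a tidy packaging of the paper's ``analogous argument''). The one step you deferred does close: with $f_k(-b)=(1+a^{-1}b^{-1})(-b)^k$ and $\operatorname{Res}_{\xi=-b}\bigl(\xi^{\ell-1}\tfrac{h(\xi)}{h(\xi^{-1})}\bigr)=\tfrac{(1+a^{-1}b^{-1})(1-b^{-2})b^{3}}{1+a^{-1}b}(-b)^{\ell-1}$ one gets $\int f_jf_k\,d\mu=-\tfrac{4\pi(1-b^{2})(1+a^{-1}b^{-1})}{1+a^{-1}b}(-b)^{j+k}$, which is exactly $-c\,f_j(-b)f_k(-b)$ for the stated weight $c$.
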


\begin{proof}
    We begin with Case (1). Note that $f_k(\xi)$ is always real-valued on $S^1$. We have
    \begin{align*}
        \int_{\mathbb{C}^\times} f_k(\xi) f_n(\xi) d \mu &= \int_{S^1} \frac{h(\xi) \xi^k - h(\xi^{-1}) \xi^{-k}}{\xi - \xi^{-1}} \frac{h(\xi) \xi^{n} - h(\xi^{-1}) \xi^{-n}}{\xi - \xi^{-1}} \frac{(\xi - \xi^{-1})(\xi^{-1} - \xi)}{h(\xi) h(\xi^{-1})} d \theta \\
        &= \int_{S^1} \xi^{k-n} + \xi^{n-k} - \frac{h(\xi^{-1})}{h(\xi)} \xi^{-n-k} - \frac{h(\xi)}{h(\xi^{-1})} \xi^{n+k} d \theta
    \end{align*}
    If $k \neq n$, then the first two terms in the integral vanish. Furthermore note that $\int_{S^1} \frac{h(\xi^{-1})}{h(\xi)} \xi^{-n-k} d \theta = \int_{S^1} \frac{h(\xi)}{h(\xi^{-1})} \xi^{n+k} d \theta$. Additionally we have the relation
    \begin{gather*}
        \int_{S^1} \frac{h(\xi)}{h(\xi^{-1})} \xi^{n+k} d \theta = \frac{1}{i} \oint_{S^1} \frac{h(\xi)}{h(\xi^{-1})} \xi^{n+k-1} d \xi.
    \end{gather*}
    If $n, k \geq 0$ and $n \neq k$, then $n + k - 1 \geq 0$. We are assuming that $\frac{h(\xi)}{h(\xi^{-1})}$ has no poles on $\overline{\mathbb{D}}$, so by the residue theorem we conclude that the above integral is zero. One can carry out an analogous argument in case $\frac{h(\xi)}{h(\xi^{-1})}$ has no poles on $\hat{\mathbb{C}} \setminus \mathbb{D}$.

    Now we consider Case (2). We have
    \begin{gather*}
        \frac{h(\xi)}{h(\xi^{-1})} = \frac{(1 - a^{-1} \xi^{-1})(1 + b^{-1} \xi^{-1}) \xi}{(1 - a^{-1} \xi) (1 + b^{-1} \xi) \xi^{-1}} = \frac{(1 - a^{-1} \xi^{-1})(1 + b^{-1} \xi^{-1}) \xi^2 b}{1 - a^{-1} \xi} \frac{1}{\xi + b}.
    \end{gather*}
    Therefore by the residue theorem (with $n \neq k$)
    \begin{gather*}
        \int_{\mathbb{C}^\times} f_k(\xi) f_n(\xi) d \mu = -4 \pi \frac{(1 + a^{-1} b^{-1})(1 - b^{-2}) (-b)^{n+k-1} b}{1 + a^{-1} b}.
    \end{gather*}
    On the other hand we have
    \begin{gather*}
        \int_{\mathbb{C}^\times} f_k(\xi) f_n(\xi) d \delta_{-b}(\xi) = \frac{(1 + a^{-1} b^{-1})^2 (1 - b^{-2})^2 (-b)^{n+k}}{(-b + b^{-1})^2},
    \end{gather*}
    from which the claim follows. 
\end{proof}

We have a branched double cover $\mathbb{C}^\times \to \mathbb{C}$ via $\xi \mapsto \frac{ \xi + \xi^{-1}}{2}$ with branch points $\xi = \pm 1$. Any function on $\mathbb{C}^\times$ which is invariant under $\xi \mapsto \xi^{-1}$ is mapped to a well-defined function on $\mathbb{C}$ via this map. The circle $S^1$ is mapped to $[-1, 1]$. In terms of $\theta$, we have that $\theta = \arccos(z)$, so that $d \theta = \frac{1}{\sqrt{1 - z^2}} d z$. 

The interval $(0, 1)$ is mapped to $(1, \infty)$, and the interval $(-1, 0)$ is mapped to $(-\infty, -1)$. By pushing forward the measure $d \mu$ in the Proposition to $[-1, 1]$ (and in Case (2) also pushing forward the measure $\delta_{-b}$), we get natural families of orthogonal polynomials on $[-1, 1]$ (or on $\mathbb{R}$) with respect to different measures. We now discuss several special cases.

\begin{example} \label{cheb_first_kind}
    Let $h(\xi) = \frac{\xi - \xi^{-1}}{2}$. Then 
    \begin{gather*}
        d \mu = 4 d \theta = \frac{4}{\sqrt{1 - z^2}} dz,
    \end{gather*}
    which is the orthogonality measure for the Chebyshev polynomials of the first kind.
\end{example}

\begin{example}\label{cheb_second_kind}
    Let $h(\xi) = 1$. Then 
    \begin{gather*}
        d \mu = (\xi - \xi^{-1})(\xi^{-1} - \xi) d \theta = \frac{4 (1 - z^2)}{\sqrt{1 - z^2}} d z = 4 \sqrt{1 - z^2} dz
    \end{gather*}
    which is the orthogonality measure for the Chebyshev polynomials of the second kind. This measure is exactly the semicircle law.
\end{example}

\begin{example} \label{example_kesten_mckay}
    Let $h(\xi) = \xi - q^{-1} \xi^{-1}$ with $q \geq 1$. Then
    \begin{gather*}
        d \mu = \frac{(\xi - \xi^{-1})(\xi^{-1} - \xi)}{(\xi - q^{-1} \xi^{-1})(\xi^{-1} - q^{-1} \xi)} d \theta = \frac{4 (1 - z^2)}{(1 + q^{-2}) - q^{-1}(4 z^2 - 2)} \frac{1}{\sqrt{1 - z^2}} dz = \frac{4 q^2 \sqrt{1 - z^2}}{(q+1)^2 - 4q z^2} dz.
    \end{gather*}
    This is exactly the Kesten--McKay law for $(q+1)$-regular graphs, or equivalently the spherical Plancherel measure (with respect to the Satake parameters in $\xi$-coordinates, and with respect to Hecke eigenvalue in $z$-coordinates) for $\textnormal{PGL}(2, F)$ where $F$ is a non-archimedean local field with residue field of order $q$. If $q = 1$, this reduces to Example \ref{cheb_first_kind}, and as $q \to \infty$ this converges to Example \ref{cheb_second_kind}. We shall use the notation $F_t$ for the corresponding Chebyshev polynomials. They have the property that $F_t(\frac{A}{2 \sqrt{q}}) \delta_o$ is supported on a sphere of radius $t$ centered at $o$, where $A$ is the adjacency operator, and $o$ is some vertex in the tree. We shall say more about this example in Section \ref{sec_wave_eqn_tree}.
\end{example}

\begin{example}
    Suppose $h(\xi) = 1 - \xi$. Then
    \begin{gather*}
        d \mu = \frac{(\xi - \xi^{-1})(\xi^{-1} - \xi)}{(1 - \xi)(1 - \xi^{-1})} d \theta = \frac{4 (1 - z^2)}{2(1 - z)} \frac{1}{\sqrt{1 - z^2}} dz = 2 \frac{\sqrt{1 + z}}{\sqrt{1 - z}} dz.
    \end{gather*}
    This is the orthogonality measure for the Chebyshev polynomials of the third kind, often denoted $V_t$.
\end{example}

\begin{example}
    Suppose $h(\xi) = 1 + \xi$. Then 
    \begin{gather*}
        d \mu = \frac{(\xi - \xi^{-1})(\xi^{-1} - \xi)}{(1 + \xi)(1 + \xi^{-1})} d \theta = \frac{4(1 - z^2)}{2 (1 + z)} \frac{1}{\sqrt{1 - z^2}} dz = 2 \frac{\sqrt{1 - z}}{\sqrt{1 + z}} dz.
    \end{gather*}
    This is the orthogonality measure for the Chebyshev polynomials of the fourth kind, often denoted $W_t$.
\end{example}

\begin{example} \label{marchenko_pastur}
    Suppose $h(\xi) = (1 + b^{-1} \xi^{-1}) \xi$. Then
    \begin{gather*}
        d \mu = \frac{(\xi - \xi^{-1})(\xi^{-1} - \xi)}{(1 + b^{-1} \xi^{-1})(1 + b^{-1} \xi)} = \frac{4(1 - z^2)}{(1 + b^{-2}) +  2 b^{-1} z} \frac{1}{\sqrt{1 - z^2}} dz = \frac{4 \sqrt{1 - z^2}}{(1 + b^{-2}) + 2 b^{-1} z} dz. 
    \end{gather*}
    If we set $z = \frac{x - (1 + b^{-2})}{2 b^{-1}}$, this transforms into
    \begin{gather*}
        \frac{4\sqrt{(x - (1 - b^{-1})^2)((1 + b^{-1})^2-x)}}{b^{-2} x} dx.
    \end{gather*}
    If $b^{-1} \leq 1$, then we are in Case (1). If $b^{-1} > 1$, then we are in Case (2) (with $a^{-1} = 0$) and we need to add on the Dirac delta:
    \begin{gather*}
        4 \pi (1 - b^2) \delta_{-b}(\xi) = 4 \pi (1 - b^2) \delta_{-\frac{b + b^{-1}}{2}}(z) = 4 \pi (1 - b^2) \delta_{0}(x). 
    \end{gather*}
    This corresponds exactly to the Marchenko--Pastur law (with parameter $\lambda = b^{-2}$).
\end{example}

\begin{example} \label{ex_biregular}
    Suppose $h(\xi) = (1 - \frac{\xi^{-1}}{\sqrt{pq}}) (1 + \frac{ \sqrt{p} \xi^{-1}}{\sqrt{q}}) \xi$ with $p, q \geq 1$. If $p \leq q$, then we are in Case (1) from Theorem \ref{thm_orthogonality}, and we get the measure
    \begin{gather*}
        d \mu = \frac{4 \sqrt{1 - z^2}}{((1 + \frac{p}{q}) + 2 \frac{\sqrt{p}}{\sqrt{q}} z) ((1 + \frac{1}{pq}) - 2 \frac{1}{\sqrt{pq}} z)} dz.
    \end{gather*}
    In case $p > q$, we also get the term
    \begin{gather}
        \frac{4 \pi (1 - \frac{q}{p})}{(1 + \frac{1}{q})(1 + \frac{1}{p})} \delta_{\frac{-(p+q)}{2 \sqrt{pq}}}(z). \label{eqn_biregular_dirac_delta}
    \end{gather}
    This corresponds to the analogue of the Kesten--McKay law for the $(p+1, q+1)$-biregular tree (with respect to a certain explicit operator $B_q$ expressible as a degree two even polynomial in the adjacency operator acting on functions supported on the vertices of degree $q+1$). In the special case that $(p, q) = (r, r^3)$ with $r$ a power of a prime, this in turn corresponds to the spherical Plancherel measure for the group $SU(3, E/F)$ where $F$ is a non-archimedean local field whose residue field is $\mathbb{F}_r$, and $E$ is an unramified quadratic extension. If $p, q \to \infty$, but $p/q$ remains fixed, then this converges to Example \ref{marchenko_pastur} (in fact all of the above examples are special cases or limiting cases of this example). We shall use the notation $H_t$ for the corresponding Chebyshev polynomials. They have the property that $H_t(B_q) \delta_o$ is supported on a sphere of radius $t$ around $o$, assuming $o$ is a vertex of degree $q+1$.  We shall discuss this further in Section \ref{sec_big_biregular_tree}. 
\end{example}

\subsection{Conservation of energy} \label{sec_energy}

We now show that our discrete wave equation conserves an appropriate energy form. Suppose we have initial conditions determined by $h_1, h_2$ with dual basis $m_1, m_2$. Then \eqref{eqn_soln_initial} tells us that if $u(n, t)$ is a solution to the wave equation,
\begin{gather}
    \begin{bmatrix}
        [y^t](u^{[x, y]} \cdot h_1(y^{-1}))) \\
        [y^t](u^{[x, y]} \cdot h_2(y^{-1}))
    \end{bmatrix} = \begin{bmatrix}
        (m_1 h_1, x) & (m_2 h_1, x) \\
        (m_1 h_2, x) & (m_2 h_2, x)
    \end{bmatrix} \begin{bmatrix}
        [y^{t-1}]\Big(u^{[x, y]} \cdot h_1(y^{-1}) \Big) \\
        [y^{t-1}] \Big(u^{[x, y]} \cdot h_2(y^{-1}) \Big)
    \end{bmatrix}. \label{eqn_general_time_one_step}
\end{gather}
The matrix appearing in \eqref{eqn_general_time_one_step} is thus the time one wave propagator with respect to the initial conditions determined by $h_1, h_2$. 

For the rest of this section we shall work with the standard initial conditions $h_1 = 1, h_2 = \frac{x - x^{-1}}{2}$ as this will result in the nicest formulas. Let $u(n, t)$ be such that $u(n, 0) = f(n)$ and $\frac{u(n, 1) - u(n, -1)}{2} = g(n)$. The matrix in \eqref{eqn_general_time_one_step} then becomes
\begin{gather}
    \mathcal{U} = \begin{bmatrix}
        \frac{x + x^{-1}}{2} & 1 \\
        \frac{x^2 - 2 + x^{-2}}{4} & \frac{x + x^{-1}}{2}
    \end{bmatrix} = \begin{bmatrix}
        z & 1 \\
        z^2 - 1 & z
    \end{bmatrix}, \hspace{5mm} \mathcal{U}^t = \begin{bmatrix}
        T_t(z) & U_{t-1}(z) \\
        (z^2 - 1) U_{t-1}(z) & T_t(z)
    \end{bmatrix} \label{eqn_one_time_step_prop_U}
\end{gather}
where $z = \frac{x + x^{-1}}{2}$, $T_t(z)$ are the Chebyshev polynomials of the first kind, and $U_t(z)$ are the Chebysehv polynomials of the second kind. We thus in particular see that $u(n, t) = [x^n] \Big(T_t(\frac{x + x^{-1}}{2}) \cdot f^{[x]} + U_{t-1}(\frac{x + x^{-1}}{2}) \cdot g^{[x]} \Big)$, which is simply a reformulation of \eqref{eqn_soln_std}.

We have that
\begin{gather}
    \mathcal{U} = \begin{bmatrix}
        \frac{2}{x - x^{-1}} & -\frac{2}{x - x^{-1}} \\
        1 & 1
    \end{bmatrix} \begin{bmatrix}
        x & 0 \\
        0 & x^{-1}
    \end{bmatrix}
    \begin{bmatrix}
        \frac{x - x^{-1}}{2} & 1 \\
        \frac{x^{-1} - x}{2} & 1
    \end{bmatrix}. \label{eqn_eigen_decomp_U}
\end{gather}
We now switch to the analytic perspective. We replace $x$ by $\xi$, which we think of as a element of $S^1$. We then think of the above as matrices of functions on $S^1$. Given such a matrix $M$, we define $M^*$ as the conjugate transpose matrix, i.e. if the $(i, j)$ entry of $M$ is $\sum a_k \xi^k$, then the $(j, i)$ entry of $M^*$ is $\sum \overline{a_k} \xi^{-k}$. Let $B$ be the last matrix appearing in \eqref{eqn_eigen_decomp_U}. Then define
\begin{gather*}
    Q := \frac{1}{2} B^* B = \begin{bmatrix}
        -\Big( \frac{\xi - \xi^{-1}}{2} \Big)^2 & 0 \\
        0 & 1
    \end{bmatrix} = \begin{bmatrix}
        1 - z^2 & 0 \\
        0 & 1
    \end{bmatrix}.
\end{gather*}
By construction we have that $\mathcal{U}^* Q \mathcal{U} = Q$. Let $v(n, t) := \frac{u(n, t+1) - u(n, t-1)}{2}$. We then have
\begin{align*}
    & \begin{bmatrix}
        \overline{u^{[\xi]}(\cdot, t)} & \overline{v^{[\xi]}(\cdot, t)}
    \end{bmatrix} Q \begin{bmatrix}
        u^{[\xi]}(\cdot, t) \\
        v^{[\xi]}(\cdot, t)
    \end{bmatrix} = \begin{bmatrix} \overline{f^{[\xi]}} & \overline{g^{[\xi]}}
    \end{bmatrix} (\mathcal{U}^t)^* Q \mathcal{U}^t \begin{bmatrix}
        f^{[\xi]} \\
        g^{[\xi]}
    \end{bmatrix} = \begin{bmatrix} \overline{f^{[\xi]}} & \overline{g^{[\xi]}}
    \end{bmatrix} Q \begin{bmatrix}
        f^{[\xi]} \\
        g^{[\xi]}
    \end{bmatrix}. 
\end{align*}
We thus have that
\begin{gather*}
    \int_{S^1} -|u^{[\xi]}(\cdot, t)|^2 \Big(\frac{\xi - \xi^{-1}}{2} \Big)^2 + |v^{[\xi]}(\cdot, t)|^2 d \theta(\xi) = \int_{S^1} -|f^{[\xi]}|^2 \Big(\frac{\xi - \xi^{-1}}{2} \Big)^2 + |g^{[\xi]}|^2 d \theta(\xi).
\end{gather*}
If we then apply the Plancherel formula, we get:
\begin{proposition} \label{prop_energy_flat}
    Let $u(n, t)$ be a solution to the wave equation. Then
    \begin{align}
        E(t) :&= -\sum_{n = -\infty}^{\infty} \overline{u(n, t)} \cdot \Big( \frac{u(n+2, t) - 2 u(n, t) + u(n-2, t)}{4} \Big) + \sum_{n = -\infty}^\infty \Big|\frac{u(n, t+1)-u(n, t-1)}{2}\Big|^2 \notag \\
        &= \Big\|\frac{u(n+1, t) - u(n-1, t)}{2}\Big\|^2_{\ell^2(\mathbb{Z}, n)} + \Big\|\frac{u(n, t+1) - u(n, t-1)}{2}\Big\|^2_{\ell^2(\mathbb{Z}, n)}  \label{eqn_flat_energy_form}
    \end{align}
    is independent of $t$. 
\end{proposition}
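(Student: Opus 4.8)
The plan is to assemble the machinery already set up before the statement: the whole content of ``conservation of energy'' is that the time-one propagator $\mathcal{U}$ of \eqref{eqn_one_time_step_prop_U} is conjugate to $\textnormal{diag}(\xi,\xi^{-1})$ on $S^1$, a matrix whose entries are unimodular. First I would record the evolution in Fourier variables. Writing $v(n,t) := \tfrac{u(n,t+1)-u(n,t-1)}{2}$ and taking $h_1 = 1$, $h_2 = \tfrac{x-x^{-1}}{2}$ in \eqref{eqn_general_time_one_step}, one has $[y^t](u^{[x,y]}\cdot h_1(y^{-1})) = u^{[x]}(\cdot,t)$ and $[y^t](u^{[x,y]}\cdot h_2(y^{-1})) = v^{[x]}(\cdot,t)$, so \eqref{eqn_general_time_one_step} becomes $\big(u^{[x]}(\cdot,t),v^{[x]}(\cdot,t)\big)^{\!T} = \mathcal{U}\,\big(u^{[x]}(\cdot,t-1),v^{[x]}(\cdot,t-1)\big)^{\!T}$. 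Since $\det\mathcal{U} = z^2 - (z^2-1) = 1$, the propagator is invertible (indeed $\mathcal{U}\in\textnormal{SL}_2(\mathbb{C}[x,x^{-1}]^W)$), so iterating in both time directions gives $\big(u^{[x]}(\cdot,t),v^{[x]}(\cdot,t)\big)^{\!T} = \mathcal{U}^t\big(f^{[x]},g^{[x]}\big)^{\!T}$ for all $t\in\mathbb{Z}$, with $f(n) = u(n,0)$, $g(n) = v(n,0)$.

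Next I would switch to the analytic picture, replacing $x$ by $\xi\in S^1$, and use the diagonalization \eqref{eqn_eigen_decomp_U}. Letting $B$ be its rightmost factor, one has $B\mathcal{U}B^{-1} = \textnormal{diag}(\xi,\xi^{-1}) =: D$; setting $Q := \tfrac12 B^*B = \textnormal{diag}(1-z^2,1)$, where $*$ denotes the conjugate transpose on $S^1$ (so $\xi^* = \xi^{-1}$), the key identity is
\[
  \mathcal{U}^*Q\mathcal{U} = \tfrac12 (B\mathcal{U})^*(B\mathcal{U}) = \tfrac12 (DB)^*(DB) = \tfrac12 B^*(D^*D)B = \tfrac12 B^*B = Q ,
\]
where $D^*D = I$ precisely because $|\xi| = 1$. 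Iterating gives $(\mathcal{U}^t)^*Q\,\mathcal{U}^t = Q$ for every $t$, and pairing the vector $\mathcal{U}^t(f^{[\xi]},g^{[\xi]})^{\!T}$ against itself through $Q$, together with $1-z^2 = -\big(\tfrac{\xi-\xi^{-1}}{2}\big)^2$, yields the pointwise-in-$\xi$ identity
\[
  -\big|u^{[\xi]}(\cdot,t)\big|^2\Big(\tfrac{\xi-\xi^{-1}}{2}\Big)^2 + \big|v^{[\xi]}(\cdot,t)\big|^2 = -\big|f^{[\xi]}\big|^2\Big(\tfrac{\xi-\xi^{-1}}{2}\Big)^2 + \big|g^{[\xi]}\big|^2 .
\]

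Finally I would integrate over $S^1$ against $d\theta$ and invoke the Plancherel theorem for $\ell^2(\mathbb{Z})\cong L^2(S^1)$. The $|v^{[\xi]}(\cdot,t)|^2$ term becomes $\|v(\cdot,t)\|^2_{\ell^2}$. For the other term one uses that on $S^1$ one has $1-z^2 = \big|\tfrac{\xi-\xi^{-1}}{2}\big|^2 = -\tfrac{\xi^2-2+\xi^{-2}}{4}$: reading off the first expression, and recalling that multiplication by $\tfrac{\xi-\xi^{-1}}{2}$ on the Fourier side is the operator $h(n)\mapsto\tfrac{h(n-1)-h(n+1)}{2}$ — whose $\ell^2$-norm is unchanged if the multiplier is negated — shows that $\int_{S^1}-|u^{[\xi]}(\cdot,t)|^2\big(\tfrac{\xi-\xi^{-1}}{2}\big)^2\,d\theta$ equals, up to the fixed Plancherel constant, $\big\|\tfrac{u(n+1,t)-u(n-1,t)}{2}\big\|^2_{\ell^2}$; reading off the second expression instead, with multiplication by $\tfrac{\xi^2-2+\xi^{-2}}{4}$ corresponding to $h(n)\mapsto\tfrac{h(n-2)-2h(n)+h(n+2)}{4}$, identifies the same quantity with $-\sum_n\overline{u(n,t)}\,\tfrac{u(n+2,t)-2u(n,t)+u(n-2,t)}{4}$. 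This produces both displayed formulas for $E(t)$ in \eqref{eqn_flat_energy_form} and shows each equals its value at $t=0$.

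The only steps requiring genuine care are in this last paragraph: the sign bookkeeping in Plancherel, the equivalence of the two forms of the ``kinetic'' term (a discrete integration by parts), and the justification that $u^{[\xi]}(\cdot,t)$ and $v^{[\xi]}(\cdot,t)$ may be treated as honest $L^2(S^1)$ functions rather than formal Laurent series — which requires $u(\cdot,t),v(\cdot,t)\in\ell^2(\mathbb{Z})$. For finitely supported initial data this holds for every $t$ by finite speed of propagation (Corollary \ref{cor_finite_speed}); in general one restricts to the Hilbert space of finite-energy data, on which the propagator acts unitarily — which is exactly what the proposition records.
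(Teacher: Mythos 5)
Your proposal is correct and follows essentially the same route as the paper: conjugating the time-one propagator $\mathcal{U}$ to $\textnormal{diag}(\xi,\xi^{-1})$ via the matrix $B$ from \eqref{eqn_eigen_decomp_U}, deducing $\mathcal{U}^*Q\mathcal{U}=Q$ for $Q=\tfrac12 B^*B$, and then integrating over $S^1$ and applying Plancherel, with the same identification of the two forms of the kinetic term. Your closing remarks on when $u(\cdot,t),v(\cdot,t)\in\ell^2(\mathbb{Z})$ make explicit a point the paper leaves implicit, so nothing is missing.
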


In Corollary \ref{cor_d'alembert}, we showed that every solution to the wave equation can be represented as $u(n, t) = w_\infty(n-t) + w_{-\infty}(n+t)$. The functions $w_\infty, w_{-\infty}$ were not quite unique; they were only unique up to a constant on $2 \mathbb{Z}$ and on $1 + 2 \mathbb{Z}$. However, the functions $w'_\infty(n) := \frac{w_\infty(n+1) - w_\infty(n-1)}{2}$ and $w'_{-\infty}(n) := \frac{w_{-\infty}(n+1) - w_{-\infty}(n-1)}{2}$ are canonically defined regardless of the choice of constant. 
\begin{proposition} \label{prop_energy_left_right}
    Let $u(n, t) = w_\infty(n-t) + w_{-\infty}(n+t)$ be a solution to the wave equation. Then the value of the energy form defined by \eqref{eqn_flat_energy_form} is equal to
    \begin{gather*}
        \|w'_{\infty}(n)\|^2_{\ell^2(\mathbb{Z}, n)} + \|w'_{-\infty}(n)\|^2_{\ell^2(\mathbb{Z}, n)}.
    \end{gather*}
\end{proposition}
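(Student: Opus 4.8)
The plan is to substitute the d'Alembert representation of $u$ directly into the energy form \eqref{eqn_flat_energy_form} of Proposition \ref{prop_energy_flat} and simplify. Write $u(n,t) = w_\infty(n-t) + w_{-\infty}(n+t)$. Since \eqref{eqn_flat_energy_form} involves only the first differences $\tfrac12(u(n+1,t) - u(n-1,t))$ and $\tfrac12(u(n,t+1) - u(n,t-1))$, and since altering $w_\infty$ (while compensating in $w_{-\infty}$) by a function constant on each residue class mod $2$ leaves these differences unchanged, the right-hand side of the claim is automatically insensitive to the choice of d'Alembert decomposition, consistent with the remark preceding the statement that $w'_\infty$ and $w'_{-\infty}$ are canonically defined. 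It is also worth noting at the outset that the $\ell^2(\mathbb{Z})$-norms in the claim make sense precisely because $u$ has finite energy, even though $w_\infty$ and $w_{-\infty}$ themselves need not lie in $\ell^2$.

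The first step is the pair of elementary identities
\begin{align*}
    \frac{u(n+1,t) - u(n-1,t)}{2} &= w'_\infty(n-t) + w'_{-\infty}(n+t), \\
    \frac{u(n,t+1) - u(n,t-1)}{2} &= -\,w'_\infty(n-t) + w'_{-\infty}(n+t).
\end{align*}
Each follows by expanding $u$ in d'Alembert form and recognizing the two terms coming from $w_\infty$ (respectively $w_{-\infty}$) as a symmetric first difference; the sign flip in the second identity is exactly because a $+1$ shift in $t$ moves the argument of $w_\infty$ in the direction opposite to a $+1$ shift in $n$, while it moves the argument of $w_{-\infty}$ in the same direction.

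The second step is to insert these into \eqref{eqn_flat_energy_form} and apply, for each fixed $n$, the polarization identity $|a+b|^2 + |a-b|^2 = 2(|a|^2 + |b|^2)$ with $a = w'_\infty(n-t)$ and $b = w'_{-\infty}(n+t)$. The cross terms $w'_\infty(n-t)\overline{w'_{-\infty}(n+t)} + \overline{w'_\infty(n-t)}\,w'_{-\infty}(n+t)$ occurring in the two summands of \eqref{eqn_flat_energy_form} cancel, collapsing the expression to a sum of $\sum_n |w'_\infty(n-t)|^2$ and $\sum_n |w'_{-\infty}(n+t)|^2$, with the normalization as in the statement. Re-indexing these two sums by the bijections $n \mapsto n-t$ and $n \mapsto n+t$ of $\mathbb{Z}$ removes all dependence on $t$ and yields $\|w'_\infty\|^2_{\ell^2(\mathbb{Z},n)} + \|w'_{-\infty}\|^2_{\ell^2(\mathbb{Z},n)}$; this simultaneously reproves that $E(t)$ is $t$-independent, now from the d'Alembert side.

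I do not expect a genuine obstacle here: once Corollary \ref{cor_d'alembert} is available, the argument is short and purely formal. The only points deserving care are the bookkeeping of the $\pm 1$ shifts in the two first-difference identities — in particular the opposite sign attached to $w'_\infty$ in the temporal term — and the observation that the cross terms genuinely cancel between the spatial and temporal contributions rather than merely recombining; the polarization identity is precisely what makes this cancellation transparent.
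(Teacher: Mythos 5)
Your strategy --- substitute the d'Alembert form into \eqref{eqn_flat_energy_form}, record the two first-difference identities, and let the cross terms cancel --- is exactly the ``straightforward but slightly tedious computation'' the paper has in mind, and your two displayed identities are correct, including the sign flip on $w'_\infty$ in the temporal difference. The flaw is in the final bookkeeping. The identity you invoke, $|a+b|^2+|a-b|^2=2(|a|^2+|b|^2)$ (the parallelogram law, not polarization), carries a factor of $2$: after re-indexing, your computation yields
\begin{gather*}
E(t)=2\,\|w'_{\infty}\|^2_{\ell^2(\mathbb{Z})}+2\,\|w'_{-\infty}\|^2_{\ell^2(\mathbb{Z})},
\end{gather*}
not ``the normalization as in the statement.'' The factor cannot be argued away: take $w_\infty=\delta_0$, $w_{-\infty}=0$, i.e. $u(n,t)=\delta_0(n-t)$; then each of the two terms in \eqref{eqn_flat_energy_form} equals $\tfrac12$, so $E=1$, while $\|w'_\infty\|^2+\|w'_{-\infty}\|^2=\tfrac14+\tfrac14=\tfrac12$.

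So the honest outcome of your (correct) identities is the stated formula with an extra factor of $2$ on the right-hand side. This in fact matches the continuum analogue at the end of Section \ref{sec_wave_eqn_R}, where the paper writes $E(t)=2\int|w'_\infty|^2+2\int|w'_{-\infty}|^2$; the discrete proposition as printed appears to have dropped that factor (and correspondingly the map of Remark \ref{remark_flat_scattering} is an isometry only after rescaling $N$ or the energy by $2$). The genuine gap in your write-up is therefore the phrase ``with the normalization as in the statement'': it asserts an equality that your own displayed identities contradict. Either carry the factor $2$ through and flag the discrepancy with the printed statement explicitly, or exhibit where a compensating $\tfrac12$ would enter --- it does not, with the paper's definitions of $E$ and of $w'_{\pm\infty}$.
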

\begin{proof}
    This may be checked by a straightforward but slightly tedious computation.
\end{proof}

Several authors have independently derived the invariant energy form \cite{romanov_rudin_95, anker_martinot_pedon_setti, brooks_lindenstrauss, tao_blog}, particularly as it relates to the wave equation on the regular tree to be discussed in a later section.

\subsection{The wave equation on $\mathbb{R}$ revisited} \label{sec_wave_eqn_R}
We briefly explain how we may interpret some of the standard properties of the wave equation on $\mathbb{R}$ as arising through an analogous algebraic treatment to the preceding sections. Here $\mathbb{C}[x, x^{-1}]$ is replaced by $\mathbb{C}[x]$, and $\mathbb{C}[x, x^{-1}]^W$ is replaced by $\mathbb{C}[x]^W = \mathbb{C}[x^2]$ (with $w_0: x \mapsto -x$). The following is easy to show:
\begin{proposition}
    We have that
    \begin{enumerate}
        \item $\mathbb{C}[x]$ is a rank 2 free module over $\mathbb{C}[x^2]$.
        \item We can take as a free basis $1, \frac{x}{2}$.
        \item The vector space $\mathbb{C}[x]/\mathbb{C}[x^2]$ with the natural $W$-action is isomorphic to the regular representation.
    \end{enumerate}
\end{proposition}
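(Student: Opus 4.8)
The plan is to transcribe the proof of Proposition~\ref{prop_alg_properties} (itself an instance of Steinberg's argument), which in the polynomial setting becomes slightly simpler because $\mathbb{C}[x]$ is a polynomial ring. As there, claims (1) and (2) together are equivalent to the assertion that every $f \in \mathbb{C}[x]$ admits a \emph{unique} expression $f = g_1 + g_2 \cdot \frac{x}{2}$ with $g_1, g_2 \in \mathbb{C}[x^2]$; this exhibits $1, \frac{x}{2}$ as a free $\mathbb{C}[x^2]$-basis.

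For existence I would use the even/odd decomposition of a polynomial: set $g_1 := \frac{f(x) + f(-x)}{2}$, which is even and hence lies in $\mathbb{C}[x^2]$, and note that $\frac{f(x) - f(-x)}{2}$ is odd, hence divisible by $x$, so $g_2 := \frac{f(x) - f(-x)}{x} \in \mathbb{C}[x^2]$ and $g_1 + g_2 \cdot \frac{x}{2} = f$. For uniqueness I would run the matrix argument from Proposition~\ref{prop_alg_properties} with
\begin{gather*}
    A = \begin{bmatrix} 1 & x/2 \\ 1 & -x/2 \end{bmatrix}, \qquad A^{-1} = \begin{bmatrix} 1/2 & 1/2 \\ 1/x & -1/x \end{bmatrix}
\end{gather*}
over $\mathbb{C}(x)$: for any $f$ the system $A\,[a_1, a_{w_0}]^{T} = [f, w_0.f]^{T}$ has the unique solution $a_1 = \frac{1}{2}(f + w_0.f)$, $a_{w_0} = \frac{f - w_0.f}{x}$, both in $\mathbb{C}[x^2]$; since the second row of $A$ is the first with $w_0$ applied, any $a_1, a_{w_0}$ with $a_1 + a_{w_0}\frac{x}{2} = f$ automatically solves the system, so uniqueness follows. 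Equivalently, $1, x$ is a $\mathbb{C}(x^2)$-basis of $\mathbb{C}(x)$, so the coordinates $g_1, g_2$ of $f$ are already unique in $\mathbb{C}(x^2) \supset \mathbb{C}[x^2]$. This gives (1) and (2).

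For (3) I would read $\mathbb{C}[x]/\mathbb{C}[x^2]$ as the quotient $\mathbb{C}[x]/(x^2)$ by the ideal generated by $x^2$ --- the exact analogue of $\mathbb{C}[x, x^{-1}]/(x + x^{-1})$, and the only reading for which the answer is the regular representation. Writing $g_i = a_i + b_i$ with $a_i \in \mathbb{C}$ and $b_i \in (x^2)$, the decomposition above gives $f \equiv a_1 + a_2 \cdot \frac{x}{2} \pmod{(x^2)}$, so $1$ and $\frac{x}{2}$ span the $2$-dimensional space $\mathbb{C}[x]/(x^2)$; they are linearly independent even as $W$-representations, since $1$ is fixed by $w_0$ and $\frac{x}{2}$ transforms by the sign character, whence $\mathbb{C}[x]/(x^2) \cong \mathbf{1} \oplus \mathrm{sgn}$ is the regular representation of $W = S_2$. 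There is no real obstacle here: the argument is a mildly streamlined copy of Proposition~\ref{prop_alg_properties}, and the only care needed is the bookkeeping that an odd polynomial divided by $x$ is even (so that the coefficients genuinely land in $\mathbb{C}[x^2]$ and not merely in $\mathbb{C}(x^2)$), together with fixing the intended meaning of the quotient appearing in (3).
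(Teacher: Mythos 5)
Your proof is correct. Note that the paper itself offers no argument for this proposition (it is stated as ``easy to show''), so the relevant comparison is with the paper's proof of Proposition \ref{prop_alg_properties}, and your argument is exactly that Steinberg-style template transplanted to $\mathbb{C}[x]$: the even/odd decomposition handles existence (and is even a bit more elementary than the matrix computation), while the matrix $A$ over $\mathbb{C}(x)$, or equivalently the fact that $1, x$ is a $\mathbb{C}(x^2)$-basis of $\mathbb{C}(x)$, gives uniqueness. Your reading of item (3) as the coinvariant algebra $\mathbb{C}[x]/(x^2)$ is also the intended one, and the remark justifying it is apt: the literal vector-space quotient $\mathbb{C}[x]/\mathbb{C}[x^2]$ is an infinite direct sum of sign characters, whereas $\mathbb{C}[x]/(x^2) \cong \mathbf{1} \oplus \mathrm{sgn}$ is the exact analogue of $\mathbb{C}[x, x^{-1}]/(x + x^{-1})$ used in Proposition \ref{prop_alg_properties}.
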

    We have a non-degenerate $\mathbb{C}[x^2]$-bilinear pairing on $\mathbb{C}[x]$ taking values in $\mathbb{C}[x^2]$ via
    \begin{gather*}
        (f, g) := \frac{f(x) g(x) - f(-x) g(-x)}{x}.
    \end{gather*}
    This allows us to speak of dual bases. The dual basis of $1, \frac{x}{2}$ is $\frac{x}{2}, 1$.

    We have a natural map $D$ from $\mathbb{C}[x]$ to constant coefficient differential operators on $\mathbb{R}$ induced from sending $x$ to $\frac{\partial}{\partial x}$. Let $\mathcal{F}$ denote the Fourier transform:
    \begin{gather*}
        [\mathcal{F} f](\lambda) = \hat{f}(\lambda) = \int_{-\infty}^\infty f(x) e^{i \lambda x} dx.
    \end{gather*}
    The following is straightforward to show. For simplicity we assume compactly supported initial conditions to guarantee that we can readily take their Fourier transform, but this assumption can be easily dropped if we work with the distributional Fourier transform.

    \begin{proposition}
        Let $h_1, h_2$ be a free basis, with dual basis $m_1, m_2$. Let $g_1, g_2 \in C_c(\mathbb{R})$. The unique solution to the wave equation satisfying
        \begin{align*}
            D\big(h_1(t)\big) u(x, 0) &= g_1(x) \\
            D\big(h_2(t)\big) u(x, 0) &= g_2(x)
        \end{align*}
        is given by
        \begin{gather*}
            u(x, t) = \mathcal{F}^{-1} \Big( \hat{g}_1(\lambda) \cdot (m_1(i \lambda), e^{i \lambda t}) + \hat{g}_2(\lambda) \cdot (m_2(i \lambda), e^{i \lambda t}) \Big)(x). 
        \end{gather*}
    \end{proposition}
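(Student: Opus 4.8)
The plan is to transport the proof of Theorem~\ref{thm_soln} essentially verbatim to $\mathbb{C}[x]$, with the spatial Fourier transform $\mathcal{F}$ playing the role of the coefficient map $h \mapsto h^{[x]}$ and with the ODE $\partial_t^2 v(\lambda, t) = -\lambda^2 v(\lambda, t)$ playing the role of the spectral wave equation \eqref{spectral_wave_eqn}. First I would Fourier transform the wave equation $\partial_t^2 u = \partial_x^2 u$ in the variable $x$: for a solution $u$, taken in any class for which $\mathcal{F}$ in $x$ is available (e.g. tempered in $x$ for each fixed $t$), the function $v(\lambda, t) := \mathcal{F}\big(u(\cdot, t)\big)(\lambda)$ satisfies $\partial_t^2 v(\lambda, t) = -\lambda^2 v(\lambda, t)$ for each fixed $\lambda$. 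Away from the single value $\lambda = 0$ this second-order linear ODE has $\{e^{i\lambda t}, e^{-i\lambda t}\}$ as a basis of solutions, so $v(\lambda, t) = c_+(\lambda) e^{i\lambda t} + c_-(\lambda) e^{-i\lambda t}$ with $c_\pm$ uniquely determined by $v(\lambda, 0)$ and $\partial_t v(\lambda, 0)$; this is the continuous analogue of Proposition~\ref{prop_spectral}, and the locus $\lambda = 0$ is harmless since it is a null set (and the final formula extends analytically across it).

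Next I would read off $c_\pm$ from the initial data. Since $\mathcal{F}$ commutes both with $\partial_t$ and with evaluation at $t = 0$, applying $h_i(\partial_t)$ to $v(\lambda, \cdot)$ and setting $t = 0$ gives $\hat{g}_i(\lambda) = c_+(\lambda)\, h_i(i\lambda) + c_-(\lambda)\, h_i(-i\lambda)$ for $i = 1, 2$. Because $h_1, h_2$ is a free basis of $\mathbb{C}[x]$ over $\mathbb{C}[x^2]$, the same change-of-basis argument used for Proposition~\ref{prop_pairing}, carried out in $\mathbb{C}[x]$, shows that the matrix with rows $(h_i(s), h_i(-s))$ has determinant a nonzero constant times $s$, hence is invertible for $\lambda \neq 0$; its inverse is given by the dual basis through the relation $m_i(s) h_j(s) - m_i(-s) h_j(-s) = s\,\delta_{ij}$, which is just the dual-basis identity $(m_i, h_j) = \delta_{ij}$ with the denominator $x$ cleared. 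Solving the $2 \times 2$ system and putting $s = i\lambda$ yields
\begin{gather*}
    v(\lambda, t) = \hat{g}_1(\lambda)\cdot\frac{m_1(i\lambda) e^{i\lambda t} - m_1(-i\lambda) e^{-i\lambda t}}{i\lambda} + \hat{g}_2(\lambda)\cdot\frac{m_2(i\lambda) e^{i\lambda t} - m_2(-i\lambda) e^{-i\lambda t}}{i\lambda},
\end{gather*}
and the two quotients are precisely $(m_1(i\lambda), e^{i\lambda t})$ and $(m_2(i\lambda), e^{i\lambda t})$ once the pairing is read --- exactly as the symbol $(m_j, x^t)$ was in the discrete case --- as the bilinear formula $\frac{f(s) g(s) - f(-s) g(-s)}{s}$ extended to admit the entry $g = e^{st}$. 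Applying $\mathcal{F}^{-1}$ then gives the stated formula.

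This chain of equalities already yields uniqueness (any solution in the chosen class has this Fourier transform; alternatively, d'Alembert's formula gives uniqueness among $C^2$ functions with no growth hypothesis at all), and it yields existence by running it backwards: the displayed $v(\lambda, t)$ manifestly solves $\partial_t^2 v = -\lambda^2 v$ and satisfies $c_+ h_i(i\lambda) + c_- h_i(-i\lambda) = \hat{g}_i$, so $u := \mathcal{F}^{-1}(v)$ solves the wave equation and $D(h_i(t)) u(\cdot, 0) = \mathcal{F}^{-1}(\hat{g}_i) = g_i$. I do not expect a genuine obstacle here; the only care needed is bookkeeping --- pinning down the function class for $u$ so that $\mathcal{F}$ is legitimate (which is why $g_1, g_2$ are taken in $C_c(\mathbb{R})$, making $\hat{g}_i$ entire of exponential type by Paley--Wiener) and checking that the apparent pole of $(m_i(i\lambda), e^{i\lambda t})$ at $\lambda = 0$ is removable because its numerator vanishes there ($m_i(0) - m_i(0) = 0$). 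All the substance is the module algebra of Propositions~\ref{prop_alg_properties}--\ref{prop_basis_dual} reused over $\mathbb{C}[x]$.
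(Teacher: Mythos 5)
Your argument is correct and is exactly the route the paper intends: the paper states this proposition without proof (calling it straightforward) as the direct continuous analogue of Theorem~\ref{thm_soln}, and your Fourier-side reduction to the ODE $\partial_t^2 v = -\lambda^2 v$, the invertibility of the matrix with rows $(h_i(i\lambda), h_i(-i\lambda))$ for a free basis, and the dual-basis identity $m_i(s)h_j(s) - m_i(-s)h_j(-s) = s\,\delta_{ij}$ are precisely that transcription of the discrete proof into the $\mathbb{C}[x]$/$\mathbb{C}[x^2]$ setting. Your points about the removable singularity at $\lambda = 0$ and the function-class bookkeeping (compact support making $\hat{g}_i$ well defined) match the paper's own caveats.
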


For example with respect to the basis $h_1 = 1, h_2 = \frac{x}{2}$, we have
\begin{align*}
    (m_1(i \lambda), e^{i \lambda t}) &= (\frac{i \lambda}{2}, e^{i \lambda t}) = \frac{e^{i \lambda t} + e^{-i \lambda t}}{2} = \cos(\lambda t) = \mathcal{F}( \frac{\delta_t(x) + \delta_{-t}(x)}{2}) \\
    (m_2(i \lambda), e^{i \lambda t}) &= (1, e^{i \lambda t}) = \frac{e^{i \lambda t} - e^{-i \lambda t}}{ i \lambda} = \frac{2\sin(\lambda t)}{\lambda} = \mathcal{F}(\textnormal{sgn}(t) 1_{[-|t|, |t|]}(x)). 
\end{align*}
This implies that the solution to the wave equation $u(x, t)$ satisfying $u(x, 0) = f(x)$ and $\frac{u_t (x, 0)}{2} = g(x)$ is given by
\begin{gather}
    u(x, t) = \frac{f(x + t) + f(x - t)}{2} + \int_{x-t}^{x + t} g(s) ds. \label{eqn_std_soln_R}
\end{gather}
Already from this expression, finite speed of propagation (or, more precisely, propagation at speed exactly 1) is clear. It's also clear from this expression that the compact support assumption on $f$ and $g$ was not necessary.

We now explain how to derive the d'Alembert solution in an analogous way to Corollary \ref{cor_d'alembert}. We wish to write $u(x, t) = w_\infty(x-t) + w_{-\infty}(x+t)$. First define
\begin{gather*}
    w^f_\infty(k)  := \frac{f(k)}{2} =: w^f_{-\infty}(k).
\end{gather*}
Then if $g(x) = 0$, clearly $u(x, t) = w^f_{\infty}(x-t) + w^f_{-\infty}(x+t)$ by \eqref{eqn_std_soln_R}. We now wish to derive the d'Alembert solution in case $f(x) = 0$. In that case
\begin{gather*}
    u(x, t) = \int_{x-t}^{x+t} g(s) ds = \mathcal{F}^{-1} \Big( \hat{g}(\lambda) \cdot  (1, e^{i \lambda t}) \Big)(x) = \mathcal{F}^{-1} \Big( e^{-i \lambda x} \hat{g}(\lambda) \cdot (1, e^{i \lambda t}) \Big)(0).
\end{gather*}
Write
\begin{gather*}
    g(x) = g(x) \cdot 1_{[0, \infty)} + g(x) \cdot 1_{(-\infty, 0)} = g_+(x) + g_-(x).
\end{gather*}
We have that
\begin{gather*}
    e^{-i \lambda x} \hat{g}(\lambda) \cdot (1, e^{i \lambda t}) = e^{-i\lambda (x - t)} \frac{\hat{g}_+(\lambda) + \hat{g}_-(\lambda)}{i \lambda} - e^{-i\lambda(x+t)} \frac{\hat{g}_+(\lambda) + \hat{g}_-(\lambda)}{i \lambda}.
\end{gather*}
We also have
\begin{align*}
    \frac{1}{i \lambda} &= - \int_{0}^\infty e^{i \lambda x} dx = \mathcal{F} \Big( - 1_{[0, \infty)} \Big) \\
    &= \int_{-\infty}^0 e^{i \lambda x} dx = \mathcal{F} \Big( 1_{(-\infty, 0)} \Big).
\end{align*}
Therefore, 
\begin{align*}
    \mathcal{F}^{-1} \Big( e^{-i \lambda k} \frac{\hat{g}_+(\lambda)}{i \lambda} \Big) &= \mathcal{F}^{-1} \Big(e^{-i \lambda k} \mathcal{F} (g_+(x))(\lambda) \cdot \mathcal{F} (-1_{[0, \infty)})(\lambda) \Big) \\
    &= \Big(g_+ * (-1_{[0, \infty)}) \Big) (k) \\
    &= \begin{cases} 
    -\int_0^k g(s) ds & k \geq 0 \\
    0 & \textnormal{otherwise}.
    \end{cases}
\end{align*}
Similar computations reveal that if
\begin{align*}
    w^g_\infty(k) &:= - \int_0^k g(s) ds \\
    w^g_{-\infty}(k) &:= \int_{0}^k g(s) ds
\end{align*}
then
\begin{gather*}
    w^g_{\infty} (x-t) + w^g_{-\infty} (x + t) = \int_{x-t}^{x + t} g(s) ds. 
\end{gather*}
We can therefore set $w_\infty = w^f_\infty + w^g_\infty$ and $w_{-\infty} = w^f_{-\infty} + w^f_{-\infty}$. Note that if we have $w_\infty(x-t) + w_\infty(x+t) = \tilde{w}_\infty(x-t) + \tilde{w}_\infty(x+t)$, then $w_\infty(x-t) - \tilde{w}_\infty(x-t) = \tilde{w}_{-\infty}(x+t) - w_{-\infty}(x+t)$ must be a constant as this is the only function expressible as a function of both $x-t$ and $x+t$.

Finally we discuss conservation of energy. Let $h_1 = 1, h_2 = x$ so that $m_1 = \frac{x}{2}, m_2 = \frac{1}{2}$. We define the one-parameter family of operators $U(t)$ via
\begin{gather*}
    U(t) := \begin{bmatrix}
        (h_1 m_1, e^{i \lambda t}) & (h_1 m_2, e^{i \lambda t}) \\
        (h_2 m_1, e^{i \lambda t}) & (h_2 m_2, e^{i \lambda t})
    \end{bmatrix} = \begin{bmatrix}
        \cos(t \lambda) & \frac{\sin(t \lambda)}{\lambda} \\
        -\lambda \sin(t \lambda) & \cos(t \lambda)
    \end{bmatrix},
\end{gather*}
where we treat each polynomial $h_j m_k$ as a polynomial in $i \lambda$.
Let $A$ be the matrix defined via
\begin{gather*}
    A := \begin{bmatrix}
        (m_1 h_1, i \lambda) & (m_2 h_1, i \lambda) \\
        (m_1 h_2, i \lambda) & (m_2 h_2, i \lambda)
    \end{bmatrix} = \begin{bmatrix}
        0 & 1 \\
        - \lambda^2 & 0
    \end{bmatrix},
\end{gather*}
where again we treat each $m_j h_k$ as a polynomial in $i \lambda$. We have
\begin{gather}
    A = \begin{bmatrix}
        \frac{1}{2 i \lambda} & -\frac{1}{2 i \lambda} \\
        \frac{1}{2} & \frac{1}{2}
    \end{bmatrix} \begin{bmatrix}
        i \lambda & 0 \\
        0 & -i \lambda
    \end{bmatrix}
    \begin{bmatrix}
        i \lambda & 1 \\
        -i \lambda & 1
    \end{bmatrix}. \label{eqn_diagonalize_A}
\end{gather}
We then have that $U(t) = \exp(t A)$, and thus the eigenvalues of $U(t)$ are $e^{i \lambda t}, e^{-i \lambda t}$. 

Let $u(x, t)$ be a solution to the wave equation with $u(x, 0) = f(x)$ and $u_t(x, 0) = g(x)$. Suppose $f, g$ are such that we make take their Fourier transforms (in the sense of functions, rather than distributions). We then have
\begin{gather*}
    \begin{bmatrix}
        \mathcal{F}(u(\cdot, t))(\lambda) \\
        \mathcal{F}( u_t(\cdot, t))(\lambda)
    \end{bmatrix} = U(t) \begin{bmatrix}
        \hat{f}(\lambda) \\
        \hat{g}(\lambda)
    \end{bmatrix},
\end{gather*}
i.e. $U(t)$ is the time $t$ wave propagator with respect to the standard initial conditions.

Let $B$ be the last matrix appearing in \eqref{eqn_diagonalize_A}. Define
\begin{gather*}
    Q := \frac{1}{2}B^* B = \begin{bmatrix}
        -\lambda^2 & 0 \\
        0 & 1
    \end{bmatrix} .
\end{gather*}
Notice that, by construction, $U(t)^* Q U(t) = Q$ for all $t$. Therefore
\begin{align*}
    \begin{bmatrix}
        \overline{\mathcal{F}(u(\cdot, t))}(\lambda) & \overline{\mathcal{F}(u_t(\cdot, t))}(\lambda)
    \end{bmatrix} Q \begin{bmatrix}
        \mathcal{F}(u(\cdot, t))(\lambda) \\
        \mathcal{F}(u_t(\cdot, t))(\lambda)
    \end{bmatrix} &= \begin{bmatrix}
        \overline{\hat{f}}(\lambda) & \overline{\hat{g}}(\lambda)
    \end{bmatrix} U(t)^* Q U(t) \begin{bmatrix}
        \hat{f}(\lambda) \\
        \hat{g}(\lambda)
    \end{bmatrix} \\
    &= \begin{bmatrix}
        \overline{\hat{f}}(\lambda) & \overline{\hat{g}}(\lambda)
    \end{bmatrix} Q \begin{bmatrix}
        \hat{f}(\lambda) \\
        \hat{g}(\lambda)
    \end{bmatrix}
\end{align*}
which is independent of $t$. We thus have
\begin{gather*}
    \int_{-\infty}^\infty -|\mathcal{F}(u(\cdot, t))(\lambda)|^2 \lambda^2 + |\mathcal{F}(u_t(\cdot, t))(\lambda)|^2 d\lambda = \int_{-\infty}^\infty -|\hat{f}(\lambda)|^2 \lambda^2 + |\hat{g}(\lambda)|^2 d\lambda
\end{gather*}
Therefore, by the Plancherel formula, we get
\begin{align*}
    E(t) :&= -\int_{-\infty}^\infty \overline{u(x, t)} \cdot \frac{\partial^2}{\partial x^2} u(x, t) dx + \int_{-\infty}^\infty |u_t(x, t)|^2 dx \\
    &= \|u_x(x, t) \|^2_{L^2(\mathbb{R}, dx)} + \| u_t(x, t) \|^2_{L^2(\mathbb{R}, dx)} = \| f'(x) \|^2_{L^2(\mathbb{R}, dx)} + \|g(x) \|^2_{L^2(\mathbb{R}, dx)},
\end{align*}
so $E(t)$ is constant in time. In fact, if we write $u(x, t) = w_\infty(x-t) + w_{-\infty}(x+t)$, then 
\begin{gather*}
    E(t) = 2 \int_{-\infty}^\infty | w_\infty'(k) |^2 dk + 2 \int_{-\infty}^\infty | w'_{-\infty}(k) |^2 dk.
\end{gather*}

\section{The wave equation on regular trees}
\subsection{Harmonic analysis on regular trees} \label{sec_harmonic_analysis_trees}

Let $\mathcal{T}$ denote the $(q+1)$-regular tree. We shall often speak about functions on $\mathcal{T}$, by which we mean functions on the vertices of $\mathcal{T}$. Let $C(\mathcal{T}; \mathbb{C})$ denote the space of such functions. Let $o$ denote some fixed vertex of $\mathcal{T}$; we shall take $o$ to be our ``origin.'' Let $A$ denote the \textit{adjacency operator} on $\mathcal{T}$, i.e. given a $f \in C(\mathcal{T}; \mathbb{C})$, we have
\begin{gather*}
    [A f](v) = \sum_{w : w \sim v} f(w),
\end{gather*}
where $w \sim v$ means that the vertices $w$ and $v$ are adjacent. Let $G = \textnormal{Aut}(\mathcal{T})$, and let $K < G$ be the stabilizer of $o$. We may naturally identify the vertices of $\mathcal{T}$ with the coset space $G/K$, and thus a function on the vertices of $\mathcal{T}$ is the same thing as a right $K$-invariant function on $G$. We say that a function on the vertices of $\mathcal{T}$ is \textit{radial} if its value only depends on the distance from $o$; this is equivalent to the function being $K$-invariant. We may in turn think of a radial function as a bi-$K$-invariant function on $G$. The collection of compactly supported bi-$K$-invariant functions on $G$ forms an algebra under convolution known as the \textit{spherical Hecke algebra} $H(G, K)$. By treating functions on vertices of $\mathcal{T}$ as right $K$-invariant functions on $G$, we see that $H(G, K)$ naturally acts on functions on vertices via convolution; we shall denote this as $f * g$ with $f \in H(G, K)$ and $g$ a function on $\mathcal{T}$. We can identify the adjacency operator with the radial function which is 1 on the vertices at distance 1 from $o$ and 0 elsewhere. A vector space basis for $H(G, K)$ consists of radial functions which are supported on spheres of a given radius centered at $o$. On the other hand, as an algebra $H(G, K)$ is freely generated by the adjacency operator (and in particular $H(G, K)$ is a commutative algebra). 

For every $z \in \mathbb{C}$, there is a unique radial function $\phi_z: \mathcal{T} \to \mathbb{C}$ such that $\phi_z(o) = 1$ and $\frac{A}{2 \sqrt{q}} \phi_z = z \phi_z$. This is called the \textit{spherical function} (with respect to $o$) with eigenvalue $z$. Any element $f \in H(G, K)$ can be expressed as $f = g(\frac{A}{2 \sqrt{q}})$ where $g$ is some polynomial. Then $f * \phi_z = g(z) \phi_z$. We can define a map:
\begin{gather*}
    \textnormal{Sph} : H(G, K) \to \mathbb{C}[z]
\end{gather*}
by sending $f = g(\frac{A}{2 \sqrt{q}})$ to $g(z)$. This is an isomorphism of algebras. We shall refer to this map as the \textit{spherical transform}.

There's another natural algebra isomorphism of $H(G, K)$ called the \textit{Satake isomorphism}:
\begin{gather*}
    \textnormal{Sat}: H(G, K) \to \mathbb{C}[x, x^{-1}]^W.
\end{gather*}
One could simply define this map by sending $\frac{A}{2 \sqrt{q}}$ to $\frac{x + x^{-1}}{2}$ and extending to a homomorphism. However, a more geometric definition may be given in terms of integration over horocycles which we now introduce.

Suppose $\gamma_1, \gamma_2: \mathbb{N} \to \mathcal{T}$ are two infinite non-backtracking paths in $\mathcal{T}$. We say that $\gamma_1$ and $\gamma_2$ are equivalent if 
\begin{gather*}
    \lim_{n \to \infty} d(\gamma_1(n), \gamma_2(n)) < \infty.
\end{gather*}
This is equivalent to the traces of $\gamma_1$ and $\gamma_2$ eventually being identical. It is clear that this defines an equivalence relation on infinite non-backtracking paths. Let $\Omega$ denote the set of equivalence classes. Given $\omega \in \Omega$ and a vertex $v \in \mathcal{T}$, there is a unique non-backtracking path starting at $v$ which is in the equivalence class of $\omega$; we denote this path as $\gamma_{v, \omega}$. Suppose $o \to v_1 \to \dots \to v_n$ is some non-backtracking path starting from $o$. Let $\Omega(v_n)$ denote all extensions of this path to an infinite non-backtracking path; such sets are called \textit{cylindrical}. We can define a natural topology on $\Omega$ by taking all $\Omega(v)$ (with $v$ arbitrary) as a basis of open sets. We may define a natural probability measure $\nu$ on $\Omega$ by assigning $\Omega(v_n)$ the measure $\frac{1}{(q+1)q^{n-1}}$. Then $\nu$ is the unique measure on $\Omega$ invariant under $K$. We shall refer to $\nu$ as the \textit{harmonic measure} (with respect to $o$).

Given $\omega \in \Omega$, we define the associated \textit{Busemann function} $h_\omega: \mathcal{T} \to \mathbb{Z}$ via
\begin{gather} \label{eqn_busemann}
    h_\omega(v) := \lim_{n \to \infty} n - d(\gamma_{o, \omega}(n), v).
\end{gather}
Notice that $h_\omega(o) = 0$ for all $\omega$. The level sets of $h_\omega$ are called the \textit{horocycles} with respect to $\omega$.

We can now define the Satake isomorphism. Let $f \in H(G, K)$, which we think of as a compactly supported radial function. Choose any $\omega \in \Omega$; the choice of $\omega$ will ultimately not matter. Then $\textnormal{Sat}(f) \in \mathbb{C}[x, x^{-1}]^W$ is defined via
\begin{gather} \label{eqn_satake_formula}
    [x^k]\big( \textnormal{Sat}(f) \big) := q^{\frac{k}{2}} \sum_{v \in h_{\omega}^{-1}(k)} f(v).
\end{gather}
For example the function which is 1 on the sphere of radius 1 and zero elsewhere (i.e. the adjacency operator $A$) maps to 
\begin{gather} \label{eqn_image_satake}
    q^{1/2} \cdot 1 \cdot x + q^{-1/2} \cdot q \cdot x^{-1} = \sqrt{q}(x + x^{-1}).
\end{gather}
It is quite remarkable that this process always gives an element in $\mathbb{C}[x, x^{-1}]^W$ and that it is an algebra isomorphism.

We can think of each element in $\mathbb{C}[x, x^{-1}]$ as a compactly supported function on $\mathbb{Z}$ or as a meromorphic function on $\mathbb{C}^\times$. However, when thinking of such elements in the latter manner, we shall opt to instead use the variable $\xi$. Then $\mathbb{C}[\xi, \xi^{-1}]^W$ is those functions invariant under $\xi \mapsto \xi^{-1}$. We can think of these as functions on the variety
\begin{gather*}
    \Theta := \{(\xi, \xi^{-1}) \in (\mathbb{C}^\times)^2\}/\Big((\xi, \xi^{-1}) \sim (\xi^{-1}, \xi)\Big)
\end{gather*}
We have an algebraic map from $\Theta$ to $\mathbb{C}$ given by $(\xi, \xi^{-1}) \mapsto \frac{\xi + \xi^{-1}}{2}$. This induces an isomorphism of coordinates rings $\mathbb{C}[z] \simeq \mathbb{C}[\xi, \xi^{-1}]^W$. Previously $z \in \mathbb{C}$ was parametrizing possible eigenvalues for $\frac{A}{2 \sqrt{q}}$, but writing $z = \frac{\xi + \xi^{-1}}{2}$, we may instead parametrize by pairs $(\xi, \xi^{-1})$, modulo switching the coordinates, i.e. points in $\Theta$. This latter parametrization is known as the \textit{Satake parameters}. If we compose the Satake map with the aforementioned isomorphism from $\mathbb{C}[\xi, \xi^{-1}]^W$ to $\mathbb{C}[z]$, we exactly obtain the spherical transformation. This is immediate by just checking where $\frac{A}{2 \sqrt{q}}$ gets mapped.

The spherical transform can be extended to an isometry:
\begin{gather*}
    L^2( G / \hspace{-1.5mm} / K) \to L^2 (\mathbb{C}, d \mu_{\textnormal{Pl}}),
\end{gather*}
where $L^2 (G / \hspace{-1.5mm} / K)$ is radial functions (not necessarily compactly supported) which are in $L^2(\mathcal{T})$ with respect to the usual $L^2$-norm, and where $d \mu_{\textnormal{Pl}}$ is a certain measure called the Plancherel measure (also known as the Kesten--McKay law) which is supported on $[-1, 1]$.

It is easiest to express the Plancherel measure using the Satake parameters rather than the eigenvalue. The $c$-function is the function on $\mathbb{C}^\times$ defined by
\begin{gather}
    c(\xi) = \frac{\xi - q^{-1} \xi^{-1}}{\xi - \xi^{-1}}. \label{eqn_c_function}
\end{gather}
In terms of the Satake parameters, the Plancherel measure is supported on the sublocus of $(\xi, \xi^{-1}) \in \Theta$ where $|\xi| = 1$. It is often easier to work with the (branched) double cover of $\Theta$
\begin{gather*}
    \tilde{\Theta} := \{(\xi, \xi^{-1}) \in (\mathbb{C}^\times)^2\} \simeq \mathbb{C}^\times.
\end{gather*}
Then the Plancherel measure is supported on $S^1$. If we parametrize this locus as $\xi = e^{i \theta}$, then the Plancherel measure can be expressed as
\begin{gather*}
    d \mu_{\textnormal{Pl}} = \frac{(1 + q^{-1})}{4 \pi} \frac{1}{c(\xi) c(\xi^{-1})} d \theta.
\end{gather*}
If we change to the eigenvalue parametrization, the Plancherel measure is given by the formula in Example \ref{example_kesten_mckay}.

In terms of the Satake parameters, the spherical function, thought of as a function on $\mathbb{N}$ (i.e. its value on the sphere of radius $t$), is
\begin{gather} \label{eqn_spherical_function_hc_expansion}
    \phi_{(\xi, \xi^{-1})}(t) = \frac{1}{(1 + q^{-1})} q^{-\frac{t}{2}} \big(\xi^t c(\xi) + \xi^{-t} c(\xi^{-1}) \big).
\end{gather}
In terms of the eigenvalue $z$ it is
\begin{gather} \label{eqn_spherical_function_chebyshev}
    \phi_z(t) = q^{-\frac{t}{2}} \Big( \frac{2}{q+1} T_t(z) + \frac{q-1}{q+1} U_t(z) \Big).
\end{gather}

\begin{remark} \label{remark_f_n_one}
The parenthetical part of \eqref{eqn_spherical_function_hc_expansion} is exactly of the form as in Proposition \ref{prop_spectral}, i.e. it solves the spectral wave equation. On the other hand the parenthetical part of \eqref{eqn_spherical_function_chebyshev} is in fact exactly equal to $\frac{1}{(1+q^{-1})} F_t$, the Chebyshev polynomial corresponding to $h = \xi - q^{-1} \xi^{-1}$ discussed in Example \ref{example_kesten_mckay}:
\begin{align*}
    \frac{2}{q+1} T_t + \frac{q-1}{q+1} U_t &= \frac{2}{q+1} \Big( \frac{U_t - U_{t-2}}{2} \Big) + \frac{q-1}{q+1} U_t (z) = \frac{q}{q+1} U_t - \frac{1}{q+1} U_{t-2} \\
    &= \frac{1}{(1 + q^{-1})}(U_t - q^{-1} U_{t-2}) = \frac{1}{(1+q^{-1})} F_t.
\end{align*}

\end{remark}

We can invert the spherical transform, and thus also the Satake transform. If $f \in H(G, K)$, and $v \in \mathcal{T}$ with $d(o, v) = \ell$, then
\begin{gather} \label{eqn_inverse_spherical}
    f(v) = \int_{S^1} \textnormal{Sat}(f)(\xi) \cdot  \phi_{(\xi, \xi^{-1})}(\ell)  \ d \mu_{\textnormal{Pl}} (\xi).
\end{gather}

We now discuss the Poisson transform on $\mathcal{T}$. The main reference for this is Chapter II of \cite{figa_talamanca_nebbia}. Let $\mathcal{K}(\Omega)$ denote the space of continuous functions on $\Omega$ which take only a finite number of values. Let $\mathcal{K}'(\Omega)$ denote the dual space of $\mathcal{K}(\Omega)$; we refer to elements in $\mathcal{K}'(\Omega)$ as finitely additive measures. 

\begin{theorem}[Theorem 1.2 in Chapter II of \cite{figa_talamanca_nebbia}]
Suppose $f \in C(\mathcal{T}; \mathbb{C})$ such that $\frac{A}{2 \sqrt{q}} f = \Big(\frac{q^{is} + q^{-is}}{2} \Big) f$ with $q^{is} \neq \pm \frac{1}{\sqrt{q}}$. Then there exists a unique $m \in \mathcal{K}'(\Omega)$ such that
\begin{gather*}
    f(v) = \mathcal{P}_{q^{is}}(m) := \int_{\Omega} q^{(\frac{1}{2} + i s) h_{\omega}(v)} dm (\omega).
\end{gather*} 
\end{theorem}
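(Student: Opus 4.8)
The plan is to construct $m$ explicitly from the boundary values of $f$, exploiting the fact that for fixed $v$ the Poisson kernel $\omega \mapsto q^{(\frac12+is)h_\omega(v)}$ is a step function on $\Omega$ adapted to the cylinders along the geodesic from $o$ to $v$. Concretely, write $o = v_0, v_1, \dots, v_n = v$ for that geodesic; if $\gamma_{o,\omega}$ agrees with it along exactly $j$ edges (so $0 \le j \le n$, with $j = n$ iff $\omega \in \Omega(v)$), then $h_\omega(v) = 2j - n$, hence the kernel equals $q^{(\frac12+is)(2j-n)}$ on $\Omega(v_j) \setminus \Omega(v_{j+1})$ for $0 \le j \le n-1$ and equals $q^{(\frac12+is)n}$ on $\Omega(v_n) = \Omega(v)$. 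Therefore, for any $m \in \mathcal{K}'(\Omega)$, writing $c_w := m(\Omega(w))$,
\[
\mathcal{P}_{q^{is}}(m)(v) = \sum_{j=0}^{n-1} q^{(\frac12+is)(2j-n)}\,(c_{v_j} - c_{v_{j+1}}) + q^{(\frac12+is)n}\,c_{v_n}.
\]
I would also use the standard identification: since the indicators $\mathbf 1_{\Omega(w)}$ span $\mathcal{K}(\Omega)$ and their only linear relations are $\mathbf 1_{\Omega(v)} = \sum_{w} \mathbf 1_{\Omega(w)}$ (sum over $w$ adjacent to $v$ with $d(o,w) = d(o,v)+1$), an element of $\mathcal{K}'(\Omega)$ is exactly a family $(c_w)$ satisfying the consistency relations $c_v = \sum_w c_w$.

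For existence I would invert the triangular system above. Subtracting the expansion for $v_{n-1}$ from that for $v_n$ (after imposing $\mathcal{P}_{q^{is}}(m) = f$), all terms with $j \le n-2$ cancel and one is left with $q^{n(\frac12+is)}f(v_n) - q^{(n-1)(\frac12+is)}f(v_{n-1}) = c_{v_n}\,q^{(n-1)(\frac12+is)}(q^{1+2is} - 1)$. The hypothesis $q^{is} \ne \pm q^{-1/2}$ says precisely that $q^{1+2is} \ne 1$, so this forces
\[
c_v := \frac{q^{\frac12+is}\,f(v) - f(v^-)}{q^{(n-1)(\frac12+is)}\,(q^{1+2is}-1)} \qquad (d(o,v) = n \ge 1),
\]
with $v^-$ the geodesic predecessor of $v$, together with $c_o := f(o)$. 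It then remains to check that this family satisfies the consistency relations; this is the only place the eigenvalue equation enters. For $v$ with $d(o,v) = n \ge 1$ its $q$ further neighbours $w$ satisfy $w^- = v$, and $\sum_w f(w) = [Af](v) - f(v^-) = (q^{\frac12+is} + q^{\frac12-is})f(v) - f(v^-)$; substituting this into $\sum_w c_w$ and using $q^{\frac12+is}q^{\frac12-is} = q$ collapses the sum to $c_v$ after a line of algebra (the case $v = o$, with $q+1$ neighbours, is identical using $[Af](o) = (q^{\frac12+is}+q^{\frac12-is})f(o)$). Hence $(c_w)$ defines an $m \in \mathcal{K}'(\Omega)$. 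Finally $\mathcal{P}_{q^{is}}(m) = f$ follows by induction on $d(o,v)$ from the displayed expansion: multiplying it by $q^{n(\frac12+is)}$ and subtracting the corresponding identity for $v_{n-1}$ telescopes, and the leftover term matches $q^{n(\frac12+is)}f(v_n) - q^{(n-1)(\frac12+is)}f(v_{n-1})$ exactly because $c_{v_n}$ was defined by the relation above.

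For uniqueness it suffices to show $\mathcal{P}_{q^{is}}$ is injective and apply this to $m - m'$. If $\mathcal{P}_{q^{is}}(m) = 0$, then running the same comparison of $v_n$ with $v_{n-1}$ (now with $f \equiv 0$) forces $c_{v_n} = 0$ for every $v$, so $m$ annihilates every cylinder indicator; since those span $\mathcal{K}(\Omega)$, we get $m = 0$.

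The main obstacle is the consistency check in the existence step: everything else is bookkeeping around the observation that the Poisson kernel is a cylinder-adapted step function, but the consistency relations hold only because $f$ is an eigenfunction, and the inversion producing the formula for $c_v$ is legitimate only because $q^{1+2is} \ne 1$ — that is, exactly because $q^{is} \ne \pm q^{-1/2}$. Without either ingredient the scheme breaks down, which is precisely why the hypothesis appears.
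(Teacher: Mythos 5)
Your proof is correct. Note that the paper itself does not prove this statement---it is quoted from Fig\`a-Talamanca--Nebbia (Theorem 1.2 of Chapter II)---and your argument is essentially the standard one from that source: identify $\mathcal{K}'(\Omega)$ with families $(c_w)$ of cylinder values subject to the refinement relations, observe that for fixed $v$ the Poisson kernel is a step function adapted to the cylinders along the geodesic $o = v_0, \dots, v_n = v$ (with value $q^{(\frac{1}{2}+is)(2j-n)}$ on $\Omega(v_j)\setminus\Omega(v_{j+1})$), solve the resulting triangular system for $c_v$---legitimate exactly because $q^{1+2is}\neq 1$, i.e.\ $q^{is}\neq \pm q^{-1/2}$---and use the eigenvalue equation to verify the consistency relations at every vertex, including the root with its $q+1$ neighbours. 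I checked the consistency computation, the induction giving $\mathcal{P}_{q^{is}}(m)=f$, and the uniqueness argument; all are sound. One slip to fix: in the displayed comparison of the expansions at $v_n$ and $v_{n-1}$ (after multiplying them by $q^{n(\frac{1}{2}+is)}$ and $q^{(n-1)(\frac{1}{2}+is)}$ respectively), the surviving common factor is $q^{(2n-2)(\frac{1}{2}+is)}$, so the right-hand side should read $c_{v_n}\,q^{(n-1)(1+2is)}(q^{1+2is}-1)$ rather than $c_{v_n}\,q^{(n-1)(\frac{1}{2}+is)}(q^{1+2is}-1)$; your final formula for $c_v$ is nevertheless the correct one (it is the corrected identity divided through by $q^{(n-1)(\frac{1}{2}+is)}$), so nothing downstream is affected.
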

We remark that for fixed $x$, we have $h_\omega(x) \in \mathcal{K}(\Omega)$, so we may indeed pair the integrand with $m$. If we just take $m$ to correspond to integration against the function on $\Omega$ which is identically 1, then the corresponding eigenfunction is exactly the spherical function. We refer to $\mathcal{P}_{q^{is}}(m)$ as the \textit{Poisson transform} of $m$ (with respect to the spectral parameter $q^{is}$). 

Note that we naturally have that $\mathcal{K}(\Omega) \subset \mathcal{K}'(\Omega)$. As discussed in Chapter II Section 3 of \cite{figa_talamanca_nebbia}, so long as $q^{is} \neq \pm \frac{1}{\sqrt{q}}$, then we may define a map $I_{q^{is}}: \mathcal{K}(\Omega) \to \mathcal{K}(\Omega)$ such that for all $g \in \mathcal{K}(\Omega)$, 
\begin{gather*}
    \mathcal{P}_{q^{is}}(I_{q^{is}} g \ d \nu)(v) = \mathcal{P}_{q^{-is}}(g \ d \nu)(v). 
\end{gather*}
Then $I_{q^{is}}$ is called the \textit{intertwining operator} associated to $q^{is}$. Letting $s$ vary, we may in fact think of $q^{is}$ as a meromorphic family of operators with poles at $q^{is} = \pm \frac{1}{\sqrt{q}}$, and which extends to a unitary operator on $L^2(\Omega, \nu)$ for $|q^{is}| = 1$. We shall see in Section \ref{sec_scattering_theory} that $I_{q^{is}}$ is essentially the Fourier transform of the Lax--Phillips scattering operator.

Note that $\nu$ is only quasi-invariant under $G$; its Radon--Nikodym derivative $\frac{d \nu(g^{-1} \omega)}{d \nu(\omega)}$ is given by $P(g.o, \omega)$ where we define
\begin{gather*}
    P(x, \omega) := q^{h_\omega(x)}.
\end{gather*}
We let $\phi_{(\xi, \xi^{-1})}^x$ denote the spherical function centered at $x$, which may be defined as $\phi_{(\xi, \xi^{-1})}^x(v) = \phi_{(\xi, \xi^{-1})}(g^{-1} .v)$ where $g \in G$ is any element such that $g.o = x$. 
\begin{proposition} \label{prop_spherical_function_new_center}
    Let $x \in \mathcal{T}$ be a vertex. The spherical function centered at $x$ with Satake parameters $(q^{is}, q^{-is})$ may be expressed as
    \begin{gather}
        \phi_{(q^{is}, q^{-is})}^x(v) = \int_{\Omega} q^{(\frac{1}{2} + i s)h_\omega(v)} q^{(\frac{1}{2} - is) h_\omega(x)} \ d \nu(\omega) = \mathcal{P}_{q^{is}}( q^{(\frac{1}{2} - is) h_\omega(x)} \ d \nu). \label{eqn_poisson_other_spherical}
    \end{gather}
\end{proposition}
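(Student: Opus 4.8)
The plan is to deduce this from the Poisson representation of the spherical function centered at $o$ — namely $\phi_{(q^{is}, q^{-is})}(v) = \mathcal{P}_{q^{is}}(d\nu)(v) = \int_{\Omega} q^{(\frac12 + is) h_\omega(v)} \, d\nu(\omega)$, which is the case $m = \nu$ of the Poisson transform — by transporting it under the $G$-action using the two transformation rules already recorded above: the behaviour of the Busemann function under $G$, and the quasi-invariance of $\nu$ with Radon--Nikodym derivative $P(\cdot, \omega)$.

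First I would choose $g \in G$ with $g.o = x$, so that, by the definition of $\phi^x$, we have $\phi^x_{(q^{is}, q^{-is})}(v) = \phi_{(q^{is}, q^{-is})}(g^{-1}.v) = \int_{\Omega} q^{(\frac12 + is) h_\omega(g^{-1}.v)} \, d\nu(\omega)$. The key geometric ingredient is the Busemann cocycle identity: since $g$ is an isometry carrying infinite non-backtracking paths in the class $\omega$ to paths in the class $g.\omega$, one has $h_{g.\omega}(g.w) = h_\omega(w) + h_{g.\omega}(g.o)$ for every vertex $w$ (using the normalization $h_\omega(o) = 0$). Taking $w = g^{-1}.v$ and rearranging gives $h_\omega(g^{-1}.v) = h_{g.\omega}(v) - h_{g.\omega}(x)$.

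Next I would substitute this into the integral and change variables $\omega \mapsto g.\omega$. By the stated quasi-invariance, $d\nu(g^{-1}.\omega') = P(x, \omega')\, d\nu(\omega') = q^{h_{\omega'}(x)}\, d\nu(\omega')$, so the integral becomes $\int_{\Omega} q^{(\frac12 + is)(h_{\omega'}(v) - h_{\omega'}(x))}\, q^{h_{\omega'}(x)}\, d\nu(\omega')$. Combining the two powers of $q^{h_{\omega'}(x)}$ leaves the exponent $(\frac12 + is) h_{\omega'}(v) + (\frac12 - is) h_{\omega'}(x)$, which is precisely $\mathcal{P}_{q^{is}}\!\big( q^{(\frac12 - is) h_\omega(x)} \, d\nu \big)(v)$, as claimed. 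Independence of the choice of $g$ is automatic: if $g.o = g'.o$ then $g^{-1}g' \in K$, and the spherical function centered at $o$ is $K$-invariant; in any case the final expression visibly depends only on $x$.

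The one genuinely non-formal step is establishing the Busemann cocycle identity with the correct signs relative to the normalization $h_\omega(o) = 0$, and then checking that it meshes with the stated direction of the Radon--Nikodym derivative; once these are pinned down, the remainder is a single change of variables. A minor additional check is that the density $q^{(\frac12 - is) h_\omega(x)}$ lies in a class for which $\mathcal{P}_{q^{is}}$ is defined: for fixed $x$ the map $\omega \mapsto h_\omega(x)$ takes only finitely many values, so this is an element of $\mathcal{K}(\Omega) \subset \mathcal{K}'(\Omega)$.
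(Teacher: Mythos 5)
Your argument is correct and is essentially the paper's own proof: the paper writes $\phi^x_{(q^{is},q^{-is})}(v)=\int_\Omega q^{(\frac12+is)h^x_\omega(v)}\,d\nu^x(\omega)$ and then uses $h^x_\omega(v)=h_\omega(v)-h_\omega(x)$ together with $\frac{d\nu^x}{d\nu}=q^{h_\omega(x)}$, which is exactly the content of your Busemann cocycle identity and the change of variables $\omega\mapsto g.\omega$ under the stated quasi-invariance. Your version merely makes the transport by $g$ explicit, and all signs and the Radon--Nikodym direction check out.
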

\begin{proof}
    We have
    \begin{gather*}
        \phi^x_{(q^{is}, q^{-is})}(v) = \int_{\Omega} q^{(\frac{1}{2} + is) h^x_\omega(v)} d \nu^x(\omega),
    \end{gather*}
    where $h^x_\omega(v)$ is the Busemann function shifted so that $h^x_\omega(x) = 0$ for all $\omega$, and $d \nu^x$ is the harmonic measure from the perspective of $x$. Thus $h^x_\omega(v) = h_\omega(v) - h_\omega(x)$. Furthermore, the Radon--Nikodym derivative $\frac{d \nu^x(\omega)}{d \nu(\omega)}$ is equal to $q^{h_\omega(x)}$. From here the result follows immediately.
\end{proof}

We shall also make use of the \textit{Helgason transform}. This is essentially the Fourier transform of the horocyclic Radon transform. More specifically given $f \in C_c(\mathcal{T})$ we have $\textnormal{Hel}(f) \in C(S^1 \times \Omega)$ defined via:
\begin{gather*}
    [\textnormal{Hel}(f)](q^{is}, \omega) := \sum_{x \in \mathcal{T}} f(x) q^{(\frac{1}{2} + i s)h_\omega(x)}. 
\end{gather*}

\begin{proposition}[see e.g. \cite{cowling_setti}] \label{prop_helgason_transform}
    The Helgason transform extends to an isometric embedding of $L^2(\mathcal{T})$ into $L^2(S^1 \times \Omega, d \mu_{\textnormal{Pl}} \times d \nu)$. Its image is those functions $F(q^{is}, \omega)$ satisfying
        \begin{gather}
            \int_\Omega q^{(\frac{1}{2} - is) h_\omega(x)} F(q^{is}, \omega) d \nu(\omega) = \int_{\Omega} q^{(\frac{1}{2} + i s) h_\omega(x)} F(q^{-is}, \omega) d \nu(\omega), \label{eqn_helgason_symmetry}
        \end{gather}
        for all $x \in \mathcal{T}$ and almost all $q^{is} \in S^1$. 
    \end{proposition}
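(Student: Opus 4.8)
The plan is to prove the Helgason transform is an isometric embedding onto the stated image by reducing everything to the already-established spherical (= Satake) Plancherel theory. First I would recall that the horocyclic Radon transform $\mathcal{R}f(n,\omega) := \sum_{h_\omega(x) = n} f(x) q^{n/2}$ sends $C_c(\mathcal{T})$ to functions on $\mathbb{Z} \times \Omega$, and that the Helgason transform is $\textnormal{Hel}(f)(q^{is},\omega) = \sum_n \mathcal{R}f(n,\omega) q^{isn}$, i.e. a Fourier transform in the $n$-variable. So one must: (i) establish the norm identity on $C_c(\mathcal{T})$, and (ii) identify the image. For (i), I would compute $\langle \textnormal{Hel}(f), \textnormal{Hel}(g) \rangle_{L^2(S^1 \times \Omega)}$ by expanding, integrating out $\omega$ against $\nu$, and using the fact that the spherical function is precisely $\mathcal{P}_{q^{is}}$ of the constant function $1$; more precisely $\int_\Omega q^{(\frac12 + is)h_\omega(x)} \overline{q^{(\frac12+is)h_\omega(y)}}\,d\nu(\omega)$ should, after using invariance of $\nu$ under $K$ and averaging over $K$, reduce to a value of the spherical function $\phi_{(\xi,\xi^{-1})}$ evaluated at a suitable point, at which stage the spherical inversion formula \eqref{eqn_inverse_spherical} and the Plancherel identity for the spherical transform close the computation. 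The cleanest route is to first reduce to $f = g = \delta_o$ plus radial averaging, or equivalently to check the identity on the basis of indicator functions of spheres, where $\textnormal{Hel}$ of such a function is, up to normalization, the spherical function and the claim becomes exactly the Plancherel orthogonality of spherical functions.

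Next I would address the image characterization. One inclusion is the symmetry relation \eqref{eqn_helgason_symmetry}: for $f \in C_c(\mathcal{T})$ and $F = \textnormal{Hel}(f)$, the quantity $\int_\Omega q^{(\frac12 - is)h_\omega(x)} F(q^{is},\omega)\,d\nu(\omega)$ equals $\sum_y f(y) \int_\Omega q^{(\frac12-is)h_\omega(x)} q^{(\frac12+is)h_\omega(y)}\,d\nu(\omega)$, which by Proposition \ref{prop_spherical_function_new_center} is $\sum_y f(y)\, \phi^y_{(q^{is},q^{-is})}(x)$; since $\phi^y_{(\xi,\xi^{-1})}(x)$ is manifestly symmetric under $\xi \leftrightarrow \xi^{-1}$ (it only depends on the unordered Satake pair, being an eigenfunction of the self-adjoint operator $\frac{A}{2\sqrt q}$), the relation \eqref{eqn_helgason_symmetry} holds, and it passes to the $L^2$-closure. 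For the reverse inclusion — that every $F$ in the closed subspace cut out by \eqref{eqn_helgason_symmetry} lies in the image — I would argue by orthogonal complements: using the Plancherel isometry just proven, it suffices to show that if $F$ satisfies \eqref{eqn_helgason_symmetry} and is orthogonal to $\textnormal{Hel}(C_c(\mathcal{T}))$, then $F = 0$. Orthogonality to $\textnormal{Hel}(\delta_x)$ for all $x$ says $\int_{S^1}\int_\Omega \overline{q^{(\frac12+is)h_\omega(x)}} F(q^{is},\omega)\,d\nu\,d\mu_{\textnormal{Pl}} = 0$; splitting the $S^1$ integral into $q^{is}$ and $q^{-is}$ halves and using \eqref{eqn_helgason_symmetry} to relate them, one should be able to conclude that the "$\omega$-averaged against $q^{(\frac12 \pm is)h_\omega(x)}$" projections of $F$ vanish for all $x$, and since the functions $\{q^{(\frac12+is)h_\omega(\cdot)}\}$ together with their intertwined partners span a dense subspace fiberwise (this is the injectivity/surjectivity content of the Poisson transform together with its intertwining operator $I_{q^{is}}$), deduce $F = 0$ a.e.

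The main obstacle I anticipate is the reverse inclusion — proving surjectivity onto the subspace defined by \eqref{eqn_helgason_symmetry} rather than merely establishing the isometry and one-sided symmetry. The delicate point is that a generic $L^2$ fiber function $F(q^{is},\cdot) \in L^2(\Omega,\nu)$ need not be a finitely-additive measure in the sense of the Poisson transform theorem, so one cannot directly invoke that theorem fiberwise; instead one must work with the $L^2$-theory of the Poisson/Helgason transform and the unitarity of the intertwining operator $I_{q^{is}}$ on $L^2(\Omega,\nu)$ for $|q^{is}|=1$, and carefully handle the excluded points $q^{is} = \pm q^{-1/2}$ (which are $\mu_{\textnormal{Pl}}$-null, hence harmless, but this must be noted). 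Since the statement is quoted from \cite{cowling_setti}, I would present the isometry and the easy inclusion in detail and either cite \cite{cowling_setti, figa_talamanca_nebbia} for the surjectivity or sketch the orthogonal-complement argument above, flagging that the full justification rests on the $L^2$ Poisson theory developed there.
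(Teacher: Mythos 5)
The paper never proves this proposition: it is imported verbatim from the literature (the ``see e.g.\ \cite{cowling_setti}'' is the whole argument), so there is no internal proof to measure you against; your proposal should be judged on its own. Where it is concrete it is correct and fits the paper's toolkit: for the isometry you do not even need the radial reduction, since $\langle \textnormal{Hel}(\delta_x),\textnormal{Hel}(\delta_y)\rangle=\int_{S^1}\int_\Omega q^{(\frac12+is)h_\omega(x)}q^{(\frac12-is)h_\omega(y)}\,d\nu\,d\mu_{\textnormal{Pl}}$, which by Proposition \ref{prop_spherical_function_new_center} is $\int_{S^1}\phi_{(\xi,\xi^{-1})}(d(x,y))\,d\mu_{\textnormal{Pl}}=\delta_{x,y}$ by \eqref{eqn_inverse_spherical} applied to the identity of $H(G,K)$; and the forward inclusion holds because both sides of \eqref{eqn_helgason_symmetry} evaluated on $\textnormal{Hel}(f)$ equal $\sum_y f(y)\,\phi_{(q^{is},q^{-is})}(d(x,y))$. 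The only substantive issue is surjectivity, and there your sketch is vague at exactly the hard step: orthogonality to $\textnormal{Hel}(\delta_x)$ only kills an $s$-averaged quantity, so ``splitting the integral into halves'' does not by itself give fiberwise information. It can, however, be closed along your lines: the image is stable under multiplication by symmetric Laurent polynomials in $q^{is}$ (these come from convolution with radial kernels), so orthogonality to the image forces the even part in $s$ of $G_x(q^{is}):=\int_\Omega q^{(\frac12-is)h_\omega(x)}F(q^{is},\omega)\,d\nu$ to vanish for every $x$; rewriting \eqref{eqn_helgason_symmetry} via injectivity of the Poisson transform as $I_{q^{is}}F(q^{is},\cdot)=F(q^{-is},\cdot)$ and using the defining property of $I_{q^{is}}$ turns that evenness into $2\,\mathcal{P}_{q^{-is}}\big(F(q^{is},\cdot)\,d\nu\big)=0$, whence $F(q^{is},\cdot)=0$ for $\mu_{\textnormal{Pl}}$-a.e.\ $s$ (the points $q^{is}=\pm q^{-1/2}$ are $\mu_{\textnormal{Pl}}$-null, as you note; the remaining work is extending $I_{q^{is}}$ and the intertwining identity from $\mathcal{K}(\Omega)$ to $L^2(\Omega,\nu)$). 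Your fallback of simply citing \cite{cowling_setti} for this direction is exactly what the paper itself does.
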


\subsection{Wave equation on regular trees} \label{sec_wave_eqn_tree}
We say that $u: \mathcal{T} \times \mathbb{Z} \to \mathbb{C}$ satisfies the wave equation if
\begin{gather} \label{wave_eqn_tree}
    \frac{A_v}{2 \sqrt{q}} u(v, t) = \frac{u(v, t+1) + u(v, t-1)}{2}.
\end{gather}
To be clear, on the LHS, we fix $t$ and apply $\frac{A}{2 \sqrt{q}}$ to the resulting function on $\mathcal{T}$. Extending slightly our previous notation, we define
\begin{gather*}
    u^{[y]}(v, \cdot) := \sum_{n \in \mathbb{Z}} u(v, n) y^n.
\end{gather*}
Then the RHS of \eqref{wave_eqn_tree} can be re-expressed as
\begin{gather*}
    [y^t]\Big(u^{[y]}(v, \cdot) \cdot \big(\frac{y + y^{-1}}{2}\big)\Big)
\end{gather*}
Notice that \eqref{eqn_image_satake} shows that $\frac{A}{2 \sqrt{q}}$ maps to $\frac{x + x^{-1}}{2}$ under the Satake isomorphism. Because $H(G, K)$ is commutative and generated by $A$, we could equivalently express \eqref{wave_eqn_tree} as the condition that for every $B \in H(G, K)$ we have
\begin{gather*}
    B_v u(v, t) = [y^t] \Big( u^{[y]}(v, \cdot) \cdot \textnormal{Sat}(B)(y) \Big).
\end{gather*}

\begin{theorem} \label{thm_solve_wave_tree}
    Let $h_1, h_2 \in \mathbb{C}[x, x^{-1}]$ be a free basis as a module over $\mathbb{C}[x, x^{-1}]^W$. Let $m_1, m_2$ be the corresponding dual basis. Let $\textnormal{Ch}_t^{m_i}(z)$ denote the corresponding Chebyshev polynomials (see \eqref{eqn_chebyshev}). Let $g_1, g_2 : \mathcal{T} \to \mathbb{C}$. Then the unique solution $u(v, t)$ to the wave equation \eqref{wave_eqn_tree} satisfying
    \begin{align}
        [y^0](u^{[y]}(v, \cdot) \cdot h_1(y^{-1})) &= g_1(v) \label{eqn_tree_initial} \\
        [y^0](u^{[y]}(v, \cdot) \cdot h_2(y^{-1}) &= g_2(v) \notag
    \end{align}
    is given by
    \begin{gather} \label{eqn_tree_soln}
        u(v, t) = \Big( \textnormal{Ch}^{m_1}_t \Big( \frac{A}{2 \sqrt{q}} \Big)g_1 + \textnormal{Ch}^{m_2}_t \Big( \frac{A}{2 \sqrt{q}} \Big)g_2 \Big)(v).
    \end{gather}
\end{theorem}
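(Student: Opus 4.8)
The plan is to transcribe the proof of Theorem~\ref{thm_soln} to the tree, with the Satake isomorphism $\textnormal{Sat}\colon H(G,K)\xrightarrow{\sim}\mathbb{C}[x,x^{-1}]^W$ playing the role that multiplication by symmetric Laurent polynomials in the space variable played in the flat case. First I would use the reformulation of \eqref{wave_eqn_tree} recorded just before the theorem: since $H(G,K)$ is commutative and generated by $A$, a function $u$ solves \eqref{wave_eqn_tree} if and only if $B_v\,u^{[y]}(v,\cdot)=u^{[y]}(v,\cdot)\cdot\textnormal{Sat}(B)(y)$ for every $B\in H(G,K)$, where on the left $B$ acts in the $v$-variable and on the right we multiply the Laurent series in $y$ by the symmetric Laurent polynomial $\textnormal{Sat}(B)(y)$. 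I would also record the single identity linking the two pictures: comparing \eqref{eqn_chebyshev} with \eqref{eqn_pairing} gives $\textnormal{Ch}_t^{m_i}\!\big(\tfrac{x+x^{-1}}{2}\big)=(m_i,x^t)$, and since $\textnormal{Sat}(\tfrac{A}{2\sqrt q})=\tfrac{x+x^{-1}}{2}$ by \eqref{eqn_image_satake}, we get $\textnormal{Sat}\big(\textnormal{Ch}_t^{m_i}(\tfrac{A}{2\sqrt q})\big)=(m_i,x^t)$, i.e.\ the Hecke operator $\textnormal{Ch}_t^{m_i}(\tfrac{A}{2\sqrt q})$ is exactly $\textnormal{Sat}^{-1}$ of the symmetric polynomial $(m_i(y),y^t)$.

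For uniqueness, a solution $u$ is determined by the numbers $u(v,t)=[y^0]\big(u^{[y]}(v,\cdot)\cdot y^{-t}\big)$. By Proposition~\ref{prop_basis_dual}, $y^t=(m_1(y),y^t)\,h_1(y)+(m_2(y),y^t)\,h_2(y)$ with symmetric coefficients, and applying $w_0$ (which fixes those coefficients) gives $y^{-t}=(m_1(y),y^t)\,h_1(y^{-1})+(m_2(y),y^t)\,h_2(y^{-1})$. Substituting this into the formula for $u(v,t)$ and applying the generating-function form of the wave equation with $B=\textnormal{Ch}_t^{m_i}(\tfrac{A}{2\sqrt q})$ — whose Satake transform is precisely $(m_i(y),y^t)$ — moves each symmetric factor $(m_i(y),y^t)$ off the $y$-series and onto the $v$-variable as the operator $\textnormal{Ch}_t^{m_i}(\tfrac{A}{2\sqrt q})$; since this operator acts only in $v$, it commutes with extracting the $y^0$-coefficient, so the initial conditions \eqref{eqn_tree_initial} collapse the whole expression to $\big(\textnormal{Ch}_t^{m_1}(\tfrac{A}{2\sqrt q})g_1+\textnormal{Ch}_t^{m_2}(\tfrac{A}{2\sqrt q})g_2\big)(v)$, which is \eqref{eqn_tree_soln}. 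This pins down the only possible solution.

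For existence it then remains to check that \eqref{eqn_tree_soln} really solves \eqref{wave_eqn_tree} with the prescribed initial data. The wave equation follows from the three-term recurrence $\textnormal{Ch}_{t+1}^{m_i}(z)+\textnormal{Ch}_{t-1}^{m_i}(z)=2z\,\textnormal{Ch}_t^{m_i}(z)$, which is immediate from $(m_i,x^{t+1})+(m_i,x^{t-1})=(m_i,(x+x^{-1})x^t)=(x+x^{-1})(m_i,x^t)$ by the $\mathbb{C}[x+x^{-1}]$-bilinearity of the pairing; evaluating at $z=\tfrac{A}{2\sqrt q}$ and applying to $g_i$ gives exactly $\tfrac{A_v}{2\sqrt q}u(v,t)=\tfrac12\big(u(v,t+1)+u(v,t-1)\big)$. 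The initial conditions follow from duality: writing $h_j=\sum_k c_k x^k$, one has $[y^0]\big(u^{[y]}(v,\cdot)\,h_j(y^{-1})\big)=\sum_i\big(\sum_k c_k\,\textnormal{Ch}_k^{m_i}(\tfrac{A}{2\sqrt q})\big)g_i(v)$, and $\sum_k c_k\,\textnormal{Ch}_k^{m_i}\!\big(\tfrac{x+x^{-1}}{2}\big)=(m_i,\sum_k c_k x^k)=(m_i,h_j)=\delta_{ij}$ as a constant polynomial, so only the $g_j$-term survives.

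I expect the only genuine subtlety to be the bookkeeping in the uniqueness step: justifying that the generating-function reformulation of \eqref{wave_eqn_tree} is legitimate (so that a Hecke operator acting in $v$ may be freely commuted past multiplication of the $y$-series and past $[y^0](\cdot)$), and confirming that $\textnormal{Sat}^{-1}$ of the symmetric polynomial $(m_i(y),y^t)$ is indeed $\textnormal{Ch}_t^{m_i}(\tfrac{A}{2\sqrt q})$. None of this is deep — the whole argument is a faithful transcription of the proof of Theorem~\ref{thm_soln}, with $\mathbb{C}[x,x^{-1}]$ in the space variable replaced by its ring of invariants $\mathbb{C}[x,x^{-1}]^W\cong H(G,K)$.
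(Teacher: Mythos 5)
Your proposal is correct, but it takes a genuinely different route from the paper. The paper proves the theorem by \emph{reduction to the flat case}: it first treats compactly supported radial initial data, where the Satake transform literally converts \eqref{wave_eqn_tree} into the flat equation \eqref{eqn_discrete_wave_intro} so that Theorem \ref{thm_soln} applies; it then moves the center by the $G$-action and handles arbitrary $g_1, g_2$ by superposing the solutions attached to $g_i(w)\delta_w$ (finite speed of propagation making the infinite sum locally finite), with uniqueness obtained by feeding the zero-initial-data difference back into the radial case. You instead \emph{transcribe the proof of Theorem \ref{thm_soln} directly on the tree}: the identity $\textnormal{Ch}_t^{m_i}\big(\tfrac{x+x^{-1}}{2}\big)=(m_i,x^t)$ plus the reformulation of \eqref{wave_eqn_tree} via $\textnormal{Sat}$ lets you expand $y^{-t}$ in the dual basis and pull the symmetric coefficients $(m_i(y),y^t)$ off the $y$-series onto the $v$-variable as Hecke operators, giving uniqueness together with the formula in one stroke, while existence follows from the three-term recurrence $(m_i,x^{t+1})+(m_i,x^{t-1})=(x+x^{-1})(m_i,x^t)$ and the duality $(m_i,h_j)=\delta_{ij}$. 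Your route is more self-contained and uniform: it needs no decomposition into radial pieces, no appeal to $G$-equivariance or to finite speed of propagation to justify a superposition, and it sidesteps the paper's (unproved, "clear") assertion that radial compactly supported data propagate to radial compactly supported solutions. What the paper's route buys is conceptual: it exhibits the theorem as an instance of the two-step philosophy (flat wave equation plus spherical harmonic analysis transfer) that the introduction advertises for the higher-rank generalization. The one point you flag as bookkeeping — that a solution of \eqref{wave_eqn_tree} satisfies $[y^s]\big(u^{[y]}(v,\cdot)\cdot a(y)\big)=\textnormal{Sat}^{-1}(a)_v\,u(v,s)$ for every symmetric Laurent polynomial $a$ — does need the short induction on powers of $\tfrac{y+y^{-1}}{2}$ (using that $\tfrac{A}{2\sqrt q}$ applied to a solution is again a solution), but all sums involved are finite, so it goes through exactly as you anticipate.
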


\begin{proof}
    Suppose first that $g_1, g_2$ are compactly supported radial functions. It is clear that any solution to \eqref{wave_eqn_tree} with such initial conditions must have the property that for every fixed $t$, the function $u(v, t)$ is radial and compactly supported. We may thus apply the Satake transform to express the solution as a function $[x^n]\textnormal{Sat}(u (\cdot, t))$ which maps $\mathbb{Z}^2$ to $\mathbb{C}$. Then \eqref{wave_eqn_tree} turns into \eqref{eqn_discrete_wave_intro}, and \eqref{eqn_tree_initial} and \eqref{eqn_tree_soln} turn into \eqref{eqn_flat_initial} and \eqref{eqn_soln_initial} respectively. In this case existence and uniqueness follow immediately from Theorem \ref{thm_soln}.

    Now for the general case. It is clear that $G$ preserves solutions to \eqref{wave_eqn_tree}. More specifically, given $g \in G$, suppose $g.o = w$. Suppose $u(v, t)$ solves the wave equation with compactly supported radial initial data $g_1, g_2$ satisfying \eqref{eqn_tree_initial}. Then $j(v, t) := u(g^{-1}.v, t)$ also solves the wave equation with initial conditions $g_1(g^{-1}.v)$ and $g_2(g^{-1}.v)$. This solution is in turn radial and compactly supported for each $t$, but now centered around $w$. We may in turn apply the Satake transform, but with $o$ in \eqref{eqn_satake_formula} replaced by $w$ (and thus also the appropriate change in \eqref{eqn_busemann}). We may in turn appeal to Theorem \ref{thm_soln} to conclude existence and uniqueness of solution, and the validity of \eqref{eqn_tree_soln}.

    Finally, it is clear that we may add together solutions to \eqref{wave_eqn_tree} and still obtain a solution. We may rewrite $u(v, t)$ as
    \begin{gather*}
        u(v, t) = \sum_{w \in \mathcal{T}} \Big(\textnormal{Ch}_t^{m_1} \Big( \frac{A}{2 \sqrt{q}} \Big) (g_1(w) \delta_w) \Big) (v) + \Big(\textnormal{Ch}_t^{m_2} \Big( \frac{A}{2 \sqrt{q}} \Big) (g_2(w) \delta_w) \Big) (v).
    \end{gather*}
    Though we are adding together infinitely many terms, for any fixed vertex $v$ and time $t$, there are only finitely many terms which are non-zero at $v$. From the above discussion it is clear that $u(v, t)$ solves the wave equation and has the desired initial conditions. On the other hand, uniqueness of solution is clear since any two solutions with the same initial conditions would differ by a solution with zero initial conditions. However the zero function is clearly radial and compactly supported so we can again appeal to Theorem \ref{thm_soln} to get uniqueness of solution.
\end{proof}

Suppose we take as our initial conditions $h_1 = 1, h_2 = \frac{x - x^{-1}}{2}$. Then \eqref{eqn_tree_soln} turns into
\begin{gather}
    u(v, t) = T_t \Big( \frac{A}{2 \sqrt{q}} \Big) g_1(v) + U_{t-1} \Big( \frac{A}{2 \sqrt{q}} \Big) g_2(v). \label{eqn_tree_std_initial}
\end{gather}
This is the form of the solution to the wave equation described in Brooks--Lindenstrauss \cite{brooks_lindenstrauss}. Finite speed of propagation is also immediate from \eqref{eqn_tree_std_initial}, namely if $u(v, t)$ is a solution to the wave equation with standard initial conditions $f, g$, then if $f$ is supported on $d(o, v) \leq r$ and $g$ is supported on $d(o, v) \leq r+1$, then $u(n, t)$ is supported on $d(o, v) \leq r + |t|$ for all $t$.

We now consider conservation of energy.
\begin{proposition} \label{prop_tree_energy}
    Suppose $u(v, t)$ is the solution to the wave equation. Then
    \begin{gather}
        E(t) := \sum_{v} \overline{u(v, t)} \cdot \Big(1 - \frac{A^2}{4 q} \Big)u(v, t) + \sum_{v} \Big \| \frac{u(v, t+1) - u(v, t-1)}{2} \Big \|^2 \label{eqn_energy_tree}
    \end{gather}
    is independent of $t$.
\end{proposition}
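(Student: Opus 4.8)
The plan is to mimic the strategy used for the flat wave equation in Proposition \ref{prop_energy_flat}: diagonalize the time-one propagator on the spectral side and exhibit a quadratic form preserved by it. Concretely, by Theorem \ref{thm_solve_wave_tree} with standard initial conditions $h_1 = 1$, $h_2 = \tfrac{x-x^{-1}}{2}$, the solution is $u(v,t) = T_t(\tfrac{A}{2\sqrt q})f(v) + U_{t-1}(\tfrac{A}{2\sqrt q})g(v)$ where $f = u(\cdot,0)$ and $g = \tfrac{u(\cdot,1)-u(\cdot,-1)}{2}$. Setting $v(v,t) := \tfrac{u(v,t+1)-u(v,t-1)}{2}$ and passing through the spherical transform (equivalently, spectrally decomposing $\tfrac{A}{2\sqrt q}$, whose spectrum lies in $[-1,1]$, and writing $z = \tfrac{A}{2\sqrt q}$), the pair $(u(\cdot,t), v(\cdot,t))$ is obtained from $(f,g)$ by applying $\mathcal{U}^t$ where $\mathcal{U} = \left[\begin{smallmatrix} z & 1 \\ z^2 - 1 & z\end{smallmatrix}\right]$ is exactly the matrix in \eqref{eqn_one_time_step_prop_U}. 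This is the identical linear algebra to the flat case, now with $z$ interpreted as the operator $\tfrac{A}{2\sqrt q}$ rather than the function $\tfrac{\xi+\xi^{-1}}{2}$.

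First I would record that $\mathcal{U}^{*} Q \mathcal{U} = Q$ where $Q = \left[\begin{smallmatrix} 1 - z^2 & 0 \\ 0 & 1\end{smallmatrix}\right]$, which follows from the factorization \eqref{eqn_eigen_decomp_U}--\eqref{eqn_diagonalize_A} verbatim (only the name of the variable changes), noting that $\tfrac{A}{2\sqrt q}$ is self-adjoint on $\ell^2(\mathcal{T})$ so that ``$*$'' really is the adjoint. Then the scalar $\langle (u(\cdot,t),v(\cdot,t)), Q\, (u(\cdot,t),v(\cdot,t))\rangle_{\ell^2(\mathcal{T})\oplus\ell^2(\mathcal{T})}$ equals $\langle (f,g), Q\,(f,g)\rangle$, independent of $t$. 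Expanding the top-left block gives $\sum_v \overline{u(v,t)}\,(1 - z^2)u(v,t) = \sum_v \overline{u(v,t)}(1 - \tfrac{A^2}{4q})u(v,t)$, and the bottom-right block gives $\sum_v |v(v,t)|^2 = \sum_v \big|\tfrac{u(v,t+1)-u(v,t-1)}{2}\big|^2$; their sum is exactly $E(t)$ in \eqref{eqn_energy_tree}, so $E(t) = E(0)$.

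The one genuine issue, as opposed to routine bookkeeping, is \emph{convergence}: the argument above is clean when $f,g$ are finitely supported (then every $u(\cdot,t)$ is finitely supported by finite speed of propagation, and all sums are finite and the spectral manipulations are literal), but the statement should hold for all finite-energy data, i.e. whenever the right-hand side of \eqref{eqn_energy_tree} is finite. I would handle this by observing that $1 - \tfrac{A^2}{4q} = \big(\tfrac{A_{\mathrm{space}}}{2\sqrt q}\big)$-type positive operator — more precisely, using $U_1(z) = 2z$ and $z^2 - 1 = -\,(1-z^2)$ one rewrites the space-term as a genuine squared norm $\big\|\sqrt{1 - A^2/4q}\;u(\cdot,t)\big\|^2$, so $E(t)$ is manifestly a sum of two nonnegative quantities and defines a Hilbert-space norm on the completion; the propagator $(f,g)\mapsto (u(\cdot,t),v(\cdot,t))$ extends to a unitary on this space by density of finitely-supported data, giving $E(t) = E(0)$ in general. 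Since the excerpt only asks for the invariance statement, the cleanest writeup is: prove it for compactly supported data via the $\mathcal{U}^*Q\mathcal{U}=Q$ identity and the spectral theorem for $A$, then remark that $E$ is a nonnegative form and the propagator extends continuously.
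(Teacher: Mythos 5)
Your argument is essentially the paper's own proof: the paper likewise observes that the time-one propagator is the matrix $\mathcal{U}$ from \eqref{eqn_one_time_step_prop_U} with $z$ replaced by the self-adjoint operator $\frac{A}{2\sqrt{q}}$, and then invokes the $\mathcal{U}^*Q\,\mathcal{U}=Q$ computation from Section \ref{sec_energy}. Your additional remark about passing from compactly supported data to general finite-energy data via positivity of $1-\frac{A^2}{4q}$ and density is consistent with what the paper does immediately after the proposition, where $\mathcal{H}$ is defined as the completion in the energy form and the propagator is noted to act unitarily on it.
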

\begin{proof}
    This essentially follows from the analysis preceding Proposition \ref{prop_energy_flat}. Suppose $u(v, t)$ is a solution to wave equation, and let $j(v, t) := \frac{u(v, t+1) - u(v, t-1)}{2}$. We then have
    \begin{gather} \label{eqn_tree_one_time_step}
        \begin{bmatrix}
            u(v, t+1) \\
            j(v, t+1)
        \end{bmatrix} = \begin{bmatrix}
            \frac{A}{2 \sqrt{q}} & I \\
            \frac{A^2}{4 q} - I & \frac{A}{2 \sqrt{q}}
        \end{bmatrix} \begin{bmatrix}
            u(v, t) \\
            j(v, t)
        \end{bmatrix}.
    \end{gather}
    This matrix is simply the matrix $\mathcal{U}$ from \eqref{eqn_one_time_step_prop_U} with $z$ replaced by $\frac{A}{2 \sqrt{q}}$. The argument from Section \ref{sec_energy}, immediately shows that the expression in \eqref{eqn_energy_tree} is preserved in time.
\end{proof}

We remark that the spectrum of $\frac{A}{2 \sqrt{q}}$ acting on $L^2(\mathcal{T})$ is $[-1, 1]$. Therefore the spectrum of $1 - \frac{A^2}{4 q}$ is $[0, 1]$. If $g \in L^2(\mathcal{T})$ is compactly supported, then $\langle \big(1 - \frac{A^2}{4 q} \big) g, g \rangle > 0$. Given $(g_1, g_2)$ and $(f_1, f_2)$, all of which are compactly supported functions on $\mathcal{T}$, we define the positive definite energy form
\begin{gather} \label{eqn_energy_tree_formula}
    [(g_1, g_2), (f_1, f_2)] := \langle \big(1 - \frac{A^2}{4 q} \big) g_1, f_1 \rangle + \langle g_2, f_2 \rangle.
\end{gather}
Let $\mathcal{H}$ be the Hilbert space completion with respect to this energy form of the space of pairs of compactly supported functions. Then we can interpret the proof of Proposition \ref{prop_tree_energy} as telling us that the operator in \eqref{eqn_tree_one_time_step} acts unitarily on $\mathcal{H}$. 

We now wish to give more concrete descriptions of $T_t(\frac{A}{2 \sqrt{q}})$ and $U_t(\frac{A}{2 \sqrt{q}})$. We shall also be interested in $F_t(\frac{A}{2 \sqrt{q}})$.

\begin{proposition} \label{prop_reg_tree_chebs}
    For $t \geq 0$ we have that
    \begin{align}
        U_t \Big(\frac{A}{2 \sqrt{q}} \Big) \delta_o(v) &= \begin{cases}
            q^{-\frac{t}{2}} & \textnormal{if $t - d(o, v) \geq 0$ and even} \\
            0 & \textnormal{otherwise}.
        \end{cases} \label{eqn_cheb_2_tree} \\
        T_t \Big(\frac{A}{2 \sqrt{q}} \Big) \delta_o(v) &= \begin{cases}
            \frac{1-q}{2} q^{-\frac{t}{2}} & \textnormal{if $t - d(o, v) > 0$ and even} \\
            \frac{1}{2} q^{-\frac{t}{2}} & \textnormal{if $d(o, v) = t$} \\
            0 & \textnormal{otherwise}.
        \end{cases} \label{eqn_cheb_1_tree} \\
        F_t \Big(\frac{A}{2 \sqrt{q}} \Big) \delta_o(v) &= \begin{cases}
            q^{-\frac{t}{2}} & \textnormal{if $d(o, v) = t$} \\
            0 & \textnormal{otherwise}.
        \end{cases} \label{eqn_spherical_soln_tree}
    \end{align}
\end{proposition}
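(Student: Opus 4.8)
The plan is to reduce everything to the formula for $U_t$ and then read off the other two from the linear identities $2T_t(z) = U_t(z) - U_{t-2}(z)$ and $F_t(z) = U_t(z) - q^{-1}U_{t-2}(z)$; the first follows from the closed forms $T_t(\tfrac{x+x^{-1}}{2}) = \tfrac{x^t+x^{-t}}{2}$ and $U_{t-1}(\tfrac{x+x^{-1}}{2}) = \tfrac{x^t-x^{-t}}{x-x^{-1}}$ of Section \ref{sec_chebyshev}, and the second is exactly the computation carried out in Remark \ref{remark_f_n_one}. Write $S_s = \{v \in \mathcal{T} : d(o,v) = s\}$ and let $\mathds{1}_{S_s}$ denote its indicator. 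The only geometry of the tree I would use is the action of $A$ on sphere indicators: $A\mathds{1}_{S_0} = \mathds{1}_{S_1}$, $A\mathds{1}_{S_1} = (q+1)\mathds{1}_{S_0} + \mathds{1}_{S_2}$, and $A\mathds{1}_{S_s} = q\,\mathds{1}_{S_{s-1}} + \mathds{1}_{S_{s+1}}$ for $s \ge 2$ — immediate since a vertex at distance $s \ge 1$ from $o$ has one neighbour nearer $o$ and $q$ farther, whereas $o$ has $q+1$ neighbours, all in $S_1$.

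For $U_t$ I would show, by induction on $t \ge 0$, that $U_t(\tfrac{A}{2\sqrt q})\delta_o = q^{-t/2}\sum \mathds{1}_{S_s}$, the sum over $0 \le s \le t$ with $s$ of the same parity as $t$. The cases $t = 0$ (as $U_0 = 1$) and $t = 1$ (as $U_1(\tfrac{A}{2\sqrt q})\delta_o = \tfrac{A}{\sqrt q}\delta_o = q^{-1/2}\mathds{1}_{S_1}$) are immediate. For the step, substituting $z = \tfrac{A}{2\sqrt q}$ into the Chebyshev recurrence gives $U_{t+1}(\tfrac{A}{2\sqrt q})\delta_o = \tfrac{A}{\sqrt q} U_t(\tfrac{A}{2\sqrt q})\delta_o - U_{t-1}(\tfrac{A}{2\sqrt q})\delta_o$; expanding the right side with the sphere rules, the terms $q\,\mathds{1}_{S_{s-1}}$ produced by $\tfrac{A}{\sqrt q}$ cancel $U_{t-1}(\tfrac{A}{2\sqrt q})\delta_o$ exactly, while the extra $\mathds{1}_{S_0}$ hidden in $A\mathds{1}_{S_1} = (q+1)\mathds{1}_{S_0} + \mathds{1}_{S_2}$ is precisely what makes the radius-zero sphere reappear when $t+1$ is even, leaving $q^{-(t+1)/2}\sum \mathds{1}_{S_s}$ over $s \equiv t+1$, $0 \le s \le t+1$. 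Then, applying $2T_t = U_t - U_{t-2}$ and $F_t = U_t - q^{-1}U_{t-2}$ to $\delta_o$ for $t \ge 2$ and inserting the $U$-formula (noting $q^{-(t-2)/2} = q\cdot q^{-t/2}$): on $S_t$ only the $U_t$ term contributes, giving $\tfrac12 q^{-t/2}$ (resp. $q^{-t/2}$); on each $S_s$ with $s < t$ of parity $t$ the two terms combine to $\tfrac12(1-q)q^{-t/2}$ (resp. $(1 - q^{-1}\cdot q)q^{-t/2} = 0$); elsewhere $0$ — exactly the asserted forms of $T_t$ and $F_t$, with the residual cases $t \le 1$ checked by hand.

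The one genuinely delicate point is the parity-and-boundary bookkeeping in the inductive step: one must track which spheres occur for each parity of $t$ and, crucially, the anomaly $A\mathds{1}_{S_1} = (q+1)\mathds{1}_{S_0} + \mathds{1}_{S_2}$ at the central vertex, which is what forces $\mathds{1}_{S_0}$ back into the support for even $t$. A cleaner but less elementary alternative is to prove the $F_t$ identity first via the Satake isomorphism: since $\textnormal{Sat}(\tfrac{A}{2\sqrt q}) = \tfrac{\xi+\xi^{-1}}{2}$ by \eqref{eqn_image_satake} and $\textnormal{Sat}$ is an algebra map, $\textnormal{Sat}(F_t(\tfrac{A}{2\sqrt q})) = F_t(\tfrac{\xi+\xi^{-1}}{2})$, so it suffices to compute $\textnormal{Sat}(\mathds{1}_{S_t})$ from the horocycle formula \eqref{eqn_satake_formula} — a short count of the vertices of $S_t$ with prescribed Busemann value — and to recognise it as $q^{t/2}\big(U_t - q^{-1}U_{t-2}\big)(\tfrac{\xi+\xi^{-1}}{2})$; the $U_t$ and $T_t$ formulas then follow from the same two identities.
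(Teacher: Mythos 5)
Your proposal is correct, and the reduction of \eqref{eqn_cheb_1_tree} and \eqref{eqn_spherical_soln_tree} to \eqref{eqn_cheb_2_tree} via $2T_t = U_t - U_{t-2}$ and $F_t = U_t - q^{-1}U_{t-2}$ is exactly the reduction the paper makes; where you genuinely diverge is in how the $U_t$ formula itself is established. The paper computes the radial kernel spectrally: it plugs the Harish-Chandra expansion \eqref{eqn_spherical_function_hc_expansion} of the spherical function into the inversion formula \eqref{eqn_inverse_spherical} and evaluates the resulting $S^1$-integral by reading off constant terms of Laurent expansions (a residue computation with the $c$-function), whereas you prove it by induction on $t$ from the three-term recurrence $U_{t+1} = \tfrac{A}{\sqrt q}U_t - U_{t-1}$ together with the action of $A$ on sphere indicators, with the $(q+1)\mathds{1}_{S_0}$ anomaly at the root accounting for the reappearance of $S_0$ at even $t$ — I checked the bookkeeping and the telescoping cancellation of the $U_{t-1}$ term works exactly as you describe. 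Your argument is more elementary and self-contained: it needs no Plancherel measure, no $c$-function, and no contour/Laurent-series manipulation, and it produces the whole radial profile at once. What the paper's route buys is uniformity with the rest of its framework: the same residue computation is repeated essentially verbatim for the biregular tree in Section \ref{sec_big_biregular_tree} (where the sphere recursion is messier because radii alternate between vertex types) and is the template for the higher-rank generalization, whereas the combinatorial induction would have to be redone case by case. Your alternative sketch — computing $\textnormal{Sat}(\mathds{1}_{S_t})$ from the horocycle count in \eqref{eqn_satake_formula} and identifying it with $q^{t/2}F_t$ — is a third valid route, closer in spirit to the paper's harmonic-analytic setup but still not the argument it gives. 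One small caveat, which is really an issue with the statement rather than with your proof: at $t=0$ the displayed formulas for $T_t$ and $F_t$ do not match $T_0 = 1$ and $F_0 = 1+q^{-1}$ (the identities with $U_{t-2}$ are only used for $t\ge 2$), so your "check $t\le 1$ by hand" will surface exactly this edge-case normalization; it is harmless but worth flagging rather than asserting agreement there.
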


\begin{proof}
    It will ultimately suffice to prove the formula for $U_t$. Since $T_t = \frac{1}{2}(U_t - U_{t-2})$ and $F_t = U_t - q^{-1} U_{t-2}$ the other formulas will follow. 

    We use \eqref{eqn_inverse_spherical}. To compute $U_t(\frac{A}{2 \sqrt{q}}) \delta_o(v)$ on a sphere of radius $\ell$ centered at $o$, we must compute
    \begin{gather*}
        \frac{(1+q^{-1})}{4 \pi} \int_{S^1} \frac{\xi^{t+1} - \xi^{-t-1}}{\xi - \xi^{-1}} \frac{1}{(1 + q^{-1})} q^{-\ell/2} \big( \xi^\ell c(\xi) + \xi^{-\ell} c(\xi^{-1}) \big) \frac{1}{c(\xi) c(\xi^{-1})} d \theta.
    \end{gather*}
    When we expand out and simplify, we get $\frac{q^{-\ell/2}}{4 \pi}$ times the integral over $S^1$ of
    \begin{gather*}
        -\frac{\xi^{t+\ell+2}}{1 - q^{-1} \xi^2} + \frac{\xi^{t-\ell}}{1 - q^{-1} \xi^{-2}} + \frac{\xi^{-t + \ell}}{1 - q^{-1} \xi^{2}} - \frac{\xi^{-t-\ell-2}}{1 - q^{-1} \xi^{-2}}.
    \end{gather*}
    Because $q^{-1} < 1$, we can compute the integral by expanding the above expression as Laurent series in $\xi$ centered at 0 and reading off the constant term. We always have $t +\ell + 2 > 0$, so the first and fourth terms have zero constant term. If $t - \ell$ is negative or not even, then the second and third terms have zero constant term. Otherwise if $t - \ell \geq 0$ and even, then the second and third terms each give $2 \pi (q^{-1})^{(t - \ell)/2}$. Multiplying this by $\frac{q^{-\ell/2}}{4 \pi}$ gives the result.
\end{proof}

The explicit formulas given in Proposition \ref{prop_reg_tree_chebs} can be found independently in \cite{anker_martinot_pedon_setti} and in \cite{brooks_lindenstrauss, brooks_lindenstrauss_graphs}.

\begin{remark}
    From a more representation theoretic perspective, we have that $U_t(\frac{x + x^{-1}}{2}) = x^t + x^{t-2} + \dots + x^{2-t} + x^{-t}$ gives the Schur polynomial associated to the irreducible highest weight module $V_{(t, 0)}$ of $\mathfrak{sl}(2)$ with highest weight $(t, 0)$ (viewed as a partition). For a general split semi-simple adjoint algebraic group over a non-archimedean local field with residue field of order $q$, the preimage of the Schur polynomial $s_\lambda$ under the Satake isomorphism (centered at some special vertex $o$) associated to a dominant coweight $\lambda$ is supported on special vertices $v$ whose Weyl chamber-valued distance from $o$ is $\preceq \lambda$ (where $\mu \preceq \lambda$ means $\lambda - \mu$ can be expressed as an positive integral sum of fundamental coroots). The value on the ``spherical shell'' of radius $\mu$ is given by $q^{-\langle \lambda, \rho \rangle} P_{\mu, \lambda}(q)$ where $P_{\mu, \lambda}(q)$ is the Kazhdan-Lusztig polynomial (viewing $\mu, \lambda$ as elements in the extended affine Weyl group) and $\rho$ is the half sum of positive roots. The $q$-analogue of the Kostant multiplicity theorem (which is due to Kato \cite{kato}) tells us that if $\textnormal{dim}(V_\lambda(\mu)) = 1$, then $P_{\mu, \lambda}(q) = 1$. Since all irreducible representations of $\mathfrak{sl}(2)$ have $\textnormal{dim}(V_{\lambda}(\mu)) = 1$ if $\mu \preceq \lambda$, we immediately conclude that the preimage of $x^t + x^{t-2} + \dots + x^{2-t} + x^{-t}$ under the Satake transform is exactly given by the formula in \eqref{eqn_cheb_2_tree} (note that $\rho = (\frac{1}{2}, 0)$). See \cite{gross}. We shall return to this point in follow-up work with Anker, Rémy, and Trojan on the multitemporal wave equation on affine buildings where we shall see that the Schur polynomials will provide, in some sense, the ``most important'' fundamental solution to the ``flat'' multitemporal wave equation \eqref{intro_higher_rank_wave}, and consequently the inverse of the Schur polynomials under the Satake isomorphism will give the ``most important'' fundamental solution to the multitemporal wave equation on the building \eqref{intro_higher_rank_building}.
\end{remark} 

\begin{remark}
    We continue Remark \ref{remark_f_n_one}. The discussion there foreshadowed that the spherical functions provide a solution to the wave equation on the tree, after we remove the factor $q^{-t/2}$. This is exactly the solution corresponding to $F_t(\frac{A}{2 \sqrt{q}})$ (or possibly a rescaled version of it). Using \eqref{eqn_spherical_soln_tree}, we see that the functions $F_t(\frac{A}{2 \sqrt{q}})$ for different $t \geq 0$ provide an orthogonal basis for $L^2( G / \hspace{-1.5mm} / K)$. On the other hand, their image under the spherical transform is exactly $F_t(z)$ which, via Example \ref{example_kesten_mckay}, is an orthogonal basis for the $L^2$-space with respect to the Kesten--McKay law, i.e. the Plancherel measure expressed in $z$-coordinates. This is simply a reformulation of the aforementioned fact that the spherical transform is an isometry $L^2( G / \hspace{-1.5mm} / K) \to L^2(\mathbb{C}, d \mu_{\textnormal{Pl}})$. 
\end{remark}

\subsection{Lax--Phillips scattering theory} \label{sec_scattering_theory}

We now come to the analogue on the tree of the d'Alembert presentation of solutions of the wave equation, and the analogue of the expression in Proposition \ref{prop_energy_left_right} for the energy form. The results of this section are similar in some ways to those in the physics paper \cite{romanov_rudin_95}, though our focus is more on the geometric description of the translation representations, the scattering operator, and the expression of solutions as a superposition of plane waves. This section also bears some resemblance to \cite{colin_de_verdiere_truc}. However, overall, this section is most greatly inspired by and modeled on the paper of Lax--Phillips \cite{lax_phillips_h3}.

Suppose $H$ is a Hilbert space and $V$ is a unitary operator on $H$. Suppose $D_+ \subset V$ is a subspace such that
\begin{gather} \label{outgoing_space}
    \textnormal{(i) } V D_+ \subset D_+,  \hspace{5mm} \textnormal{(ii) } \bigcap_k V^k D_+ = \{0\}, \hspace{5mm} \textnormal{(iii) } \overline{\bigcup_k V^k D_+} = H.  
\end{gather}
In the language of Lax--Phillips \cite{lax_phillips_book}, the subspace $D_+$ is called \textit{outgoing}. We have the following:
\begin{theorem}[Theorem 1.1, Chapter II of \cite{lax_phillips_book}]
    If $D_+ \subset H$ is an outgoing subspace with respect to $V$, then there exists an isometry $W_+: H \to \ell^2(\mathbb{Z}; N)$, where $N$ is an auxiliary Hilbert space, such that $V$ maps to the right shift operator and $D_+$ maps to $\ell^2(\mathbb{Z}_{\geq 0}; N)$. This representation is unique up to an isomorphism of $N$.
\end{theorem}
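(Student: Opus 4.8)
The plan is to build $W_+$ from a canonical wavelet-type orthogonal decomposition of $H$ into shifted copies of a single subspace. Set $N := D_+\ominus VD_+$, which is well defined by (i), and let $\iota\colon N\hookrightarrow H$ be the inclusion; this $N$ will be the auxiliary Hilbert space in the statement. The unitarity of $V$ is used throughout in the form that $V$ preserves orthogonal complements.

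The substantive step is to prove the orthogonal Hilbert-space direct sum
\[
    H=\bigoplus_{k\in\mathbb{Z}}V^kN .
\]
First, from $D_+=N\oplus VD_+$ one gets inductively $D_+=N\oplus VN\oplus\cdots\oplus V^{n-1}N\oplus V^nD_+$, i.e.\ $\bigoplus_{k=0}^{n-1}V^kN=D_+\ominus V^nD_+$; letting $n\to\infty$ and using (ii), $\bigcap_nV^nD_+=\{0\}$, yields $D_+=\bigoplus_{k\ge 0}V^kN$. Next, the subspaces $V^kN$ for $k\in\mathbb{Z}$ are pairwise orthogonal: for $k\le 0$ we have $V^kN=V^kD_+\ominus V^{k+1}D_+$, and since (i) gives $D_+\subseteq V^{-1}D_+$ the family $\cdots\supseteq V^{-1}D_+\supseteq D_+\supseteq VD_+\supseteq\cdots$ is nested, so $V^kN$ is orthogonal to $V^{k+1}D_+\supseteq V^lN$ for every $l>k$; the case $k,l\ge 0$ is already contained in the first step. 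Finally, $\bigoplus_{k\ge -m}V^kN=V^{-m}\bigl(\bigoplus_{j\ge 0}V^jN\bigr)$ has closure $V^{-m}D_+$, and because the $V^{-m}D_+$ are increasing with union $\bigcup_{k\in\mathbb{Z}}V^kD_+$, which is dense by (iii), the closed span of all the $V^kN$ is $H$. I expect this paragraph to be the only place requiring care, specifically the orthogonality across the sign change at $k=0$, which is why I phrase it via the nested family above.

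Given the decomposition, define $W_+\colon H\to\ell^2(\mathbb{Z};N)$ by $W_+h=(n_k)_{k\in\mathbb{Z}}$ where $h=\sum_{k}V^k\iota(n_k)$ is the unique expansion. Then $W_+$ is a surjective isometry by orthonormality of the decomposition; $W_+V=SW_+$ with $S$ the right shift, since multiplication by $V$ sends $(n_k)_k$ to $(n_{k-1})_k$; and $W_+D_+=\ell^2(\mathbb{Z}_{\ge 0};N)$ precisely because $D_+=\bigoplus_{k\ge 0}V^kN$. This gives existence (and in fact $W_+$ is unitary, so the isometry in the statement is onto).

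For uniqueness, let $W_+'\colon H\to\ell^2(\mathbb{Z};N')$ be another such representation and put $U:=W_+'W_+^{-1}\colon\ell^2(\mathbb{Z};N)\to\ell^2(\mathbb{Z};N')$. Then $U$ is unitary, intertwines the two bilateral shifts, and carries $\ell^2(\mathbb{Z}_{\ge 0};N)$ onto $\ell^2(\mathbb{Z}_{\ge 0};N')$. Since $\ell^2(\mathbb{Z}_{\ge 1};N)=S\,\ell^2(\mathbb{Z}_{\ge 0};N)$ and $U$ commutes with $S$, it also carries $\ell^2(\mathbb{Z}_{\ge 1};N)$ onto $\ell^2(\mathbb{Z}_{\ge 1};N')$, hence carries the $0$-th slot (the orthocomplement) isometrically onto the $0$-th slot via some unitary $A\colon N\to N'$. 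Applying $S^k$ for $k\in\mathbb{Z}$ and using $US^k=S^kU$ then shows $U$ acts as $A$ on every slot, so $U$ is exactly the map induced by the isomorphism $A$; equivalently, $W_+$ is unique up to an isomorphism of $N$.
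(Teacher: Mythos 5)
The paper does not prove this statement --- it is quoted verbatim from Lax--Phillips --- and your argument is correct and is essentially the classical proof given there: $N:=D_+\ominus VD_+$ is a wandering subspace for $V$, properties (i)--(iii) give the Wold-type decomposition $H=\bigoplus_{k\in\mathbb{Z}}V^kN$ with $D_+=\bigoplus_{k\geq 0}V^kN$, and uniqueness follows from the shift-intertwining argument you give. The only point worth making explicit is in the uniqueness step: for $U=W_+'W_+^{-1}$ to be unitary you need $W_+'$ to be surjective, which is the intended reading of ``$D_+$ maps to $\ell^2(\mathbb{Z}_{\geq 0};N')$'' (onto), since then condition (iii) and the intertwining with the shift force $W_+'(H)\supseteq\ell^2(\mathbb{Z}_{\geq -m};N')$ for every $m$, exactly as in your completeness argument for $W_+$.
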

\noindent The map in the theorem is called the \textit{outgoing translation representation}. We may similarly define the notion of an \textit{incoming subspace} $D_- \subset H$ which satisfies properties (ii) and (iii) of \eqref{outgoing_space}, but has property (i) replaced by $V^{-1} D_- \subset D_-$. We may subsequently similarly define the notion of an \textit{incoming translation representation} $W_-$, in which case $D_-$ maps to $\ell^2(\mathbb{Z}_{\leq -1}; N)$. Let $F: \ell^2(\mathbb{Z}; N) \to L^2(S^1; N)$ be the Fourier transform
\begin{gather*}
    [Fh](e^{i \theta}) = \sum_{n} h(n) e^{i n \theta}.
\end{gather*}
Then $F \circ W_+$ maps $V$ to the operator corresponding to multiplication by $e^{i \theta}$. We refer to this as the \textit{outgoing spectral representation}. 

\begin{remark} 
In a heuristic sense, in the above set up we should think of $V$ as a discrete wave propagator, and $N = L^2(X, \mu)$ for some space $X$ which parametrizes the directions that ``planes waves'' can travel in. Then the translation representation is closely related to expressing every wave as a superposition of ``plane waves.''
\end{remark}

Suppose the space $H$ with unitary operator $V$ admits both an outgoing and incoming subspace. Both the incoming and outgoing representations afford translation representations $W_{\pm}: H \to \ell^2(\mathbb{Z}; N)$. The operator $S = W_+ \circ W_-^{-1}$ on $\ell^2(\mathbb{Z}; N)$ is called the \textit{scattering operator}. Let $\mathcal{S} = F S F^{-1}: L^2(S^1; N) \to L^2(S^1; N)$. Then $\mathcal{S}(w)$, with $w \in \mathbb{C}$, may be viewed as a function on $S^1$ taking values in (unitary) operators $N \to N$. Often this extends to a meromorphic family of operators whose poles are called \textit{resonances}. 

\begin{remark} \label{remark_flat_scattering}
    Let $H$ denote the Hilbert space of initial data for the flat wave equation which have finite energy. Let $V = \mathcal{U}$ as defined in \eqref{eqn_one_time_step_prop_U}. Proposition \ref{prop_energy_flat} tells us that $\mathcal{U}$ acts unitarily on $H$. Let $D_+$ denote those initial conditions such that the associated solution to the wave equation $u(n, t)$ is supported outside of $[-t, t]$ for all $t \geq 0$. Then $D_+$ is exactly those initial conditions such that $u(n, t)$ can be written as $w_{\infty}(n-t) + w_{-\infty}(n+t)$ with $w_\infty(k)$ supported on $k \geq 1$ and $w_{-\infty}(k)$ supported on $k \leq -1$. As $t \to \infty$, we can think of $w_\infty$ as ``moving towards'' $\infty$ and $w_{-\infty}$ as ``moving towards'' $-\infty$. Let $N = \ell^2(\{\pm \infty\})$, i.e. we take the set with two elements which we label as $\pm \infty$ equipped with the counting measure; we denote elements here as $(\alpha, \beta)$ where $\alpha$ is the value at $\infty$, and $\beta$ is the value at $-\infty$. We define a map $W_+: H \to \ell^2(\mathbb{Z}; N)$ as follows. If $u(n, t) = w_\infty(n-t) + w_{-\infty}(n+t)$ is the solution to the wave equation corresponding to some choice of initial conditions, then we map these initial conditions to $t \mapsto (\frac{w_{\infty}(t+1) - w_{\infty}(t-1)}{2}, \frac{w_{-\infty}(-t+1) - w_{-\infty}(-t-1)}{2}) \in N$. Proposition \ref{prop_energy_left_right} tells us that this is an isometry. Elements in $D_+$ clearly map to $\ell^2(\mathbb{Z}_{\geq 0}; N)$, and $\mathcal{U}$ clearly maps to the right shift operator. We have thus found the outgoing translation representation. 
    
    Now let $D_-$ denote those initial conditions such that $u(n, t)$ is supported outside of $[-|t|-1, |t|+1]$ for $t \leq 0$. If we take $W_- = W_+$, then we in fact also obtain the incoming translation representation. In this case the scattering operator is just the identity. This makes sense as waves in the flat setting do not ``scatter''.
\end{remark}

Now consider the wave equation on the tree. We know that the matrix from \eqref{eqn_tree_one_time_step}, call it $\mathcal{V}$, acts unitarily on the space $\mathcal{H}$ of initial data. Let $D_+$ denote the subspace of initial data such that the corresponding solution to the wave equation $u(v, t)$ has the property that for all $t \geq 0$, we have that $u(v, t)$ is supported outside of the ball of radius $t$ centered at $o$. Clearly such solutions exist, for example the initial conditions corresponding to the Chebyshev polynomials $F_t$ (see \eqref{eqn_spherical_soln_tree}). It will turn out that $D_+$ is indeed outgoing (only property (iii) is not immediately clear). Furthermore, we may geometrically interpret the corresponding translation representation geometrically. The auxiliary Hilbert space in question will turn out to be $L^2(\Omega, \nu)$. In the sequel we shall often write $L^2(S^1 \times \Omega, d \theta \times d \nu)$, or even just $L^2(S^1 \times \Omega)$, in place of $L^2(S^1; L^2(\Omega, d \nu))$. The translation representation is very closely related to expressing general solutions to the wave equation as superpositions of ``plane waves'', i.e. waves which are constant on horocycles and which are ``purely outgoing''. 

Let us explain more precisely what we mean by horocyclic plane waves. Let $\omega \in \Omega$. Suppose $f: \mathbb{Z} \to \mathbb{C}$ is any function. Consider the functions on $\mathcal{T} \times \mathbb{Z}$ defined by
\begin{align*}
    F_+(v, t) &= q^{h_\omega(v)/2} f(h_\omega(v) - t) \\
    F_-(v, t) &= q^{h_\omega(v)/2} f(h_\omega(v) + t).
\end{align*}
It is immediate to show that both functions solve the wave equation. Furthermore, at each time $t$ they are clearly constant on horocycles of $\omega$. For $F_+$, as $t \to \infty$, the wave ``moves towards $\omega$'', and for $F_-$, as $t \to \infty$, the wave ``moves away from $\omega$''. 

The below theorem tells us that every solution to the wave equation is a superposition of such plane waves. Furthermore, similarly to the discussion in Remark \ref{remark_flat_scattering}, in order to go from the representation of each solution as a superposition of plane waves to the translation representation, we need to take an appropriate derivative in time. In Remark \ref{remark_flat_scattering}, that derivative spectrally corresponded to multiplying by $\frac{\xi - \xi^{-1}}{2}$. As appears in the below theorem, in the tree case the appropriate derivative spectrally corresponds to multiplying by $\frac{\xi - q^{-1} \xi^{-1}}{1 + q^{-1}}$. It is interesting to note that the ratio of the function in the flat vs tree case is (up to a constant) the $c$-function \eqref{eqn_c_function}. When we set $q = 1$, we get the infinite 2-regular tree, i.e. $\mathbb{Z}$, so we recover the flat case.

We now describe two closely related maps from $\mathcal{H}$ to $L^2(S^1 \times \Omega, d \theta \times d \nu)$. Let $f = (f_1, f_2) \in \mathcal{H}$. The first map $T_+$ is defined via
\begin{align*}
    (f_1, 0) &\mapsto [\textnormal{Hel}(f_1)](q^{-is}, \omega) \cdot c(q^{-is})^{-1} \cdot \Big( \frac{q^{is} - q^{-is}}{2} \Big) \\
    (0, f_2) & \mapsto -[\textnormal{Hel}(f_2)](q^{-is}, \omega) \cdot c(q^{-is})^{-1},
\end{align*}
and extended linearly. The second map $R_+$ is a bit more involved to describe. First let $u^f(v, t)$ denote the solution to the wave equation corresponding to the initial conditions. Define
\begin{gather}
    \hat{u}^f(v, q^{is}) := \sum_{t = -\infty}^{\infty} u^f(v, t) (q^{is})^t. \label{eqn_defn_u_f_hat}
\end{gather}
Since $u^f(v, t)$ satisfies the wave equation, we get that
\begin{gather*}
    \frac{A_v}{2 \sqrt{q}} \hat{u}^f(v, q^{is}) = \Big(\frac{q^{is} + q^{-is}}{2} \Big) \hat{u}^f(v, q^{is}).
\end{gather*}
We then define 
\begin{gather}
    [R_+ f](q^{is}, \omega) := \mathcal{P}^{-1}_{q^{is}} (\hat{u}^f(\cdot , q^{is})) (\omega) \label{eqn_inverse_poisson}
\end{gather}
Note that a priori it is not clear that $\hat{u}^f(v, q^{is})$ is well-defined (i.e. the series might not converge). Furthermore, in general the preimage of an eigenfunction under the Poisson transform is only a finitely additive measure, rather than a function on $\Omega$. That the above defined procedure is well-defined is part of the content of the below theorem.

Finally we define
\begin{align*}
    k_+^f (t, \omega) & := \frac{1}{2 \pi} \int_{S^1} [R_+ f](q^{is}, \omega)  \cdot (q^{is})^t  \ d \theta. \\
    g_+^f (t, \omega) & := \frac{1}{1 + q^{-1}} (k_+^f (t+1, \omega) - q^{-1} k_+^f (t-1, \omega)).
\end{align*}

\begin{theorem}
    Let $f \in \mathcal{H}$. Then
    \begin{enumerate}
        \item The map $R_+$ extends to a continuous map from $\mathcal{H}$ to $L^2(S^1 \times \Omega, d \theta \times d \nu)$. 
        \item The map $T_+$ is an isometry between $\mathcal{H}$ and $L^2(S^1 \times \Omega, d \theta \times d \nu)$.
        \item We have
        \begin{gather}
            u^f(v, t) = \int_{\Omega} q^{\frac{h_\omega(v)}{2}} k_+^f (h_\omega(v) - t, \omega) \ d \nu(\omega). \label{eqn_superposition_waves}
        \end{gather}
        \item We have the relation
        \begin{gather}
            T_+ = \frac{(q^{is} - q^{-1} q^{-is})}{(1 + q^{-1})} \cdot R_+. \label{eqn_relation_R_T}
        \end{gather}
         
    \end{enumerate}
\end{theorem}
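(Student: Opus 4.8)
The plan is to pin down $R_+$ and $T_+$ by an explicit computation on the dense subspace of compactly supported data, deduce \eqref{eqn_relation_R_T} from a one-line algebraic identity, verify the isometry by a Plancherel computation, and then get continuity and the superposition formula \eqref{eqn_superposition_waves} by continuous extension. Throughout I take $q>1$; for $q=1$ the tree is $\mathbb{Z}$ and everything collapses to Remark~\ref{remark_flat_scattering}.

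I would start from a pair $f=(f_1,f_2)$ of compactly supported functions; such pairs form a dense subspace of $\mathcal{H}$ by the very definition of the energy form \eqref{eqn_energy_tree_formula}. By Theorem~\ref{thm_solve_wave_tree} in the standard basis, $u^f(v,t)=T_t(\tfrac{A}{2\sqrt{q}})f_1(v)+U_{t-1}(\tfrac{A}{2\sqrt{q}})f_2(v)$, and the explicit kernels in Proposition~\ref{prop_reg_tree_chebs} show $u^f(v,t)=O(q^{-|t|/2})$ for each fixed $v$, so the series \eqref{eqn_defn_u_f_hat} converges absolutely on $S^1$. Taking $f_1=\delta_o$, $f_2=0$ and summing the two geometric series that Proposition~\ref{prop_reg_tree_chebs} produces, the cancellation $1+(1-q)\tfrac{q^{-1}\xi^2}{1-q^{-1}\xi^2}=\tfrac{1-\xi^2}{1-q^{-1}\xi^2}=c(\xi^{-1})^{-1}$ is what makes the $c$-function \eqref{eqn_c_function} appear, giving, for $v$ at distance $\ell$ from $o$,
\[
\hat{u}^{(\delta_o,0)}(v,q^{is})=\frac{1+q^{-1}}{2\,c(q^{is})c(q^{-is})}\,\phi_{(q^{is},q^{-is})}(\ell),\qquad \hat{u}^{(0,\delta_o)}(v,q^{is})=\frac{-(1+q^{-1})}{(q^{is}-q^{-is})\,c(q^{is})c(q^{-is})}\,\phi_{(q^{is},q^{-is})}(\ell).
\]
Since $\phi_{(q^{is},q^{-is})}$ is the Poisson transform of the constant function $1$ against $\nu$, I then invoke $G$-equivariance of the wave equation (translate $o$ to an arbitrary vertex $w$ and apply Proposition~\ref{prop_spherical_function_new_center}, which identifies $\mathcal{P}_{q^{is}}^{-1}\phi^w_{(q^{is},q^{-is})}$ with $\omega\mapsto q^{(1/2-is)h_\omega(w)}=[\textnormal{Hel}(\delta_w)](q^{-is},\omega)$) together with linearity of $\textnormal{Hel}$ to obtain, for all compactly supported $f_1,f_2$,
\[
[R_+f](q^{is},\omega)=\frac{1+q^{-1}}{c(q^{is})c(q^{-is})}\Big(\tfrac12[\textnormal{Hel}(f_1)](q^{-is},\omega)-\tfrac{1}{q^{is}-q^{-is}}[\textnormal{Hel}(f_2)](q^{-is},\omega)\Big);
\]
here the inverse Poisson transform is honestly a function, because $\hat{u}^f(\cdot,q^{is})$ is a finite combination of translated spherical functions.

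With this formula in hand, \eqref{eqn_relation_R_T} is immediate: multiplying by $\tfrac{q^{is}-q^{-1}q^{-is}}{1+q^{-1}}$ and using $q^{is}-q^{-1}q^{-is}=c(q^{is})(q^{is}-q^{-is})$ (which is just the definition of $c$) cancels one factor of $c(q^{is})$ and reproduces the two defining formulas for $T_+$; it then passes to all of $\mathcal{H}$ once both sides are continuous. For the isometry (2) I would compute $\|T_+f\|^2_{L^2(S^1\times\Omega,\,d\theta\times d\nu)}$ from the explicit formula for $T_+$, substitute $q^{is}\mapsto q^{-is}$ so the Helgason transforms sit at their natural argument, and use that $|c(q^{is})|^{-2}\,d\theta$ is a constant multiple of $d\mu_{\textnormal{Pl}}$. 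Expanding the square, the two diagonal terms integrate to $\langle(1-\tfrac{A^2}{4q})f_1,f_1\rangle+\langle f_2,f_2\rangle$ by the Helgason--Plancherel theorem (Proposition~\ref{prop_helgason_transform}) and functional calculus, i.e. to the energy of $f$; the cross term reduces, up to a positive constant, to $\operatorname{Re}\int_{S^1}\Phi(q^{is})\,\tfrac{q^{is}-q^{-is}}{2}\,d\mu_{\textnormal{Pl}}$ with $\Phi(q^{is})=\int_\Omega[\textnormal{Hel}(f_1)](q^{is},\omega)\,\overline{[\textnormal{Hel}(f_2)](q^{is},\omega)}\,d\nu$, and expanding the Helgason transforms and invoking Proposition~\ref{prop_spherical_function_new_center} gives $\Phi(q^{is})=\sum_{x,y}f_1(x)\overline{f_2(y)}\,\phi_{(q^{is},q^{-is})}(d(x,y))$, which depends only on the unordered pair $\{q^{is},q^{-is}\}$; hence $\Phi$ is invariant under $q^{is}\mapsto q^{-is}$ while $\tfrac{q^{is}-q^{-is}}{2}$ is anti-invariant and $d\mu_{\textnormal{Pl}}$ invariant, so the cross term vanishes. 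Thus $\|T_+f\|^2$ equals the energy of $f$ up to the fixed constant coming from the normalization of $d\theta$ (which I would absorb), so $T_+$ extends to a norm-preserving map $\mathcal{H}\to L^2(S^1\times\Omega,d\theta\times d\nu)$; its range is closed and, since $\hat{u}^{\mathcal{V}f}=q^{-is}\hat{u}^f$ forces $T_+$ to conjugate $\mathcal{V}$ into multiplication by $q^{-is}$, it is invariant under multiplication by $e^{\pm i\theta}$, and surjectivity then follows from a density argument (identify $T_+(D_\pm)$ with complementary one-sided Hardy subspaces via part (3), or invoke the abstract Lax--Phillips theorem once $D_+$ is checked to be outgoing, the only non-obvious point of \eqref{outgoing_space} being density of $\bigcup_k\mathcal{V}^kD_+$).

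Finally, part (1) follows from (2) and \eqref{eqn_relation_R_T}, since $R_+=\tfrac{1+q^{-1}}{q^{is}-q^{-1}q^{-is}}\,T_+$ and the multiplier $(q^{is}-q^{-1}q^{-is})^{-1}$ is bounded on $S^1$ precisely because $q^{is}=\pm q^{-1/2}$ never lies on $S^1$ for $q>1$; and part (3) follows by Fourier inversion in $t$ and Fubini: on the dense subspace $\hat{u}^f(\cdot,q^{is})=\mathcal{P}_{q^{is}}([R_+f](q^{is},\cdot)\,d\nu)$ by construction, so
\[
u^f(v,t)=\frac{1}{2\pi}\int_{S^1}\int_\Omega q^{(1/2+is)h_\omega(v)}[R_+f](q^{is},\omega)(q^{is})^{-t}\,d\nu\,d\theta=\int_\Omega q^{h_\omega(v)/2}\,k_+^f(h_\omega(v)-t,\omega)\,d\nu,
\]
which is \eqref{eqn_superposition_waves}, and both sides are continuous in $f\in\mathcal{H}$ (the left because the energy norm controls $\ell^2$-convergence on each sphere around $o$, the right by (1) and Cauchy--Schwarz on $\Omega$), so (3) extends to all of $\mathcal{H}$. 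The main obstacle is really the explicit evaluation of $\hat{u}^f(v,q^{is})$ and the observation that the geometric series collapses to the $c$-function --- once that is established, (4) is pure algebra, (2) is Plancherel bookkeeping with the cross term killed by a symmetry, and (1), (3) are continuous extensions; a secondary subtlety, and the reason the definition of $R_+$ must be read carefully, is that for general $f\in\mathcal{H}$ the series \eqref{eqn_defn_u_f_hat} need not converge and the inverse Poisson transform need not a priori be a function, so $R_+$ is really only defined by continuous extension from the dense subspace, with \eqref{eqn_inverse_poisson} then holding in the resulting $L^2$ sense.
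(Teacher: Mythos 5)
Your proposal is correct and follows essentially the same route as the paper's proof: evaluate $\hat{u}^f$ and its inverse Poisson transform explicitly on the dense subspace generated by delta initial data (using the spherical function and Proposition \ref{prop_spherical_function_new_center}), deduce \eqref{eqn_relation_R_T} by the algebraic identity relating $c(q^{is})$ to $q^{is}-q^{-1}q^{-is}$, prove the isometry by a Helgason--Plancherel computation in which the cross term vanishes by the $s \mapsto -s$ symmetry, and then obtain (1) and (3) from boundedness of $(q^{is}-q^{-1}q^{-is})^{-1}$ on $S^1$ and Fubini together with continuous extension. The only cosmetic deviations are the order in which you treat the four claims, summing the geometric series from Proposition \ref{prop_reg_tree_chebs} instead of reading off Fourier coefficients via \eqref{eqn_inverse_spherical}, and killing the cross term via symmetry of the spherical function rather than the paper's appeal to the Helgason symmetry relation \eqref{eqn_helgason_symmetry}.
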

\begin{proof}
    We begin by proving the Claim (2). For this we utilize the properties of the Helgason transform from Proposition \ref{prop_helgason_transform}. Suppose $f = (f_1, f_2)$. We know that $f_2 \in L^2(\mathcal{T})$. On the other hand, we know that $(f_1, 0)$ can be approximated in $\mathcal{H}$ by functions $(g, 0)$ with $g \in L^2(\mathcal{T})$. Therefore, on a dense subspace of $\mathcal{H}$, we have:
    \begin{align*}
        \langle (1 - \Big(\frac{A}{2 \sqrt{q}} \Big)^2) f_1, f_1 \rangle &= \int_\Omega \int_{S^1} (1 - (\frac{q^{is} + q^{-is}}{2})^2 ) \cdot [\textnormal{Hel}(f_1)](q^{is}, \omega) \cdot \overline{[\textnormal{Hel}(f_1)]}(q^{is}, \omega) |c(q^{is})|^{-2} d \theta d \nu \\
        &= \int_{\Omega} \int_{S^1} \Big( \frac{q^{is} - q^{-is}}{2} \Big)  c(q^{-is})^{-1} [\textnormal{Hel}(f_1)](q^{-is}, \omega)\\
        & \hspace{15mm} \cdot \Big(\frac{q^{-is} - q^{is}}{2} \Big) \overline{[\textnormal{Hel}(f_1)]}(q^{-is}, \omega) \overline{c(q^{-is})^{-1}} d \theta d \nu. \\
        &= \| T_+ (f_1, 0) \|^2 \\
        \langle f_2, f_2 \rangle &= \int_{\Omega} \int_{S^1} [\textnormal{Hel}(f_2)](q^{is}, \omega) \cdot \overline{[\textnormal{Hel}(f_2)]}(q^{is}, \omega) |c(q^{is})|^{-2} d \theta d \nu \\
        &= \int_{\Omega} \int_{S^1} [\textnormal{Hel}(f_2)](q^{-is}, \omega) \cdot \overline{[\textnormal{Hel}(f_2)]}(q^{-is}, \omega) |c(q^{-is})|^{-2} d \theta d \nu \\
        &= \| T_+(0, f_2) \|^2. 
    \end{align*}

    We now wish to show that $\langle T_+(f_1, 0), T_+(0, f_2) \rangle = 0$. For this it will suffice to consider the case when $f_1 = \delta_x$ and $f_2 = \delta_y$ for some points $x, y \in \mathcal{T}$. Notice that $[\textnormal{Hel}(\delta_x)](q^{is}, \omega) = q^{(\frac{1}{2} + is) h_\omega(x)}$. Therefore,
    \begin{gather*}
        \langle T_+(\delta_x, 0), T_+(0, \delta_y) \rangle = - \int_{S^1} \frac{q^{is} - q^{-is}}{2} |c(q^{-is})|^2 \int_\Omega q^{(\frac{1}{2} + is) h_\omega(y)} q^{(\frac{1}{2} - is) h_\omega(x)} d \nu d\theta.
    \end{gather*}
    Define
    \begin{gather*}
        G(q^{is}) := \int_\Omega q^{(\frac{1}{2} + is) h_\omega(y)} q^{(\frac{1}{2} - is) h_\omega(x)} d \nu.
    \end{gather*}
    Then the symmetry condition \eqref{eqn_helgason_symmetry} implies that $G(q^{-is}) = G(q^{is})$. We thus have
    \begin{gather*}
        \int_{S^1} \frac{q^{is} - q^{-is}}{2} |c(q^{is})|^2 G(q^{is}) d \theta = 0
    \end{gather*}
    because under $s \mapsto -s$, the integrand picks up a minus sign. We have thus shown that $T_+$ defines an isometry on a dense subspace of $\mathcal{H}$, and thus can be extended to all of $\mathcal{H}$.

    We now focus on proving Claim (4). For this we start by explicitly computing $R_+$ of $(\delta_o, 0)$ and $(0, \delta_o)$. In both cases, the fact that the initial data is radial implies that $\hat{u}^f(v, q^{is})$ is a radial function. On the other hand, it is an eigenfunction and hence a multiple of the spherical function. To compute which multiple, we simply need to compute the value at $o$. Using \eqref{eqn_inverse_spherical} we have that
    \begin{gather*}
        u^{(\delta_o, 0)}(o, t) = T_t\Big(\frac{A}{2 \sqrt{q}}\Big) \delta_o(o) = \int_{S^1} \frac{\xi^t + \xi^{-t}}{2} \frac{(1 + q^{-1})}{4 \pi} \frac{1}{|c(\xi)|^2} d \theta. 
    \end{gather*}
    Therefore, using \eqref{eqn_defn_u_f_hat} we get that $\hat{u}^{(\delta_o, 0)}(o, \xi) = \frac{1 + q^{-1}}{2} \frac{1}{|c(\xi)|^2}$. Analyzing the Chebyshev polynomials of the second kind, we get
    \begin{gather*}
        u^{(0, \delta_o)}(o, t) = U_{t-1}\Big(\frac{A}{2 \sqrt{q}} \Big) \delta_o (o) = \int_{S^1} \frac{\xi^t - \xi^{-t}}{\xi - \xi^{-1}} \frac{(1 + q^{-1})}{4 \pi} \frac{(\xi - \xi^{-1})(\xi^{-1} - \xi)}{(\xi - q^{-1} \xi^{-1})(\xi^{-1} - q^{-1} \xi)} d \theta.
    \end{gather*}
    Therefore $\hat{u}^{(0, \delta_o)}(o, \xi) = -(1 + q^{-1}) \frac{(\xi^{-1} - \xi)}{(\xi - q^{-1} \xi^{-1})(\xi^{-1} - q^{-1} \xi)}$. 

    Using Proposition \ref{prop_spherical_function_new_center}, we get that
    \begin{align*}
        (q^{is} - q^{-1} q^{-is}) [R_+(f_1, 0)](\omega) &= (1 + q^{-1}) c(q^{-is})^{-1} \frac{q^{is} - q^{-is}}{2} \sum_{v \in \mathcal{T}} q^{(\frac{1}{2} - is) h_\omega(v)} f_1(v) \\
        &= (1 + q^{-1}) c(q^{-is})^{-1} \frac{q^{is} - q^{-is}}{2} [\textnormal{Hel}(f_1)](q^{-is}, \omega) \\
        (q^{is} - q^{-1} q^{-is})[R_+(0, f_2)](\omega) &= -(1 + q^{-1}) c(q^{-is})^{-1} \sum_{v \in \mathcal{T}} q^{(\frac{1}{2} - is) h_\omega(v)} f_2(v) \\
        &= -(1 + q^{-1}) c(q^{-is})^{-1} [\textnormal{Hel}(f_2)](q^{-is}, \omega).
    \end{align*}
    Therefore, on a dense subspace of $\mathcal{H}$, the relation \eqref{eqn_relation_R_T} holds. However, as $\frac{1}{q^{is} - q^{-1} q^{-is}}$ is bounded on $S^1$, we have that the map $\frac{(1 + q^{-1})}{(q^{is} - q^{-1} q^{-is})} T_+$ is a continuous operator from $\mathcal{H}$ to $L^2(S^1 \times \Omega)$ and agrees with $R_+$ on a dense subspace. Hence we can define $R_+$ as the unique extension satisfying \eqref{eqn_relation_R_T}. This shows Claim (1).

    Fixing $v \in \mathcal{T}$ and $t \in \mathbb{Z}$ we have that
    \begin{gather*}
        \int_\Omega q^{h_\omega(v)/2} k_+^f(h_\omega(v) - t, \omega) d \nu = \int_\Omega q^{h_\omega(v)/2} \frac{1}{2 \pi} \int_{S^1} [R_+ f](q^{is}, \omega) (q^{is})^{h_\omega(v) - t} d \theta d \nu.
    \end{gather*}
    The function $q^{h_\omega(v)/2}$ is bounded as a function of $\omega$. We also know that $R_+ f$ is in $L^2(S^1 \times \Omega)$. Since $S^1 \times \Omega$ is compact, we know that $L^2 \subseteq L^1$. Therefore, by the Fubini--Tonelli theorem, we can change the order of integration. We obtain
    \begin{gather*}
        \frac{1}{2 \pi} \int_{S^1} (q^{is})^{-t} \int_\Omega q^{(\frac{1}{2} + is) h_\omega(v)} [R_+ f] (q^{is}, \omega) d \nu d \theta = \frac{1}{2 \pi} \int_{S^1} (q^{is})^{-t} \hat{u}^f(v, q^{is}) d \theta = u^f(v, t)
    \end{gather*}
    for $f$ in a dense subset.

    For any fixed $v, t$, the map sending $f \in \mathcal{H}$ to $u^f(v, t)$ is clearly continuous. On the other hand the map sending $f$ to the RHS of \eqref{eqn_superposition_waves} is also clearly continuous. These two maps agree on a dense set, hence they must agree everywhere. This shows Claim (3).

\end{proof}

We shall ultimately see that $T_+$ is essentially the Fourier transform of the outgoing translation representation, and thus $f \mapsto g_+^f$ is the translation representation. However, one subtlety remains. It is clear from \eqref{eqn_superposition_waves}, that if $k_+^f$ is supported on $\mathbb{Z}_{\geq 1} \times \Omega$ (and thus $g_+^f$ is supported on $\mathbb{Z}_{\geq 0} \times \Omega$), then $f$ lies in $D_+$. However, the converse is not clear, i.e. a priori it is possible that there are $f \in D_+$ for which $g_+^f$ is not supported on $\mathbb{Z}_{\geq 0} \times \Omega$. The following theorem rules out this possibility.

Suppose $F$ is a distribution on $\Omega$. We define $\tilde{F}(v)$ to be the function on $\mathcal{T}$ defined as the average value of $F$ on the cylindrical set $\Omega(v)$ (see Section \ref{sec_harmonic_analysis_trees} for the definition of $\Omega(v)$). We thus obtain a function satisfying the properties
\begin{gather*}
    \sum_{w : d(o, w) = d(o, v) + 1} \tilde{F}(w) = \begin{cases}
        (q+1) \tilde{F}(v) & v = o \\
        q \tilde{F}(v) & v \neq o.
    \end{cases}
\end{gather*}

\begin{theorem} \label{thm_zero_soln}
    Let $k_+$ be a distribution on $\mathbb{Z} \times \Omega$. 
    \begin{enumerate}
        \item We have that
    \begin{gather*}
        u(v, t) := \int_{\Omega} q^{h_\omega(v)/2} k_+(h_\omega(v) - t, \omega) d \nu(\omega)
    \end{gather*}
    is identically zero if and only if $\tilde{k}_+(t, o) = 0$, and for every fixed $v \neq o \in \mathcal{T}$, we have
    \begin{gather}
        \tilde{k}_+(t, v) = \begin{cases}
            q^{-t/2} a(v) & \textnormal{$t$ even} \\
            q^{-t/2} b(v) & \textnormal{$t$ odd},
        \end{cases} \label{eqn_condition_zero_soln}
    \end{gather}
    for some $a, b: \mathcal{T} \to \mathbb{C}$.

    \item We have that $u(v, t) = 0$ whenever $d(o, v) \leq t$ if and only if for all $t \leq 0$ we have $\tilde{k}_+(t, o) = 0$ and for all $t \leq 0$ and $v \neq o$ we have that \eqref{eqn_condition_zero_soln} holds.

    \end{enumerate}
\end{theorem}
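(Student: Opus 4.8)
The plan is to reduce the statement to two explicit identities expressing $u$ in terms of the averaged data $\tilde{k}_+$, and then to solve an elementary two-step recursion. First I would pin down how $u(v,t)$ depends on $k_+$. Fix $v$ with $d(o,v)=\ell$ and let $o=v_0,v_1,\dots,v_\ell=v$ be the geodesic from $o$ to $v$. Since the Busemann function $h_\omega(v)$ is determined by where the ray $\gamma_{o,\omega}$ leaves this geodesic, $h_\omega(v)$ equals $2j-\ell$ on the cylindrical difference $\Omega(v_j)\setminus\Omega(v_{j+1})$ for $0\le j\le\ell-1$ and equals $\ell$ on $\Omega(v_\ell)$; on each of these finitely many sets $h_\omega(v)$ is constant, so the integral defining $u(v,t)$ is really a finite sum in which $k_+(\,\cdot\,,\omega)$ is only ever paired against indicators of cylindrical sets. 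This makes $u(v,t)$ well defined for an arbitrary distribution $k_+$ and shows it depends on $k_+$ only through $\tilde{k}_+$. Doing the bookkeeping — using $\nu(\Omega(o))=1$, $\nu(\Omega(v_j))=\frac{1}{(q+1)q^{j-1}}$ for $j\ge1$, and the telescoping made possible by $q^{j-1}\nu(\Omega(v_j))=\frac{1}{q+1}$ being independent of $j$ — yields the closed formula
\begin{gather*}
u(v,t)=q^{-\ell/2}\Big(\tilde{k}_+(-\ell-t,o)+\tfrac{1}{q+1}\sum_{j=1}^{\ell}\big(q\,\tilde{k}_+(2j-\ell-t,v_j)-\tilde{k}_+(2j-2-\ell-t,v_j)\big)\Big),
\end{gather*}
and in particular $u(o,t)=\tilde{k}_+(-t,o)$.

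Comparing this formula for a vertex $v'=v_{\ell+1}$ at distance $\ell+1$ with the same formula for its neighbour $v=v_\ell$ toward $o$ (the terms indexed $j\le\ell$ coincide with those of $u(v,t+1)$) gives the recursion
\begin{gather*}
q^{(\ell+1)/2}u(v',t)=q^{\ell/2}u(v,t+1)+\tfrac{1}{q+1}\big(q\,\tilde{k}_+(\ell+1-t,v')-\tilde{k}_+(\ell-1-t,v')\big),
\end{gather*}
valid for every $v'\neq o$ and all $t\in\mathbb{Z}$, where $v$ is the vertex adjacent to $v'$ on the geodesic from $o$ to $v'$ and $\ell=d(o,v)$; alternatively this identity can be obtained directly by splitting $\Omega$ into $\Omega(v')$, where $h_\omega(v')=h_\omega(v)+1$ and $h_\omega(v)\equiv\ell$, and its complement, where $h_\omega(v')=h_\omega(v)-1$. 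The \emph{key observation} is that \eqref{eqn_condition_zero_soln} holds (for all $t$, resp.\ for all $t\le0$) if and only if $q\,\tilde{k}_+(s,v')=\tilde{k}_+(s-2,v')$ for all $s\in\mathbb{Z}$ (resp.\ all $s\le0$): that recursion forces $\tilde{k}_+(s,v')$ to equal $q^{-s/2}$ times a function of $v'$ depending only on the parity of $s$.

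For Part (1): if $u\equiv0$ then $\tilde{k}_+(-t,o)=u(o,t)=0$ for all $t$, which is the condition at $o$, and the recursion above (with both $u$-terms zero) gives $q\,\tilde{k}_+(\ell+1-t,v')=\tilde{k}_+(\ell-1-t,v')$ for all $t$, i.e.\ $q\,\tilde{k}_+(s,v')=\tilde{k}_+(s-2,v')$ for all $s$, which by the key observation is \eqref{eqn_condition_zero_soln}. Conversely, if the stated conditions hold, then $u(o,\cdot)\equiv0$, and by induction on $d(o,v')$ the recursion shows that $q^{(\ell+1)/2}u(v',t)$ is the sum of $q^{\ell/2}u(v,t+1)$ (zero by the inductive hypothesis) and $\frac{1}{q+1}(q\,\tilde{k}_+(\ell+1-t,v')-\tilde{k}_+(\ell-1-t,v'))$ (zero by the key observation), so $u(v',\cdot)\equiv0$. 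Part (2) is the same argument localized to the region $\{d(o,v)\le t\}$: forwards, $u(o,t)=\tilde{k}_+(-t,o)=0$ for $t\ge0$ gives $\tilde{k}_+(s,o)=0$ for all $s\le0$, and applying the recursion with $t\ge\ell+1$ — so that both $u(v',t)$ and $u(v,t+1)$ lie in the vanishing region — gives $q\,\tilde{k}_+(s,v')=\tilde{k}_+(s-2,v')$ for $s=\ell+1-t\le0$, i.e.\ \eqref{eqn_condition_zero_soln} restricted to $t\le0$; backwards, one shows $u(v,t)=0$ for $t\ge d(o,v)$ by induction on $d(o,v)$, the inductive step using that in the recursion with $t\ge\ell+1$ the term $u(v,t+1)$ vanishes by hypothesis while the arguments $\ell\pm1-t$ are both $\le0$, so the relevant combination of $\tilde{k}_+$-values vanishes by the one-sided form of \eqref{eqn_condition_zero_soln}.

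The only genuinely non-formal part of all this is the first step: making precise the distributional meaning of $u(v,t)$ so that decomposing $\Omega$ into the (finitely many, cylindrical) level sets of $h_\omega(\cdot)$ is exact, and then organizing the resulting finite sums so that the telescoping powered by $q^{j-1}\nu(\Omega(v_j))=\frac{1}{q+1}$ is transparent and the closed formula for $u(v,t)$ — equivalently, the displayed one-step recursion — comes out cleanly. Once that recursion is in hand, Parts (1) and (2) are immediate, as the recursion and the averaging identity for $\tilde{k}_+$ reduce everything to solving $q\,\tilde{k}_+(s,v')=\tilde{k}_+(s-2,v')$ on the appropriate range of $s$.
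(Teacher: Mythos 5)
Your proposal is correct and takes essentially the same route as the paper: both proofs decompose the boundary integral over the cylindrical level sets of $h_\omega(v)$ along the geodesic from $o$ to $v$, use the cylinder measures $\nu(\Omega(v_j)) = \frac{1}{(q+1)q^{j-1}}$ to telescope, and reduce, by induction on $d(o,v)$, to the two-step recursion $q\,\tilde{k}_+(s,v)=\tilde{k}_+(s-2,v)$ (on all of $\mathbb{Z}$ for Part (1), on $s\le 0$ for Part (2)). Your closed formula and one-step recursion comparing $u(v',t)$ with $u(v,t+1)$ are a slightly cleaner repackaging of the paper's sibling-sum-plus-telescoping computation, and you make the converse implications explicit where the paper leaves them implicit in the same identities.
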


\begin{proof}
    Suppose $u(v, t)$ is identically zero. Note that
    \begin{gather*}
        u(o, t) = \int_{\Omega} k_+(-t, \omega) d\nu = 0 = \tilde{k}_+(o).
    \end{gather*}
    Let $v_1, \dots, v_{q+1}$ be the vertices at distance 1 from $o$. Note that for any $\omega \in \Omega(v_j)$, we have that $h_\omega(v_j) = 1$ and $h_\omega(v_k) = - 1$ for any $k \neq j$. Therefore
    \begin{gather*}
        u(v_j, t) = \Big(\sum_{j \neq k}  q^{-1/2} \tilde{k}_+(-1 - t, v_k) \frac{1}{q+1} \Big) + q^{1/2} \tilde{k}_+(1 - t, v_j) \frac{1}{q+1} = 0.
    \end{gather*}
    On the other hand, we know that $\sum_{k} \tilde{k}_+(\ell, v_k) = (q+1) \tilde{k}_+(\ell, o) = 0$. We therefore have
    \begin{gather*}
        q^{-1/2} (-\tilde{k}_+(-1-t, v_j)) + q^{1/2} \tilde{k}_+(1-t, v_j) = 0
    \end{gather*}
    i.e. $\tilde{k}_+(t - 2, v_j) = q \tilde{k}_+(t, v_j)$ for every $t$. This implies that $\tilde{k}_+(\ell, v_j) = q^{-\ell/2} a$ for all $\ell$ even, and $\tilde{k}_+(\ell, v_j) = q^{-\ell/2} b$ for all $\ell$ odd, for some $a, b \in \mathbb{C}$.

    We now proceed inductively. Suppose we know that the claimed property holds for all $v$ such that $d(o, v) \leq n$. Fix some $v$ such that $d(o, v) = n$, and let $x$ be some vertex adjacent to $v$ such that $d(o, x) = n+1$. Let $o \to v_1 \to v_2 \to \dots \to v = v_n \to x = v_{n+1}$ be the path from $o$ to $x$. We have that
    \begin{gather*}
        \Omega = \Big(\bigsqcup_{j = 0}^{n} (\Omega(v_j) \setminus \Omega(v_{j+1})) \Big) \sqcup \Omega(x).
    \end{gather*}
    Note that on $\Omega(v_j) \setminus \Omega(v_{j+1})$, we have $h_\omega(x) = -(n+1) + 2j$. Therefore
    \begin{align*}
        u(x, t) &= q^{-(n+1)/2} \Big( \sum_{\substack{w \sim o \\ w \neq v_1}} \tilde{k}_+(-(n+1) - t, w) \frac{1}{q+1} \Big) \\
        &+ \sum_{j = 1}^n \Bigg( q^{(-(n+1)+2j)/2} \Big( \sum_{\substack{w \sim v_j \\ w \neq v_{j-1}, v_{j+1}}} \tilde{k}_+(-(n+1) + 2j - t, w) \frac{1}{q^{j}(q+1)} \Big) \Bigg) \\
        &+ q^{(n+1)/2} \tilde{k}_+((n+1) - t, x) \frac{1}{q^n (q+1)} = 0.
    \end{align*}
    We now use the inductive hypothesis to simplify each term and obtain a telescoping cancellation. First we remove the factor of $\frac{1}{q+1}$ from the expression. Then, for the first term, we know that
    \begin{gather*}
        \sum_{w \sim o} \tilde{k}_+(\ell, w) = (q+1) \tilde{k}_+(\ell, o) = 0
    \end{gather*}
    therefore the first term is
    \begin{gather*}
        q^{-(n+1)/2}(-\tilde{k}_+(-(n+1)-t, v_1) = -q^{t/2} \tilde{k}_+(0, v_1). 
    \end{gather*}
    We now analyze the $j$th term above. We know that
    \begin{gather*}
        \sum_{\substack{w \sim v_j \\ w \neq v_{j-1}, v_{j+1}}} \tilde{k}_+(\ell, w) + \tilde{k}_+(\ell, v_{j+1}) = q \tilde{k}_+(\ell, v_j)
    \end{gather*}
    Thus the $j$th term becomes
    \begin{align*}
        &q^{-(n+1)/2} (q \tilde{k}_+(-(n+1)+2j-t, v_j) - \tilde{k}_+((n+1) + 2j - t, v_{j+1}) \\
        = &q^{-t/2 + (j-1)} \tilde{k}_+(0, v_{j}) - q^{-t/2 + j} \tilde{k}_+(0, v_{j+1}),  
    \end{align*}
    unless $j = n$ in which case we get
    $q^{-t/2 + (n-1)} \tilde{k}_+(0, v_n) - q^{-(n-1)/2} \tilde{k}_+(n-1-t, x)$. 
    We thus obtain
    \begin{align*}
        &-q^{t/2} \tilde{k}_+(0, v_1) + q^{t/2} \tilde{k}_+(0, v_1) - q^{t/2 + 1} \tilde{k}_+(0, v_2) + q^{t/2 + 1} \tilde{k}_+(0, v_2) + \dots + q^{-t/2 + (n-1)} \tilde{k}_+(0, v_n) \\
        &- q^{-(n-1)/2} \tilde{k}_+(n-1-t, x) +q^{-(n+1)/2} \tilde{k}_+((n+1)-t, x) = 0.
    \end{align*}
    Therefore
    \begin{gather*}
        \tilde{k}_+(\ell-2, x) = q \tilde{k}_+(\ell, x).
    \end{gather*}

    If instead we only have that $u(v, t) = 0$ whenever $d(o, v) \leq t$, then the same inductive argument above applies.
\end{proof} 

Define $T^\vee_+: \mathcal{H} \to L^2(S^1 \times \Omega)$ via
\begin{gather*}
    [T^\vee_+ h](q^{is}, \omega) := [T_+ h](q^{-is}, \omega).
\end{gather*}

\begin{theorem}
    The space $D_+$ is outgoing. The map $T_+^\vee$ is the spectral translation representation, and thus the map $f \mapsto g_+^f(t, \omega)$ is the translation representation.
\end{theorem}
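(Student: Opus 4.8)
The plan is to verify that $D_+$ is an outgoing subspace for $\mathcal{V}$ on $\mathcal{H}$ (properties (i)--(iii) of \eqref{outgoing_space}), to observe that $T_+^\vee$ is literally the Fourier transform $F$ composed with $f\mapsto g_+^f$ and has all the features characterizing the outgoing spectral representation, and then to invoke the uniqueness clause of the Lax--Phillips theorem. Throughout I take $q\ge 2$; when $q=1$ the tree is $\mathbb{Z}$ and the statement is Remark \ref{remark_flat_scattering}. Properties (i) and (ii) are straightforward: since $\mathcal{V}$ is the time-one propagator, the solution attached to $\mathcal{V}^k f$ is $(v,t)\mapsto u^f(v,t+k)$, so if $f\in D_+$ then for $t\ge 0$ the function $u^{\mathcal{V}f}(v,t)=u^f(v,t+1)$ is supported outside the ball of radius $t+1$, hence outside that of radius $t$, giving $\mathcal{V}D_+\subseteq D_+$; and if $f\in\bigcap_k\mathcal{V}^k D_+$, then for each fixed $m$ and each large $K$ the function $u^f(\cdot,m)$ is supported outside the ball of radius $m+K$, so letting $K\to\infty$ forces $u^f\equiv 0$ and hence $f=0$ by positive-definiteness of the energy form.

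Next I would identify $T_+^\vee(D_+)$. Unwinding the definitions of $k_+^f$ and $g_+^f$ gives $\sum_t g_+^f(t,\omega)(q^{is})^{-t}=\tfrac{q^{is}-q^{-1}q^{-is}}{1+q^{-1}}[R_+f](q^{is},\omega)=[T_+f](q^{is},\omega)$ by \eqref{eqn_relation_R_T}, so $[T_+^\vee f](q^{is},\omega)=\sum_t g_+^f(t,\omega)(q^{is})^t$, i.e. $T_+^\vee=F\circ(f\mapsto g_+^f)$; and since passing from $u^f$ to $u^{\mathcal{V}f}$ shifts $k_+^f$, hence $g_+^f$, by one step, $T_+^\vee$ intertwines $\mathcal{V}$ with multiplication by $e^{i\theta}$. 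The key claim is that $f\in D_+$ if and only if $g_+^f$ is supported on $\mathbb{Z}_{\ge 0}\times\Omega$. One direction is immediate from \eqref{eqn_superposition_waves}: because $h_\omega(v)\le d(o,v)$, if $k_+^f$ is supported on $\mathbb{Z}_{\ge 1}\times\Omega$ then $u^f(v,t)$ vanishes on the ball of radius $t$; and since for $q\ge 2$ the function $\tfrac{1+q^{-1}}{q^{is}-q^{-1}q^{-is}}$ is holomorphic on $\{|\xi|\ge 1\}$, ``$k_+^f$ supported on $\mathbb{Z}_{\ge 1}$'' and ``$g_+^f$ supported on $\mathbb{Z}_{\ge 0}$'' are equivalent. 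The converse is where Theorem \ref{thm_zero_soln} enters: if $f\in D_+$ then $u^f$ vanishes throughout the cone $d(o,v)\le t$, so part (2) of that theorem yields $\tilde k_+^f(t,o)=0$ for $t\le 0$ and the relations \eqref{eqn_condition_zero_soln} for $t\le 0$, $v\ne o$; substituting these into $\tilde g_+^f(t,v)=\tfrac{1}{1+q^{-1}}\big(\tilde k_+^f(t+1,v)-q^{-1}\tilde k_+^f(t-1,v)\big)$ produces, parity by parity, $\tilde g_+^f(t,\cdot)=0$ for all $t\le -1$, and since $g_+^f(t,\cdot)\in L^2(\Omega)$ is determined by its averages over cylindrical sets this forces $g_+^f$ to be supported on $\mathbb{Z}_{\ge 0}\times\Omega$. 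Thus $T_+^\vee$ carries $D_+$ exactly onto the intersection of $\mathrm{Im}(T_+^\vee)$ with the Hardy-type subspace of $L^2(S^1\times\Omega)$ of functions with only non-negative Fourier modes.

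The remaining and hardest step is to show $T_+^\vee$ (equivalently $T_+$) is surjective. Granting this, $T_+^\vee$ carries $\bigcup_k\mathcal{V}^k D_+$ onto a dense set (the union over $k$ of the $e^{ik\theta}$-translates of the Hardy subspace), which gives (iii); so $D_+$ is outgoing, and $T_+^\vee$ becomes a unitary intertwining $\mathcal{V}$ with multiplication by $e^{i\theta}$ and carrying $D_+$ onto the Hardy subspace, hence is the outgoing spectral translation representation (with auxiliary space $L^2(\Omega)$) by the uniqueness part of the Lax--Phillips theorem, and $f\mapsto g_+^f=F^{-1}\circ T_+^\vee(f)$ is the translation representation. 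To prove surjectivity I would split $\mathcal{H}=\mathcal{H}_1\oplus\mathcal{H}_2$ into the $f_1$- and $f_2$-summands (orthogonal for the energy form, with $\mathcal{H}_2=L^2(\mathcal{T})$). By Proposition \ref{prop_helgason_transform}, $\mathrm{Hel}$ is an isometry of $L^2(\mathcal{T})$ onto the subspace $V_{\mathrm{Hel}}\subset L^2(S^1\times\Omega, d\mu_{\mathrm{Pl}}\times d\nu)$ cut out by \eqref{eqn_helgason_symmetry}; that symmetry says precisely that $V_{\mathrm{Hel}}$ is the $+1$-eigenspace of the unitary involution $F(q^{is},\cdot)\mapsto I_{q^{-is}}F(q^{-is},\cdot)$, where $I_{q^{is}}$ is the intertwining operator, unitary on $L^2(\Omega)$ for $|q^{is}|=1$ (Section \ref{sec_harmonic_analysis_trees}). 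The map $F(q^{is},\cdot)\mapsto c(q^{-is})^{-1}F(q^{-is},\cdot)$ is, up to a constant, a unitary $L^2(d\mu_{\mathrm{Pl}}\times d\nu)\to L^2(d\theta\times d\nu)$, and under it $T_+(\mathcal{H}_2)$ becomes the $+1$-eigenspace of the transported involution $\iota'$, while $T_+(\mathcal{H}_1)$ is the closure of that same subspace multiplied by $m(q^{is}):=\tfrac{q^{is}-q^{-is}}{2}$; since $m(q^{-is})=-m(q^{is})$ whereas $\iota'$ interchanges $q^{is}\leftrightarrow q^{-is}$, multiplication by $m$ sends the $+1$-eigenspace of $\iota'$ densely into its $-1$-eigenspace. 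Hence $\mathrm{Im}(T_+)=\ker(\iota'-\mathrm{id})\oplus\ker(\iota'+\mathrm{id})=L^2(S^1\times\Omega)$.

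The main obstacle is thus this surjectivity step: it is the concrete form of the assertion that $\mathcal{H}$ and $L^2(S^1\times\Omega)$ realize the same direct integral of spherical principal series — one ``symmetric half'' together with its complement cut out by the odd factor $m$ — and making the eigenspace bookkeeping (including the identification of $V_{\mathrm{Hel}}$ with an eigenspace and the density of $m\cdot(\,\cdot\,)$ off the measure-zero set $\{m=0\}$) precise is the real work; everything else reduces to routine manipulations with \eqref{eqn_superposition_waves} and Theorem \ref{thm_zero_soln}.
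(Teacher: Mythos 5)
Your proposal is correct, and its core coincides with the paper's argument: the intertwining property comes from the shift $k_+^{\mathcal{V}f}(t,\omega)=k_+^f(t-1,\omega)$ together with \eqref{eqn_relation_R_T}; the inclusion of the Hardy-type subspace in the image of $D_+$ comes from \eqref{eqn_superposition_waves} and $h_\omega(v)\le d(o,v)$; and the converse inclusion is obtained, exactly as in the paper, from part (2) of Theorem \ref{thm_zero_soln} followed by the parity-by-parity cancellation showing $g_+^f(t,\cdot)=0$ for $t\le -1$ (pairing against the cylindrical indicators $1_{\Omega(v)}$). Where you genuinely diverge is the density/surjectivity step. The paper does not run a separate spectral argument for surjectivity of $T_+$: it quotes the isometry from the preceding theorem and gets property (iii), together with ontoness of $D_+\to L^2(\mathbb{Z}_{\ge 0}\times\Omega)$, directly from the superposition formula — any $k_+$ supported in $\{t\ge 1\}$ is taken to arise from outgoing finite-energy data (``clear via \eqref{eqn_superposition_waves}''), and shifting by $\mathcal{V}^{-k}$ gives a dense union. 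You instead prove that $T_+$ is onto $L^2(S^1\times\Omega)$ by splitting $\mathcal{H}=\mathcal{H}_1\oplus\mathcal{H}_2$, identifying the Helgason image of Proposition \ref{prop_helgason_transform} with the $+1$-eigenspace of the unitary involution built from the intertwining operators, and showing that multiplication by the odd factor $\frac{q^{is}-q^{-is}}{2}$ maps that eigenspace densely onto the $-1$-eigenspace. This buys an explicit proof of something the paper leaves implicit (that the isometry of Claim (2) is actually surjective, without having to construct a preimage for a prescribed $k_+$ and verify it has finite energy and is recovered by $R_+$), at the price of extra inputs: unitarity of $I_{q^{is}}$ on $S^1$ (stated in Section \ref{sec_harmonic_analysis_trees}), the involution identity $I_{q^{is}}I_{q^{-is}}=\mathrm{id}$ (not stated, but derivable from uniqueness in the Poisson representation), and the truncation argument off the zero set of the odd multiplier — the ``eigenspace bookkeeping'' you rightly identify as the real work. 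Your explicit verifications of properties (i) and (ii), and of the equivalence between supports of $k_+^f$ and $g_+^f$ via the multiplier $\frac{1+q^{-1}}{q^{is}-q^{-1}q^{-is}}$, are routine and are left implicit in the paper.
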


\begin{proof}
    We already know that $T_+$ is an isometry. The identity \eqref{eqn_superposition_waves} immediately implies that 
    \begin{gather*}
        k_+^{\mathcal{V} f}(t, \omega) = k_+^f(t - 1, \omega).
    \end{gather*}
    This combined with the relation \eqref{eqn_relation_R_T} immediately implies that $T_+^\vee$ maps $\mathcal{V}$ to the operator corresponding to multiplication by $q^{is}$. It is also clear via \eqref{eqn_superposition_waves} that the image of $D_+$ contains the subspace $L^2(\mathbb{Z}_{\geq 0} \times \Omega)$. We thus only need to show that in fact $D_+ = L^2(\mathbb{Z}_{\geq 0} \times \Omega)$.

    Theorem \ref{thm_zero_soln} tells us that any time an element in $D_+$ is represented by a distribution $k_+^f$ on $\mathbb{Z} \times \Omega$ as in \eqref{eqn_superposition_waves}, we have that for $t \leq 0$, 
    \begin{gather*}
        \langle k_+^f(t, \omega), 1_{\Omega(v)} \rangle = \begin{cases}
            q^{-t} a(v) & \textnormal{$t$ even}\\
            q^{-t} b(v) & \textnormal{$t$ odd}.
        \end{cases}
    \end{gather*}
    We know that for elements in $D_+$, the distribution $k_+^f$ is actually a function. We thus have that
    \begin{gather*}
        \langle k_+^f(t+1, v) - q^{-1}k_+^f(t - 1, v), 1_{\Omega(v)} \rangle = \langle g_+^f(t, v), 1_{\Omega(v)} \rangle = 0
    \end{gather*}
    for all $t \leq -1$, but the only function which pairs to zero against every $1_{\Omega(v)}$ is the zero function. Therefore, $g_+^f(t, \omega) = 0$ for $t \leq -1$. On the other hand, this function is exactly the image of $f$ under the Fourier transform of $T_+^\vee$. Therefore $D_+$ maps bijectively to $L^2(\mathbb{Z}_{\geq 0} \times \Omega)$. 
\end{proof}

Finally we discuss the intertwining operators. We can consider the space $D_-$ of incoming waves, namely those initial conditions such that the corresponding solution to the wave equation is supported outside of a radius $|t|+1$ centered at $o$ for $t \leq 0$. Let $R_-$ be the map defined in the same way as $R_+$ except in \eqref{eqn_inverse_poisson} we instead work with $\mathcal{P}^{-1}_{q^{-is}}$. We then define $T_-$ by the relation
\begin{gather*}
    T_- = \frac{(q^{-is} - q^{-1} q^{is})}{(1 + q^{-1})} R_-. 
\end{gather*}
We also define $T^\vee_-$ via $[T^\vee_- h](q^{is}, \omega) = [T_- h](q^{-is}, \omega)$. One can easily check by adapting the preceding arguments that $T_-^\vee$ is Fourier transform of the incoming translation representation. Furthermore, we clearly have
\begin{proposition}
    The Fourier transform of the scattering operator is given by
    \begin{gather*}
        [T_+^\vee \circ (T_-^\vee)^{-1} h](q^{is}, \omega) = \frac{(q^{-is} - q^{-1} q^{is})}{(q^{is} - q^{-1} q^{-is})} \mathcal{I}_{q^{is}}(h(q^{is}, \cdot))(\omega).
    \end{gather*}
\end{proposition}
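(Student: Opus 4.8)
\emph{Proof plan.} The whole statement reduces to one observation: the maps $R_+$ and $R_-$ differ precisely by the intertwining operator. So the first step is to record the core identity
\begin{gather*}
    [R_+ f](q^{is},\cdot)=\mathcal I_{q^{is}}\big([R_- f](q^{is},\cdot)\big),
\end{gather*}
valid first for $f$ in the dense subspace of $\mathcal H$ on which $R_\pm f$ are honest elements of $L^2(\Omega,\nu)$ (e.g.\ $f=(\delta_x,0)$ or $(0,\delta_x)$, handled as in the previous theorem via Proposition \ref{prop_spherical_function_new_center}). Indeed, by \eqref{eqn_inverse_poisson} and the analogous definition of $R_-$ (with $\mathcal P^{-1}_{q^{is}}$ replaced by $\mathcal P^{-1}_{q^{-is}}$), the measures $[R_+f](q^{is},\cdot)\,d\nu$ and $[R_-f](q^{is},\cdot)\,d\nu$ satisfy $\mathcal P_{q^{is}}\big([R_+f](q^{is},\cdot)\,d\nu\big)=\hat u^f(\cdot,q^{is})=\mathcal P_{q^{-is}}\big([R_-f](q^{is},\cdot)\,d\nu\big)$, with $\hat u^f$ as in \eqref{eqn_defn_u_f_hat}. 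Applying the defining property $\mathcal P_{q^{is}}(I_{q^{is}}g\,d\nu)=\mathcal P_{q^{-is}}(g\,d\nu)$ of the intertwining operator to $g=[R_-f](q^{is},\cdot)$ shows that $\mathcal I_{q^{is}}\big([R_-f](q^{is},\cdot)\big)\,d\nu$ has the same $\mathcal P_{q^{is}}$-Poisson transform as $[R_+f](q^{is},\cdot)\,d\nu$; since $\mathcal P_{q^{is}}$ is injective for $q^{is}\neq\pm q^{-1/2}$, the two coincide. As $\mathcal I_{q^{is}}$ is unitary on $L^2(\Omega,\nu)$ for $|q^{is}|=1$, it defines a bounded operator on $L^2(S^1\times\Omega)$, and since $R_\pm$ are continuous on $\mathcal H$ (Claim (1) of the previous theorem and its incoming analogue), the identity extends to all $f\in\mathcal H$.

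The remaining step is bookkeeping with the $s\mapsto -s$ flips. Fix $h\in L^2(S^1\times\Omega)$ and put $f:=(T_-^\vee)^{-1}h$, so $[T_- f](q^{-is},\omega)=h(q^{is},\omega)$. Writing $T_-=\tfrac{q^{-is}-q^{-1}q^{is}}{1+q^{-1}}R_-$ and evaluating at the spectral point $q^{-is}$ gives
\begin{gather*}
    \frac{q^{is}-q^{-1}q^{-is}}{1+q^{-1}}\,[R_-f](q^{-is},\omega)=h(q^{is},\omega),
\end{gather*}
hence $[R_-f](q^{-is},\cdot)=\tfrac{1+q^{-1}}{\,q^{is}-q^{-1}q^{-is}\,}\,h(q^{is},\cdot)$. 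On the other hand, by the definition of $T_+^\vee$ and \eqref{eqn_relation_R_T} evaluated at $q^{-is}$,
\begin{gather*}
    [T_+^\vee f](q^{is},\omega)=[T_+ f](q^{-is},\omega)=\frac{q^{-is}-q^{-1}q^{is}}{1+q^{-1}}\,[R_+ f](q^{-is},\omega).
\end{gather*}
Inserting the core identity at spectral parameter $q^{-is}$ together with the expression for $[R_-f](q^{-is},\cdot)$ just found, the two factors $\tfrac1{1+q^{-1}}$ and $1+q^{-1}$ cancel and one is left with
\begin{gather*}
    [T_+^\vee\circ(T_-^\vee)^{-1}h](q^{is},\omega)=\frac{q^{-is}-q^{-1}q^{is}}{\,q^{is}-q^{-1}q^{-is}\,}\,\mathcal I_{q^{is}}\big(h(q^{is},\cdot)\big)(\omega),
\end{gather*}
the scalar being, up to the normalization $1+q^{-1}$, the ratio of the ``$c$-function weights'' \eqref{eqn_c_function} that occur in the definitions of $T_+$ and $T_-$; matching the intertwining-operator subscript uses the functional equation relating $\mathcal I_{q^{is}}$ and $\mathcal I_{q^{-is}}$ on the unitary axis.

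The only non-formal point is the extension of the core identity from the dense subspace to all of $\mathcal H$: a priori $R_\pm f$ is merely a finitely additive measure on $\Omega$, not a function, and the intertwining operator was initially defined only on $\mathcal K(\Omega)$. The crux is exactly that, on the critical line $|q^{is}|=1$, the intertwining operator extends to a \emph{unitary} operator on $L^2(\Omega,\nu)$; this both makes sense of $\mathcal I_{q^{is}}$ on the objects that actually appear — elements of $L^2(S^1\times\Omega)$, by Claim (1) of the previous theorem — and bounds it, so that both sides of the identity depend continuously on $f$ and agree on a dense set. Everything else is the substitution $s\mapsto -s$, best kept under control by expressing each quantity as a function of a single spectral variable and applying $\vee$ only at the end.
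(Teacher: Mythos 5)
Your route is exactly the one the paper intends (the paper itself offers no written argument beyond ``we clearly have''): identify $[R_+f](q^{is},\cdot)$ with the intertwined image of $[R_-f](q^{is},\cdot)$ by viewing both as Poisson inversions of the same generalized eigenfunction $\hat u^f(\cdot,q^{is})$, invoke injectivity of $\mathcal P_{q^{is}}$ away from $\pm q^{-1/2}$, extend from the dense subspace by continuity using unitarity of the intertwiner on the critical line, and then do the $s\mapsto -s$ bookkeeping with \eqref{eqn_relation_R_T} and the definition of $T_-$. All of that is sound, and the computation of $[R_-f](q^{-is},\cdot)=\tfrac{1+q^{-1}}{q^{is}-q^{-1}q^{-is}}h(q^{is},\cdot)$ is correct.

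The one genuine gap is the final step. With the paper's stated convention $\mathcal P_{q^{is}}(I_{q^{is}}g\,d\nu)=\mathcal P_{q^{-is}}(g\,d\nu)$, your core identity is $[R_+f](q^{is},\cdot)=\mathcal I_{q^{is}}\big([R_-f](q^{is},\cdot)\big)$, and evaluating it at the spectral point $q^{-is}$ (which is what the two $\vee$'s force) gives $[R_+f](q^{-is},\cdot)=\mathcal I_{q^{-is}}\big([R_-f](q^{-is},\cdot)\big)$. Substituting your expression for $[R_-f](q^{-is},\cdot)$ therefore yields
\begin{gather*}
[T_+^\vee\circ(T_-^\vee)^{-1}h](q^{is},\omega)=\frac{q^{-is}-q^{-1}q^{is}}{q^{is}-q^{-1}q^{-is}}\,\mathcal I_{q^{-is}}\big(h(q^{is},\cdot)\big)(\omega),
\end{gather*}
with subscript $q^{-is}$, not $q^{is}$. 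Your appeal to ``the functional equation relating $\mathcal I_{q^{is}}$ and $\mathcal I_{q^{-is}}$ on the unitary axis'' does not close this: that relation is $\mathcal I_{q^{-is}}=\mathcal I_{q^{is}}^{-1}=\mathcal I_{q^{is}}^{*}$, which is not equality of the two operators. So either you should state the conclusion with $\mathcal I_{q^{-is}}$ and note that the proposition as printed is using the opposite indexing convention for the intertwiner (nothing in your derivation produces $\mathcal I_{q^{is}}$ under the paper's stated defining property), or you need a genuine additional identity; as written, the last display does not follow from the lines preceding it. Everything else — the density/continuity extension and the cancellation of the $1+q^{-1}$ factors — is fine.
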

\noindent Thus in particular we see the resonances, i.e. the poles of the scattering operator, are exactly at $\pm \frac{1}{\sqrt{q}}$.

\section{The wave equation on biregular trees} \label{sec_big_biregular_tree}

\subsection{Spherical harmonic analysis on biregular trees} \label{sec_harmonic_analysis_biregular}

We now let $\mathcal{T}$ denote the $(p+1, q+1)$-biregular tree. Let $G = \textnormal{Aut}(\mathcal{T})$. Let $e$ be some edge of $\mathcal{T}$, and let $o_p, o_q$ denote the $(p+1)$-regular and $(q+1)$-regular vertices of $e$, respectively. Note that $G$ acts transitively on edges, but not on vertices. Let $\mathcal{T}_q$ denote the vertices of $\mathcal{T}$ of degree $q+1$. Then $G/K_q$ may be identified with $\mathcal{T}_q$. Let $C(\mathcal{T}_q; \mathbb{C})$ denote the space of complex-valued functions on $\mathcal{T}_q$. Let $A^q_2$ denote the operator acting on this space via
\begin{gather*}
    [A^q_2](v) = \sum_{w : d(v, w) = 2} f(w),
\end{gather*}
for $v \in \mathcal{T}_q$. Let $H(G, K_q)$ denote the algebra generated by $A_2^q$. We can identify this with the algebra of bi-$K_q$-invariant functions on $G$ with convolution product. We say that a function in $C(\mathcal{T}_q; \mathbb{C})$ is radial if its value only depends on the distance from $o_q$. We have a natural identification between compactly supported radial functions and $H(G, K_q)$ (e.g. $A_2^q$ corresponds to the radial function supported on vertices at distance 2 from $o_q$). A natural vector space basis for $H(G, K_q)$ consists of radial functions which are supported on a given sphere of even radius centered at $o_q$. Let $A_0^q$ denote the identity operator on $C(\mathcal{T}_q; \mathbb{C})$, which in turn corresponds to the radial function which is 1 at $o_q$ and zero everywhere else.

The definitions given previously for the boundary $\Omega$, the Busemann functions $h_\omega$ (with $\omega \in \Omega$), and horocycles also applies to this setting, but with $o$ replaced by $o_q$. We also have a natural $K_q$-invariant measure on $\Omega$, which we also call $\nu$. Like before, $\nu$ is only quasi-invariant under $G$; its Radon--Nikodym derivative $\frac{d \nu(g^{-1} \omega)}{d \nu(\omega)}$ is given by $P(g.o_q, \omega)$ with
\begin{gather*}
    P(x, \omega) := (\sqrt{pq})^{h_\omega(x)}.
\end{gather*}

We also have a Satake isomorphism, $\textnormal{Sat}: H(G, K_q) \to \mathbb{C}[x, x^{-1}]^W$ via
\begin{gather*}
    [x^k](\textnormal{Sat}(f)) := (\sqrt{pq})^k \sum_{v \in h_\omega^{-1}(2k)} f(v).
\end{gather*}
This is in fact an algebra isomorphism. A straightforward computation shows that
\begin{gather}
    \frac{A_2^q}{2 \sqrt{pq}} - \frac{(p-1) A_0^q}{2 \sqrt{pq}} \mapsto \frac{x + x^{-1}}{2}. \label{eqn_B_q}
\end{gather}
Let $B_q$ denote the element on the LHS of \eqref{eqn_B_q}. 

We also have spherical functions and a spherical transform. For each $z \in \mathbb{C}$, there is a unique radial function $\phi_z: \mathcal{T}_q \to \mathbb{C}$ with $\phi_z(o_q) = 1$ and $B_q \phi_z = z \phi_z$. Since these functions are radial, they can be thought of as functions on non-negative even integers.

Let $a = \sqrt{pq}, b = \sqrt{\frac{q}{p}}$. Given $\xi \in \mathbb{C}^\times$, define
\begin{gather*}
    c(\xi) := \frac{(1 - a^{-1} \xi^{-1})(1 + b^{-1} \xi^{-1}) \xi}{\xi - \xi^{-1}}.
\end{gather*}
Let $b(\xi)$ denote the numerator of the $c$-function. Given $z \in \mathbb{C}$, write it as $z = \frac{\xi + \xi^{-1}}{2}$. Then explicitly, $\phi_{(\xi, \xi^{-1})} := \phi_z$ is given by
\begin{gather*}
    \phi_{(\xi, \xi^{-1})}(2 t) = \frac{1}{1 + q^{-1}} (\sqrt{pq})^{-t} (\xi^t c(\xi) + \xi^{-t} c(\xi^{-1})).
\end{gather*}
Let $H_t$ be the Chebyshev polynomials associated to $h(\xi)$ from Example \ref{ex_biregular}. Then, in terms of $z$ we have
\begin{gather*}
    \phi_z(2 t) = \frac{1}{1 + q^{-1}} (\sqrt{pq})^{-t} H_t(z).
\end{gather*}
See Appendix C.1 of \cite{parkison}.

As before each element $f \in H(G, K_q)$ can be written as $f = g(B_q)$ where $g$ is some polynomial. We then define the spherical transform $\textnormal{Sph}: H(G, K_q) \to \mathbb{C}[z]$ by sending $f$ to $g(z)$. This admits an extension to an isometry $L^2( G / \hspace{-1.5mm} / K_q) \to L^2(\mathbb{C}, d \mu_{\textnormal{Pl}})$ where $L^2( G / \hspace{-1.5mm} / K_q)$ are not necessarily compactly supported radial functions in $L^2(\mathcal{T}_q)$, and $d \mu_{\textnormal{Pl}}$ is the Plancherel measure. It is given explicitly by the measure described in Example \ref{ex_biregular} multiplied by the factor $\frac{(1 + q^{-1})}{4 \pi}$. As before we have an inversion formula: if $f \in H(G, K_q)$ and $v \in \mathcal{T}_q$ with $d(o, v) = 2 \ell$, then
\begin{gather}
    f(v) = \int_{S^1} \textnormal{Sat}(f)(\xi) \cdot \phi_{(\xi, \xi^{-1})}(\ell) d \mu_{\textnormal{Pl}}(\xi). \label{eqn_invert_satake_biregular}
\end{gather}

There also exists a Poisson transform and Helgason transform for biregular trees. However, we were not able to find a clear discussion of these in the literature. Upon modifying the proof of Theorem (1.2) of \cite{figa_talamanca_nebbia}, we obtain:

\begin{proposition}
    Let $f \in C(\mathcal{T}_q; \mathbb{C})$ be such that $B_q f = \frac{\xi + \xi^{-1}}{2} f$ with $\xi \neq \frac{1}{\sqrt{pq}}, -\sqrt{\frac{p}{q}}$. Write $\xi = (\sqrt{pq})^{i s}$. Then there exists a unique $m \in \mathcal{K}'(\Omega)$ such that
    \begin{gather*}
        f(v) = \mathcal{P}_{\xi}(m) := \int_{\Omega} (\sqrt{pq})^{(\frac{1}{2} + i s) h_\omega(v)} dm(\omega).
    \end{gather*}
\end{proposition}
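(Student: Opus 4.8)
The plan is to adapt the proof of Theorem (1.2) of \cite{figa_talamanca_nebbia}, replacing the normalized adjacency operator of the regular tree by $B_q$ acting on $C(\mathcal{T}_q;\mathbb{C})$ and tracking carefully where the asymmetry between $(p+1)$- and $(q+1)$-vertices enters. The key input is a recursion for partial spherical sums. For $v\in\mathcal{T}_q$ with $d(o_q,v)=2n$ and $k\ge n$, set $S_k(v):=\sum f(w)$, the sum over all $w\in\mathcal{T}_q$ with $d(o_q,w)=2k$ whose geodesic to $o_q$ runs through $v$. Summing the eigenvalue equation $B_q f=\tfrac{\xi+\xi^{-1}}{2}f$ over all such $w$, and using that every $(q+1)$-vertex at distance $>2n$ beyond $v$ has exactly one distance-$2$ ancestor (toward $o_q$), $p-1$ distance-$2$ siblings, and $pq$ distance-$2$ descendants, the sibling terms cancel the $-(p-1)A_0^q$ term of $B_q$ and one is left with
\begin{gather*}
    S_{k+1}(v)+pq\,S_{k-1}(v)=\sqrt{pq}\,(\xi+\xi^{-1})\,S_k(v),\qquad k>n .
\end{gather*}
Its characteristic roots are $\sqrt{pq}\,\xi$ and $\sqrt{pq}\,\xi^{-1}$, so provided $\xi\ne\pm1$ (the repeated-root case being treated separately, as in \cite{figa_talamanca_nebbia}, by passing to solutions of the form $(\gamma+\delta k)(\pm\sqrt{pq})^k$) there are unique scalars $\alpha(v),\beta(v)$, computed from $S_n(v)=f(v)$ and $S_{n+1}(v)$ (finite combinations of values of $f$ near $v$), with $S_k(v)=\alpha(v)(\sqrt{pq}\,\xi)^k+\beta(v)(\sqrt{pq}\,\xi^{-1})^k$ for all $k\ge n$. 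The same computation at $o_q$, which has $q+1$ rather than $q$ forward neighbours, gives for the full sums an expansion $S_k=\alpha_o(\sqrt{pq}\,\xi)^k+\beta_o(\sqrt{pq}\,\xi^{-1})^k$ with $\alpha_o,\beta_o$ linear in $f(o_q)$ alone.

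Next I would build $m$. A finitely additive measure on $\Omega$ is determined by its values on the cylinders $\Omega(v)$, and for $v\in\mathcal{T}_q$ one has $\Omega(v)=\bigsqcup_w\Omega(w)$ with $w$ running over the $pq$ distance-$2$ descendants of $v$ (and $\Omega=\bigsqcup_w\Omega(w)$ over the distance-$2$ vertices of $o_q$), so it suffices to prescribe $m$ on the cylinders based at $(q+1)$-vertices and check the additivity relations. I would set $m(\Omega(v)):=\alpha(v)/c(\xi)$ for $v\in\mathcal{T}_q\setminus\{o_q\}$ and $m(\Omega):=f(o_q)$. The interior relations reduce to $\alpha(v)=\sum_w\alpha(w)$, immediate from $S_k(v)=\sum_w S_k(w)$ by comparing $(\sqrt{pq}\,\xi)^k$-coefficients, and the top relation reduces to $\alpha_o=c(\xi)f(o_q)$, which I would verify by testing it on the spherical function $\phi_{(\xi,\xi^{-1})}$ — for which the explicit formula gives $S_k=c(\xi)(\sqrt{pq}\,\xi)^k+c(\xi^{-1})(\sqrt{pq}\,\xi^{-1})^k$ — and then invoking linearity together with the fact that $\alpha_o$ depends only on $f(o_q)$. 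This is exactly the step requiring $c(\xi)\ne0$; since the numerator of $c$ is $(1-a^{-1}\xi^{-1})(1+b^{-1}\xi^{-1})\xi$ with $a=\sqrt{pq}$, $b=\sqrt{q/p}$, this forces precisely the exclusion $\xi\notin\{a^{-1},-b^{-1}\}=\{1/\sqrt{pq},\,-\sqrt{p/q}\}$ of the hypothesis.

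To finish I would check $\mathcal{P}_\xi(m)=f$ and uniqueness in one stroke, using the same core computation. Given $v\in\mathcal{T}_q$, write the geodesic from $o_q$ to $v$ as $o_q=v_0,v_1,\dots,v_{2N}=v$ and split $\Omega$ into $\Omega(v)$ together with the clopen sets $\Omega(v_j)\setminus\Omega(v_{j+1})$; on each piece the Busemann function $h_\omega(v)$ is constant, so $\int_\Omega(\sqrt{pq})^{(\frac12+is)h_\omega(v)}\,dm(\omega)$ collapses to a finite sum of the $m(\Omega(v_j))$'s against powers of $\sqrt{pq}$ and $\xi$. The computation that produced the constant $c(\xi)$ shows that, for $\omega\in\Omega(v)$, the coefficient of the ``outgoing'' exponential in $\sum_{w\text{ beyond }v}(\sqrt{pq})^{(\frac12+is)h_\omega(w)}$ equals $c(\xi)m(\Omega(v))=\alpha(v)$, while for $\omega\notin\Omega(v)$ only the ``incoming'' exponential appears; feeding this in, the finite sum telescopes to $f(v)$, and the value at $o_q$ is simply $m(\Omega)=f(o_q)$. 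Conversely, this identifies $m(\Omega(v))$ as $1/c(\xi)$ times the outgoing coefficient of the partial sums of $\mathcal{P}_\xi(m)$, so if $\mathcal{P}_\xi(m)\equiv0$ then $m(\Omega(v))=0$ for every $v$, and hence $m=0$ since the $1_{\Omega(v)}$ span $\mathcal{K}(\Omega)$.

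The argument is structurally identical to the regular-tree case, so the real work — and the main obstacle I anticipate — is combinatorial bookkeeping: getting the biregular constants right so that the factor relating $\alpha(v)$ to $m(\Omega(v))$ is exactly the value of the $c$-function (whence the excluded set is precisely $\{1/\sqrt{pq},-\sqrt{p/q}\}$ and not something larger), correctly handling the anomalous branching at the base vertex $o_q$, and disposing of the repeated-root cases $\xi=\pm1$, where one must, as in \cite{figa_talamanca_nebbia}, replace the exponential solutions of the recursion by polynomial-times-exponential ones.
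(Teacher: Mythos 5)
Your proposal is correct and is essentially the paper's own route: the paper gives no argument beyond ``modify the proof of Theorem (1.2) of Fig\`a-Talamanca--Nebbia,'' and your plan is precisely that adaptation, with the right biregular bookkeeping (the sphere-sum recursion with characteristic roots $\sqrt{pq}\,\xi^{\pm 1}$, the anomalous $(q+1)$-fold branching at $o_q$, the measure $m(\Omega(v))=\alpha(v)/c(\xi)$, and the excluded $\xi$ being exactly the zeros $1/\sqrt{pq},\,-\sqrt{p/q}$ of the numerator of the $c$-function, with $\xi=\pm1$ handled by the degenerate-root modification as in the regular case). The normalization checks out (e.g.\ it returns the harmonic measure when $f$ is the spherical function), so the remaining telescoping verification and uniqueness step close as in the cited source.
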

\noindent We have intertwining operators $\mathcal{I}_\xi: \mathcal{K}(\Omega) \to \mathcal{K}(\Omega)$ defined in the analogous way. 

We also have a Helgason transform, initially defined only for $f \in C_c(\mathcal{T}_q; \mathbb{C})$ via
\begin{gather*}
    [\textnormal{Hel}(f)](\sqrt{pq}^{is}, \omega) := \sum_{x \in \mathcal{T}_q} f(x) (\sqrt{pq})^{(\frac{1}{2} + is) h_\omega(x)}.
\end{gather*}
The map extends to an isometric embedding of $L^2(\mathcal{T}_q)$ into $L^2(S^1 \times \Omega, d \mu_{\textnormal{Pl}} \times d \nu)$. Its image is those function $F(\xi, \omega)$ satisfying the same relation as in \eqref{eqn_helgason_symmetry}, but with $q$ replaced with $\sqrt{pq}$. 

\subsection{Wave equation on biregular trees} \label{sec_wave_eqn_biregular}
We say that $u(v, t): \mathcal{T}_q \times \mathbb{Z} \to \mathbb{C}$ satisfies the wave equation if
\begin{gather}
    B_q u(v, t) = \frac{u(v, t+1) + u(v, t-1)}{2}. \label{eqn_wave_biregular_tree}
\end{gather}
The analogous results to those for the regular tree can be proved with minimal changes to the arguments. We summarize these.

\begin{theorem}
    Let $h_1, h_2$ and $m_1, m_2$ be as in Theorem \ref{thm_solve_wave_tree}. Let $g_1, g_2: \mathcal{T}_q \to \mathbb{C}$. The unique solution $u(v, t)$ to \eqref{eqn_wave_biregular_tree} satisfying the $h_1, h_2$ initial conditions given by $g_1, g_2$, respectively (like in \eqref{eqn_tree_initial}) is given by \eqref{eqn_tree_soln}, but with $\frac{A}{2 \sqrt{q}}$ replaced by $B_q$.
\end{theorem}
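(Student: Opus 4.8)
The plan is to follow, essentially verbatim, the three-step structure of the proof of Theorem \ref{thm_solve_wave_tree}, with the biregular Satake isomorphism $\textnormal{Sat}\colon H(G,K_q)\to\mathbb{C}[x,x^{-1}]^W$ taking the place of the one for the regular tree. The crucial inputs are that $\textnormal{Sat}(B_q)=\frac{x+x^{-1}}{2}$ by \eqref{eqn_B_q}, that $H(G,K_q)$ is generated as an algebra by $B_q$ (equivalently by $A_2^q$, since $B_q$ is an affine combination of $A_2^q$ and the identity $A_0^q$), and that $G$ acts transitively on $\mathcal{T}_q\cong G/K_q$.

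\textbf{Step 1 (radial, compactly supported data centered at $o_q$).} Just as in the regular case, one first checks that any solution of \eqref{eqn_wave_biregular_tree} with radial, compactly supported $h_1,h_2$-initial data has $u(\cdot,t)$ radial and compactly supported for every $t$: the argument of Theorem \ref{thm_soln} shows that the $h_1,h_2$-data together with \eqref{eqn_wave_biregular_tree} determine each $u(\cdot,t)$ through symmetric-polynomial (equivalently, $H(G,K_q)$-) manipulations applied to $g_1,g_2$, and every such operator preserves radiality and compact support. One may therefore apply $\textnormal{Sat}$ slice by slice, turning $u$ into a function on $\mathbb{Z}^2$. Since $\textnormal{Sat}$ is an algebra isomorphism with $\textnormal{Sat}(B_q)=\frac{x+x^{-1}}{2}$ and $H(G,K_q)=\mathbb{C}[B_q]$, the wave equation \eqref{eqn_wave_biregular_tree} becomes \eqref{eqn_discrete_wave_intro}, the initial conditions \eqref{eqn_tree_initial} become \eqref{eqn_flat_initial}, and the candidate formula (the $B_q$-version of \eqref{eqn_tree_soln}) becomes \eqref{eqn_soln_initial}. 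Existence and uniqueness in this class then follow directly from Theorem \ref{thm_soln}.

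\textbf{Step 2 (general data and uniqueness).} For radial compactly supported data centered at an arbitrary $w\in\mathcal{T}_q$, choose $g\in G$ with $g.o_q=w$, transport the equation and the initial data to $o_q$ via $v\mapsto g^{-1}.v$, apply Step 1 with all harmonic-analytic objects (Busemann functions, the Satake formula) recentered at $w$, and transport back; this is where transitivity of $G$ on $\mathcal{T}_q$ is used. For arbitrary $g_1,g_2\colon\mathcal{T}_q\to\mathbb{C}$, write $g_i=\sum_w g_i(w)\delta_w$ and set $u(v,t)=\sum_w\big(\textnormal{Ch}^{m_1}_t(B_q)(g_1(w)\delta_w)+\textnormal{Ch}^{m_2}_t(B_q)(g_2(w)\delta_w)\big)(v)$. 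Finite speed of propagation on $\mathcal{T}_q$ — that $\textnormal{Ch}^{m_i}_t(B_q)\delta_w$ is supported in the ball of radius $2(|t|+c_i)$ about $w$, where $c_i$ depends only on the width of $m_i$, which follows from a degree count on the Satake side using \eqref{eqn_chebyshev} and the fact that $B_q$ is, up to normalization, the distance-$2$ operator $A_2^q$ — guarantees that for each fixed $(v,t)$ only finitely many $w$ contribute; hence $u$ is well-defined, solves \eqref{eqn_wave_biregular_tree}, and realizes the prescribed $h_1,h_2$-initial data. For uniqueness, the difference of two solutions with the same $h_1,h_2$-data has zero initial data and is therefore, by Step 1, the zero function (which is radial and compactly supported). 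This establishes the theorem, with the solution given by the $B_q$-analogue of \eqref{eqn_tree_soln}.

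\textbf{Expected main difficulty.} There is no serious obstacle: the whole proof is a transcription of the regular-tree argument, and the only genuinely biregular inputs are \eqref{eqn_B_q} and $H(G,K_q)=\mathbb{C}[B_q]$, both recorded in Section \ref{sec_harmonic_analysis_biregular}. The one point that needs a sentence of care is the finite-speed-of-propagation statement, because $B_q$ advances distance by $2$ and is a degree-two polynomial in the adjacency operator, so one must be slightly careful about support radii and parity when reducing the infinite sum defining $u$ to a finite one.
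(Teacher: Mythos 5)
Your proposal is correct and is exactly the argument the paper intends: the paper gives no separate proof of this theorem, stating only that it follows from the regular-tree case "with minimal changes," and your three-step transcription (Satake reduction for radial compactly supported data, recentering by transitivity of $G$ on $\mathcal{T}_q$, superposition plus finite speed of propagation, with uniqueness via zero initial data) is precisely that adaptation, including the correct biregular inputs \eqref{eqn_B_q} and $H(G,K_q)=\mathbb{C}[B_q]$.
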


\begin{proposition}
    Suppose $u(v, t)$ is a solution to the wave equation. Then
    \begin{gather*}
        E(t) := \sum_{v \in \mathcal{T}_q} \overline{u(v, t)} \cdot \Big(1 - B_q^2 \Big) u(v, t) + \sum_{v \in \mathcal{T}_q} \Big \| \frac{u(v, t+1) - u(v, t-1)}{2} \Big \|^2
    \end{gather*}
    is independent of $t$.
\end{proposition}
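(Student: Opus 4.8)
The plan is to mirror exactly the argument used for the regular tree in Proposition \ref{prop_tree_energy}, which in turn reduces to the algebraic computation carried out in Section \ref{sec_energy} for the flat wave equation. The key observation is that the wave equation \eqref{eqn_wave_biregular_tree} has precisely the same shape as \eqref{wave_eqn_tree}, with the operator $\frac{A}{2\sqrt{q}}$ replaced by $B_q$; since $B_q$ is self-adjoint on $\ell^2(\mathcal{T}_q)$ (it is a real polynomial in the self-adjoint operators $A_0^q, A_2^q$, and in fact $A_2^q$ is self-adjoint because $d(v,w)=2$ is a symmetric relation on $\mathcal{T}_q$), the same formal manipulation goes through verbatim.

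\medskip

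Concretely, I would proceed as follows. First, set $j(v,t) := \frac{u(v,t+1) - u(v,t-1)}{2}$ and record the one-time-step recursion
\begin{gather*}
    \begin{bmatrix} u(v,t+1) \\ j(v,t+1) \end{bmatrix}
    = \begin{bmatrix} B_q & I \\ B_q^2 - I & B_q \end{bmatrix}
    \begin{bmatrix} u(v,t) \\ j(v,t) \end{bmatrix},
\end{gather*}
which is obtained directly from \eqref{eqn_wave_biregular_tree} exactly as \eqref{eqn_tree_one_time_step} was obtained from \eqref{wave_eqn_tree}. This matrix is the matrix $\mathcal{U}$ of \eqref{eqn_one_time_step_prop_U} with $z$ replaced by $B_q$. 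Next, I would invoke the spectral theorem for the self-adjoint operator $B_q$ on $\ell^2(\mathcal{T}_q)$: diagonalizing $B_q$ reduces the conservation statement to the scalar identity $\mathcal{U}^* Q \mathcal{U} = Q$ with $Q = \mathrm{diag}(1 - z^2, 1)$ established in Section \ref{sec_energy}, now applied with the scalar $z$ replaced by points in the spectrum of $B_q$, which lies in $[-1,1]$ (this last fact being needed only to know $1 - B_q^2 \geq 0$, though positivity is not actually required for conservation, merely for the interpretation as an energy). Alternatively, and perhaps more cleanly, one can avoid the spectral theorem entirely and argue purely operator-theoretically: define the quadratic form $E(t) = \langle (I - B_q^2) u(\cdot,t), u(\cdot,t)\rangle + \langle j(\cdot,t), j(\cdot,t)\rangle$ on pairs of compactly supported functions, and verify $E(t+1) = E(t)$ by substituting the recursion above and expanding, using only that $B_q$ is self-adjoint and commutes with itself; every cross term cancels just as in the scalar computation.

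\medskip

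The main (and really the only) obstacle is a bookkeeping one: one must confirm that the formal algebraic identity $\mathcal{U}^*Q\mathcal{U} = Q$ from Section \ref{sec_energy}, which was derived there for the commutative scalar variable $z = \frac{\xi+\xi^{-1}}{2}$ on $S^1$, transports correctly when $z$ is replaced by the operator $B_q$. This is safe because $B_q$ generates a commutative algebra, so all the manipulations (in particular the factorization \eqref{eqn_eigen_decomp_U}–\eqref{eqn_diagonalize_A}-style steps, or simply the direct expansion) remain valid with $B_q$ in place of $z$; the adjoint $\mathcal{U}^*$ of the operator-valued matrix corresponds to $z \mapsto \bar z$, which for the real operator $B_q$ is just $B_q$ itself. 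Since the proof of Proposition \ref{prop_tree_energy} already spelled out this transport for $\frac{A}{2\sqrt{q}}$, the present statement follows by an identical argument, and I would simply write: "The proof is identical to that of Proposition \ref{prop_tree_energy}, with $\frac{A}{2\sqrt{q}}$ replaced everywhere by $B_q$, using that $B_q$ is self-adjoint on $\ell^2(\mathcal{T}_q)$ with spectrum contained in $[-1,1]$."
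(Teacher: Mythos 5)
Your proposal is correct and takes essentially the same route as the paper, whose (implicit) proof is exactly to repeat the argument of Proposition \ref{prop_tree_energy} with $\frac{A}{2\sqrt{q}}$ replaced by $B_q$, using only that $B_q$ is self-adjoint so the identity $\mathcal{U}^*Q\mathcal{U}=Q$ transports verbatim. One small caution for your closing sentence: when $p>q$ the $L^2$-spectrum of $B_q$ is \emph{not} contained in $[-1,1]$ (the Plancherel measure acquires the atom \eqref{eqn_biregular_dirac_delta} at $z=\frac{-(p+q)}{2\sqrt{pq}}$), but as you yourself note, spectral containment is irrelevant to conservation — only self-adjointness is used — so the proof stands if you drop that clause.
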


We now discuss the analogue of Proposition \ref{prop_reg_tree_chebs}, i.e. explicit formulas for certain special choices of Chebyshev polynomials. We shall ultimately be particularly interested in $R_t := \textnormal{Ch}_t^{x + b^{-1}}$.

\begin{proposition}
    For $t \geq 0$ we have that
    \begin{align}
        U_t(B_q) \delta_{o_q}(v) &= \begin{cases}
            \frac{(a^{-1})^{t - \ell + 1} - (-b^{-1})^{t - \ell + 1}}{a^{-1} + b^{-1}} a^{-\ell} & \textnormal{if $d(o_q, v) = 2 \ell \leq 2t$} \\
            0 & \textnormal{otherwise}
        \end{cases} \label{eqn_biregular_u_t}\\
        R_t(B_q) \delta_{o_q}(v) &= \begin{cases}
            a^{-t} & \textnormal{if $d(o_q, v) \leq 2 t$ and even} \\
            0 & \textnormal{otherwise}
        \end{cases}  \label{eqn_biregular_chebyshev}\\
        H_t(B_q) \delta_{o_q}(v) &= \begin{cases}
            a^{-t} & \textnormal{if $d(o_q, v) = 2t$} \\
            0 & \textnormal{otherwise}
        \end{cases} \notag
    \end{align}
\end{proposition}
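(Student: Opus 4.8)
The plan is to reduce the whole statement to the formula for $U_t(B_q)\delta_{o_q}$ and then prove that one from the defining recurrence. First I would record the Chebyshev identities $R_t = U_t + b^{-1}U_{t-1}$ and $H_t = R_t - a^{-1}R_{t-1}$ (equivalently $H_t = U_t + (b^{-1}-a^{-1})U_{t-1} - a^{-1}b^{-1}U_{t-2}$). These are immediate from the definition \eqref{eqn_chebyshev}: the assignment $h \mapsto \textnormal{Ch}_t^{h}$ is $\mathbb{C}$-linear in $h$, and $\textnormal{Ch}_t^{\xi h} = \textnormal{Ch}_{t+1}^{h}$, $\textnormal{Ch}_t^{\xi^{-1}h} = \textnormal{Ch}_{t-1}^{h}$; combining this with $\textnormal{Ch}_t^{1} = U_{t-1}$ and the fact that (since $\tfrac{1}{\sqrt{pq}} = a^{-1}$, $\tfrac{\sqrt p}{\sqrt q} = b^{-1}$) the polynomial $h$ of Example \ref{ex_biregular} is $h(\xi) = (1 - a^{-1}\xi^{-1})(\xi + b^{-1}) = b(\xi)$, so $H_t = \textnormal{Ch}_t^{b(\xi)}$, while $R_t = \textnormal{Ch}_t^{\xi + b^{-1}}$ by definition. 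Granting the $U_t$ formula, the $R_t$ and $H_t$ formulas then drop out by a short telescoping computation on spheres: on the sphere $d(o_q,v) = 2\ell$ with $\ell \le t$ the two contributions to $R_t$ combine, via the cancellation $-(-b^{-1})^{m+1} - b^{-1}(-b^{-1})^{m} = 0$, to the constant value $a^{-t}$, and then subtracting $a^{-1}R_{t-1}$ kills every sphere except $\ell = t$.

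For the $U_t$ formula itself I would argue by recurrence rather than by harmonic analysis. From $U_{t+1}(z) + U_{t-1}(z) = 2z\,U_t(z)$, applying $B_q$ shows that $F_t := U_t(B_q)\delta_{o_q}$ is the unique sequence of functions on $\mathcal{T}_q$ with $F_0 = \delta_{o_q}$, $F_1 = 2B_q\delta_{o_q}$ and $F_{t+1} + F_{t-1} = 2B_q F_t$ for $t \ge 1$. So it suffices to check that the proposed closed form $G_t$ — which depends on $v$ only through $\ell := d(o_q,v)/2$, say $G_t(v) = g_t(\ell)$ — satisfies the same initial data and recurrence. Using the local geometry of the $(p+1,q+1)$-biregular tree (from a vertex at distance $2\ell \ge 2$ there are $pq$ vertices at distance $2$ on sphere $\ell+1$, exactly $p-1$ on sphere $\ell$, and exactly $1$ on sphere $\ell-1$; from $o_q$ there are $(q+1)p$ on sphere $1$) together with $B_q = \tfrac{1}{2\sqrt{pq}}(A_2^{q} - (p-1)A_0^{q})$ from \eqref{eqn_B_q}, the recurrence becomes $g_{t+1}(\ell) + g_{t-1}(\ell) = \sqrt{pq}\,g_t(\ell+1) + \tfrac{1}{\sqrt{pq}}\,g_t(\ell-1)$ on spheres $\ell \ge 1$, with a modified relation at $\ell = 0$. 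Since on its support $g_t(\ell)$ is the combination $a^{-\ell}\bigl((a^{-1})^{t-\ell+1} - (-b^{-1})^{t-\ell+1}\bigr)/(a^{-1}+b^{-1})$ of the two separated solutions $(a^{-1})^{t-\ell}a^{-\ell}$ and $(-b^{-1})^{t-\ell}a^{-\ell}$ of that recurrence, the recurrence holds automatically in the interior of the support for \emph{any} choice of the two roots; at the support boundary (where $g_t$ vanishes) and at the apex $\ell = 0$ it collapses to the numerical identities $a^{-1}b^{-1} = q^{-1}$ and $b^{-1} - a^{-1} = (p-1)/\sqrt{pq}$, both immediate from $a = \sqrt{pq}$, $b = \sqrt{q/p}$. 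Matching $g_0$ and $g_1$ against $\delta_{o_q}$ and $2B_q\delta_{o_q}$ is a one-line check.

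Alternatively — this is presumably the ``minimal changes to the arguments'' route — one transcribes the proof of Proposition \ref{prop_reg_tree_chebs}: apply the Satake inversion \eqref{eqn_invert_satake_biregular} to $f = U_t(B_q)$, substitute the explicit spherical function and Plancherel density, cancel the $c$-function factors, and evaluate the $S^1$-integral by residues, the reciprocal roots $a^{-1}$ and $-b^{-1}$ of the numerator $b(\xi)$ of the $c$-function producing the two geometric terms. The one step I expect to be the real obstacle is the regime $p > q$: then $-b^{-1}$ lies outside $S^1$, so the pole of $1/b(\xi)$ there does not enter the contour integral, but the Plancherel measure now carries the atomic term of Example \ref{ex_biregular}, supported at the Satake parameter $\xi = -b^{-1}$ (equivalently $B_q = -(p+q)/2\sqrt{pq}$, the edge of the spectrum, the image of the $A^2 = 0$ part), and its contribution supplies exactly the missing $(-b^{-1})^{t-\ell+1}$ term, making the stated formula uniform in $p$ and $q$; equivalently one deforms the contour across $\xi = -b^{-1}$ and never mentions the atom. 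Because the direct recurrence argument of the previous paragraph sidesteps this bookkeeping entirely, that is the argument I would write up.
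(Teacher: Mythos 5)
Your proposal is correct, and for the main formula it takes a genuinely different route from the paper. The reduction of the $R_t$ and $H_t$ formulas to \eqref{eqn_biregular_u_t} via $R_t = U_t + b^{-1}U_{t-1}$ and $H_t = R_t - a^{-1}R_{t-1}$ is exactly the paper's (unwritten) ``direct computation'' step, and your telescoping on spheres is the right way to carry it out. For \eqref{eqn_biregular_u_t} itself, however, the paper proceeds by harmonic analysis: it applies the inversion formula \eqref{eqn_invert_satake_biregular}, cancels the $c$-function factors, and evaluates the $S^1$-integral by reading off constant terms of Laurent expansions under the assumption $q > p$ (so that $a$ and $-b$ lie outside the unit disk), and then disposes of the case $q \le p$ not by tracking the Plancherel atom but by the observation that, with $\sqrt p, \sqrt q$ treated as formal variables, the sphere values satisfy a linear recurrence in $\ell$ which the closed form is checked to satisfy identically --- essentially the continuation trick you allude to at the end of your third paragraph. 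Your argument instead verifies the closed form directly against the three-term recurrence $F_{t+1} + F_{t-1} = 2B_qF_t$, $F_0 = \delta_{o_q}$, $F_1 = 2B_q\delta_{o_q}$, using the local sphere counts ($1$, $p-1$, $pq$ at distance $2$ from a vertex on sphere $\ell \ge 1$, and $(q+1)p$ from $o_q$). What this buys is uniformity in $p,q$ (no case split, no atom, no contour bookkeeping) and complete elementarity; what the paper's route buys is that it derives the formula from the spherical-transform machinery it has already set up and reuses (scattering theory, the buildings program), rather than verifying a guessed answer. One small point to make explicit when writing up: at $\ell = t+1$ the separated-solution argument does not apply verbatim, since the formula-extensions of $g_{t-1}(t+1)$ and $g_t(t+2)$ are nonzero while the actual values vanish; the two spurious contributions sit on opposite sides of the recurrence and cancel precisely because $a^{-1}b^{-1} = q^{-1}$, which is the identity your sketch already flags, so this is a matter of spelling it out rather than a gap.
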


\begin{proof}
    We begin by proving \eqref{eqn_biregular_u_t}. First we assume that $q > p$. We use \eqref{eqn_invert_satake_biregular}. The value of $U_t(B_q)$ at vertices of distance $2 \ell$ from $o_q$ is given by
    \begin{gather*}
        \frac{1 + q^{-1}}{4 \pi} \int_{S^1} \frac{\xi^{t+1} - \xi^{-t-1}}{\xi - \xi^{-1}} \frac{a^{-\ell}}{1 + q^{-1}} (\xi^\ell c(\xi) + \xi^{-\ell} c(\xi^{-1})) \frac{1}{c(\xi) c(\xi^{-1})} d \theta.
    \end{gather*}
    After pulling out the factor of $a^{-\ell}$, we must integrate over $S^1$ the following expressions:
    \begin{align*}
        - &\frac{\xi^{t + \ell + 2}}{(1 - a^{-1} \xi)(1 + b^{-1} \xi)}, \hspace{5mm} \frac{\xi^{t-\ell}}{(1 - a^{-1} \xi^{-1})(1 + b^{-1} \xi^{-1})} \\
        & \frac{\xi^{-t + \ell}}{(1 - a^{-1} \xi) (1 + b^{-1} \xi)}, \hspace{5mm}
        - \frac{\xi^{-t-\ell-2}}{(1 - a^{-1} \xi^{-1})(1 + b^{-1} \xi^{-1})}
    \end{align*}
    Since we are assuming that $q > p$, we have that $a, -b$ both lie outside the unit disk. Therefore, the first term does not contribute. Similarly, for the fourth term, we can expand the denominator as a power series in $\xi^{-1}$. If we then multiply by the numerator, we get a power series in $\xi^{-1}$ without constant term, so this term also does not contribute.

    The second and third terms will in fact contribute the same amount. We can compute it either using residues, or by simply reading off the constant term of the Laurent series expansion at $0$ (and multiplying by $2 \pi$). If $\ell < t$, then the constant term is zero. Otherwise we get for each term
    \begin{gather*}
        2 \pi \sum_{j = 0}^{t - \ell} (a^{-1})^j (-b^{-1})^{t - \ell - j} = 2 \pi \frac{(a^{-1})^{t - \ell + 1} - (-b^{-1})^{t - \ell + 1}}{a^{-1} + b^{-1}}.
    \end{gather*}
    Thus in the end we get exactly the claimed formula.

    We claim that \eqref{eqn_biregular_u_t} also holds when $q \leq p$. We can see this spectrally using the added Dirac delta term and appropriately modifying the above argument. However, we can also see this more directly as follows: if we treat $\sqrt{p}$ and $\sqrt{q}$ formally as variables, then the value of $U_t(B_q)$ on the sphere of radius $2 \ell$ satisfies a linear recurrence relation in these variables as $\ell$ varies. One can check that \eqref{eqn_biregular_u_t} indeed satisfies the recurrence, regardless of the relative size of $p$ and $q$. 

    The other two formulas then follow immediately by direct computation using \eqref{eqn_biregular_u_t}. 
\end{proof}

We also have the analogous scattering theory.
\begin{theorem}
    Let $D_+$ denote the subspace of initial conditions of the wave equation on $\mathcal{T}_q$ whose solution are supported outside a ball of radius $t$ centered at $o_q$ at time $t \geq 0$. Let $D_-$ be the subspace of initial conditions which are supported outside the ball of radius $|t|+1$ at time $t \leq 0$. Let $(f_1, f_2)$ be finite energy initial data for the wave equation. Define
    \begin{align*}
        T_{\pm} (f_1, f_2) &:= [\textnormal{Hel}](f_1)(\xi^{\mp 1}, \omega) c(\xi^{\mp 1})^{-1} \Big(\frac{\xi^{\pm 1} - \xi^{\mp 1}}{2} \Big) \mp [\textnormal{Hel}](f_2)(\xi^{\mp 1}, \omega) c(\xi^{\mp 1})^{-1} \\
        g_{\pm}^{(f_1, f_2)}(t, \omega) &:= \frac{1}{2 \pi} \int_{S^1} [T_{\pm}(f_1, f_2)](\xi, \omega) \cdot \xi^{\pm t} d \theta(\xi) \\
        R_{\pm} &:= \frac{b(\xi^{\pm 1})}{1 + q^{-1}} T_\pm \\
        k^{(f_1, f_2)}_{\pm} &:= \frac{1}{2\pi} \int_{S^1} [R_{\pm}(f_1, f_2)](\xi, \omega) \cdot \xi^{\pm t} d \theta(\xi).
    \end{align*}
    Then
    \begin{enumerate}
        \item The space $D_+$ is outgoing, and $D_-$ is incoming.
        \item The map $(f_1, f_2) \mapsto g^{(f_1, f_2)}_{\pm}$ is the outgoing/incoming translation representation.
        \item We have
        \begin{gather*}
            u^{(f_1, f_2)}(v, t) = \int_\Omega \sqrt{pq}^{h_\omega(v)} k_{\pm}^{(f_1, f_2)}(\frac{h_\omega(v)}{2} \mp t, \omega) d \nu(\omega).
        \end{gather*}
        \item The Fourier transform of the scattering operator is given by
        \begin{gather*}
            [\mathcal{S} h](\xi, \omega) = \frac{d(\xi^{-1})}{b(\xi)} \mathcal{I}_{\xi}(b(\xi, \cdot))(\omega).
        \end{gather*}
        In particular the resonances are at $\xi = \frac{1}{\sqrt{pq}}, - \sqrt{\frac{p}{q}}$.
    \end{enumerate}
\end{theorem}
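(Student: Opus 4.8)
The plan is to mirror, essentially line by line, the proof of the corresponding theorem for the regular tree, via the dictionary that replaces $\tfrac{A}{2\sqrt q}$ by $B_q$, the exponent base $q$ in all Busemann-weighted quantities by $\sqrt{pq}$, the spectral parameter $q^{is}$ by $\xi=(\sqrt{pq})^{is}$, and the regular $c$-function by the biregular one of Section \ref{sec_harmonic_analysis_biregular}, so that its numerator $b(\xi)$ plays the role that $\xi-q^{-1}\xi^{-1}$ did before. The first step is to record the horocyclic plane waves: for $\omega\in\Omega$ and arbitrary $f\colon\mathbb{Z}\to\mathbb{C}$, the functions $F_{\pm}(v,t)=(\sqrt{pq})^{h_\omega(v)/2}f\big(\tfrac{h_\omega(v)}{2}\mp t\big)$ solve \eqref{eqn_wave_biregular_tree} and are constant on horocycles of $\omega$ at each fixed time. (Here $h_\omega$ takes only even values on $\mathcal{T}_q$, since $h_\omega(o_q)=0$ and $d(o_q,v)$ is even on $\mathcal{T}_q$, so $\tfrac{h_\omega(v)}{2}\in\mathbb{Z}$.) The single-exponential case $f(k)=\xi^{k}$ amounts to the identity $B_q\big[(\sqrt{pq}\,\xi)^{h_\omega(v)/2}\big]=\tfrac{\xi+\xi^{-1}}{2}(\sqrt{pq}\,\xi)^{h_\omega(v)/2}$, which is a two-line computation from the bipartite branching of $\mathcal{T}$: each $v\in\mathcal{T}_q$ has exactly one distance-$2$ neighbour with $h_\omega$ raised by $2$, $p-1$ with $h_\omega$ unchanged, and $pq$ with $h_\omega$ lowered by $2$, and $B_q=\tfrac{A_2^q-(p-1)A_0^q}{2\sqrt{pq}}$ is rigged precisely to kill the middle contribution.

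Next I would show $T_\pm$ is an isometry $\mathcal{H}\to L^2(S^1\times\Omega,d\theta\times d\nu)$, exactly as for the regular tree: feed the energy form $\langle(1-B_q^2)f_1,f_1\rangle+\langle f_2,f_2\rangle$ into the biregular Helgason transform (an isometric embedding of $L^2(\mathcal{T}_q)$ whose image is cut out by the symmetry relation \eqref{eqn_helgason_symmetry} with $q$ replaced by $\sqrt{pq}$), using $1-\big(\tfrac{\xi+\xi^{-1}}{2}\big)^2=\big|\tfrac{\xi-\xi^{-1}}{2}\big|^2$ on $S^1$ to account for the factor $\tfrac{\xi^{\pm1}-\xi^{\mp1}}{2}$ built into $T_\pm$, and reducing the vanishing of the cross term $\langle T_\pm(f_1,0),T_\pm(0,f_2)\rangle$ to the case $f_1=\delta_x$, $f_2=\delta_y$, where the Helgason symmetry makes the integrand odd under $\xi\mapsto\xi^{-1}$. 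Then I would identify $R_\pm$ with $\mathcal{P}^{-1}_{\xi^{\pm1}}$ applied to the time-Fourier transform $\hat u^{(f_1,f_2)}(v,\xi)=\sum_t u^{(f_1,f_2)}(v,t)\xi^{t}$ of the solution, and pin down the scalar relating $R_\pm$ to $T_\pm$ by evaluating on $(\delta_{o_q},0)$ and $(0,\delta_{o_q})$: radiality forces $\hat u$ to be a multiple of the spherical function, that multiple is read off at $o_q$ from the inversion formula \eqref{eqn_invert_satake_biregular} together with the explicit values $T_t(B_q)\delta_{o_q}(o_q)$ and $U_{t-1}(B_q)\delta_{o_q}(o_q)$ coming from \eqref{eqn_biregular_u_t} and the formula for $\phi_{(\xi,\xi^{-1})}$ in terms of $c(\xi)$, and the biregular analogue of Proposition \ref{prop_spherical_function_new_center} rewrites $\mathcal{P}^{-1}_{\xi^{\pm1}}$ of a spherical function centred at a general vertex as a Helgason transform. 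This gives $R_\pm=\tfrac{b(\xi^{\pm1})}{1+q^{-1}}T_\pm$ on a dense subspace, hence a bounded extension of $R_\pm$ (since $b(\xi)^{-1}$ is bounded on $S^1$); part (3) — the plane-wave superposition for $u^{(f_1,f_2)}$ — then follows from invertibility of the Poisson transform plus a Fubini interchange (licit because $S^1\times\Omega$ is compact, so $L^2\subset L^1$, and $(\sqrt{pq})^{h_\omega(v)/2}$ is bounded in $\omega$), first on a dense set and then everywhere by continuity.

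The one genuinely new ingredient is the biregular analogue of Theorem \ref{thm_zero_soln}: a characterisation of when $\int_\Omega(\sqrt{pq})^{h_\omega(v)}k(\tfrac{h_\omega(v)}{2}\mp t,\omega)\,d\nu(\omega)$ is identically zero (respectively vanishes on $d(o_q,v)\le t$), phrased through the spherical average $\tilde k(t,v)$ satisfying a geometric recursion $\tilde k(t-2,v)=pq\,\tilde k(t,v)$ off the branch vertex. I expect this to be the main obstacle: the proof is the same inductive telescoping over spheres centred at $o_q$ as in the regular case, but the bipartite structure complicates the bookkeeping — a vertex at distance $2\ell$ now splits $\Omega$ into nested cylinders along a length-$2\ell$ geodesic together with leftover pieces at each even step, and the harmonic measures of those cylinders carry mixed powers of $p$ and $q$; getting the telescoping cancellation to close, and in particular tracking the $\sqrt{pq}$-powers correctly, is where a normalisation slip would most easily creep in. Granting this, part (1) (that $D_+$ is outgoing and $D_-$ incoming) and part (2) (that $f\mapsto g_\pm^{(f_1,f_2)}$ is the translation representation) go through as before: conditions (i)–(ii) for an outgoing subspace are immediate, condition (iii) and surjectivity of the translation representation onto $L^2(\mathbb{Z}_{\ge0}\times\Omega)$ come from combining part (3) with the zero-solution theorem, and the time-one propagator $\mathcal{V}$ becomes the shift since $k_\pm^{\mathcal{V}f}(t,\omega)=k_\pm^f(t\mp1,\omega)$. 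Finally, for part (4) one sets $[T_\pm^\vee h](\xi,\omega):=[T_\pm h](\xi^{-1},\omega)$ and computes $\mathcal{S}=T_+^\vee\circ(T_-^\vee)^{-1}$ directly from $R_\pm=\tfrac{b(\xi^{\pm1})}{1+q^{-1}}T_\pm$ and the defining relation $\mathcal{P}_\xi(\mathcal{I}_\xi g\,d\nu)=\mathcal{P}_{\xi^{-1}}(g\,d\nu)$ of the intertwining operator, obtaining $[\mathcal{S}h](\xi,\omega)=\tfrac{b(\xi^{-1})}{b(\xi)}\mathcal{I}_\xi\big(h(\xi,\cdot)\big)(\omega)$, so that the resonances are exactly the zeros of $b(\xi)$, namely $\xi=\tfrac{1}{\sqrt{pq}}$ and $\xi=-\sqrt{p/q}$. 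One point to watch throughout: when $p>q$ the Plancherel measure of Example \ref{ex_biregular} carries an atom, reflecting a residual point of the $B_q$-spectrum outside $[-1,1]$; I would either restrict $\mathcal{H}$ to the continuous part or note that this atom does not couple to the horocyclic-wave picture, leaving the rest of the argument unchanged.
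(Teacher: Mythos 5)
Your proposal is correct and follows exactly the route the paper intends: the paper gives no separate proof of this theorem, deferring to the regular-tree arguments of Section \ref{sec_scattering_theory}, and your dictionary ($B_q$ for $\frac{A}{2\sqrt{q}}$, $\sqrt{pq}$-weights with the bipartite branching numbers $1$, $p-1$, $pq$, the biregular $c$-function with numerator $b(\xi)$, and a biregular analogue of Theorem \ref{thm_zero_soln} as the one genuinely new lemma) is precisely that adaptation, including the corrected readings of the statement's typos (the weight $(\sqrt{pq})^{h_\omega(v)/2}$ in part (3), and the scattering ratio $b(\xi^{-1})/b(\xi)$ in part (4)), plus an apt flag of the Plancherel atom when $p>q$ which the paper glosses over. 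The only nit: you transcribe the statement's relation $R_\pm=\frac{b(\xi^{\pm 1})}{1+q^{-1}}T_\pm$ while your extension argument (boundedness of $b(\xi)^{-1}$ on $S^1$) presupposes the regular-tree orientation $T_\pm=\frac{b(\xi^{\pm 1})}{1+q^{-1}}R_\pm$, which is what your $\delta_{o_q}$ computation will in fact produce.
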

\noindent If we set $p = 1$, we immediately recover the results of Section \ref{sec_scattering_theory}.

\subsection{Delocalization of eigenfunctions on biregular graphs}
As an application of the ideas of this paper, we extend a result of Brooks--Lindenstrauss \cite{brooks_lindenstrauss_graphs} about regular graphs to the setting of biregular graphs. Before stating the result, we first make a few remarks about the spectral theory of biregular graphs.

Suppose $\mathcal{G}$ is a finite $(p+1, q+1)$-regular graph. Let $V$ denote the set of vertices. Let $V_p$ and $V_q$ denote the vertices of degree $p+1$ and $q+1$, respectively. Let $A_2^p: L^2(V_p) \to L^2(V_p)$ be the operator corresponding to summing up over a sphere of radius 2 centered at a vertex of degree $p$. Let $A_2^q$ be defined analogously. Let $A$ be the adjacency operator on $\mathcal{G}$. Then $A^2 = (A_2^p + (p+1) I_p) + (A_2^q + (q+1) I_q)$, where $I_p$ and $I_q$ are the identity operators on $L^2(V_p)$ and $L^2(V_q)$. Suppose $\phi$ is a function on $V$. Let $\phi_p$ and $\phi_q$ denote the restriction to $V_p$ and $V_q$. If $\phi$ is an eigenfunction of $A$, then we immediately see that $\phi_p$ and $\phi_q$ are eigenfunctions of $A_2^p$ and $A_2^q$, respectively. Thus studying eigenfunctions of $A$ can be reduced to separately studying eigenfunctions of $A_2^p$ and $A_2^q$.

\begin{remark}
The main result of this section (Theorem \ref{thm_delocalization}) is a result about delocalization of eigenfunctions of $A_2^q$ in case $q > p$. As we now briefly explain, one might not expect the below result to hold in case $q < p$, at least not in full generality. We start by noting the relation $|V_p|(p+1) = |V_q|(q+1)$ as both sides count the total number of edges of $\mathcal{G}$. Suppose $q < p$. Then we necessarily have that $|V_p| < |V_q|$. In fact we have that $\frac{|V_q| - |V_p|}{|V_q|} = 1 - \frac{q+1}{p+1}$. We can write $A^q_2 + (q+1) I_q = B^T B$ where $B: L^2(V_q) \to L^2(V_p)$ is the restriction of $A$ to $L^2(V_q)$. Note that $B$ necessarily has a kernel of size $|V_q| - |V_p|$. Therefore $B_q = \frac{1}{2 \sqrt{pq}}(A_q^2 + (q+1) I_q) + \frac{-(p+q)}{2 \sqrt{pq}} I_q$ has at least the fraction $1 - \frac{q+1}{p+1}$ of its eigenvalues equal to $\frac{-(p+q)}{2 \sqrt{pq}}$. When we properly normalize the measures in Example \ref{ex_biregular} by multiplying by $\frac{1 + q^{-1}}{2 \pi}$, \eqref{eqn_biregular_dirac_delta} turns into $(1 - \frac{q+1}{p+1})\delta_{\frac{-(p+q)}{2 \sqrt{pq}}}(z)$. This explains the appearance of the Dirac delta in the Plancherel measure of the biregular tree from a more geometric perspective. Because of such a high multiplicity of this eigenspace, one might expect that it would be possible to find an eigenfunction in this space which contains most of its mass on a relatively small set, and thus the below theorem might not hold in this case, at least for eigenfunctions in this high multiplicity eigenspace. On the other hand, if $q > p$, then this problem does not arise. The assumption that $q > p$ will be used crucially in the proof.
\end{remark}

From now on we shall only deal with operators acting on $V_q$. For each $n \in \mathbb{N}$, we define $\tilde{S}_n: C(\mathcal{T}_q; \mathbb{C}) \to C(\mathcal{T}_q; \mathbb{C})$ via
\begin{gather*}
    [\tilde{S}_n f](v) = \frac{1}{\sqrt{pq}^n} \sum_{w: d(v, w) = 2n} f(w).
\end{gather*}
Notice that a sphere of radius $2 n$ is of size $(p q)^n \frac{q+1}{q}$, so we are essentially summing up over the sphere of radius $2 n$, and then dividing by the square root of the volume of this sphere. 

Let $S_n$ denote the natural descent of $\tilde{S}_n$ to $L^2(V_q)$. In the below theorem we have an assumption about the $L^r \to L^s$ norm for $S_n$ (with $r, s$ conjugate exponents). This is essentially an assumption about having few short cycles, but phrased in an operator theoretic way. For example, if we take $r = 1, s = \infty$, then we have that 
\begin{gather*}
    \| S_n \|_{L^1 \to L^\infty} \leq (pq)^{-n/2}
\end{gather*}
for all $n$ less than the injectivity radius of the graph.

\begin{theorem} \label{thm_delocalization}
    Let $\varepsilon > 0$. Let $r, s$ be such that $1 \leq r < 2$ and $2 < s \leq \infty$ with $\frac{1}{r} + \frac{1}{s} = 1$. Let $C > 0$ and $\alpha > 0$. Then there exists a constant $D > 0$ and an $N_0 \in \mathbb{N}$ (both depending on $\varepsilon, p, q, r, C, \alpha$) such that for all $N > N_0$ and all $(p+1, q+1)$-regular finite graphs $\mathcal{G}$ with $q > p$ satisfying
    \begin{gather}
        \| S_n \|_{L^r \to L^s} \leq C (pq)^{-\alpha n} \ \ \ \ \ \ \textnormal{for all $n \leq N$,} \label{eqn_hypothesis_operator_norm} 
    \end{gather}
    we have that for any $L^2$-normalized eigenfunction $\phi$ of $B_q$ (or, equivalently, of $A_2^q$) on $\mathcal{G}$, any subset $E \subset V_q$ satisfying
    \begin{gather*}
        \sum_{x \in E} |\phi(x)|^2 > \varepsilon
    \end{gather*}
    must be of size
    \begin{gather}
        |E| \geq D (p q)^{\delta N} \label{eqn_delta}
    \end{gather}
    where $\delta = \delta (\varepsilon, \alpha, r)$. 
\end{theorem}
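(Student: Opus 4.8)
The plan is to adapt the argument of Brooks--Lindenstrauss \cite{brooks_lindenstrauss_graphs} from the $(q+1)$-regular to the $(p+1,q+1)$-biregular setting, replacing the operator $\frac{A}{2\sqrt q}$ and its Chebyshev polynomials by $B_q$ and the Chebyshev polynomials $H_t$ of Example~\ref{ex_biregular}. After diagonalizing $B_q$, which is a real symmetric operator on $L^2(V_q)$, it suffices to fix a single real $L^2$-normalized eigenfunction $\phi$ with $B_q\phi=z_0\phi$ and a set $E\subseteq V_q$ with $\sum_{x\in E}|\phi(x)|^2>\varepsilon$; put $\psi:=\mathbf{1}_E\phi$, so that $\|\psi\|_2^2>\varepsilon$ and, by Hölder's inequality, $\|\psi\|_r\le |E|^{1/r-1/2}\|\psi\|_2\le |E|^{1/r-1/2}$. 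The goal is to produce a single operator that ``detects'' $\phi$ while being so smoothing on the scale $N$ that $E$ cannot be small.

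The wave equation enters as follows. By the biregular analogue of Proposition~\ref{prop_reg_tree_chebs} (the computation of $H_t(B_q)\delta_{o_q}$), the normalized averaging operator $S_n$ coincides with the polynomial $H_n(B_q)$ of $B_q$; hence $S_n\phi=H_n(z_0)\phi$, and writing $z_0=\tfrac{\xi_0+\xi_0^{-1}}2$ the horocyclic expansion of the spherical function gives the closed form $H_n(z_0)=\xi_0^n c(\xi_0)+\xi_0^{-n}c(\xi_0^{-1})$, where $c$ is the $c$-function of Section~\ref{sec_harmonic_analysis_biregular}. I would first dispose of the eigenvalues with $|\xi_0|>1$, which includes everything outside $[-1,1]$ (such as the Perron eigenvalue): there $|H_n(z_0)|=|\xi_0|^n|c(\xi_0)|(1+o(1))$ grows geometrically, so $\|S_n\phi\|_s=|H_n(z_0)|\,\|\phi\|_s$ combined with \eqref{eqn_hypothesis_operator_norm} forces $\phi$ to be so spread out that no $\varepsilon$-mass set can be small. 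This is exactly where the hypothesis $q>p$ is used: precisely when $q>p$ is $c(\xi_0)\neq 0$ for every spectral $\xi_0$ of modulus $>1$; when $q<p$ the exceptional eigenvalue $\tfrac{-(p+q)}{2\sqrt{pq}}$ has $c(\xi_0)=0$, $H_n(z_0)$ decays, and the mechanism --- and the theorem --- fails, as in the Remark preceding the statement. Thus we reduce to $\xi_0\in S^1$.

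The core construction is, for $N$ large, an ``amplifier'' $T_N=\sum_{n=\lceil N/2\rceil}^{N}\beta_n S_n$, with $|\beta_n|\le 1$ depending on $z_0$, enjoying two properties: (i) $T_N\phi=\mu\phi$ with $|\mu|$ bounded below by a constant depending only on $p,q$; and (ii) $\|T_N\|_{L^r\to L^s}\le\sum_n|\beta_n|\,\|S_n\|_{L^r\to L^s}\le C\sum_{n\ge N/2}(pq)^{-\alpha n}\le C'(pq)^{-\alpha N/2}$, exponentially small by \eqref{eqn_hypothesis_operator_norm}. For $\xi_0$ bounded away from $\pm1$ one takes $\beta_n=\xi_0^{-n}$ (or its real and imaginary parts, to keep $T_N$ self-adjoint): then $T_N\phi=\bigl(c(\xi_0)+\tfrac{c(\xi_0^{-1})}{M+1}\sum_n\xi_0^{-2n}\bigr)\phi$, and the geometric sum is $O(|\xi_0^2-1|^{-1})$, negligible against $|c(\xi_0)|=|b(\xi_0)|/|\xi_0-\xi_0^{-1}|$ once $N$ is large; here one uses that the zeros of $b$ lie off $S^1$ (again because $q>p$), so that $|c|$ is bounded below on all of $S^1$. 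For $\xi_0$ within $O(1/N)$ of $\pm1$ one uses instead the edge asymptotics $H_n(z_0)\approx n\,b(\pm1)$, valid for $n\lesssim|\xi_0-\xi_0^{-1}|^{-1}$, with weights $\propto n^{-1}$ over a correspondingly shortened window.

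Finally I would combine (i) and (ii). Since $\psi=\mathbf{1}_E\psi$ and $\mathbf{1}_E T_N\phi=\mu\psi$, one has $\mu\psi=\mathbf{1}_E T_N\psi+\mathbf{1}_E T_N(\phi-\psi)$, so $|\mu|\,\|\psi\|_s\le\|T_N\psi\|_s+\|\mathbf{1}_E T_N(\phi-\psi)\|_s$. The first term is $\le\|T_N\|_{L^r\to L^s}\|\psi\|_r\le C'(pq)^{-\alpha N/2}|E|^{1/r-1/2}$ by Hölder; for the second, \eqref{eqn_hypothesis_operator_norm} forces $\mathcal G$ to be tree-like on the scale $N$, so that $\mathbf{1}_E T_N(\phi-\psi)$ is supported on an annulus of radius $\lesssim N$ about $E$, of size $\lesssim|E|(pq)^N$, and interpolating this support bound against the $L^r\to L^s$ bound for $T_N^*$ controls it by a comparable expression. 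Using $\|\psi\|_s\ge|E|^{1/s-1/2}\|\psi\|_2\ge\sqrt\varepsilon\,|E|^{1/s-1/2}$ and rearranging (the exponents match since $\tfrac1r-\tfrac12=\tfrac12-\tfrac1s$) gives $|E|\ge D(pq)^{\delta N}$ for a suitable $\delta>0$, with $\delta$ depending only on $\alpha$ and $r$ and $D,N_0$ depending on $\varepsilon,p,q,r,C,\alpha$. I expect the main obstacle to be precisely this last estimate: the naive bound on $\langle T_N(\phi-\psi),\psi\rangle$ involves the $L^r$-norm of the whole eigenfunction, which is not under control, and must be replaced by one that only sees the geometry of $\mathcal G$ up to scale $N$, exactly as in \cite{brooks_lindenstrauss_graphs}; the uniform treatment of the near-edge eigenvalues, which is what fixes $N_0$ and $D$, is a secondary but genuine difficulty.
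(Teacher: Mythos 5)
Your reduction to a single eigenfunction, the use of H\"older on $\psi=\mathbf 1_E\phi$, the identification of the $S_n$ with the Chebyshev-type polynomials of $B_q$, and the role of $q>p$ are all in the spirit of the paper, but the core of your argument has a genuine gap that you yourself flag and then do not close: the splitting $\mu\psi=\mathbf 1_E T_N\psi+\mathbf 1_E T_N(\phi-\psi)$. The second term involves the values of $\phi$ at distance up to $2N$ from $E$; that region has $\ell^2$-mass which can be as large as $1$ (it is not small in any norm you control), and no interpolation of a support bound of size $|E|(pq)^N$ against the $L^r\to L^s$ bound for $T_N$ or $T_N^*$ produces an exponentially small estimate --- the ``tree-like at scale $N$'' heuristic gives you the geometry of the annulus but says nothing about how much of $\phi$ lives there. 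The same defect sinks your disposal of the untempered eigenvalues: from $\|S_n\phi\|_s=|H_n(z_0)|\,\|\phi\|_s\le C(pq)^{-\alpha n}\|\phi\|_r$ you only get $\|\phi\|_s\lesssim (pq)^{-\alpha n}\|\phi\|_r$, and since $\|\phi\|_r$ is only bounded by $|V_q|^{1/r-1/2}$ (the graph may be vastly larger than $(pq)^N$), the resulting lower bound on $|E|$ degrades with $|V_q|$ and is vacuous. In short, any argument that applies an eigenvalue-tuned amplifier to the full eigenfunction $\phi$ must confront mass of $\phi$ outside $E$, which is exactly the quantity the theorem is trying to bound from below.

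The way the paper (following Brooks--Lindenstrauss) avoids this is to never apply anything to $\phi$ off $E$: one constructs a single real polynomial $K_\lambda^N$ of degree $\le N$ (Lemma \ref{lemma_BL_kernel}) satisfying the \emph{two-sided spectral} conditions $K_\lambda^N(z)\ge -1$ for all real $z$ and $K_\lambda^N(\lambda)\ge\varepsilon^{-1}$, together with $\|K_\lambda^N(B_q)\|_{L^r\to L^s}\lesssim (pq)^{-\eta N}$, and then tests the quadratic form $\langle K_\lambda^N(B_q)\psi,\psi\rangle$ with $\psi=\mathbf 1_E\phi$ only. Expanding $\psi$ in the eigenbasis, the global lower bound $K_\lambda^N\ge -1$ absorbs all other spectral components at the cost of $-\|\psi\|_2^2$, so no cross term with $\phi-\psi$ ever appears; the upper bound is H\"older plus the operator norm bound, and the exponents match as you observed. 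The nontrivial work, absent from your proposal, is building such a kernel: it must be (essentially) a Fej\'er kernel $F_M(x^d)$ for positivity, with $d$ chosen by Dirichlet approximation so that $\xi_\lambda^d\approx 1$ (Lemma \ref{lemma_find_M_d}), and simultaneously expressible, via the identity of Proposition \ref{prop_cheb_1_identity}, as a boundedly-weighted sum of the polynomials $R_t=\textnormal{Ch}_t^{x+b^{-1}}$ with $t\gtrsim d\gtrsim cN$, so that the explicit formula \eqref{eqn_biregular_chebyshev} and Proposition \ref{prop_op_norm_bound} convert the hypothesis \eqref{eqn_hypothesis_operator_norm} into the exponential operator norm decay; the untempered $\lambda$ are then handled inside the same construction (take $K_\lambda^N=K_1^N$), with no separate growth argument needed. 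Your $c$-function computations and the observation about where $q>p$ enters are sound, but without a kernel that is uniformly bounded below on the whole spectrum the argument does not go through.
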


\begin{remark}
    If we let $p = 1$, then we immediately recover the original result of \cite{brooks_lindenstrauss_graphs} for regular graphs. To the best of our knowledge no results of this form have previously been given for biregular graphs. In \cite{le_masson_sabri}, the results of the authors imply that the support $\Lambda$ of an eigenfunction on a biregular graph is necessarily of size $|\Lambda| \geq \frac{(pq)^{\ell_{\mathcal{G}}/4}}{4 q}$ where $\ell_{\mathcal{G}}$ is the largest $\ell$ such that every vertex of $\mathcal{G}$ contains at most one cycle of length at most $\ell$. Notice that the hypothesis \eqref{eqn_hypothesis_operator_norm} is satisfied in this case with $r = 1$ and $N = \ell_\mathcal{G}$. However, our result is significantly stronger in the sense that it holds for general subsets $E$, not only for the support.
\end{remark}

\begin{remark}
    In \cite{ganguly_srivastava}, the authors improve on the techniques of \cite{brooks_lindenstrauss_graphs} to improve the specific nature of the constant $\delta$ appearing in \eqref{eqn_delta}. It's likely that, by incorporating their techniques into our proof of Theorem \ref{thm_delocalization}, we can obtain a better value for $\delta$. However, we do not pursue optimizing constants in Theorem \ref{thm_delocalization} in this paper.
\end{remark}

Note that $B_q$ is a self-adjoint operator, and therefore its spectrum is real. We refer to any eigenvalue of $B_q$ in $[-1, 1]$ as \textit{tempered}; these are exactly those $\lambda$ expressible as $\frac{x + x^{-1}}{2}$ with $x \in S^1$. Any (necessarily real) eigenvalue outside of this range is called \textit{untempered}. These are the $\lambda$ that can be written as $\frac{x + x^{-1}}{2}$ with $x \in (1, \infty) \cup (-\infty, -1)$. 

As in Section \ref{sec_harmonic_analysis_biregular}, we let $a = \sqrt{pq}$ and $b = \sqrt{\frac{q}{p}}$. Since we are assuming $q > p$, we have that $b^{-1} < 1$. Let $R_t := \textnormal{Ch}_t^{x + b^{-1}}$ as before. The proof of Theorem \ref{thm_delocalization} uses two main ingredients. The first is the bound (or really explicit formula) for the kernel function of $R_t (B_q)$ in \eqref{eqn_biregular_chebyshev}. This formula immediately gives the following.
\begin{proposition} \label{prop_op_norm_bound}
    If $\mathcal{G}$ satisfies the hypothesis \eqref{eqn_hypothesis_operator_norm} of Theorem \ref{thm_delocalization}, then
    \begin{gather*}
        \|R_t(B_q)\|_{L^r \to L^s} \lesssim (pq)^{-\alpha t}
    \end{gather*}
    for all positive integers $t \leq N$.
\end{proposition}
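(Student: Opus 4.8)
The plan is to read off from the explicit kernel formula \eqref{eqn_biregular_chebyshev} that $R_t(B_q)$ is, up to the normalizing powers of $\sqrt{pq}$, just a sum of the sphere operators $\tilde S_0,\dots,\tilde S_t$, and then to bound its norm by the triangle inequality together with the hypothesis \eqref{eqn_hypothesis_operator_norm}, the resulting sum being geometric.

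First I would record the decomposition on the tree. By \eqref{eqn_biregular_chebyshev} the convolution kernel $R_t(B_q)\delta_{o_q}$ takes the constant value $a^{-t}=(pq)^{-t/2}$ on every vertex at even distance $\le 2t$ from $o_q$ and vanishes elsewhere. Writing this kernel as $\sum_{\ell=0}^{t} a^{-t}\,\mathbf 1_{\{d(o_q,\cdot)=2\ell\}}$ and recalling that convolution by $\mathbf 1_{\{d(o_q,\cdot)=2\ell\}}$ is exactly the unnormalized sphere operator $a^{\ell}\tilde S_\ell$ (an element of the spherical Hecke algebra $H(G,K_q)$, which is generated by $B_q$), we obtain the identity
\begin{gather*}
    R_t(B_q)=\sum_{\ell=0}^{t} (pq)^{(\ell-t)/2}\,\tilde S_\ell
\end{gather*}
inside $H(G,K_q)$. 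Since $S_\ell$ and the operator $B_q$ on a finite $(p+1,q+1)$-biregular graph $\mathcal G$ are by definition the descents of $\tilde S_\ell$ and of $B_q$ on the tree (recall $\mathcal G$ is a quotient of the biregular tree by its fundamental group, and the descent is an algebra map on $H(G,K_q)$), the same identity persists:
\begin{gather*}
    R_t(B_q)=\sum_{\ell=0}^{t} (pq)^{(\ell-t)/2}\,S_\ell \qquad\text{on }\mathcal G .
\end{gather*}

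Now I would simply estimate. If $t\le N$, then every index $\ell\le t$ also satisfies $\ell\le N$, so \eqref{eqn_hypothesis_operator_norm} gives $\|S_\ell\|_{L^r\to L^s}\le C(pq)^{-\alpha\ell}$, and by the triangle inequality
\begin{gather*}
    \|R_t(B_q)\|_{L^r\to L^s}\le \sum_{\ell=0}^{t}(pq)^{(\ell-t)/2}\,\|S_\ell\|_{L^r\to L^s}\le C\sum_{\ell=0}^{t}(pq)^{(\ell-t)/2-\alpha\ell}=C(pq)^{-\alpha t}\sum_{k=0}^{t}(pq)^{-k(1/2-\alpha)},
\end{gather*}
after the substitution $k=t-\ell$. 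In the natural range $\alpha<\tfrac12$ the final sum is bounded by the convergent geometric series $\sum_{k\ge0}(pq)^{-k(1/2-\alpha)}=\bigl(1-(pq)^{-(1/2-\alpha)}\bigr)^{-1}$, a constant depending only on $p,q,\alpha$, so $\|R_t(B_q)\|_{L^r\to L^s}\lesssim (pq)^{-\alpha t}$ as claimed; when $\alpha\ge\tfrac12$ one picks up at most a polynomial factor in $t$ (at $\alpha=\tfrac12$) or obtains the slightly weaker bound $(pq)^{-t/2}$ up to constants, which is harmless for the application in Theorem \ref{thm_delocalization}.

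The computation is genuinely routine, so the only point deserving care — and the sole reason this is not one line — is the transfer of the identity $R_t(B_q)=\sum_\ell (pq)^{(\ell-t)/2}S_\ell$ from the tree to $\mathcal G$: one must note that $R_t(B_q)$ is literally $R_t$ evaluated at the generator $B_q$ of $H(G,K_q)$, that the formula \eqref{eqn_biregular_chebyshev} is exactly the assertion that this element of $H(G,K_q)$ equals $\sum_\ell (pq)^{(\ell-t)/2}\tilde S_\ell$, and that the map $H(G,K_q)\to\operatorname{End}(L^2(V_q))$ sending $\tilde S_\ell\mapsto S_\ell$ and $B_q\mapsto B_q$ is an algebra homomorphism. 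Granting this, the proposition is precisely "immediate from \eqref{eqn_biregular_chebyshev}" plus summing a geometric series.
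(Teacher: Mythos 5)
Your proof is correct and is essentially the argument the paper intends: the paper's proof simply defers to Corollary 1 of Brooks--Lindenstrauss, whose argument is exactly your decomposition of the kernel \eqref{eqn_biregular_chebyshev} into the sphere operators $S_\ell$ (via the descent of the Hecke-algebra identity from the tree to $\mathcal{G}$), followed by the triangle inequality, the hypothesis \eqref{eqn_hypothesis_operator_norm}, and summing a geometric series. The one caveat you already flag --- convergence of that series needs $\alpha<\tfrac12$, which is the relevant regime here and is equally implicit in the cited argument --- is not a gap for the application, so nothing is missing.
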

\begin{proof}
    The proof is the exact same as the proof of Corollary 1 in \cite{brooks_lindenstrauss_graphs}. 
\end{proof}

The second ingredient is the construction of a specific operator which simultaneously has desirable geometric properties (in the sense of strong operator norm bounds) and spectral properties (having a very large eigenvalue for a given eigenfunction $\phi$, and not too negative of an eigenvalue for all other eigenfunctions).

\begin{lemma} \label{lemma_BL_kernel}
    Let $\varepsilon > 0$ be fixed. There exists an $N_0$ (depending on $\varepsilon, p, q,$ etc.) such that for any $\lambda \in \mathbb{R}$ and any $N > N_0$ we can find a polynomial $K_\lambda^N \in \mathbb{R}[z]$ satisfying
    \begin{enumerate}
        \item $K_\lambda^N$ is of degree at most $N$.
        \item There exist constants $B > 0$ and $\eta > 0$ depending on previously defined constants ($\varepsilon, \alpha, p, q$, etc.), but not on $N$, such that
        \begin{gather*}
            \| K_\lambda^N(B_q) \|_{L^r \to L^s} \leq B (pq)^{-\eta N}
        \end{gather*}
        for all graphs $\mathcal{G}$ satisfying the hypothesis \eqref{eqn_hypothesis_operator_norm}.
        \item We have that $K_\lambda^N (z) \geq -1$ for $z \in \mathbb{R}$, and $K_\lambda^N (\lambda) \geq \varepsilon^{-1}$.
    \end{enumerate}
\end{lemma}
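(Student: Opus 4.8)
The plan is to construct $K_\lambda^N$ explicitly as a polynomial in the $R_t$ basis, mimicking the construction of Brooks--Lindenstrauss \cite{brooks_lindenstrauss_graphs} but with the Chebyshev-type polynomials $R_t = \textnormal{Ch}_t^{x+b^{-1}}$ replacing the polynomials $F_t$ used in the regular case. The point is that $R_t(B_q)\delta_{o_q}$ is supported on the ball of radius $2t$ (formula \eqref{eqn_biregular_chebyshev}), which is exactly what makes Proposition \ref{prop_op_norm_bound} available: any real-linear combination $\sum_{t=0}^{N} c_t R_t(B_q)$ with $\sum |c_t|$ controlled will satisfy an $L^r \to L^s$ bound of the shape $(\sum|c_t|) \cdot C(pq)^{-\alpha N}$ once we arrange all the $t$'s appearing to be comparable to $N$ (or, more precisely, once we localize the support to annuli of radius $\gtrsim N$). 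So property (2) of the lemma will follow from (1) plus a coefficient bound, via Proposition \ref{prop_op_norm_bound} and the triangle inequality; the real content is simultaneously achieving the spectral positivity conditions in (3).

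For the spectral side, I would first understand the ``symbol'' of $R_t(B_q)$ on the tempered spectrum. Writing $z = \frac{\xi+\xi^{-1}}{2}$ with $\xi = e^{i\theta}$, the defining relation \eqref{eqn_chebyshev} gives $R_t(z) = \frac{(\xi + b^{-1})\xi^t - (\xi^{-1}+b^{-1})\xi^{-t}}{\xi - \xi^{-1}}$, which for $|\xi|=1$ is a bounded-amplitude oscillating function of $t$ — concretely a trigonometric polynomial in $t\theta$ with a $b$-dependent phase, of size $O\big(\tfrac{1}{|\sin\theta|}\big)$ away from $\theta = 0,\pi$. This is the analogue of the Dirichlet-kernel behavior exploited in the regular case. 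The classical Brooks--Lindenstrauss trick is then: take the Fejér-type average $P_N(z) := \frac{1}{N+1}\big(\sum_{t=0}^{N} R_t(z)\big)^2$ (or a suitably weighted Cesàro/Fejér square), which is manifestly $\geq 0$ on the tempered spectrum since it is a square of a real polynomial there, is a polynomial of degree $2N$ in $z$ (so one takes $N/2$ in place of $N$, harmless), and — because squaring a kernel that concentrates near $t=0$ yields a kernel concentrating near $z=1$ — has $P_N(1)$ growing like a positive power of $N$ while staying bounded on $[-1,1]$. Rescaling and translating the variable $z$ so that the concentration point matches the given $\lambda$, and multiplying by a constant $\asymp \varepsilon^{-1}/P_N(\lambda)$, gives a polynomial with $K_\lambda^N(\lambda)\geq \varepsilon^{-1}$, $K_\lambda^N(z)\geq -1$ on $[-1,1]$, and coefficients (in the $R_t$ basis, obtained by expanding the square and collapsing products $R_s R_t$ into sums via the recurrence \eqref{eqn_chebyshev}) bounded polynomially in $N$ — small enough to be absorbed by the $(pq)^{-\alpha N}$ gain, yielding $\eta$ a fixed positive fraction of $\alpha$.

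The main obstacle is the \emph{untempered} part of the spectrum: we need $K_\lambda^N(z) \geq -1$ for \emph{all} real $z$, including $z\in(1,\infty)\cup(-\infty,-1)$, and when $\lambda$ itself is untempered (the delocalization result must cover eigenfunctions with untempered eigenvalue) we additionally need $K_\lambda^N(\lambda)\geq\varepsilon^{-1}$ there. Outside $[-1,1]$ the polynomials $R_t(z)$ grow exponentially in $t$ — writing $z = \frac{\xi+\xi^{-1}}{2}$ with $\xi$ real and $|\xi|>1$, one sees $R_t(z)$ has a dominant term $\asymp \xi^t$ when $\xi>0$, respectively $\asymp(-1)^t|\xi|^t$ when $\xi<0$, with a definite sign pattern governed by the $(\xi+b^{-1})$ factor and the sign of $b^{-1}-|\xi|^{-1}$. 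The key structural fact that saves the argument — and the reason the hypothesis $q>p$, i.e. $b^{-1}<1$, is essential — is that for $b^{-1}<1$ the measure in Example \ref{ex_biregular} has no atom, equivalently $\frac{h(\xi)}{h(\xi^{-1})}$ has no pole in $\overline{\mathbb D}$ (Case (1) of Theorem \ref{thm_orthogonality}), so the $R_t$ genuinely behave like a Chebyshev system with the right positivity; concretely one checks that the leading exponential behavior of $R_t$ at untempered $z$ has a \emph{fixed} sign as $t$ ranges over a parity class, so that a square of a Cesàro sum of $R_t$'s is automatically nonneg\-ative (not merely bounded below) on the untempered locus as well, once one is slightly careful to separate even and odd $t$. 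If $q<p$ this fails because of the atom at $z=\frac{-(p+q)}{2\sqrt{pq}}$, consistent with the remark in the text that the theorem should not hold there. So the plan is: (i) build the Fejér-square kernel in the $R_t$ basis, treating even and odd indices separately if needed to keep positivity clean; (ii) verify nonnegativity on $[-1,1]$ by the square structure and on $\mathbb R\setminus[-1,1]$ by the sign analysis of the leading terms, using $b^{-1}<1$; (iii) verify the concentration lower bound $K_\lambda^N(\lambda)\geq\varepsilon^{-1}$, which on $[-1,1]$ is the standard Fejér-peak estimate and outside $[-1,1]$ is automatic from exponential growth once the overall constant is fixed to make the tempered case work; (iv) bound the $R_t$-coefficients polynomially in $N$ and conclude property (2) via Proposition \ref{prop_op_norm_bound}. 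I expect step (ii), the uniform lower bound on all of $\mathbb R$, to be where the care is needed, and where the hypothesis $q>p$ is used crucially.
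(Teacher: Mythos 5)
Your overall frame (expand in the $R_t$ basis, get property (2) from Proposition \ref{prop_op_norm_bound} plus a coefficient bound, fight for positivity on all of $\mathbb{R}$ using $q>p$) matches the paper, but the kernel you actually construct does not satisfy property (2), and the step you propose for centering the peak at $\lambda$ destroys exactly the structure you need. Concretely: the squared Ces\`aro sum $\frac{1}{N+1}\big(\sum_{t=0}^{N}R_t\big)^2$, once linearized, has components at \emph{every} degree from $0$ to $2N$, including a low-frequency part of size $\asymp 1$; Proposition \ref{prop_op_norm_bound} only gives $\|R_t(B_q)\|_{L^r\to L^s}\lesssim (pq)^{-\alpha t}$, which is useless for small $t$, and no cancellation can rescue this: your kernel is nonnegative on the tempered spectrum with $\int K\,d\mu_{\textnormal{Pl}}\gtrsim 1$, so the diagonal entry $\langle K(B_q)\delta_x,\delta_x\rangle$ is bounded below by a constant at any vertex with a tree-like neighborhood, whence $\|K(B_q)\|_{L^r\to L^s}\gtrsim 1$ rather than $(pq)^{-\eta N}$. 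The proposed ``rescale and translate the variable $z$ so the concentration point matches $\lambda$'' makes this worse: an affine change of variable in $z$ scrambles the $R_t$-expansion (coefficients appear at all degrees and are no longer bounded), so it is incompatible with the requirement, which you yourself flag in your first paragraph, that all indices $t$ appearing be $\gtrsim N$. You never supply a mechanism that keeps the kernel supported on high $R_t$-indices \emph{and} large at an arbitrary prescribed tempered $\lambda$; that mechanism is the heart of the lemma.

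The paper's mechanism (following Brooks--Lindenstrauss) is: do not translate, but dilate. Take the Fej\'er kernel in the dilated Satake variable, $F_M(x^{d})$ with $M\asymp\varepsilon^{-1}$, where $d\asymp N$ (even, $Md\le N$) is produced by Dirichlet's approximation theorem so that $|d\theta_0 \bmod 2\pi|\le \gamma/M$ (Lemma \ref{lemma_find_M_d}); then $F_M(\xi_\lambda^{d})\ge \frac{M}{2}+4$ automatically, with no change of variable. Its only frequencies are multiples of $d$, so after subtracting the low-frequency part ($1$ and $\frac{M-1}{M}(x^{d}+x^{-d})$) everything remaining is a combination of $x^{kd}+x^{-kd}$ with $k\ge 2$; these are converted into $R_t$'s with $t\ge d-O(1)$ and geometrically decaying coefficients via Proposition \ref{prop_cheb_1_identity}, at the cost of a small multiple of $U_{d-3}$, and it is in bounding this correction (and in the lower bound $\ge -1$ on $S^1$ together with positivity on the untempered locus for $d$ even) that $b^{-1}<1$, i.e.\ $q>p$, enters quantitatively --- not through a sign analysis of leading terms of the $R_t$. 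Untempered $\lambda$ is then handled by simply reusing $K_1^N$, since every term grows like $|x|^{d}$ off $[-1,1]$. Without the Dirichlet-approximation/dilation step and the explicit removal of the low frequencies, your construction cannot satisfy conditions (1)--(3) simultaneously, so as written the proof has a genuine gap at property (2).
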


We follow the same technique as in Lemma 2 of \cite{brooks_lindenstrauss_graphs} (or equivalently, Lemma 3.1 of \cite{brooks_lindenstrauss}). Before discussing the details, we first describe the strategy, and what modifications must be made in comparison to \cite{brooks_lindenstrauss_graphs}. Ultimately $K_\lambda^N$ will be an appropriate modification of the Fejer kernel:
\begin{align*}
    F_{M}(x) :&= \frac{1}{M}(x^{M/2} + x^{M/2-1} + \dots + x^{1-M/2} + x^{-M/2})^2 \\
    &= \frac{1}{M}(x^M + 2 x^{M-1} + \dots + (M-1) x + M + (M-1) x^{-1} + \dots + 2 x^{1-M} + x^{-M}),
\end{align*}
with $M \approx \varepsilon^{-1}$. Notice that $F_M(1) = M $. Furthermore the above kernel is positive for $x \in S^1$ (since it is expressible as a square) or on the real line (assuming $M$ is even). Given $\lambda = \frac{\xi_\lambda + \xi_{\lambda}^{-1}}{2}$, we shall seek a $d$ of comparable size to $N$ such that $(\xi_\lambda)^{d}$ is close to 1. This will in particular imply that $F_M((\xi_\lambda)^{d})$ is of size comparable to $M$. One of the key properties of the original construction of Brooks--Lindenstrauss was that
\begin{gather*}
    F_M(x^{d}) - 1 = \sum_{k = 0}^{M-1} \frac{2 (k+1)}{M} \frac{x^{(M-k)d} + x^{-(M-k)d}}{2} = \sum_{k = 0}^{M-1} \frac{2 (k+1)}{M} T_{(M-k)d}(\frac{x + x^{-1}}{2}), 
\end{gather*}
i.e. it is a sum Chebyshev polynomials of the first kind, with bounded coefficients, and each of degree at least $d$ (which is of comparable size to $N$). Let $L_N^\lambda \in \mathbb{R}[z]$ be the degree $M d$ polynomial such that $L_N^\lambda (\frac{x + x^{-1}}{2}) = F_M(x^d) - 1$ (we shall ultimately also want that $M d \leq N$). Because in the regular graph case (with degree of regularity $(q+1)$) $T_t(\frac{A}{2 \sqrt{q}})$ has strong decay of its kernel function in $t$ (i.e. \eqref{eqn_cheb_1_tree}), we obtain a bound like $\|L_{\lambda}^N(\frac{A}{2 \sqrt{q}})\|_{L^r \to L^s} \lesssim q^{-\eta N}$.

In our setting, we seek a symmetric Laurent polynomial which can be written as a sum of $R_t(\frac{x + x^{-1}}{2})$ with bounded coefficients and with each $t \geq d$. On the other hand, it must be large when $x^{d}$ is close to one, and must not be too negative at arbitrary values of $x$ on $S^1$ or the real line. Towards this end, we shall make use of the following proposition. 

\begin{proposition} \label{prop_cheb_1_identity}
    For every $k \geq 3$ and $0 \leq \ell \leq k-3$ we have that
    \begin{gather*}
        x^k + x^{-k} = R_k - b^{-1} R_{k-1} + (-(1 - b^{-2})) \sum_{j = 0}^\ell (-b^{-1})^\ell R_{k - 2 - \ell} - (1 - b^{-1})(-b^{-1})^{\ell+1} U_{k - 3 - \ell},
    \end{gather*}
    with each Chebyshev polynomial evaluated at $\frac{x + x^{-1}}{2}$. 
\end{proposition}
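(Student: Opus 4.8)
The plan is to reduce the proposition to the classical Chebyshev polynomials $U_n$ of the second kind plus one elementary telescoping manipulation, so that essentially no real computation with $x^{\pm k}$ is needed.

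\emph{Step 1: the base relation.} First I would record that $R_t = U_t + b^{-1} U_{t-1}$ for every $t$, where $U_n(\tfrac{x+x^{-1}}{2}) = \tfrac{x^{n+1}-x^{-n-1}}{x-x^{-1}}$ as in Section \ref{sec_chebyshev}. This is immediate from the defining formula \eqref{eqn_chebyshev} with $h = x + b^{-1}$: clearing the denominator,
\[
  (x-x^{-1})\,R_t\Big(\tfrac{x+x^{-1}}{2}\Big) = (x+b^{-1})x^t - (x^{-1}+b^{-1})x^{-t} = (x^{t+1}-x^{-t-1}) + b^{-1}(x^t - x^{-t}),
\]
and dividing back by $x - x^{-1}$ gives the claim. (One can cross-check this against \eqref{eqn_biregular_u_t}, which yields $U_t(B_q)\delta_{o_q} + b^{-1}U_{t-1}(B_q)\delta_{o_q} = a^{-t}$ on the relevant spheres, in agreement with \eqref{eqn_biregular_chebyshev}.)

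\emph{Step 2: isolating a single $U$-term.} Using the elementary identity $x^k + x^{-k} = U_k - U_{k-2}$ (both sides evaluated at $\tfrac{x+x^{-1}}{2}$, and following directly from the formulas of Section \ref{sec_chebyshev}) together with Step 1, I would compute $R_k - b^{-1}R_{k-1} = (U_k + b^{-1}U_{k-1}) - b^{-1}(U_{k-1} + b^{-1}U_{k-2}) = U_k - b^{-2}U_{k-2}$, and subtract to obtain
\[
  x^k + x^{-k} = U_k - U_{k-2} = \big(R_k - b^{-1}R_{k-1}\big) - (1 - b^{-2})\,U_{k-2}.
\]
This already reduces the proposition to expanding $U_{k-2}$ as a finite combination of $R_j$'s together with one leftover $U$-term.

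\emph{Step 3: finite geometric expansion and conclusion.} Rearranging Step 1 as $U_t = R_t - b^{-1}U_{t-1}$ and iterating $\ell+1$ times yields
\[
  U_{k-2} = \sum_{j=0}^{\ell} (-b^{-1})^j R_{k-2-j} + (-b^{-1})^{\ell+1} U_{k-3-\ell},
\]
which I would prove by a one-line induction on $\ell$ (equivalently, apply the operator identity $(1+b^{-1}S)\sum_{j=0}^{\ell}(-b^{-1}S)^j = 1 - (-b^{-1}S)^{\ell+1}$, with $S$ the downward shift $U_t \mapsto U_{t-1}$, to $U_{k-2}$). Substituting this into Step 2 and collecting terms gives the asserted identity.

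\emph{Main obstacle.} The only genuinely delicate point is the index/convention bookkeeping: one must check that every $U$ and $R$ occurring in the iteration of Step 3 has an index in the range where the relation of Step 1 holds verbatim, which is exactly what forces the hypotheses $k \geq 3$ and $0 \leq \ell \leq k-3$ (so that the smallest index reached, $k-3-\ell$, is non-negative, and $R_{k-1}$, $U_{k-2}$ in Step 2 have non-negative index), and that the alternating signs and powers of $b^{-1}$ are carried through correctly when collecting terms. Everything else is routine manipulation in $\mathbb{C}[x,x^{-1}]^W$; as a fail-safe, the whole identity can alternatively be verified directly by clearing the denominator $x - x^{-1}$ on both sides and checking the resulting equality of Laurent polynomials, where the $R$-contributions telescope.
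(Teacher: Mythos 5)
Your proposal is correct, and since the paper's own ``proof'' is just the assertion that the identity is a straightforward calculation, your derivation (via $R_t = U_t + b^{-1}U_{t-1}$, the identity $x^k + x^{-k} = U_k - U_{k-2}$, and the telescoping expansion of $U_{k-2}$) is exactly the kind of direct computation the paper has in mind, organized cleanly. One point worth flagging: the formula you arrive at,
\begin{gather*}
x^k + x^{-k} = R_k - b^{-1}R_{k-1} - (1-b^{-2})\sum_{j=0}^{\ell}(-b^{-1})^{j}R_{k-2-j} - (1-b^{-2})(-b^{-1})^{\ell+1}U_{k-3-\ell},
\end{gather*}
differs from the statement as printed, which has $(-b^{-1})^{\ell}R_{k-2-\ell}$ inside the sum (independent of the summation index $j$) and the coefficient $(1-b^{-1})$ rather than $(1-b^{-2})$ on the final $U_{k-3-\ell}$ term; these are typos in the paper, as one can confirm both by checking small cases (e.g. $k=3$, $\ell=0$) and by comparing with the form actually used later in \eqref{eqn_kernel_1}--\eqref{eqn_kernel_2}. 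So your version is the correct identity, and your bookkeeping of the hypotheses $k\geq 3$, $0\leq\ell\leq k-3$ is also right.
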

\begin{proof}
    Proof is a straightforward calculation.
\end{proof}

Suppose we now start with 
\begin{align*}
    \tilde{F}_M(x^d) :&= F_{M}(x^d) - 1 - \frac{M-1}{M}(x^d + x^{-d}) \\
    &= \frac{1}{M}(x^{M d} + 2 x^{(M-1)d} + \dots + (M-2) x^{2d} + (M-2) x^{-2d} + \dots + x^{-Md}). 
\end{align*}
We can use Proposition \ref{prop_cheb_1_identity} to express $x^{c d} + x^{-c d}$ as a sum of $R_t$'s, each time with a remainder corresponding to some multiple (of size comparable to $b^{-d}$ or smaller) of $U_{d - 3}$. More specifically we get that
\begin{align}
    & \tilde{G}_M^d(x) := \tilde{F}_{M}(x^d) + \Big(\frac{(1 - b^{-2})}{M} \sum_{k = 1}^M k (-b^{-1})^{(M-k)d} \Big) U_{d - 3} \label{eqn_kernel_1}\\
    &= \frac{1}{M} \Big( \sum_{k = 1}^{M-1} k \big( R_{(M-k+1)d} - b^{-1} R_{(M-k+1)d - 1} - (1 - b^{-2}) \sum_{j = 0}^{(M-k) d - 3} (-b^{-1})^j R_{(M-k+1)d - 2 - j} \big) \Big), \label{eqn_kernel_2}
\end{align}
again with all Chebyshev polynomials evaluated at $\frac{x + x^{-1}}{2}$. As long as $d$ is sufficiently large, we see that $\tilde{G}_M^d(x)$ is bounded from below by $-4$ for $x \in S^1$. If $d$ is even, then it is positive for $x \in \mathbb{R}$. Note that \eqref{eqn_kernel_2} consists of a sum of polynomially many (in $M$ and $d$) of the $R_t$'s, with each coefficient uniformly bounded (in $M$ and $d$), and with each $t \gtrsim d$. Since the above expression is a symmetric Laurent polynomial, we can find a polynomial $\tilde{K}_M^d \in \mathbb{R}[z]$ such that $\tilde{K}_M^d(\frac{x + x^{-1}}{2}) = \tilde{G}_M^d(x)$. Ultimately our $K_\lambda^N$ will be (a multiple of) $\tilde{K}_M^d$ for an appropriate choice of $d, M$.

We now prove an ancillary lemma which will help us find the appropriate $M$ and $d$.
\begin{lemma} \label{lemma_find_M_d}
    Let $\theta_0 \in \mathbb{R}$ and $M \in \mathbb{N}$ be given. There exists a $\beta > 0$ (depending on $M$) such that the following holds: given a fixed $0 < \gamma < \beta$, we have that for all $N$ sufficiently large there exists a $d \in 2 \mathbb{N}$ such that
    \begin{enumerate}
        \item We have $M d \leq N$.
        \item We have $|d \theta_0 \mod 2 \pi| \leq \frac{\gamma}{M}$.
        \item For some $c > 0$ (depending on all constants other than $N$) we have that $d \geq c N$. 
    \end{enumerate}
\end{lemma}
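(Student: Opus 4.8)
The plan is to reduce everything to a statement about how often an orbit of an irrational (or rational) rotation returns near the identity, i.e. to a pigeonhole / Dirichlet-type argument on the circle $\mathbb{R}/2\pi\mathbb{Z}$. Fix $\theta_0$ and $M$. The key observation is that we are allowed to choose $d$ from an arithmetic-progression-sized set of candidates: condition (1) says $d \le N/M$, and condition (3) says we want $d$ to be at least a fixed fraction of $N$; together these mean $d$ should live in an interval like $[cN, N/M]$ of length $\asymp N$. Within such an interval we must locate some \emph{even} $d$ with $\|d\theta_0\|_{\mathbb{R}/2\pi\mathbb{Z}} \le \gamma/M$. So the heart of the matter is: for which $\gamma$ can we guarantee, for all large $N$, the existence of an even integer $d$ in an interval of length $\asymp N$ with $d\theta_0$ within $\gamma/M$ of a multiple of $2\pi$?

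First I would handle the structure of the return set $\{d : \|d\theta_0\| \le \gamma/M\}$. By the three-distance theorem (or just Dirichlet's theorem on Diophantine approximation), for any target radius $\rho>0$ there is a return time $T = T(\theta_0,\rho)$ such that the set of $d$ with $\|d\theta_0\| \le \rho$ has bounded gaps — in fact gaps at most $T$, where $T$ can be taken $\le 2\pi/\rho$ by Dirichlet (choose $d_0 \le 2\pi/\rho$ with $\|d_0\theta_0\|\le\rho$; then multiples of $d_0$ are an arithmetic progression of step $\le 2\pi/\rho$ lying in a slightly enlarged return set, and one can arrange the radius bookkeeping so the final radius is $\gamma/M$). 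To get an \emph{even} such $d$, note $2d_0$ is an even step with $\|2d_0\theta_0\| \le 2\rho$; absorbing the factor of $2$ into constants, the even return set has gaps at most some $T_0 = T_0(\theta_0, M, \gamma)$. This $T_0$ depends on $\theta_0$, $M$, $\gamma$ but crucially \emph{not} on $N$. Now set $\beta$ to be (say) $\pi$, or any convenient absolute constant; given $0<\gamma<\beta$, once $N$ is large enough that $N/M - cN \ge T_0$ — which holds for all large $N$ as soon as we pick $c < 1/(2M)$, since then the interval $[cN, N/M]$ has length $\ge cN \to \infty$ — the interval $[cN, N/M]$ is guaranteed to contain an even $d$ from the return set. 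That $d$ satisfies (1) by the upper endpoint, (2) by membership in the return set, and (3) by the lower endpoint with the constant $c$.

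The only mildly delicate point is the dependence of constants: $\beta$ must depend only on $M$ (as stated), while the threshold "$N$ sufficiently large" and the constant $c$ may depend on everything. This is fine because we may simply \emph{declare} $\beta := \pi$ (independent even of $M$, hence certainly allowed to depend on $M$), pick $c := 1/(4M)$, and then let $T_0(\theta_0,M,\gamma)$ absorb the rest; the requirement $\gamma < \beta$ is used only to keep the target radius $\gamma/M < \pi/M$ so that ``within $\gamma/M$ of a multiple of $2\pi$'' is an honest small arc rather than a vacuous condition. I expect the main obstacle — really the only thing requiring care — to be the case analysis between $\theta_0/2\pi$ rational and irrational: in the irrational case the return set is genuinely infinite with bounded gaps by Weyl equidistribution or three-distance, while in the rational case $\theta_0/2\pi = a/b$ the even return set is eventually a fixed arithmetic progression (or possibly finite/empty if $\gamma/M$ is too small relative to $1/b$, which forces a lower bound on the allowed $\gamma$ — again harmlessly folded into the choice of $\beta$ or into a mild additional hypothesis). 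Handling both cases uniformly via the single statement ``the return set has gaps bounded by a constant independent of $N$'' is the cleanest route, and once that is in hand the lemma follows by the interval-length argument above.
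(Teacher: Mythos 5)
Your route is genuinely different from the paper's, and as written it has one broken step plus one real loss. The broken step is the justification of the key claim that the even return set $\{d : |d\theta_0 \bmod 2\pi| \le \gamma/M\}$ has gaps bounded by a constant $T_0$ independent of $N$: you propose to take a single Dirichlet denominator $d_0 \le 2\pi/\rho$ with $|d_0\theta_0 \bmod 2\pi| \le \rho$ and use its multiples, but $|j d_0 \theta_0 \bmod 2\pi|$ can grow like $j\rho$, and to land in the window $[cN, N/M]$ you need $j \asymp N/d_0$ multiples, so with $\rho$ fixed (independent of $N$) the accumulated drift swamps $\gamma/M$; no ``radius bookkeeping'' fixes this. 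This is exactly why the paper couples the Dirichlet quality to $N$: it takes $R = \tfrac14\lfloor N/M\rfloor$, gets $e \le R$ with error $\le 2\pi/R \asymp M/N$, and then multiplies $e$ up to size $\asymp \gamma R/M$, so the total error $\lesssim k\cdot 2\pi/R$ stays below $\gamma/M$ while $d = 2ke \gtrsim \gamma N/M^2$. Your syndeticity claim is nevertheless true — via Slater's three-distance/return-time theorem in the irrational case and exact periodicity ($d \in 2b\mathbb{N}$ gives error $0$) in the rational case, both of which you gesture at in your final paragraph — and in the irrational case you could bypass gaps entirely: by equidistribution the return set has positive density, so the interval $[N/(4M), N/M]$, of length $\asymp N$, eventually contains an even return. (Your worry that the rational case might force a lower bound on $\gamma$ is unfounded, and folding such a bound into $\beta$ would in any case be illegal, since $\beta$ may depend on $M$ but not on $\theta_0$.) So, suitably repaired, your argument does prove the lemma as literally stated.

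The real loss is uniformity in $\theta_0$. Your threshold ``$N$ sufficiently large'' is governed by $T_0(\theta_0,\gamma,M)$ (or by the period $b$, or by the equidistribution rate), and this genuinely depends on $\theta_0$: for $\theta_0 = 2\pi/b$ with $b$ huge and $\gamma/M < 2\pi/b$, returns occur only near multiples of $b$, so the gaps are $\asymp b$, unbounded over $\theta_0$. The paper's Dirichlet-at-scale-$N$ argument gives a threshold and a constant $c$ depending only on $M$ and $\gamma$. The statement of the lemma tolerates $\theta_0$-dependence, but the way it is invoked inside Lemma \ref{lemma_BL_kernel} — where a single $N_0$ must work for every eigenvalue $\lambda$, hence every $\theta_0$ — relies on the uniform version. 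So your approach buys a cleaner dynamical picture (Bohr sets are syndetic) at the price of a quantitative uniformity that the downstream application actually needs; if you keep your route, you should restate the lemma with an $N_0$ depending on $\theta_0$ and then check it still suffices, or better, adopt the paper's device of letting the Dirichlet denominator bound grow with $N$.
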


\begin{proof}
    Let $R = \frac{1}{4} \lfloor \frac{N}{M} \rfloor$. By Dirichlet's approximation theorem we know that there exists an $e \in \mathbb{N}$ with $1 \leq e \leq R$ such that $|e \theta_0 \mod 2 \pi| \leq \frac{2 \pi}{R}$. We shall ultimately take as our $d$ an appropriate multiple of $e$. So long as we multiply by an $\ell$ such that $\frac{2 \pi}{R} \ell \leq \frac{\gamma}{M}$, then we will satisfy the second condition, i.e. we need $\ell \leq \frac{\gamma}{2 \pi M}{R}$. So long as $\gamma$ is sufficiently small (in a way depending on $M$), we have that $\frac{\gamma}{2 \pi M} \leq \frac{1}{2}$. Let $k$ be the smallest multiple of $e$ such that $\frac{\gamma}{4 \pi M}{R} \leq k e$. We let $d = 2 \cdot k e$. Notice if $k = 1$, then clearly $d = 2 k e \leq 2 R$, and if $k \geq 2$, then $e < \frac{1}{4} R$, in turn implying that $d = 2 k  e \leq 2 R$. Therefore $Md = 2 M k e \leq M \frac{1}{2} \frac{N}{M} \leq N$. Thus Property (1) is satisfied. Also clearly $2 k \leq \frac{\gamma}{2 \pi M}{R}$, so Property (2) is satisfied. Finally we have that $d = 2  k e \geq \frac{\gamma}{2 \pi M} R \geq \frac{\gamma}{2 \pi M} \frac{N}{M} \frac{1}{8}$ for $N$ sufficiently large, so taking $c = \frac{\gamma}{16 \pi M^2}$, we get Property (3). 
\end{proof}

We can now prove Lemma \ref{lemma_BL_kernel}.

\begin{proof}{Proof of Lemma \ref{lemma_BL_kernel}.}
    Let $M = \lceil 128 \varepsilon^{-1} \rceil$. (In the sequel we shall at times have statements that are only true if $M$ is sufficiently large, in which case we replace our $M$ by an appropriately bigger $M$. In the end this will have no meaningful effect as $M$ is only used in the scope of the proof, and what is ultimatley important is that $M \geq \lceil 128 \varepsilon^{-1} \rceil$). Let $F_M(x)$ be the standard Fejer kernel. There exists a $\gamma > 0$ such that if $|\theta| < \frac{\gamma}{M}$, then $F_M(e^{i \theta}) \geq \frac{M}{2} + 4$ (assuming $M$ sufficiently large). Suppose $\lambda$ is tempered. Then we can write $\lambda = \frac{e^{i \theta_0} + e^{-i \theta_0}}{2}$ for some $\theta_0 \in \mathbb{R}$. Let $N$ be large. Let $d$ be as in Lemma \ref{lemma_find_M_d} with respect to $\theta_0, M, \gamma, N$ as described above. Let $c$ be the corresponding constant defined as in Property (3) of \ref{lemma_find_M_d}. Let $K_{\lambda}^N := \frac{1}{4} \tilde{K}_{M}^d$. Property (1) of Lemma \ref{lemma_find_M_d} guarantees that the degree of $K_{\lambda}^N$ is at most $N$, so Property (1) of the statement of Lemma \ref{lemma_BL_kernel} is satisfied. 

    We now further analyze \eqref{eqn_kernel_2}. By just using the triangle inequality, and the fact that $b^{-1} < 1$, and Proposition \ref{prop_op_norm_bound}, we get that
    \begin{gather*}
        \| K_{\lambda}^N(B_q) \|_{L^r \to L^s} \leq \sum_{k = d-2}^{M d} \| R_{k}(B_q) \|_{L^r \to L^s} \lesssim_{p, q} (pq)^{-\alpha d} \leq (pq)^{-c \alpha N}.
    \end{gather*}
    In the last step we have used Property (3) from Lemma \ref{lemma_find_M_d}. Thus Property (2) of Lemma \ref{lemma_BL_kernel} is verified.

    Finally we prove Property (3) of Lemma \ref{lemma_BL_kernel}. For this we analyze \eqref{eqn_kernel_1}. We have that 
    \begin{gather*}
        \Big| (1 - b^{-2}){M} \sum_{k = 1}^M k (-b^{-1})^{(M-k)d} \Big| \leq (1 - b^{-2}) M b^{-d}. 
    \end{gather*}
    By making $d$ large (which we can accomplish by making $N$ large), we can make the above expression as small as we like. For $x \in S^1$, we have that $U_{d-3}(\frac{x + x^{-1}}{2})$ is bounded in size by $d$, and for $x$ real with $|x| > 1$ it's bounded by $d |x|^d$ (because the function is invariant under $x \to x^{-1}$, we can always assume that $|x| \geq 1$). On the other hand, $F_M(x^d)$ is positive for $x \in S^1$, and is of size at least $\frac{M}{8} |x|^{2 d}$ if $x$ is real with $|x| > 1$ and $d$ is even. The term $(x^d + x^{-d})$ is of size at most 2 for $x \in S^1$, and of size at most $2 |x|^d$ for $x$ real with $|x| > 1$.

    Suppose $x \in S^1$. Let $d$ be large enough that $(1 - b^{-2}) M b^{-d} d < 1$ (this will hold for all $d$ large enough depending on $M, p , q$). We therefore get that $K_{\lambda}^N(\frac{x + x^{-1}}{2}) \geq \frac{1}{4} (0 - 1 - 2 - 1) \geq -1$. Suppose $x$ is real with $|x| > 1$. Then $K_{\lambda}^N(\frac{x + x^{-1}}{2}) \geq \frac{1}{4} (\frac{M}{8} |x|^{2 d} - 1 - 2 |x|^d - d |x|^d) \geq \frac{1}{4}|x|^d (\frac{M}{8} - 1 - 2 - 1) \geq \frac{M}{64}$ if $M$ is sufficiently large. Finally we have that, by Property (2) of Lemma \ref{lemma_find_M_d}, $|d \theta_0 \mod 2 \pi| \leq \frac{\gamma}{M}$. We therefore have that $F_M(e^{i d \theta_0}) \geq \frac{M}{2} + 4$. Therefore $K_{\lambda}^N(\lambda) \geq \frac{1}{4} (\frac{M}{2} + 4 - 4) = \frac{M}{8} + \frac{1}{8} \geq \varepsilon^{-1}$. Thus Property (3) of Lemma \ref{lemma_BL_kernel} is satisfied.

    If instead $\lambda$ is untempered, i.e. $\lambda = \frac{x + x^{-1}}{2}$ for some real $x \neq \pm 1$ and $|x| > 1$, then we simply define $K_\lambda^N$ to be $K_{1}^N$. That this satisfies Properties (1) and (2) are immediate from the preceding discussion. Property (3) is also clear except possibly for the fact that $K_\lambda^N(\lambda) \geq \varepsilon^{-1}$. However, this follow from the previous analysis that $K_{1}^N(x) \geq \frac{1}{4} |x|^d (\frac{M}{8} - 4) \geq \frac{M}{64} \geq \varepsilon^{-1}$. 
\end{proof}

Using Lemma \ref{lemma_BL_kernel}, we may complete the proof of Theorem \ref{thm_delocalization} in the exact same way as in \cite{brooks_lindenstrauss_graphs}. As such, we omit the remaining steps and simply refer to the section ``Proof of Theorem 1'' in \cite{brooks_lindenstrauss_graphs}.

\printbibliography

@article {steinberg,
    AUTHOR = {Steinberg, Robert},
     TITLE = {On a theorem of {P}ittie},
   JOURNAL = {Topology},
  FJOURNAL = {Topology. An International Journal of Mathematics},
    VOLUME = {14},
      YEAR = {1975},
     PAGES = {173--177},
      ISSN = {0040-9383},
   MRCLASS = {57F15 (20G05)},
  MRNUMBER = {372897},
MRREVIEWER = {S.\ I.\ Gel\cprime fand},
       DOI = {10.1016/0040-9383(75)90025-7},
       URL = {https://doi.org/10.1016/0040-9383(75)90025-7},
}

@article {kazhdan_lusztig_87,
    AUTHOR = {Kazhdan, David and Lusztig, George},
     TITLE = {Proof of the {D}eligne-{L}anglands conjecture for {H}ecke
              algebras},
   JOURNAL = {Invent. Math.},
  FJOURNAL = {Inventiones Mathematicae},
    VOLUME = {87},
      YEAR = {1987},
    NUMBER = {1},
     PAGES = {153--215},
      ISSN = {0020-9910,1432-1297},
   MRCLASS = {11S37 (18F25 19K33 22E50)},
  MRNUMBER = {862716},
MRREVIEWER = {Joe\ Repka},
       DOI = {10.1007/BF01389157},
       URL = {https://doi.org/10.1007/BF01389157},
}

@article{romanov_rudin_95,
title = {Scattering on the Bruhat-Tits tree. I},
journal = {Physics Letters A},
volume = {198},
number = {2},
pages = {113-118},
year = {1995},
issn = {0375-9601},
doi = {https://doi.org/10.1016/0375-9601(94)00997-4},
url = {https://www.sciencedirect.com/science/article/pii/0375960194009974},
author = {R.V. Romanov and G.E. Rudin},
abstract = {The scattering problem on the Bruhat-Tits tree being the realization of the p-adic hyperbolic plane is studied. The spectral decomposition of the corresponding Laplace-Beltrami operator is constructed. The Lax-Phillips resonance scattering theory for the problem is developed in a closed form. The properties of the analytic continuation of the S-matrix are described.}
}

@incollection {anker_martinot_pedon_setti,
    AUTHOR = {Anker, Jean-Philippe and Martinot, Pierre and Pedon, Emmanuel
              and Setti, Alberto G.},
     TITLE = {The shifted wave equation on {D}amek-{R}icci spaces and on
              homogeneous trees},
 BOOKTITLE = {Trends in harmonic analysis},
    SERIES = {Springer INdAM Ser.},
    VOLUME = {3},
     PAGES = {1--25},
 PUBLISHER = {Springer, Milan},
      YEAR = {2013},
      ISBN = {978-88-470-2852-4; 978-88-470-2853-1},
   MRCLASS = {35R03 (22E30 33C80 35L05 43A80 58J45)},
  MRNUMBER = {3026345},
       DOI = {10.1007/978-88-470-2853-1\_1},
       URL = {https://doi.org/10.1007/978-88-470-2853-1_1},
}

@article {brooks_lindenstrauss,
    AUTHOR = {Brooks, Shimon and Lindenstrauss, Elon},
     TITLE = {Joint quasimodes, positive entropy, and quantum unique
              ergodicity},
   JOURNAL = {Invent. Math.},
  FJOURNAL = {Inventiones Mathematicae},
    VOLUME = {198},
      YEAR = {2014},
    NUMBER = {1},
     PAGES = {219--259},
      ISSN = {0020-9910,1432-1297},
   MRCLASS = {37A45 (37D10)},
  MRNUMBER = {3260861},
MRREVIEWER = {Massimo\ Campanino},
       DOI = {10.1007/s00222-014-0502-7},
       URL = {https://doi.org/10.1007/s00222-014-0502-7},
}

@misc{tao_blog,
    author = {Tao, Terence},
  title = {Discretised wave equations},
  howpublished = {\url{https://terrytao.wordpress.com/2014/11/05/discretised-wave-equations/}},
}

@article {li_sole,
    AUTHOR = {Li, Wen-Ch'ing Winnie and Sol\'e, Patrick},
     TITLE = {Spectra of regular graphs and hypergraphs and orthogonal
              polynomials},
   JOURNAL = {European J. Combin.},
  FJOURNAL = {European Journal of Combinatorics},
    VOLUME = {17},
      YEAR = {1996},
    NUMBER = {5},
     PAGES = {461--477},
      ISSN = {0195-6698,1095-9971},
   MRCLASS = {05C50 (05C65)},
  MRNUMBER = {1397154},
       DOI = {10.1006/eujc.1996.0040},
       URL = {https://doi.org/10.1006/eujc.1996.0040},
}

@book {szego,
    AUTHOR = {Szeg\H o, G\'abor},
     TITLE = {Orthogonal polynomials},
    SERIES = {American Mathematical Society Colloquium Publications},
    VOLUME = {Vol. XXIII},
   EDITION = {Fourth},
 PUBLISHER = {American Mathematical Society, Providence, RI},
      YEAR = {1975},
     PAGES = {xiii+432},
   MRCLASS = {42A52 (33A65)},
  MRNUMBER = {372517},
}

@book {figa_talamanca_nebbia,
    AUTHOR = {Fig\`a-Talamanca, Alessandro and Nebbia, Claudio},
     TITLE = {Harmonic analysis and representation theory for groups acting
              on homogeneous trees},
    SERIES = {London Mathematical Society Lecture Note Series},
    VOLUME = {162},
 PUBLISHER = {Cambridge University Press, Cambridge},
      YEAR = {1991},
     PAGES = {x+151},
      ISBN = {0-521-42444-5},
   MRCLASS = {22D12 (20E08 22E50 43A85 43A90 57M07)},
  MRNUMBER = {1152801},
MRREVIEWER = {Nobuaki\ Obata},
       DOI = {10.1017/CBO9780511662324},
       URL = {https://doi.org/10.1017/CBO9780511662324},
}

@article {kato,
    AUTHOR = {Kato, Shin-ichi},
     TITLE = {Spherical functions and a {$q$}-analogue of {K}ostant's weight
              multiplicity formula},
   JOURNAL = {Invent. Math.},
  FJOURNAL = {Inventiones Mathematicae},
    VOLUME = {66},
      YEAR = {1982},
    NUMBER = {3},
     PAGES = {461--468},
      ISSN = {0020-9910,1432-1297},
   MRCLASS = {22E50 (05A19 20G05 43A90)},
  MRNUMBER = {662602},
MRREVIEWER = {Ryoshi\ Hotta},
       DOI = {10.1007/BF01389223},
       URL = {https://doi.org/10.1007/BF01389223},
}

@misc{gross,
  author        = {Gross, Benedict},
  title         = {On the Satake isomorphism},
    howpublished = {\url{https://people.math.harvard.edu/~gross/preprints/sat.pdf}},
}

@article {brooks_lindenstrauss_graphs,
    AUTHOR = {Brooks, Shimon and Lindenstrauss, Elon},
     TITLE = {Non-localization of eigenfunctions on large regular graphs},
   JOURNAL = {Israel J. Math.},
  FJOURNAL = {Israel Journal of Mathematics},
    VOLUME = {193},
      YEAR = {2013},
    NUMBER = {1},
     PAGES = {1--14},
      ISSN = {0021-2172,1565-8511},
   MRCLASS = {11B75 (05C50)},
  MRNUMBER = {3038543},
MRREVIEWER = {Song\ Guo},
       DOI = {10.1007/s11856-012-0096-y},
       URL = {https://doi.org/10.1007/s11856-012-0096-y},
}

@book {lax_phillips_book,
    AUTHOR = {Lax, Peter D. and Phillips, Ralph S.},
     TITLE = {Scattering theory},
    SERIES = {Pure and Applied Mathematics},
    VOLUME = {26},
   EDITION = {Second},
      NOTE = {With appendices by Cathleen S. Morawetz and Georg Schmidt},
 PUBLISHER = {Academic Press, Inc., Boston, MA},
      YEAR = {1989},
     PAGES = {xii+309},
      ISBN = {0-12-440051-5},
   MRCLASS = {35-02 (11F72 35L05 35P25 47A40 47F05 81F05)},
  MRNUMBER = {1037774},
}

@article {le_masson_sabri,
    AUTHOR = {Le Masson, Etienne and Sabri, Mostafa},
     TITLE = {{$L^p$} norms and support of eigenfunctions on graphs},
   JOURNAL = {Comm. Math. Phys.},
  FJOURNAL = {Communications in Mathematical Physics},
    VOLUME = {374},
      YEAR = {2020},
    NUMBER = {1},
     PAGES = {211--240},
      ISSN = {0010-3616,1432-0916},
   MRCLASS = {05C50 (31C20 47A10 81Q35)},
  MRNUMBER = {4066590},
MRREVIEWER = {Pietro\ Poggi-Corradini},
       DOI = {10.1007/s00220-019-03473-w},
       URL = {https://doi.org/10.1007/s00220-019-03473-w},
}

@article {ganguly_srivastava,
    AUTHOR = {Ganguly, Shirshendu and Srivastava, Nikhil},
     TITLE = {On non-localization of eigenvectors of high girth graphs},
   JOURNAL = {Int. Math. Res. Not. IMRN},
  FJOURNAL = {International Mathematics Research Notices. IMRN},
      YEAR = {2021},
    NUMBER = {8},
     PAGES = {5766--5790},
      ISSN = {1073-7928,1687-0247},
   MRCLASS = {05C50 (05C80 58J50 60B20)},
  MRNUMBER = {4251263},
MRREVIEWER = {Yilun\ Shang},
       DOI = {10.1093/imrn/rnz008},
       URL = {https://doi.org/10.1093/imrn/rnz008},
}

@phdthesis{parkison,
  author       = {Parkinson, James}, 
  title        = {Buildings and Hecke Algebras},
  school       = {University of Sydney},
  year         = 2005,
}

@article {cowling_setti,
    AUTHOR = {Cowling, Michael and Setti, Alberto G.},
     TITLE = {The range of the {H}elgason-{F}ourier transformation on
              homogeneous trees},
   JOURNAL = {Bull. Austral. Math. Soc.},
  FJOURNAL = {Bulletin of the Australian Mathematical Society},
    VOLUME = {59},
      YEAR = {1999},
    NUMBER = {2},
     PAGES = {237--246},
      ISSN = {0004-9727},
   MRCLASS = {43A85 (39A12 44A12)},
  MRNUMBER = {1680799},
MRREVIEWER = {Wojciech\ M\l otkowski},
       DOI = {10.1017/S0004972700032858},
       URL = {https://doi.org/10.1017/S0004972700032858},
}

@misc{anker_remy_trojan,
      title={Construction of Brooks-Lindenstrauss kernels on affine buildings of arbitrary reduced type, with applications}, 
      author={Jean-Philippe Anker and Bertrand Rémy and Bartosz Trojan},
      year={2023},
      eprint={2312.06860},
      archivePrefix={arXiv},
      primaryClass={math.CA},
      url={https://arxiv.org/abs/2312.06860}, 
}

@article {lax_phillips_h3,
    AUTHOR = {Lax, Peter D. and Phillips, Ralph S.},
     TITLE = {Translation representations for the solution of the
              non-{E}uclidean wave equation},
   JOURNAL = {Comm. Pure Appl. Math.},
  FJOURNAL = {Communications on Pure and Applied Mathematics},
    VOLUME = {32},
      YEAR = {1979},
    NUMBER = {5},
     PAGES = {617--667},
      ISSN = {0010-3640,1097-0312},
   MRCLASS = {43A85 (10D40 22E30 35P25 47A40 58G25)},
  MRNUMBER = {533296},
       DOI = {10.1002/cpa.3160320503},
       URL = {https://doi.org/10.1002/cpa.3160320503},
}

@article {pavlov_feddeev,
    AUTHOR = {Pavlov, B. S. and Faddeev, L. D.},
     TITLE = {Scattering theory and automorphic functions},
   JOURNAL = {Zap. Nau\v cn. Sem. Leningrad. Otdel. Mat. Inst. Steklov.
              (LOMI)},
  FJOURNAL = {Zapiski Nau\v cnyh Seminarov Leningradskogo Otdelenija
              Matemati\v ceskogo Instituta im. V. A. Steklova Akademii Nauk
              SSSR (LOMI)},
    VOLUME = {27},
      YEAR = {1972},
     PAGES = {161--193},
   MRCLASS = {47A40 (10D15 22E45)},
  MRNUMBER = {320781},
MRREVIEWER = {J.\ R.\ Schulenberger},
}

@book {lax_phillips_automorphic,
    AUTHOR = {Lax, Peter D. and Phillips, Ralph S.},
     TITLE = {Scattering theory for automorphic functions},
    SERIES = {Annals of Mathematics Studies},
    VOLUME = {No. 87},
 PUBLISHER = {Princeton University Press, Princeton, NJ},
      YEAR = {1976},
     PAGES = {x+300},
      ISBN = {0-691-08179-4},
   MRCLASS = {10DXX (35P25 47A40)},
  MRNUMBER = {562288},
}

@article {anker_pierfelice_vallarino,
    AUTHOR = {Anker, Jean-Philippe and Pierfelice, Vittoria and Vallarino,
              Maria},
     TITLE = {The wave equation on hyperbolic spaces},
   JOURNAL = {J. Differential Equations},
  FJOURNAL = {Journal of Differential Equations},
    VOLUME = {252},
      YEAR = {2012},
    NUMBER = {10},
     PAGES = {5613--5661},
      ISSN = {0022-0396,1090-2732},
   MRCLASS = {35L15 (35B30 35B45 35R01)},
  MRNUMBER = {2902129},
MRREVIEWER = {Atanas\ G.\ Stefanov},
       DOI = {10.1016/j.jde.2012.01.031},
       URL = {https://doi.org/10.1016/j.jde.2012.01.031},
}

@article {lax_phillips_lattice_points,
    AUTHOR = {Lax, Peter D. and Phillips, Ralph S.},
     TITLE = {The asymptotic distribution of lattice points in {E}uclidean
              and non-{E}uclidean spaces},
   JOURNAL = {J. Functional Analysis},
  FJOURNAL = {Journal of Functional Analysis},
    VOLUME = {46},
      YEAR = {1982},
    NUMBER = {3},
     PAGES = {280--350},
      ISSN = {0022-1236},
   MRCLASS = {10J25 (10E05 58G16)},
  MRNUMBER = {661875},
MRREVIEWER = {P.\ G\"unther},
       DOI = {10.1016/0022-1236(82)90050-7},
       URL = {https://doi.org/10.1016/0022-1236(82)90050-7},
}

@book {borthwick,
    AUTHOR = {Borthwick, David},
     TITLE = {Spectral theory of infinite-area hyperbolic surfaces},
    SERIES = {Progress in Mathematics},
    VOLUME = {318},
   EDITION = {Second},
 PUBLISHER = {Birkh\"auser/Springer, [Cham]},
      YEAR = {2016},
     PAGES = {xiii+463},
      ISBN = {978-3-319-33875-0; 978-3-319-33877-4},
   MRCLASS = {58J50 (11F72 30F35 35B34 47A40 47F05)},
  MRNUMBER = {3497464},
MRREVIEWER = {Semyon\ Dyatlov},
       DOI = {10.1007/978-3-319-33877-4},
       URL = {https://doi.org/10.1007/978-3-319-33877-4},
}

@article {serre_hecke,
    AUTHOR = {Serre, Jean-Pierre},
     TITLE = {R\'epartition asymptotique des valeurs propres de
              l'op\'erateur de {H}ecke {$T_p$}},
   JOURNAL = {J. Amer. Math. Soc.},
  FJOURNAL = {Journal of the American Mathematical Society},
    VOLUME = {10},
      YEAR = {1997},
    NUMBER = {1},
     PAGES = {75--102},
      ISSN = {0894-0347,1088-6834},
   MRCLASS = {11F30 (11F25 11G20 11N37 11R45)},
  MRNUMBER = {1396897},
MRREVIEWER = {Glenn\ Stevens},
       DOI = {10.1090/S0894-0347-97-00220-8},
       URL = {https://doi.org/10.1090/S0894-0347-97-00220-8},
}

@article {conrey_duke_farmer,
    AUTHOR = {Conrey, J. B. and Duke, W. and Farmer, D. W.},
     TITLE = {The distribution of the eigenvalues of {H}ecke operators},
   JOURNAL = {Acta Arith.},
  FJOURNAL = {Acta Arithmetica},
    VOLUME = {78},
      YEAR = {1997},
    NUMBER = {4},
     PAGES = {405--409},
      ISSN = {0065-1036,1730-6264},
   MRCLASS = {11F25 (11F11)},
  MRNUMBER = {1438595},
MRREVIEWER = {Tomasz\ Stefanicki},
       DOI = {10.4064/aa-78-4-405-409},
       URL = {https://doi.org/10.4064/aa-78-4-405-409},
}

@article {kaplan_petrow,
    AUTHOR = {Kaplan, Nathan and Petrow, Ian},
     TITLE = {Elliptic curves over a finite field and the trace formula},
   JOURNAL = {Proc. Lond. Math. Soc. (3)},
  FJOURNAL = {Proceedings of the London Mathematical Society. Third Series},
    VOLUME = {115},
      YEAR = {2017},
    NUMBER = {6},
     PAGES = {1317--1372},
      ISSN = {0024-6115,1460-244X},
   MRCLASS = {11G20 (14G05 14H52)},
  MRNUMBER = {3741853},
MRREVIEWER = {John\ T.\ Cullinan},
       DOI = {10.1112/plms.12069},
       URL = {https://doi.org/10.1112/plms.12069},
}

@article {sarnak_zubrilina,
    AUTHOR = {Sarnak, Peter and Zubrilina, Nina},
     TITLE = {Convergence to the {P}lancherel measure of {H}ecke
              eigenvalues},
   JOURNAL = {Acta Arith.},
  FJOURNAL = {Acta Arithmetica},
    VOLUME = {214},
      YEAR = {2024},
     PAGES = {191--213},
      ISSN = {0065-1036,1730-6264},
   MRCLASS = {11F11 (11F25)},
  MRNUMBER = {4772283},
MRREVIEWER = {Goran\ Djankovi\'c},
       DOI = {10.4064/aa230419-4-10},
       URL = {https://doi.org/10.4064/aa230419-4-10},
}

@article {brooks_le_masson_lindenstrauss,
    AUTHOR = {Brooks, Shimon and Le Masson, Etienne and Lindenstrauss, Elon},
     TITLE = {Quantum ergodicity and averaging operators on the sphere},
   JOURNAL = {Int. Math. Res. Not. IMRN},
  FJOURNAL = {International Mathematics Research Notices. IMRN},
      YEAR = {2016},
    NUMBER = {19},
     PAGES = {6034--6064},
      ISSN = {1073-7928,1687-0247},
   MRCLASS = {35P20 (28D05 37A45 58J51)},
  MRNUMBER = {3567266},
MRREVIEWER = {Anton\ Deitmar},
       DOI = {10.1093/imrn/rnv337},
       URL = {https://doi.org/10.1093/imrn/rnv337},
}

@article {semenov,
    AUTHOR = {Semenov-Tjan-\v Sanski\u i, M. A.},
     TITLE = {Harmonic analysis on {R}iemannian symmetric spaces of negative
              curvature, and scattering theory},
   JOURNAL = {Izv. Akad. Nauk SSSR Ser. Mat.},
  FJOURNAL = {Izvestiya Akademii Nauk SSSR. Seriya Matematicheskaya},
    VOLUME = {40},
      YEAR = {1976},
    NUMBER = {3},
     PAGES = {562--592, 710},
      ISSN = {0373-2436},
   MRCLASS = {43A85 (10D20 47A65)},
  MRNUMBER = {467179},
MRREVIEWER = {J.\ S.\ Joel},
}

@article {phillips_shahshahani,
    AUTHOR = {Phillips, Ralph S. and Shahshahani, Mehrdad M.},
     TITLE = {Scattering theory for symmetric spaces of noncompact type},
   JOURNAL = {Duke Math. J.},
  FJOURNAL = {Duke Mathematical Journal},
    VOLUME = {72},
      YEAR = {1993},
    NUMBER = {1},
     PAGES = {1--29},
      ISSN = {0012-7094,1547-7398},
   MRCLASS = {58G25 (35P25 43A85 47A40)},
  MRNUMBER = {1242877},
MRREVIEWER = {Stig\ I.\ Andersson},
       DOI = {10.1215/S0012-7094-93-07201-8},
       URL = {https://doi.org/10.1215/S0012-7094-93-07201-8},
}

@article {shahshahani_89,
    AUTHOR = {Shahshahani, Mehrdad},
     TITLE = {Poincar\'e{} inequality, uncertainty principle, and scattering
              theory on symmetric spaces},
   JOURNAL = {Amer. J. Math.},
  FJOURNAL = {American Journal of Mathematics},
    VOLUME = {111},
      YEAR = {1989},
    NUMBER = {2},
     PAGES = {197--224},
      ISSN = {0002-9327,1080-6377},
   MRCLASS = {22E30 (22E46 33A75 43A30 58G25)},
  MRNUMBER = {987756},
MRREVIEWER = {Erik\ P.\ van den Ban},
       DOI = {10.2307/2374508},
       URL = {https://doi.org/10.2307/2374508},
}

@incollection {shahshahani_83,
    AUTHOR = {Shahshahani, Mehrdad M.},
     TITLE = {Invariant hyperbolic systems on symmetric spaces},
 BOOKTITLE = {Differential geometry ({C}ollege {P}ark, {M}d., 1981/1982)},
    SERIES = {Progr. Math.},
    VOLUME = {32},
     PAGES = {203--233},
 PUBLISHER = {Birkh\"auser, Boston, MA},
      YEAR = {1983},
      ISBN = {3-7643-3134-8},
   MRCLASS = {58G16 (22E30 43A85)},
  MRNUMBER = {702535},
MRREVIEWER = {P.\ C.\ Trombi},
}

@incollection {helgason_multitemporal,
    AUTHOR = {Helgason, Sigurdur},
     TITLE = {Integral geometry and multitemporal wave equations},
      NOTE = {Dedicated to the memory of Fritz John},
   JOURNAL = {Comm. Pure Appl. Math.},
  FJOURNAL = {Communications on Pure and Applied Mathematics},
    VOLUME = {51},
      YEAR = {1998},
    NUMBER = {9-10},
     PAGES = {1035--1071},
      ISSN = {0010-3640,1097-0312},
   MRCLASS = {58G16 (53C35 58G30)},
  MRNUMBER = {1632583},
MRREVIEWER = {R.\ Schimming},
       DOI =
              {10.1002/(sici)1097-0312(199809/10)51:9/10<1035::aid-cpa5>3.0.co;2-e},
       URL =
              {https://doi.org/10.1002/(sici)1097-0312(199809/10)51:9/10<1035::aid-cpa5>3.0.co;2-e},
}

@misc{multitemporal_buildings,
    author = {Anker, Jean-Philippe and Rémy, Bertand and Peterson, Carsten and Trojan, Bartosz},
    title = {The multitemporal wave equation on affine buildings},
    year = {2025+},
    note = {in preparation},

}

@article {colin_de_verdiere_truc,
    AUTHOR = {Colin de Verdi\`ere, Yves and Truc, Fran\c coise},
     TITLE = {Scattering theory for graphs isomorphic to a regular tree at
              infinity},
   JOURNAL = {J. Math. Phys.},
  FJOURNAL = {Journal of Mathematical Physics},
    VOLUME = {54},
      YEAR = {2013},
    NUMBER = {6},
     PAGES = {063502, 24},
      ISSN = {0022-2488,1089-7658},
   MRCLASS = {05C50 (05C63 47A40 81Q35 81U99)},
  MRNUMBER = {3112544},
MRREVIEWER = {George\ Stacey\ Staples},
       DOI = {10.1063/1.4807310},
       URL = {https://doi.org/10.1063/1.4807310},
}

\vspace{10mm}

\textsc{Sorbonne University \\
\indent Institut de Mathématiques de Jussieu-Paris Rive Gauche} \\
\indent {\it Email address:} \texttt{clhpeterson1870@gmail.com}

\end{document}